\setlist[enumerate]{label=\rm{(\arabic*)}}
\setlist[enumerate,2]{label=\rm({\it\roman*})}
\setlist[itemize]{label=\raisebox{0.25ex}{\tiny$\bullet$}}
\theoremstyle{plain}
\newtheorem{theorem}{Theorem}[section]
\newtheorem*{theoremaux}{Theorem \theoremauxnum}
\gdef\theoremauxnum{1}
\newtheorem{proposition}[theorem]{Proposition}
\newtheorem*{propositionaux}{Proposition \propositionauxnum}
\gdef\propositionauxnum{1}
\newtheorem{lemma}[theorem]{Lemma}
\newtheorem*{lemmaaux}{Lemma \lemmaauxnum}
\gdef\lemmaauxnum{1}
\newtheorem{corollary}[theorem]{Corollary}
\theoremstyle{definition}
\newtheorem{definition}[theorem]{Definition}
\newtheorem*{definition*}{Definition}
\newtheorem{example}[theorem]{Example}
\theoremstyle{remark}
\newtheorem{remark}[theorem]{Remark}
\newtheorem*{remark*}{Remark}
\newcommand{\incl}[1][r]{\ar@<-0.2pc>@{^(-}[#1] \ar@<+0.2pc>@{-}[#1]}
\newcommand{\hs}{\kern 0.8pt}
\newcommand{\FF}{{\mathbb{F}}}
\renewcommand{\P}{\mathbb{P}}
\newcommand{\Hom}{\mathrm{Hom}}
\renewcommand{\AA}{\mathcal{A}}
\newcommand{\BB}{\mathcal{B}}
\newcommand{\CC}{\mathcal{C}}
\newcommand{\AB}{\mathcal{AB}}
\newcommand{\Mon}{D_\infty}
\newcommand{\Spec}{\mathrm{Spec}}
\newcommand{\inn}{\mathrm{inn}}
\newcommand{\V}{\mathcal{V}}
\renewcommand{\SS}{\mathcal{S}}
\newcommand{\p}{\mathbb{P}}
\newcommand{\Q}{\mathbb{Q}}
\newcommand{\HH}{\mathrm{H}}
\newcommand{\Fk}{F_{\k}}
\newcommand{\RR}{\mathcal{R}}
\renewcommand{\k}{\mathrm{k}}
\newcommand{\Inn}{\mathrm{Inn}}
\newcommand{\A}{\mathbb{A}}
\newcommand{\C}{\mathbb{C}}
\newcommand{\F}{\mathbb{F}}
\newcommand{\N}{\mathbb{N}}
\newcommand{\Z}{\mathbb{Z}}
\newcommand{\R}{\mathbb{R}}
\renewcommand{\RR}{\mathcal{R}}
\newcommand{\Gm}{\mathbb{G}_m}
\renewcommand{\O}{\mathcal{O}}
\newcommand{\Vg}{\mathcal{V}^G(G/H)}
\newcommand{\Vgt}{\mathcal{V}^G(G)}
\newcommand{\Vone}{\mathcal{V}_1^G(G/H)}
\newcommand{\W}{\mathcal W}
\renewcommand{\k}{\overline{k}}
\newcommand{\DD}{\mathcal{D}}
\def\w{\omega}
\def\modH{\overline}
\DeclareMathOperator{\Card}{Card}
\DeclareMathOperator{\SL}{SL}
\DeclareMathOperator{\Def}{Def}
\DeclareMathOperator{\SU}{SU}
\DeclareMathOperator{\PSU}{PSU}
\DeclareMathOperator{\Aut}{Aut}
\DeclareMathOperator{\PGL}{PGL}
\DeclareMathOperator{\PSL}{PSL}
\DeclareMathOperator{\Quot}{Quot}
\DeclareMathOperator{\Gal}{Gal}
\DeclareMathOperator{\SO}{SO}
\DeclareMathOperator{\Bir}{Bir}
\DeclareMathOperator{\Stab}{Stab}
\title[Forms of almost homogeneous varieties over perfect fields]{Forms of almost homogeneous varieties \\ over perfect fields}
\author[Lucy Moser-Jauslin and Ronan Terpereau]{Lucy Moser-Jauslin and Ronan Terpereau}
\thanks{The second-named author is supported by the ANR Project FIBALGA ANR-18-CE40-0003-01.
This work received partial support from the French "Investissements d\textquoteright Avenir" program and from project ISITE-BFC (contract ANR-lS-IDEX-OOOB). The IMB receives support from  the EIPHI Graduate School (contract ANR-17-EURE-0002).
}
\address{Institut de Math\'{e}matiques de Bourgogne, UMR 5584 CNRS, Universit\'{e} de Bourgogne, F-21000 Dijon, France}
\email{lucy.moser-jauslin@u-bourgogne.fr}
\address{Univ. Lille, CNRS, UMR 8524 - Laboratoire Paul Painlev\'e, F-59000 Lille, France}
\email{ronan.terpereau@univ-lille.fr}
\begin{document}

\begin{abstract}
We study the $k$-forms of almost homogeneous varieties over perfect base fields $k$. 
First, we discuss criteria for the existence of $k$-forms in the homogeneous case. 
Then, we extend the Luna-Vust theory from algebraically closed fields to perfect fields to determine when a given $k$-form of the open orbit of an almost homogeneous variety extends to a $k$-form of the entire variety. 
Finally, in the last section, we apply these results to determine the real forms of complex almost homogeneous $\mathrm{SL}_2$-threefolds.
\end{abstract}

\maketitle

\tableofcontents

\vspace{-7mm}

\section{Introduction}

\subsection{Setting}
The goal of this article is to contribute to the study of forms of algebraic varieties by treating the case of \emph{almost homogeneous varieties}. These are the (normal) varieties on which a reductive algebraic group acts with a dense open orbit. Toric varieties, determinantal varieties, flag varieties, symmetric varieties (and more generally spherical varieties), normalizations of nilpotent orbit closures in Lie algebras, and prehomogeneous vector spaces are classical examples of almost homogeneous varieties.

Let $k$ be a perfect field, let $\k$ be a fixed algebraic closure of $k$, and let $\Gamma=\Gal(\k/k)$ be the absolute Galois group of $k$. In this article, we call \emph{$k$-form} of a variety $X$ over $\k$ any variety $Z$ over $k$ such that 
\[Z_{\k}=Z \times_{\Spec(k)} \Spec(\k) \simeq X.\] Giving a $k$-form of $X$ is equivalent to giving an \emph{effective descent datum} on $X$, i.e.~an algebraic semilinear action $\mu\colon \Gamma \to \Aut(X)$ stabilizing an affine covering of $X$, in which case the corresponding $k$-form is $X/\Gamma$, where $\Gamma$ acts on $X$ via $\mu$.

Since, in this article, we are mostly interested in varieties endowed with an algebraic group action (in particular, almost homogeneous varieties), we will only consider $k$-forms compatible with this algebraic group action. 
More precisely, given a connected linear algebraic group $G$ over $\k$ and a $G$-variety $X$ over $\k$, we fix a $k$-form $F$ of $G$ in the category of algebraic groups over $k$, and then consider the \emph{$(k,F)$-forms} of $X$, i.e.~the $F$-varieties $Z$ over $k$ such that $Z_{\k} \simeq X$ as $G$-varieties over $\k$. Any such form corresponds to an  \emph{effective $(G,\rho)$-equivariant descent datum} on $X$ (here $\rho$ stands for the descent datum on $G$ associated with the $k$-form $F$) through the map $X \mapsto X/\Gamma$; see \S\S~\ref{sec:Forms of algebraic groups and inner twists}-\ref{sec:descent data for algebraic varieties with group action} for details.

One of the main goals of this article is to give a complete classification of real forms of almost homogeneous $\SL_2$-threefolds. This is achieved in section \S~\ref{sec:real forms of SL2-threefolds}. Before doing this, we develop two independent sections with results on $k$-forms for general almost homogeneous varieties, which are used in the last section to obtain the classification.

\subsection{Overview of the article}

\subsubsection*{Forms of homogeneous spaces \emph{(\S~\ref{sec:forms of qh varieties2})}} 
In \S~\ref{sec:forms of qh varieties2} we focus on the problem of the existence of forms, and the parametrization of their isomorphism classes, for arbitrary homogeneous spaces under a connected linear algebraic group. 

Two descent data $\rho_1,\rho_2\colon \Gamma \to \Aut(G)$ on an algebraic group $G$ are called \emph{equivalent} if there exists an algebraic group automorphism  $\psi \in \Aut_{\k}(G)$ such that $\rho_{2,\gamma}=\psi \circ \rho_{1,\gamma} \circ \psi^{-1}$, for all $\gamma \in \Gamma$, in which case they correspond to isomorphic $k$-forms of $G$. If furthermore $\psi=\inn_g$ is an inner algebraic group automorphism, then $\rho_1$ and $\rho_2$ are called \emph{strongly equivalent}. Also, if $\rho$ is a descent datum on $G$ and if there exists a locally constant map $c\colon\Gamma \to G(\k)$ (for the Krull topology on $\Gamma$) such that $\rho_c:=\inn_c \circ \rho$ is a descent datum on $G$, then $\rho_c$ is called an \emph{inner twist} of $\rho$. Note that the relation of being inner twists defines an equivalence relation on the set of descent data on $G$.

The two main results of this section are Proposition \ref{prop:A}, which gives a criterion to determine when a given homogeneous space $G/H$ admits a $(k,F)$-form, and Theorem \ref{th:B}, which describes the possible obstruction to the existence of a  $(k,F)$-form for $G/H$ when changing the $k$-form $F$ of $G$ inside its equivalence class for the relation of being inner twists.
The proofs of these two results are  detailed in \S~\ref{sec:Existence of forms}.  The first result is proven by a straightforward calculation. As for the Theorem~\ref{th:B}, the first part is a special case of a result of  Borovoi and Gagliardi (see \cite[Theorem~1.6]{BG21}). We include a direct proof in the case of homogeneous spaces, relying only on Proposition~\ref{prop:A}.  First of all, this gives an explicit instance of Borovoi and Gagliardi's result, and it also allows us to give a self-contained presentation.

\begin{proposition} \label{prop:A} \emph{(\S~\ref{sec:Existence of forms})}
Let $G$ be a connected linear algebraic group over $\k$, and let $F$ be a $k$-form of $G$ corresponding to the descent datum $\rho$ on $G$. Let $H \subseteq G$ be an algebraic subgroup.
The homogeneous space $X=G/H$ admits a $(k,F)$-form if and only if there exists a locally constant map $t\colon \Gamma \to G(\k)$ such that
\begin{enumerate}
\item\label{item1-prop:A}
$\rho_\gamma(H)=t_\gamma H t_\gamma^{-1}$ for all $\gamma\in\Gamma$; and
\item\label{item2-prop:A}
$t_{\gamma_1 \gamma_2}\in \rho_{\gamma_1}(t_{\gamma_2}) t_{\gamma_1} H$ for all $\gamma_1,\gamma_2 \in\Gamma$. 
\end{enumerate}
If \ref{item1-prop:A}-\ref{item2-prop:A} are satisfied, then a $(G,\rho)$-equivariant descent datum on $X$ is given by 
\begin{equation*}
\mu\colon \Gamma \to \Aut(X),\ \gamma \mapsto (gH \mapsto \rho_\gamma(g)t_\gamma H).
\end{equation*}
Moreover, if $\rho_1$ and $\rho_2$ are two strongly equivalent descent data on $G$, with corresponding $k$-forms $F_1$ and $F_2$, then there is a bijection between the isomorphism classes of $(k,F_1)$-forms and of $(k,F_2)$-forms of $X$. 
\end{proposition}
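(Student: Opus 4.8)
The plan is to classify all effective $(G,\rho)$-equivariant descent data $\mu\colon\Gamma\to\Aut(X)$ on $X=G/H$, since by the preliminaries of \S\S~\ref{sec:Forms of algebraic groups and inner twists}--\ref{sec:descent data for algebraic varieties with group action} these are exactly the $(k,F)$-forms of $X$. First I would observe that any $\rho_\gamma$-semilinear $G$-equivariant automorphism $\mu_\gamma$ of the homogeneous space $X$ is completely determined by the image of the base point: writing $\mu_\gamma(eH)=t_\gamma H$ for some $t_\gamma\in G(\k)$, equivariance forces $\mu_\gamma(gH)=\rho_\gamma(g)t_\gamma H$ for all $g$, which is the asserted formula. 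Thus producing a descent datum amounts to producing a map $t\colon\Gamma\to G(\k)$ for which this formula is legitimate, and the whole equivalence reduces to checking that legitimacy is equivalent to \ref{item1-prop:A}--\ref{item2-prop:A}.

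For the backward implication, assume $t$ satisfies \ref{item1-prop:A}--\ref{item2-prop:A}. I would check that \ref{item1-prop:A} makes each $\mu_\gamma$ a well-defined isomorphism by factoring it as the $\rho_\gamma$-semilinear isomorphism $G/H\isoto G/\rho_\gamma(H)$ induced by $\rho_\gamma$, followed by the right-translation isomorphism $G/(t_\gamma Ht_\gamma^{-1})\isoto G/H$, $g(t_\gamma Ht_\gamma^{-1})\mapsto gt_\gamma H$; here the identification $\rho_\gamma(H)=t_\gamma Ht_\gamma^{-1}$ is exactly \ref{item1-prop:A}. A direct computation giving $\mu_{\gamma_1}\circ\mu_{\gamma_2}(gH)=\rho_{\gamma_1\gamma_2}(g)\,\rho_{\gamma_1}(t_{\gamma_2})t_{\gamma_1}H$ then shows that $\mu$ is a homomorphism $\Gamma\to\Aut(X)$ precisely when \ref{item2-prop:A} holds. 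Local constancy of $t$ yields the required continuity of $\mu$, and effectiveness is automatic because $G/H$ is quasi-projective by Chevalley, so the semilinear $\Gamma$-action stabilizes an affine covering. Hence $\mu$ is an effective $(G,\rho)$-equivariant descent datum and $X/\Gamma$ is a $(k,F)$-form.

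For the forward implication, start from a $(k,F)$-form, i.e.\ an effective $(G,\rho)$-equivariant descent datum $\mu$, and set $t_\gamma H:=\mu_\gamma(eH)$. The delicate points are three. First, one must promote $\gamma\mapsto t_\gamma H$ to a genuinely locally constant lift $t\colon\Gamma\to G(\k)$; I would do this using that $\Gamma$ is profinite and $\gamma\mapsto t_\gamma H$ is locally constant (hence has finite image), choosing one representative in $G(\k)$ per value and declaring $t$ constant on each preimage. Second, the \emph{equality} in \ref{item1-prop:A} must be extracted: well-definedness of $\mu_\gamma$ forces $\rho_\gamma(H)\subseteq t_\gamma Ht_\gamma^{-1}$, while injectivity of $\mu_\gamma$ (analysing when $\rho_\gamma(g)t_\gamma H=\rho_\gamma(g')t_\gamma H$ forces $gH=g'H$) forces the reverse inclusion $t_\gamma Ht_\gamma^{-1}\subseteq\rho_\gamma(H)$, and the two together give \ref{item1-prop:A}. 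Third, condition \ref{item2-prop:A} drops out of $\mu_{\gamma_1\gamma_2}=\mu_{\gamma_1}\circ\mu_{\gamma_2}$ evaluated at $eH$. I expect the main obstacle to be exactly this interplay between well-definedness and bijectivity needed for the equality in \ref{item1-prop:A}, which is the step where one genuinely uses that $\mu_\gamma$ is an isomorphism and not merely a morphism, together with the bookkeeping of the locally constant lift.

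Finally, for the \emph{Moreover} clause, suppose $\rho_2=\inn_g\circ\rho_1\circ\inn_g^{-1}$ with $\inn_g\in\Aut_{\k}(G)$. The plan is to argue conceptually rather than by transporting the maps $t$, which is awkward since $\inn_g$ moves $H$. I would first note that $\psi:=\inn_g$ intertwines $\rho_1$ and $\rho_2$, namely $\psi\circ\rho_{1,\gamma}=\rho_{2,\gamma}\circ\psi$ for all $\gamma$, hence descends to an isomorphism $\alpha\colon F_1\isoto F_2$ of algebraic groups over $k$ with $\alpha_{\k}=\psi$. Restriction of the group action along $\alpha$ gives an equivalence between the categories of $F_2$-varieties and of $F_1$-varieties over $k$, with quasi-inverse given by $\alpha^{-1}$. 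It then remains to see that this equivalence carries $(k,F_2)$-forms of $X$ to $(k,F_1)$-forms of $X$: for an $F_2$-variety $Z$ with $Z_{\k}\simeq X$ as $G$-varieties, the $\k$-fibre of $\alpha^{*}Z$ is $X$ with its $G$-action precomposed by $\psi=\inn_g$, and the left translation $p\mapsto g^{-1}\cdot p$ is a $G$-equivariant automorphism from this twisted action back to the original one, so $(\alpha^{*}Z)_{\k}\simeq X$ as $G$-varieties. Since $\alpha^{*}$ is an equivalence preserving this property in both directions, it induces the desired bijection between isomorphism classes of $(k,F_1)$- and $(k,F_2)$-forms of $X$.
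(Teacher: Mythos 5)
Your proposal is correct, and for the main equivalence it follows essentially the same route as the paper (Proposition~\ref{prop:existence of (k,F)-form}): the descent datum is pinned down by $\mu_\gamma(eH)=t_\gamma H$, condition~\ref{item1-prop:A} is equated with well-definedness and condition~\ref{item2-prop:A} with the homomorphism property, a locally constant lift $t$ of $\gamma\mapsto t_\gamma H$ is extracted from the algebraicity of $\mu$, and effectiveness is free because $G/H$ is quasiprojective. One small point in your favor: the paper's displayed computation $t_\gamma H=\rho_\gamma(H)t_\gamma H$ literally yields only the inclusion $\rho_\gamma(H)\subseteq t_\gamma Ht_\gamma^{-1}$, whereas you explicitly obtain the reverse inclusion from injectivity of $\mu_\gamma$ (equivalently, from the stabilizer identity $\Stab_G(\mu_\gamma(eH))=\rho_\gamma(H)$), so your treatment of the equality in~\ref{item1-prop:A} is the more careful one.

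Where you genuinely diverge is the \emph{Moreover} clause. The paper (Corollary~\ref{cor:existence of forms for strongly eq rho}) transports the parametrizing maps directly, setting $t_{2,\gamma}=g\,\rho_{1,\gamma}(g)^{-1}t_{1,\gamma}$ and verifying conditions~\ref{item1-prop:A}--\ref{item2-prop:A} for $\rho_2$ by computation, then asserts that $t_1\mapsto t_2$ induces the bijection on isomorphism classes (a step it leaves terse, since one must track the equivalence relation on the $t$'s from the last part of Proposition~\ref{prop:existence of (k,F)-form}). You instead descend $\psi=\inn_g$ to an isomorphism $\alpha\colon F_1\isoto F_2$ over $k$ and pull back the group action along $\alpha$, using the translation $x\mapsto g^{-1}\cdot x$ to identify the $\inn_g$-twisted $G$-action on $X$ with the original one; this gives an honest equivalence of categories carrying $(k,F_2)$-forms of $X$ to $(k,F_1)$-forms of $X$, hence the bijection on isomorphism classes with no cocycle bookkeeping. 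Your route is cleaner on the equivalence-class level and makes transparent exactly where strong equivalence enters: for an outer $\psi$ the translation trick is unavailable and the pullback is a form of the twisted variety rather than of $X$, consistent with Example~\ref{ex:counter-ex not strongly eq}. What the paper's computation buys in exchange is an explicit formula within the $t$-parametrization, which it reuses in the proofs of Theorem~\ref{th:cohomological criterion} and Proposition~\ref{prop:number of forms}.
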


\begin{remark}
If $\rho_1$ and $\rho_2$ are two equivalent descent data on $G$, which are not strongly equivalent, with corresponding $k$-forms $F_1$ and $F_2$, then the existence of a $(k,F_1)$-form for $X$ does not imply the existence of a $(k,F_2)$-form for $X$; see Example~\ref{ex:counter-ex not strongly eq}. 
\end{remark}

\begin{theorem}\label{th:B} \emph{(\S~\ref{sec:Existence of forms})}
We keep the notation of Proposition~\ref{prop:A}.
Let $\rho_c$ be an inner twist of $\rho$, and  let $F_c$ be the corresponding $k$-form of $G$.
We assume that the homogeneous space $X=G/H$ admits a $(k,F)$-form. 
\begin{enumerate}[leftmargin=6mm]
\item\label{item part 1 th B} The homogeneous space $X$ admits a $(k,F_c)$-form if and only if the cohomology class $\Delta_H([\rho_c]) \in \HH^2(\Gamma,\Aut_{\k}^{G}(X))$ is neutral (see \S~\ref{sec:cohomological criterion} for details). 
In particular, if $Z(G) \subseteq H$ or $H=N_G(H)$, then $X$ admits a $(k,F_c)$-form.
\item\label{item part 2 th B} If $X$ admits a $(k,F_c)$-form
and either $\Aut_{\k}^{G}(X)$ is abelian or $Z(G) \subseteq H$, then there is a bijection between the isomorphism classes of $(k,F)$-forms and of $(k,F_c)$-forms of $X$.
\end{enumerate}
\end{theorem}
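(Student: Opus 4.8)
The plan is to deduce everything from Proposition~\ref{prop:A}, reducing the existence of a $(k,F_c)$-form to finding a locally constant map $t'\colon\Gamma\to G(\k)$ satisfying conditions \ref{item1-prop:A}--\ref{item2-prop:A} relative to $\rho_c$, starting from a map $t$ that works for $\rho$ (such a $t$ exists since $X$ admits a $(k,F)$-form). The crucial first observation is that $t'_\gamma:=c_\gamma t_\gamma$ satisfies condition \ref{item1-prop:A} for $\rho_c$ automatically, since $(\rho_c)_\gamma(H)=\inn_{c_\gamma}(\rho_\gamma(H))=c_\gamma t_\gamma H t_\gamma^{-1}c_\gamma^{-1}=t'_\gamma H (t'_\gamma)^{-1}$. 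One then measures the failure of condition \ref{item2-prop:A} by the defect
\[
n_{\gamma_1,\gamma_2}:=(t'_{\gamma_1\gamma_2})^{-1}\,(\rho_c)_{\gamma_1}(t'_{\gamma_2})\,t'_{\gamma_1},
\]
and a short check using condition \ref{item1-prop:A} shows $n_{\gamma_1,\gamma_2}\in N_G(H)$, so that its class $\overline{n}_{\gamma_1,\gamma_2}$ in $N_G(H)/H\simeq\Aut_{\k}^{G}(X)$ is a well-defined $2$-cochain on $\Gamma$.

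The heart of part~\ref{item part 1 th B} is then a bookkeeping computation. Writing $z_{\gamma_1,\gamma_2}:=c_{\gamma_1\gamma_2}^{-1}c_{\gamma_1}\rho_{\gamma_1}(c_{\gamma_2})$, which lies in $Z(G)$ precisely because $\rho_c=\inn_c\circ\rho$ is again a descent datum, and $h_{\gamma_1,\gamma_2}:=t_{\gamma_1\gamma_2}^{-1}\rho_{\gamma_1}(t_{\gamma_2})t_{\gamma_1}$, which lies in $H$ by condition \ref{item2-prop:A} for $t$, I would expand $n_{\gamma_1,\gamma_2}$ and use the centrality of $z_{\gamma_1,\gamma_2}$ to obtain $n_{\gamma_1,\gamma_2}=z_{\gamma_1,\gamma_2}\,h_{\gamma_1,\gamma_2}$. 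Hence $\overline{n}_{\gamma_1,\gamma_2}$ is exactly the image of the central cocycle $z$ under $Z(G)\to N_G(H)/H$, i.e.\ it represents $\Delta_H([\rho_c])$ in the sense of \S~\ref{sec:cohomological criterion}. Since the maps $t'$ compatible with condition \ref{item1-prop:A} differ by right multiplication by $N_G(H)$-valued maps, changing $t'$ alters $\overline{n}$ by a coboundary; thus condition \ref{item2-prop:A} becomes solvable (i.e.\ $\overline{n}$ can be rendered trivial) if and only if $\Delta_H([\rho_c])$ is neutral. The two special cases are then immediate: if $H=N_G(H)$ then $\Aut_{\k}^{G}(X)$ is trivial, and if $Z(G)\subseteq H$ then $z$ maps to the identity in $N_G(H)/H$; in both cases the class is neutral.

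For part~\ref{item part 2 th B}, I would invoke the standard descent-theoretic parametrization: once a form exists, its isomorphism classes are classified by $\HH^1(\Gamma,\Aut_{\k}^{G}(X))$ for the Galois action induced by the corresponding descent datum. A direct computation gives the $\mu$-action $\gamma\cdot\overline{n}=\overline{t_\gamma^{-1}\rho_\gamma(n)t_\gamma}$ on $N_G(H)/H$; for the datum $\mu_c$ attached to $t'_\gamma=c_\gamma t_\gamma m_\gamma$, where $m_\gamma\in N_G(H)$ is the correction possibly needed to enforce condition \ref{item2-prop:A}, the factors $c_\gamma$ cancel and one finds $\gamma\cdot_c\overline{n}=\overline{m_\gamma}^{-1}(\gamma\cdot\overline{n})\,\overline{m_\gamma}$. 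If $\Aut_{\k}^{G}(X)$ is abelian this inner conjugation is trivial; if $Z(G)\subseteq H$ then the computation of part~\ref{item part 1 th B} shows $\overline{n}$ is already trivial, so one may take $m_\gamma=e$. In either case the two Galois actions coincide, the pointed sets $\HH^1(\Gamma,\Aut_{\k}^{G}(X))$ for $\mu$ and $\mu_c$ are literally identical, and the desired bijection between the two families of forms follows by transport along these base points.

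The main obstacle I anticipate lies in the nonabelian nature of $\Aut_{\k}^{G}(X)=N_G(H)/H$. The identity $n_{\gamma_1,\gamma_2}=z_{\gamma_1,\gamma_2}h_{\gamma_1,\gamma_2}$ must be turned into a genuine nonabelian $2$-cocycle relation so as to match the definition of $\Delta_H$, and in part~\ref{item part 2 th B} the correction term $m_\gamma$ genuinely twists the Galois action by the inner automorphism $\overline{m_\gamma}$. The two hypotheses in part~\ref{item part 2 th B} are exactly what neutralizes this twist; without them the relevant $\HH^1$ sets can carry different actions and the bijection may fail, which is why the statement is conditional.
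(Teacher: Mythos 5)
Your proposal is correct and follows essentially the same route as the paper: both parts rest on Proposition~\ref{prop:A}, with the candidate map $t'_\gamma=c_\gamma t_\gamma$ corrected by an $N_G(H)(\k)$-valued map, the resulting defect identified with a representative of $\Delta_H([\rho_c])$ via the central cocycle $c_{\gamma_1}\rho_{\gamma_1}(c_{\gamma_2})c_{\gamma_1\gamma_2}^{-1}$, and part~\ref{item part 2 th B} obtained by checking that the two $\Gamma$-actions on $\Aut_{\k}^{G}(X)\simeq (N_G(H)/H)(\k)$ agree under either hypothesis. Your packaging of part~\ref{item part 1 th B} as a ``defect $2$-cochain'' whose modification under changes of $t'$ is exactly the nonabelian coboundary relation is a mild reorganization of the paper's two-directional argument (which defines $m_\gamma=s_\gamma^{-1}c_\gamma t_\gamma$ and $s_\gamma=c_\gamma t_\gamma n_\gamma^{-1}$ respectively), not a different proof.
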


\begin{remark}
With the notation of Theorem \ref{th:B}, there are homogeneous spaces $G/H$ admitting a $(k,F)$-form and a $(k,F_c)$-form for which the numbers of isomorphism classes of $(k,F)$-forms and of $(k,F_c)$-forms are distinct; see Remark~\ref{rk:rk sec 3.2}.
This shows that the condition that the automorphism group $\Aut_{\k}^{G}(G/H)$ is abelian or that  $Z(G) \subseteq H$, in the second part of Theorem~\ref{th:B}, is necessary. 
\end{remark}

\subsubsection*{Luna-Vust theory over perfect fields \emph{(\S~\ref{sec:LV over perfect fields})}}
From now on, we assume that the connected linear algebraic group $G$ over $\k$ is reductive. 
In \S~\ref{sec:LV over perfect fields}, which is the most technical part of the article, we are mostly concerned with the Luna-Vust theory whose major goal is to classify with combinatorial objects, in a sense that is precised in \S~\ref{sec:LV over alg closed fields}, the (normal) $G$-varieties that are $G$-equivariantly birational to a given (normal) $G$-variety $X_0$. 
In particular, when $X_0=G/H$ is a homogeneous space, Luna-Vust theory provides a combinatorial description of all the $G$-equivariant embeddings $X_0 \hookrightarrow X$ of $X_0$ (up to $G$-equivariant automorphism of $X$). 
It was established by Luna-Vust in \cite{LV83} for almost homogeneous $G$-varieties (case where $X_0=G/H$) over an algebraically closed field of characteristic zero, and then later extended by Timashev in \cite{Tim97,Tim11}  for arbitrary (normal) $G$-varieties over any algebraically closed field (see \S~\ref{sec:LV over alg closed fields} for a brief summary of Luna-Vust theory in this setting). 

In this article, inspired by the work of Huruguen in \cite{Hur11} on toric and spherical varieties, we extend the Luna-Vust theory over an arbitrary perfect base field $k$. As in Huruguen's case for spherical varieties, this is done by defining an action of the Galois group on the combinatorial Luna-Vust data; see \S\S~\ref{sec:Gamma action on colored data}-\ref{sec:k-forms for models} for details. 
The main result in \S~\ref{sec:LV over perfect fields} (Theorem \ref{th:C}) specializes for certain families of well-known $G$-varieties and allows us to recover some known results obtained by Huruguen in \cite{Hur11} and Wedhorn in \cite{Wed18} (for toric and spherical varieties) and by Langlois in \cite{Lan15} (for affine complexity-one $T$-varieties); see \S~\ref{sec:caseof complexity one} for a more detailed account. 

Our principal interest in generalizing the Luna-Vust theory from algebraically closed fields to arbitrary perfect fields comes from the observation that, combined with Proposition~\ref{prop:A} and Theorem~\ref{th:B}, it provides a strategy to determine the $(k,F)$-forms of a given almost homogeneous $G$-variety (as before $F$ denotes a fixed $k$-form of $G$); this strategy is detailed in \S~\ref{sec:strategy} and then applied in \S~\ref{sec:real forms of SL2-threefolds} to determine the real forms of complex almost homogeneous $\SL_2$-threefolds.

\subsubsection*{Real forms of complex almost homogeneous $\SL_2$-threefolds \emph{(\S~\ref{sec:real forms of SL2-threefolds})}}
In \S~\ref{sec:real forms of SL2-threefolds} we fix $k=\R$ and focus on the case of the complex threefolds on which $G=\SL_2(\C)$ acts with a dense open orbit. Let us emphasize that such varieties are never spherical.

First note that, since $\Gamma=\Gal(\C/\R) \simeq \Z/2\Z$, a descent datum $\rho$ on $G$ corresponds to a \emph{real group structure} on $G$, i.e.~an antiregular group involution $\sigma\colon G \to G$. Also, any real group structure on $G$ is strongly equivalent either to $\sigma_s\colon g \mapsto \modH{g}$ or to $\sigma_c\colon g \mapsto {}^{t}\modH{g}^{-1}$ (here $\modH{g}$ indicates the complex conjugate of $g$), with corresponding real loci $\SL_2(\R)$ and $\SU_2(\C)$ respectively.
Similarly, a $(G,\rho)$-equivariant descent datum on a $G$-variety $X$ corresponds to a \emph{$(G,\sigma)$-equivariant real structure} on $X$, i.e.~an antiregular  involution $\mu\colon X \to X$ such that 
\[
\forall g \in G(\C), \forall x \in X(\C),\ \mu(g \cdot x)=\sigma(g) \cdot \mu(x).
\]
Moreover, two $(G,\sigma)$-equivariant real structures on $X$ are called \emph{equivalent} if they are conjugate by a $G$-equivariant automorphism of $X$, in which case the corresponding $(\R,F)$-forms of $X$ are isomorphic as real $F$-varieties. (Here $F$ denotes the real form of $G=\SL_2$ corresponding to $\sigma$.) See \S~\ref{sec:real forms SL2} for details.

If $X$ is an almost homogeneous $G$-threefold, then it contains a dense open orbit isomorphic to $G/H$, with $H \subseteq G$ a finite subgroup, and these are well-known (see Remark~\ref{prop:finite subgroups of SL2}): there are the cyclic groups of order $n$ (conjugate to $A_n$), the binary dihedral groups of order $4n-8$ (conjugate to $D_n$ with $n \geq 4$), and the binary polyhedral groups (conjugate to $E_n$ with $n \in \{6,7,8\}$).

Our first main result in \S~\ref{sec:real forms of SL2-threefolds} is a complete description of the $(\R,F)$-forms for homogeneous $\SL_2$-threefolds.
  
\begin{theorem}\label{th:D}\emph{(\S~\ref{sec:SL2 homogeneous case})}
Let $H$ be a finite subgroup of $G=\SL_2(\C)$, and let $\sigma$ be a real group structure on $G$ corresponding to the real form $F$ or $G$.
Then the set of equivalence classes of the $(G,\sigma)$-equivariant real structures $\mu$ on $X=G/H$, which is in bijection with the set of isomorphism classes of the $(\R,F)$-forms of $X$ through the map $X \mapsto X/ \left\langle \mu \right \rangle$, and their real loci $X(\C)^\mu$, are given in Appendix~\ref{ap:table2}.
\end{theorem}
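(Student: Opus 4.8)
The plan is to specialize Proposition~\ref{prop:A} to $\Gamma=\Gal(\C/\R)=\{1,\gamma\}$, where $\gamma$ denotes complex conjugation, and thereby reduce the whole statement to a finite bookkeeping problem inside $\SL_2(\C)$, carried out over the (finitely many) conjugacy classes of finite subgroups $H$ listed in Proposition~\ref{prop:finite subgroups of SL2}. Since $\Gamma$ is generated by the single involution $\gamma$ and a descent datum is normalized by $t_1=e$, the locally constant map $t\colon\Gamma\to G(\C)$ of Proposition~\ref{prop:A} collapses to a single element $t:=t_\gamma\in\SL_2(\C)$, and conditions \ref{item1-prop:A}--\ref{item2-prop:A} become simply
\[
\sigma(H)=tHt^{-1}\qquad\text{and}\qquad \sigma(t)\,t\in H .
\]
Thus a $(G,\sigma)$-equivariant real structure on $X=G/H$ exists if and only if such a $t$ exists, every one of them has the form $\mu_t\colon gH\mapsto\sigma(g)\,tH$, and one checks directly that $\mu_t^2=\mathrm{id}$ using $\sigma(t)t\in H$ and $\sigma^2=\mathrm{id}$. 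This first reduction is purely formal.

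Next I would make the equivalence relation explicit. Recall $\Aut_{\C}^{G}(G/H)\cong N_G(H)/H$, acting by $\phi_n\colon gH\mapsto gnH$ for $n\in N_G(H)$. Conjugating $\mu_t$ by $\phi_n$ gives $\mu_{t'}$ with $t'\equiv\sigma(n)^{-1}\,t\,n\pmod H$, so that the set of equivalence classes of $(G,\sigma)$-equivariant real structures is precisely the set of orbits of the valid $t$'s under this $\sigma$-twisted conjugation by $N_G(H)/H$ --- a nonabelian $\HH^1$-type object. The problem therefore becomes: for each $H$ and each of the two real group structures $\sigma\in\{\sigma_s,\sigma_c\}$, (i) decide whether $\sigma(H)$ is conjugate to $H$ and, if so, exhibit one solution $t_0$; (ii) describe all solutions as $t=t_0 s$ and enumerate the $\sigma$-twisted $N_G(H)/H$-orbits among them.

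The case analysis is then organized by the families $A_n$, $D_n$, $E_n$, and streamlined using Theorem~\ref{th:B}. Both $\sigma_s$ and $\sigma_c$ are inner twists of one another since $\Out(\SL_2)=1$, so Theorem~\ref{th:B} applies with $F_c$ the form attached to $\sigma_c$. For the binary dihedral groups $D_n$ and the binary polyhedral groups $E_n$ (and for the cyclic groups $A_n$ of even order) one has $-I\in H$, hence $Z(G)\subseteq H$; then Theorem~\ref{th:B}\ref{item part 1 th B} guarantees that a $(\R,F_c)$-form exists as soon as a $(\R,F_s)$-form does, and Theorem~\ref{th:B}\ref{item part 2 th B} yields a bijection between the two sets of isomorphism classes, so that the split and compact columns of the table match up automatically. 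The residual genuinely non-formal case is the cyclic $A_n$ of odd order, where $Z(G)\not\subseteq H$ and $N_G(H)/H$ is the non-abelian group $\Gm\rtimes\Z/2$ (the normalizer being $N_G(T)$ for the torus $T\supseteq H$): there Theorem~\ref{th:B} does not apply and the twisted-conjugacy orbits must be computed by hand from the two displayed conditions.

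Finally I would compute the real loci. For each solution $t$,
\[
X(\C)^{\mu_t}=\{\,gH : g^{-1}\sigma(g)\,t\in H\,\},
\]
which is a (possibly empty) homogeneous space under the real locus $G(\C)^\sigma$, equal to $\SL_2(\R)$ for $\sigma_s$ and to $\SU_2(\C)$ for $\sigma_c$; its decomposition into $G(\C)^\sigma$-orbits --- including the possibility of being empty --- is read off from the same double-coset data, and from this one records the dimension, number of connected components, and (non)compactness entering Appendix~\ref{ap:table2}. I expect the main obstacle to be the orbit enumeration for the binary polyhedral groups $E_6,E_7,E_8$: their normalizers are small, so one must track carefully how $\sigma$ permutes the relevant conjugacy classes and then match each surviving twisted class to the topology of its real locus. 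The odd cyclic case, while formally outside the scope of Theorem~\ref{th:B}, is computationally light once the $\Gm\rtimes\Z/2$-action is written out explicitly.
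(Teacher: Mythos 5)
Your plan follows the paper's proof essentially step for step: specialize Proposition~\ref{prop:A} to $\Gamma=\Gal(\C/\R)\simeq\Z/2\Z$ so that the two conditions collapse to $\sigma(H)=tHt^{-1}$ and $\sigma(t)t\in H$; identify equivalence of the resulting structures $\mu_t$ with $\sigma$-twisted conjugation by $N_G(H)/H$ (this is exactly the computation of $\HH^1(\Gamma,N_G(H)/H)$ carried out in Proposition~\ref{prop:description of H1 for SL2/H}, after the normalizers are listed in Lemma~\ref{lem:N(H)/H}); use Theorem~\ref{th:B} to transfer between $\sigma_s$ and $\sigma_c$ whenever $Z(G)\subseteq H$; and read off the real loci from the condition $g^{-1}\sigma(g)t\in H$. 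Two points in your plan would, however, fail as written.

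First, your description of the ``residual'' cyclic case is wrong for $H=A_1$ (and the analogous issue arises for $A_2$): by Lemma~\ref{lem:N(H)/H} one has $N_G(A_1)=N_G(A_2)=G$, not $N_G(T)\simeq\Gm\rtimes\Z/2\Z$, so the relevant pointed sets are $\HH^1(\Gamma,\SL_2(\C))$ and $\HH^1(\Gamma,\PGL_2(\C))$. Your $\Mon$-based orbit enumeration does not apply there; the paper instead computes these sets by invoking the known classification of real group structures on $\SL_2(\C)$ (exactly two strong equivalence classes of inner twists) and lifting along $G\to G/Z(G)$. Second, the real locus $X(\C)^{\mu_t}$ is in general a finite \emph{disjoint union} of $G(\C)^\sigma$-orbits, not a single homogeneous space as you assert: e.g.\ for $H=A_2$ and $\mu=\sigma_s$ it is $\PSL_2(\R)\modH{I_2}\sqcup\PSL_2(\R)\modH{\w_4}$. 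Deciding which of the at most $\Card(H)$ equations $g^{-1}\sigma(g)=ht^{-1}$ are solvable, and how the resulting $G(\C)^\sigma$-cosets coalesce in $G/H$, is the bulk of the work on the split side; the compact side is settled uniformly by the observation that $g^{-1}\sigma_c(g)$ is a positive Hermitian matrix, hence lies in $\SU_2(\C)$ only if it equals $I_2$, but for $\sigma_s$ and $H=E_6,E_7,E_8$ the paper resorts to computer calculation, so this step cannot simply be ``read off'' from the cohomology computation.
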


We then move on to the almost homogeneous case and give a combinatorial criterion for a $(G,\sigma)$-equivariant real structure on the open orbit $X_0=G/H$ to extend to a given $G$-equivariant embedding $X_0 \hookrightarrow X$ (see Theorem \ref{th:E}); this is done by specializing the Luna-Vust theory over perfect fields (developed in \S~\ref{sec:LV over perfect fields}) in this particular setting.
Several examples where Theorem~\ref{th:E} is applied to determine the real forms of certain almost homogeneous $\SL_2$-threefolds can be found in \S~\ref{sec:almost homogeneous SL2 threefolds}; they illustrate various pathological behaviors (see Remarks \ref{rk:spherical behavior} and \ref{rk:Gamma map}), some of which cannot occur in the spherical case and motivate a systematic study  of (normal) varieties of complexity $\geq 1$ (see Definition~\ref{def:complexity}).

Our last main result, which is mostly a byproduct of Theorems~\ref{th:D} and ~\ref{th:E}, concerns the real forms of the minimal smooth completions of $X_0=G/H$ when $H$ is non-cyclic (see Remark~\ref{rk:cyclic case} for a comment on the cyclic case). 
Here we call \emph{minimal smooth completion} of $X_0$ a smooth complete almost homogeneous $G$-threefold $X$, with open orbit $X_0$, such that any $G$-equivariant birational morphism $X \to X'$, with $X'$ smooth, is an isomorphism. (The list of the minimal smooth completions of $X_0=G/H$ when $H$ is non-cyclic is given by Propositions \ref{prop:embeddings in type E} and \ref{prop:embeddings in type D}; it includes the Fano threefolds $\P^3$, $Q_3$, $V_5$, and $V_{22}^{MU}$.)

\begin{corollary}\label{cor:F}\emph{(\S~\ref{sec:minimal models})}
Let $H$ be a non-cyclic finite subgroup of $G=\SL_2(\C)$, and let $\sigma$ be a real group structure on $G$ corresponding to a real form $F$ of $G$. 
\begin{itemize}[leftmargin=5mm]
\item If $H$ is conjugate to $D_4$, then the unique minimal smooth completion $X$ of $X_0=G/H$ admits exactly two inequivalent $(G,\sigma)$-equivariant real structures.
\item If $H$ is conjugate to $E_6$, $E_7$, $E_8$, or $D_n$ with $n \geq 5$, then  the unique minimal smooth completion $X$ of $X_0=G/H$ admits exactly one $(G,\sigma)$-equivariant real structure.
\end{itemize}
Hence, there is always a unique $(\R,F)$-form for $X$, except when $H$ is conjugate to $D_4$, in which case there are exactly two non-isomorphic $(\R,F)$-forms for $X$. 
\end{corollary}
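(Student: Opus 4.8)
The plan is to transfer the classification of real structures from the open orbit $X_0=G/H$ to its unique minimal smooth completion $X$, using Theorem~\ref{th:D} on the source side and Theorem~\ref{th:E} for the extension step. First I would record, from Theorem~\ref{th:D} and the table in Appendix~\ref{ap:table2}, the equivalence classes of $(G,\sigma)$-equivariant real structures on $X_0=G/H$ for each non-cyclic $H$. Since $H$ is a binary group we have $Z(G)=\{\pm I\}\subseteq H$, so by the second part of Theorem~\ref{th:B} the number of such classes is independent of the chosen real form $F$; concretely it equals $2$ when $H$ is conjugate to $D_4$ and $1$ in the remaining cases $E_6,E_7,E_8,D_n\ (n\ge 5)$. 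I would also record the equivariant automorphism group $\Aut_{\k}^{G}(X_0)=N_G(H)/H$, which is $S_3$ for $D_4$, isomorphic to $\Z/2\Z$ for $E_6$ and for $D_n$ with $n\ge 5$, and trivial for $E_7,E_8$.

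Next I would invoke the explicit description of $X$: by Propositions~\ref{prop:embeddings in type E} and~\ref{prop:embeddings in type D}, the minimal smooth completion of $X_0$ is unique and given by explicit colored data $\mathcal{D}$. The key step is to show that every real structure on $X_0$ extends, uniquely, to $X$. A real structure $\mu$ on $X_0$ induces a semilinear $\Gamma$-action on the colored data attached to $X_0$, and by Theorem~\ref{th:E} it extends to $X$ precisely when $\mathcal{D}$ is stable under this action; since $X_0$ is dense the extension is then unique. Now $\mathcal{D}$ is singled out among all colored data by the intrinsic conditions of smoothness, completeness and minimality, all of which are preserved by the semilinear action, so the $\gamma$-translate of $\mathcal{D}$ again corresponds to a minimal smooth completion of $X_0$; as such a datum is unique, $\mathcal{D}$ is necessarily $\Gamma$-stable. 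Hence every $(G,\sigma)$-equivariant real structure on $X_0$ extends to $X$, giving a bijection between the two sets of real structures. Alternatively, one checks the $\Gamma$-stability of $\mathcal{D}$ directly from the combinatorial descriptions in the two Propositions.

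It then remains to compare the equivalence relations. Because $X$ is the unique minimal smooth completion of $X_0$ as a $G$-variety, any $\varphi\in\Aut_{\k}^{G}(X_0)$ re-embeds $X_0$ into $X$ as another minimal smooth completion and therefore extends to an element of $\Aut_{\k}^{G}(X)$; as restriction to the dense orbit is injective, this gives $\Aut_{\k}^{G}(X)=\Aut_{\k}^{G}(X_0)$. Consequently two real structures on $X$ are equivalent if and only if their restrictions to $X_0$ are, so the number of equivalence classes on $X$ equals the number on $X_0$ recorded above: two for $D_4$ and one otherwise. Finally, the bijection $X\mapsto X/\langle\mu\rangle$ of Theorem~\ref{th:D} converts this count into the count of isomorphism classes of $(\R,F)$-forms of $X$, yielding the stated conclusion.

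The main obstacle I expect is the $D_4$ case. There $\Aut_{\k}^{G}(X_0)=S_3$ is nonabelian, so the second part of Theorem~\ref{th:B} does not by itself control the dependence on $F$, and one must verify that the two classes of real structures remain inequivalent after extension rather than being fused by the larger automorphism group. This is exactly what the identity $\Aut_{\k}^{G}(X)=\Aut_{\k}^{G}(X_0)=S_3$ guarantees, together with the fact that the two classes are already $S_3$-inequivalent on $X_0$. The remaining cases are comparatively routine once the combinatorial $\Gamma$-stability of $\mathcal{D}$ is in hand, since there the source already carries a single real structure.
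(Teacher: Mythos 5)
Your overall plan (classify real structures on the open orbit, then extend them to $X$ via Theorem~\ref{th:E}) is the same as the paper's, but the two pillars you build it on are both false, and they only accidentally produce the right final numbers. First, your count on $X_0$ is wrong: by Proposition~\ref{prop:description of H1 for SL2/H}, the set $\HH^1(\Gamma,N_G(H)/H)$ has \emph{two} elements for $E_6$ and for every $D_n$ with $n\geq 4$ (not just $D_4$), and a single element only for $E_7$ and $E_8$. Second, and more seriously, your key claim that every real structure on $X_0$ automatically extends to $X$ is false. The argument you give --- the colored data $\mathcal{D}$ of the minimal smooth completion is intrinsically characterized, hence $\Gamma$-stable --- breaks down because the minimal smooth completion is unique only up to precomposing the embedding with an element of $\Aut_{\C}^{G}(X_0)$: the $\gamma$-translate of $\mathcal{D}$ is again the colored data of \emph{a} minimal smooth completion, but possibly of a different embedding of $X_0$ into the same abstract variety, hence a different colored datum. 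This is exactly what happens for $H=E_6$: the diagram has two colors $j_1,j_2$ with $b(j)=-\tfrac{1}{2}$, only one of which appears in $\mathcal{D}$, and the $\Gamma$-action induced by $\modH{g}\mapsto\modH{\sigma_s(g)}$ swaps them. The paper's proof of Corollary~\ref{cor:structures on minimal models E cases} computes this explicitly via the two quartics $w^4+x^4\mp 2\sqrt{3}\hs i\hs w^2x^2$ and finds that only the class of $\modH{g}\mapsto\modH{\sigma_s(g)\w_8}$ (resp.\ $\modH{\sigma_c(g)\w_8}$) extends to $Q_3$; likewise for $D_n$ with $n\geq 5$ only one of the two classes extends, and which one depends on $\sigma$ and on the parity of $n$ (see the table closing \S~\ref{sec:minimal models}). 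So the correct mechanism is ``two classes upstairs, one survives extension,'' not ``one class upstairs, which extends.''

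Your identification $\Aut_{\C}^{G}(X)=\Aut_{\C}^{G}(X_0)$ fails for the same reason: an automorphism of $X_0$ re-embeds $X_0$ as another minimal smooth completion, which only says the two embeddings have isomorphic total spaces, not that the automorphism extends regularly to $X$. Indeed Propositions~\ref{prop:embeddings in type E} and~\ref{prop:embeddings in type D} give $\Aut_{\C}^{G}(Q_3)=\{Id\}$ while $\Aut_{\C}^{G}(G/E_6)\simeq\Z/2\Z$, and $\Aut_{\C}^{G}(\SS_2)\simeq\Z/2\Z$ while $\Aut_{\C}^{G}(G/D_4)\simeq\mathfrak{S}_3$. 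The uniqueness of the real structure on $X$ in the $E_n$ and $D_n$ ($n\geq 5$) cases comes simply from the triviality of $\Aut_{\C}^{G}(X)$, not from a comparison with $X_0$. Note also Remark~\ref{rk:spherical behavior}: whether a real structure on $X_0$ extends to $X$ is not even a well-defined property of its equivalence class, so the bookkeeping must be carried out on explicit representatives, as the paper does with the $H$-semi-invariants $wx$ and $w^{n-2}\pm x^{n-2}$ for $D_n$ and the two quartics for $E_6$. A correct proof cannot avoid these explicit computations of the $\Gamma$-action on the distinguished colors.
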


\begin{remark}
In the setting of Corollary~\ref{cor:F}, a table with the list of the $(G,\sigma)$-equivariant real structures on $X_0=G/H$ that extend to the minimal smooth completions $X$ of $X_0$ can be found at the end of \S~\ref{sec:minimal models}.
\end{remark}

\subsection{Foreword}
The three main sections (\S\S~\ref{sec:forms of qh varieties2}-\ref{sec:LV over perfect fields}-\ref{sec:real forms of SL2-threefolds}) are largely  independent, yet closely interconnected. 
Within sections~\ref{sec:forms of qh varieties2} and \ref{sec:LV over perfect fields},  we ensure the accessibility of proofs by providing two self-contained presentations on each respective topic.

\subsection*{Acknowledgments}
We would like to thank Michel Brion, Lucas Moulin, and Pierre-Alexandre Gillard for their comments on a former version of this article.
We are also grateful to the referee for his/her careful reading of the paper and for his/her comments and suggestions which helped us to improve the manuscript.

\section{Preliminaries} \label{sec: preliminaries}

\subsection{Notation}
Let $k$ be a perfect field. We denote by $\k$ a fixed algebraic closure of $k$, and by $\Gamma=\Gal(\k/k)$ the absolute Galois group of $k$. Since $k$ is perfect, the field extension $k \hookrightarrow \k$ is Galois and $\Gamma$ is a profinite group endowed with the \emph{Krull topology}; see  \cite[\S~I]{Ber10} for a detailed account on Galois extensions and Galois groups. An abstract (abelian) group, endowed with the discrete topology, on which $\Gamma$ acts continuously is called an (\emph{abelian}) \emph{$\Gamma$-group}.

In this article an \emph{algebraic variety} (over $k$) is a separated scheme of finite type (over $k$) which is geometrically integral and \textbf{normal}.
An \emph{algebraic group} (over $k$) is a finite type group scheme (over $k$) which is smooth. 
By an \emph{algebraic subgroup}, we always mean a closed and reduced subgroup scheme. In particular, we only consider homogeneous spaces with reduced stabilizers. 

A reductive algebraic group $F$ is always assumed to be connected and of \emph{simply-connected type}, i.e.~$\Fk=F \times_{\Spec(k)} \Spec(\k)$ is isomorphic to a product of a torus and a simply-connected semisimple group. We will always denote by $F$ an algebraic group over $k$ and by $G$ an algebraic group over $\k$. Also, we will denote by $Z(G)$ the center of $G$ and, when $H \subseteq G$ is an algebraic subgroup, by $N_G(H)$ the normalizer of $H$ in $G$.

When we write "the homogeneous space $G/H$" we always implicitly refer to a pair $(X_0,x_0)$, where $X_0$ is a homogeneous $G$-variety over $\k$ and $x_0 \in X_0(\k)$ satisfies $\Stab_G(x_0)=H$ as a  subgroup scheme of $G$. An \emph{equivariant embedding} of $G/H$ is a pair $(X,x)$ formed by a $G$-variety $X$ over $\k$ and $x \in X(\k)$ such that $\Stab_x(G)=H$, as a subgroup scheme of $G$, and the $G$-orbit of $x$ is a dense open subset of $X$. 
Two equivariant embeddings $(X_1,x_1)$ and $(X_2,x_2)$ of $G/H$ are said to be \emph{isomorphic} if there exists a $G$-equivariant isomorphism $\psi \colon X_1 \to X_2$ such that $\psi(x_1)=x_2$. 
To simplify the notation, we will only write $X$ instead of $(X,x)$ to denote an equivariant embedding of $G/H$, except when the $\k$-point $x$ plays an important role.  

We refer to \cite{Mil17} for the background concerning algebraic groups and algebraic varieties endowed with algebraic group actions.

\subsection{Forms, descent data, and inner twists for algebraic groups} \label{sec:Forms of algebraic groups and inner twists}

Let $k$ be a perfect field, let $G$ be an arbitrary algebraic group over $\k$, and let $H \subseteq G$ be an algebraic subgroup.  
In \S\S~\ref{sec:Forms of algebraic groups and inner twists}-\ref{sec:descent data for algebraic varieties with group action} we recall the basic notions of forms and descent data for algebraic groups and algebraic varieties with algebraic group action. 

\begin{definition}\label{def:k-forms of algebraic groups}\item
\begin{itemize}[leftmargin=6.5mm]
\item A \emph{$k$-form} of the algebraic group $G$ is an algebraic group $F$ over $k$ together with an isomorphism $F_{\k}=F \times_{\Spec(k)} \Spec(\k) \simeq G$ of algebraic groups (over $\k$).
\item A \emph{descent datum} on $G$ is an algebraic semilinear action $\rho\colon \Gamma \to \Aut(G)$ preserving the algebraic group structure of $G$, which means that 
\begin{enumerate}[(i)]
\item there exists a finite Galois extension $k'/k$ in $\k$ and a $k'$-form $F'$ of the $\k$-variety $G$ such that the restriction of $\rho$ to $\Gal(\k/k')$ coincides with the natural $\Gal(\k/k')$-action on $G \simeq F'\times_{\Spec(k')} \Spec(\k)$; 
\item 
for each $\gamma \in \Gamma$, we have a commutative diagram 
\[
\xymatrix@R=4mm@C=2cm{
    G \ar[rr]^{\rho_\gamma} \ar[d]  && G \ar[d] \\
    \Spec(\k) \ar[rr]^{(\gamma^*)^{-1}} && \Spec(\k)
  }
  \]
where $\rho_\gamma$ is a scheme automorphism over $\Spec(k)$; and 
\item for each $\gamma \in \Gamma$, $\rho_\gamma$ is compatible with the algebraic group structure of $G$, i.e. 
\[\iota_G \circ \rho_\gamma=\rho_\gamma \circ \iota_G \ \ \text{ and }\ \ m_G \circ (\rho_\gamma \times \rho_\gamma)=\rho_\gamma \circ m_G,\] where $\iota_G\colon G \to G$ is the inverse morphism and $m_G\colon G \times G \to G$ is the multiplication morphism.
\end{enumerate}
\item Two descent data $\rho_1$ and $\rho_2$ on $G$ are \emph{equivalent} if there exists $\psi \in \Aut_{\k}(G)$ such that
\[\forall \gamma \in \Gamma,\ \rho_{2,\gamma}=\psi \circ \rho_{1,\gamma} \circ \psi^{-1}.\]
They are \emph{strongly equivalent} if we can take $\psi \in \Inn_{\k}(G)$ (=the subgroup of inner  automorphisms). If $h\in G(\k)$, we denote by $\inn_h$ the inner automorphism of $G$ defined by $\inn_h\colon G \to G,\ g\mapsto hgh^{-1}$. 
\end{itemize}
\end{definition}

\begin{remark}
The reason why the finite Galois extension $k'/k$ appears in Definition~\ref{def:k-forms of algebraic groups} is to ensure the existence of the categorical quotient $G/\Gamma$. 
Indeed, $\Gamma=\Gal(\overline{k}/k)$ is a (possibly infinite) discrete group while $\Gal(k'/k)$ is always finite, and so the categorical quotient $G/\Gamma \simeq F'/\Gal(k'/k)$ is well-defined. (The same remark holds also for Definition~\ref{def:k-forms of G-varieties} below.)
\end{remark}

If $F$ is a $k$-form of $G$, then the homomorphism $\Gamma \to \Aut(F_{\k}),\ \gamma \to Id \times (\gamma^*)^{-1}$ gives a descent datum on $G \simeq F_{\k}$.
Conversely, if $\Gamma \to \Aut(G)$ is a descent datum on $G$, then the quotient space $F:=G/\Gamma \simeq F'/\Gal(k'/k)$ is a $k$-form of $G$; an isomorphism $G \xrightarrow{\sim} \Fk$  is given by $(q,f)$, where $q\colon G\to F$ is the quotient morphism and $f\colon G \to \Spec(\k)$ is the structure morphism.

This induces a correspondence between the $k$-forms of $G$, up to isomorphism in the category of algebraic groups over $k$, and equivalence classes of descent data on $G$. (See also \cite[\S~1.4]{FSS98} or \cite[\S~2.5]{BG21}.)

\begin{definition}\label{def:inner twist}
Let $\rho\colon \Gamma \to \Aut(G)$ be a descent datum, and let $c\colon\Gamma \to G(\k)$ be a locally constant map. If the map 
\[ \rho_c \colon \Gamma \to \Aut(G),\ \gamma \mapsto (g \mapsto \inn_{c_\gamma} \circ \rho_\gamma(g)=c_\gamma \rho_\gamma(g)c_{\gamma}^{-1})\]
is a descent datum on $G$, then it is called an \emph{inner twist} of $\rho$.
\end{definition}

We fix a descent datum $\rho$ on $G$; it induces a descent datum on $Z(G)$ and $G/Z(G)$. We denote by $Z^1(\Gamma,(G/Z(G))(\k))$ the set of $1$-cocycles with values in the $\Gamma$-group $(G/Z(G))(\k)$ and by $\HH^1(\Gamma,(G/Z(G))(\k))$ the corresponding first cohomology set; see \cite[\S~I\!I.3]{Ber10} for the definitions of these two pointed sets.

\begin{lemma}\label{lem:Z1 parametrizes inner twists}
Let $\rho\colon \Gamma \to \Aut(G)$ be a descent datum, and let $c\colon\Gamma \to G(\k)$ be a locally constant map. Then the following hold:
\begin{enumerate}
\item\label{item:Z1 i} The map $\rho_c$ is a descent datum on $G$ if and only if the locally constant map $\overline{c}\colon \Gamma \to (G/Z(G))(\k),\ \gamma \mapsto \overline{c_\gamma}$ is an element of $Z^1(\Gamma,(G/Z(G))(\k))$.
\item\label{item:Z1 ii} The descent data $\rho_{c_1}$ and $\rho_{c_2}$ are equal if and only if $\overline{c_1}=\overline{c_2}$.
\item\label{item:Z1 iii} The descent data $\rho_{c_1}$ and $\rho_{c_2}$ are strongly equivalent (Definition~\ref{def:k-forms of algebraic groups}) if and only if $\overline{c_1}$ and $\overline{c_2}$ are cohomologous. 
\end{enumerate}
\end{lemma}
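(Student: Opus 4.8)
The plan is to unwind the definitions of inner twist (Definition~\ref{def:inner twist}) and of the coboundary/cocycle conditions for the $\Gamma$-group $(G/Z(G))(\k)$, and to verify that the three equivalences reduce to purely formal manipulations once one keeps track of the induced $\Gamma$-action. Throughout I write $\overline{g}$ for the image of $g \in G(\k)$ in $(G/Z(G))(\k)$, and I recall that $\rho$ induces the semilinear $\Gamma$-action $\gamma \cdot \overline{g} = \overline{\rho_\gamma(g)}$ on $(G/Z(G))(\k)$; the cocycle and coboundary relations below are all taken with respect to this action.

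For part~\ref{item:Z1 i}, I would first observe that each $\rho_{c,\gamma} = \inn_{c_\gamma} \circ \rho_\gamma$ is automatically an algebraic semilinear automorphism of $G$ compatible with the group structure and sitting over $(\gamma^*)^{-1}$, and that the local-constancy/finite-extension condition~(i) of Definition~\ref{def:k-forms of algebraic groups} is inherited from that of $\rho$ together with the local constancy of $c$. Hence the only condition that can fail is that $\rho_c$ be a genuine \emph{action}, i.e.\ that $\rho_{c,\gamma_1 \gamma_2} = \rho_{c,\gamma_1} \circ \rho_{c,\gamma_2}$. Expanding both sides using $\rho_{\gamma_1} \circ \inn_{c_{\gamma_2}} = \inn_{\rho_{\gamma_1}(c_{\gamma_2})} \circ \rho_{\gamma_1}$ (the key intertwining identity), the homomorphism condition becomes $\inn_{c_{\gamma_1 \gamma_2}} = \inn_{c_{\gamma_1}\,\rho_{\gamma_1}(c_{\gamma_2})}$ as automorphisms of $G$, which holds exactly when $c_{\gamma_1 \gamma_2}$ and $c_{\gamma_1}\,\rho_{\gamma_1}(c_{\gamma_2})$ differ by an element of $Z(G)(\k)$, that is, $\overline{c_{\gamma_1 \gamma_2}} = \overline{c_{\gamma_1}}\,\bigl(\gamma_1 \cdot \overline{c_{\gamma_2}}\bigr)$. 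This is precisely the $1$-cocycle condition, so $\rho_c$ is a descent datum iff $\overline{c} \in Z^1(\Gamma,(G/Z(G))(\k))$.

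Parts~\ref{item:Z1 ii} and~\ref{item:Z1 iii} then follow from the same computation. For~\ref{item:Z1 ii}, equality $\rho_{c_1,\gamma} = \rho_{c_2,\gamma}$ for all $\gamma$ means $\inn_{c_{1,\gamma}} = \inn_{c_{2,\gamma}}$ for all $\gamma$ (after cancelling $\rho_\gamma$), i.e.\ $c_{1,\gamma} c_{2,\gamma}^{-1} \in Z(G)(\k)$ for all $\gamma$, which is exactly $\overline{c_1} = \overline{c_2}$. For~\ref{item:Z1 iii}, strong equivalence means there is $h \in G(\k)$ with $\rho_{c_2,\gamma} = \inn_h \circ \rho_{c_1,\gamma} \circ \inn_h^{-1}$ for all $\gamma$; pushing $\inn_h$ through $\rho_{c_1,\gamma}$ via the same intertwining identity and comparing inner parts shows this is equivalent to $\overline{c_{2,\gamma}} = \overline{h}\,\overline{c_{1,\gamma}}\,\bigl(\gamma \cdot \overline{h}\bigr)^{-1}$ for all $\gamma$, which is the statement that $\overline{c_1}$ and $\overline{c_2}$ are cohomologous in $\HH^1(\Gamma,(G/Z(G))(\k))$.

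The routine parts are the verifications that conditions~(i)--(iii) of the descent-datum definition survive twisting; the step I expect to require the most care is the intertwining identity $\rho_\gamma \circ \inn_g = \inn_{\rho_\gamma(g)} \circ \rho_\gamma$ and its consistent use, since $\rho_\gamma$ is only \emph{semilinear} and one must be sure that conjugation by a $\k$-point transforms correctly under it — this is the one place where the semilinearity of the action genuinely enters, and where the precise form of the twisted $\Gamma$-action on $(G/Z(G))(\k)$ is pinned down. Once that identity is established, all three equivalences are bookkeeping.
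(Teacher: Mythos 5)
Your proof is correct and follows essentially the same route as the paper's: the paper likewise reduces part~\ref{item:Z1 i} to the homomorphism property of $\gamma \mapsto \rho_{c,\gamma}$ and "develops and simplifies" via the same intertwining identity $\rho_\gamma \circ \inn_g = \inn_{\rho_\gamma(g)} \circ \rho_\gamma$, and handles parts~\ref{item:Z1 ii} and~\ref{item:Z1 iii} by the identical comparison of inner parts modulo $Z(G)(\k)$. Your version is in fact slightly more careful than the paper's (which leaves the "developing and simplifying" implicit), and your explicit coboundary relation $\overline{c_{2,\gamma}} = \overline{h}\,\overline{c_{1,\gamma}}\,(\gamma \cdot \overline{h})^{-1}$ agrees with the one the paper derives.
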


\begin{proof}
\ref{item:Z1 i}: Let us note that the map $\rho_c$ is a descent datum if and only if it is a homomorphism, i.e.
\[ \forall  \gamma_1,\gamma_2 \in \Gamma,\  \rho_{c,\gamma_1\gamma_2} = \rho_{c,\gamma_1} \circ  \rho_{c,\gamma_2}.\]
Developing and simplifying these equalities, we observe that it is equivalent to the cocycle condition for $\overline{c}$
\\
\ref{item:Z1 ii}: This follows from considering the equalities $\rho_{c_1,\gamma}=\rho_{c_2,\gamma}$ for all $\gamma \in \Gamma$.\\
\ref{item:Z1 iii}: The two $1$-cocycles $\overline{c_1}$ and $\overline{c_2}$ are cohomologous 
if there exists $g \in G(\k)$ such that
\[ \forall \gamma \in \Gamma, c_{2,\gamma} \rho_\gamma(g) c_{1,\gamma}^{-1} g^{-1} \in Z(G)(\k).\]
On the other hand, the descent data $\rho_{c_1}$ and $\rho_{c_2}$ are strongly equivalent is there exists $g \in G(\k)$ such that, for all $h \in G(\k)$ and all $\gamma \in \Gamma$, we have 
\begin{align*}
\rho_{c_2,\gamma} (ghg^{-1})=g \rho_{c_1,\gamma}(h) g^{-1}  &\Leftrightarrow  c_{2,\gamma} \rho_\gamma(g)  \rho_\gamma(h)   \rho_\gamma(g)^{-1} c_{2,\gamma}^{-1}=gc_{1,\gamma}  \rho_\gamma(h) c_{1,\gamma}^{-1} g^{-1}\\  
&\Leftrightarrow c_{1,\gamma}^{-1} g^{-1} c_{2,\gamma} \rho_\gamma(g) \in Z(G)(\k) \\
&\Leftrightarrow  c_{2,\gamma} \rho_\gamma(g) c_{1,\gamma}^{-1} g^{-1}\in Z(G)(\k). 
\end{align*}
This finishes the proof of the lemma.
\end{proof}

Therefore, by Lemma~\ref{lem:Z1 parametrizes inner twists}, the set $Z^1(\Gamma,(G/Z(G))(\k))$ identifies with the inner twists of $\rho$ and the set $\HH^1(\Gamma,(G/Z(G))(\k))$ identifies with the strong equivalence classes of inner twists of $\rho$. These identifications will be used to define the map $\Delta_H$ in \S~\ref{sec:cohomological criterion}.

\subsection{Forms and descent data for algebraic varieties with group action}\label{sec:descent data for algebraic varieties with group action}
We fix a descent datum $\rho$ on $G$, and we denote the corresponding $k$-form by $F=G/\Gamma$.

\begin{definition}\label{def:k-forms of G-varieties}\item
\begin{itemize}[leftmargin=6mm]
\item A \emph{$(k,F)$-form} \emph{of a $G$-variety $X$} (over $\k$) is an $F$-variety $Z$ (over $k$) together with an isomorphism $Z_{\k} \simeq X$ of $G$-varieties, where $G$ acts on $Z_{\k}$ through $\Fk \simeq G$.
\item A \emph{$(G,\rho)$-equivariant descent datum on $X$} is an algebraic semilinear action $\mu\colon \Gamma \to \Aut(X)$ compatible with $(G,\rho)$, which means that 
\begin{enumerate}[(i)]
\item there exists a finite Galois extension $k'/k$ in $\k$ and a $k'$-form $Z'$ of the $\k$-variety $X$ such that the restriction of $\mu$ to $\Gal(\k/k')$ coincides with the natural $\Gal(\k/k')$-action on $X \simeq Z'\times_{\Spec(k')} \Spec(\k)$; 
\item 
for each $\gamma \in \Gamma$, we have a commutative diagram
\[
\xymatrix@R=4mm@C=2cm{
    X \ar[rr]^{\mu_\gamma} \ar[d]  && X \ar[d] \\
    \Spec(\k) \ar[rr]^{(\gamma^*)^{-1}} && \Spec(\k)
  }
  \]
where $\mu_\gamma$ is a scheme automorphism over $\Spec(k)$; and
\item the following condition holds
\begin{equation}\label{eq:Gamma-eq}
\forall \gamma \in \Gamma, \forall g \in G(\k), \forall x \in X(\k),\ \mu_\gamma(g \cdot x)=\rho_\gamma(g) \cdot \mu_\gamma(x).
\end{equation}
\end{enumerate}
\item Two $(G,\rho)$-equivariant descent data $\mu_1$ and $\mu_2$ on $X$ are \emph{equivalent} if there exists $\varphi \in \Aut_{\k}^{G}(X)$ such that 
\[\forall \gamma \in \Gamma,\ \mu_{2,\gamma}=\varphi \circ \mu_{1,\gamma} \circ \varphi^{-1}.\]
\end{itemize}
\end{definition}

Let $X$ be a $G$-variety.
As for algebraic groups, there is a correspondence between isomorphism classes of $(k,F)$-forms of $X$ and equivalence classes of \emph{effective} $(G,\rho)$-equivariant descent data on $X$. (See \cite[\S~5]{Bor20} or \cite[\S~2]{BG21} for details.) 

Here the word \emph{effective} means that $X$ is covered by $\Gamma$-stable affine open subsets, i.e.~that the quotient $X/\Gamma \simeq Z'/\Gal(k'/k)$, which always exists as an algebraic space, is actually a variety (see \cite[Proposition~V.1.8]{SGA1}).

Let us note that if $X$ is quasiprojective or covered by $\Gamma$-stable quasiprojective open subsets, then $X$ is covered by $\Gamma$-stable affine open subsets (\cite[Lemma~2.4]{BG21}).
In particular, since homogeneous spaces under the action of a connected algebraic group are quasiprojective (this follows for instance from \cite[Theorem~1]{Bri17}), equivariant descent data on such varieties are always effective.

\smallskip

Once one knows the existence of a $(G,\rho)$-equivariant descent datum $\mu$ on $X$, one can use Galois cohomology to parametrize the equivalence classes of all of them. First, observe that $\Aut_{\k}^{G}(X)$ is endowed with a $\Gamma$-group structure as follows:
\begin{equation}\label{eq:def inn_mu}
\inn_{\mu} \colon \Gamma \times \Aut_{\k}^{G}(X) \to \Aut_{\k}^{G}(X) ,\ (\gamma,\varphi) \mapsto
  \mu_\gamma \circ \varphi \circ \mu_{\gamma}^{-1}.
\end{equation}
Then the next result is a straightforward consequence of the definition of cohomologous $1$-cocycles with values in the $\Gamma$-group $\Aut_{\k}^{G}(X)$.

\begin{proposition}\label{prop:H1 Galois k-forms}\emph{(\cite[Corollary~8.2]{Wed18})}
Let $X$ be a $G$-variety, and let $\mu$ be a $(G,\rho)$-equivariant descent datum on $X$.
There is a bijection of pointed sets
\[
\begin{array}{ccccc}
 & \HH^1(\Gamma,\Aut_{\k}^{G}(X)) & \xrightarrow{\sim} & \left\{\text{
\begin{tabular}{c} 
 equivalence classes of\\ $(G,\rho)$-equivariant descent data on $X$
\end{tabular} 
 }
 \right\} \\
 & [\Gamma \to \Aut_{\k}^{G}(X),\ \gamma \mapsto c_\gamma] & \mapsto & [\Gamma \to \Aut(X),\ \gamma \mapsto (x \mapsto c_\gamma \circ \mu_\gamma(x))]. \\
\end{array}
\]
\end{proposition}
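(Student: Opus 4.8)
The plan is to run the standard twisting argument from nonabelian Galois cohomology, using the fixed descent datum $\mu$ as a base point to trivialize the situation. Write $A := \Aut_{\k}^{G}(X)$, regarded as a $\Gamma$-group via the action $\inn_{\mu}$ of \eqref{eq:def inn_mu}; recall that a $1$-cocycle of the profinite group $\Gamma$ with values in the discrete group $A$ is automatically locally constant, factoring through a finite quotient of $\Gamma$. The map in the statement sends a cocycle $c=(c_\gamma)$ to the family $\mu^c := (c_\gamma \circ \mu_\gamma)_\gamma$, and I would organize the proof around four verifications: (a) each $\mu^c$ is again a $(G,\rho)$-equivariant descent datum; (b) every $(G,\rho)$-equivariant descent datum arises this way; (c) cohomologous cocycles yield equivalent descent data; and (d) equivalent descent data come from cohomologous cocycles. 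Items (a)--(b) give surjectivity, and (c)--(d) give injectivity after passing to classes.

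For (a) I would check the three conditions of Definition~\ref{def:k-forms of G-varieties} for $\mu^c$. Since $c_\gamma$ lies over the identity of $\Spec(\k)$ and $\mu_\gamma$ lies over $(\gamma^*)^{-1}$, the composite $\mu^c_\gamma = c_\gamma \circ \mu_\gamma$ again lies over $(\gamma^*)^{-1}$, which is condition (ii); condition (i) follows from that for $\mu$ together with the local constancy of $c$. The equivariance \eqref{eq:Gamma-eq} is immediate: $\mu^c_\gamma(g\cdot x) = c_\gamma(\rho_\gamma(g)\cdot\mu_\gamma(x)) = \rho_\gamma(g)\cdot\mu^c_\gamma(x)$, using $G$-equivariance of $c_\gamma$. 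That $\mu^c$ is a homomorphism $\Gamma \to \Aut(X)$ is exactly the cocycle relation unwound through $\inn_{\mu}$: expanding $\mu^c_{\gamma_1}\circ\mu^c_{\gamma_2} = c_{\gamma_1}(\mu_{\gamma_1}c_{\gamma_2}\mu_{\gamma_1}^{-1})\mu_{\gamma_1}\mu_{\gamma_2} = c_{\gamma_1}\,(\gamma_1\cdot c_{\gamma_2})\,\mu_{\gamma_1\gamma_2}$ matches $c_{\gamma_1\gamma_2}\mu_{\gamma_1\gamma_2}=\mu^c_{\gamma_1\gamma_2}$ precisely when $c$ is a cocycle.

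For (b), given any $(G,\rho)$-equivariant descent datum $\mu'$, I would set $c_\gamma := \mu'_\gamma \circ \mu_\gamma^{-1}$. The key point is that $c_\gamma$ genuinely lands in $A$ rather than being merely a semilinear automorphism: it lies over $(\gamma^*)^{-1}\circ\gamma^* = \mathrm{id}_{\Spec(\k)}$, hence is $\k$-linear, and a direct computation using \eqref{eq:Gamma-eq} for both $\mu$ and $\mu'$ (together with the resulting identity $\mu_\gamma^{-1}(h\cdot y) = \rho_\gamma^{-1}(h)\cdot\mu_\gamma^{-1}(y)$) shows it is $G$-equivariant. Reversing the homomorphism computation of step (a) shows $c$ is a cocycle, and by construction $\mu' = \mu^c$.

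Finally, for (c) and (d), I would observe that if $\mu^{c^{(2)}}_\gamma = \varphi\circ\mu^{c^{(1)}}_\gamma\circ\varphi^{-1}$ for some $\varphi\in A$, then substituting $\mu^{c^{(i)}}_\gamma = c^{(i)}_\gamma\mu_\gamma$ and rearranging gives $c^{(2)}_\gamma = \varphi\,c^{(1)}_\gamma\,(\gamma\cdot\varphi)^{-1}$, which is exactly the coboundary relation for the action $\inn_{\mu}$; since this computation is reversible, equivalence of descent data corresponds precisely to cohomology of cocycles. Together with (a)--(b) this yields the claimed bijection of pointed sets, with the trivial cocycle matching the class of $\mu$. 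I expect the only genuinely delicate point to be (b): confirming that $c_\gamma=\mu'_\gamma\circ\mu_\gamma^{-1}$ is honestly $\k$-linear and $G$-equivariant, since this is where the two semilinearities must cancel correctly; the remaining steps are bookkeeping with the cocycle and coboundary identities.
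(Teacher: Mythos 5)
Your proof is correct and is precisely the standard twisting argument that the paper treats as a ``straightforward consequence'' of the definition of cohomologous $1$-cocycles (deferring details to \cite[Corollary~8.2]{Wed18}); your verifications (a)--(d) fill in exactly those details, including the one genuinely non-trivial point that $\mu'_\gamma\circ\mu_\gamma^{-1}$ is $\k$-linear and $G$-equivariant, hence lands in $\Aut^G_{\k}(X)$. Nothing further is needed.
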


\section{Forms of homogeneous spaces} \label{sec:forms of qh varieties2}
From now on, and until the end of the article, we will systematically assume that the algebraic group $G$ is linear and connected, and even reductive starting from \S~\ref{sec:LV over perfect fields}.
In \S~\ref{sec: finiteness number of forms}, we explain why homogeneous spaces have finitely many forms (over base fields of type $(F)$). Subsequently, in Section \ref{sec:cohomological criterion}, we recall the definition of the cohomological invariant $\Delta_H([\rho_c])$  that appears in the statement of Theorem~\ref{th:B}.
Finally, in \S~\ref{sec:Existence of forms} we prove Proposition~\ref{prop:A} and Theorem~\ref{th:B}.

\subsection{Finiteness of the number of forms for homogeneous spaces}\label{sec: finiteness number of forms}
We recall the following classical result that describes $\Aut_{\k}^{G}(X)$ in the case where $X$ is a homogeneous space. (We will use it throughout the rest of the article.)

\begin{proposition}\label{prop:G-eq automorphisms of G/H} \emph{(\cite[Proposition~1.8]{Tim11})} 
Assume that $G$ is linear and let $H \subseteq G$ be an algebraic subgroup. 
Then there is an isomorphism of groups
\[
(N_G(H)/H)(\k) \xrightarrow{\sim} \Aut_{\k}^{G}(G/H),\ nH \mapsto (\varphi_{nH}\colon gH \mapsto gn^{-1}H).
\]
\end{proposition}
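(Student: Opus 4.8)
The plan is to realise the stated map as the one induced by right translation, and to split the argument into three easy verifications (that it is a well-defined equivariant automorphism, and that it is a homomorphism with kernel $H$) followed by one genuinely substantial point, namely surjectivity. Throughout I work over the algebraically closed field $\k$, so that a morphism of $\k$-varieties is determined by its effect on $\k$-points and the orbit map $G(\k) \to (G/H)(\k)$ is surjective.

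First I would define $\Psi\colon N_G(H)(\k) \to \Aut_{\k}^{G}(G/H)$ by $n \mapsto \varphi_n$, where $\varphi_n(gH)=gn^{-1}H$. To see that $\varphi_n$ is well defined as a morphism, note that the right translation $R_{n^{-1}}\colon G\to G,\ g\mapsto gn^{-1}$ satisfies $\pi(ghn^{-1})=ghn^{-1}H=gn^{-1}H=\pi(gn^{-1})$ for $h\in H$ (using $nhn^{-1}\in H$, valid since $n\in N_G(H)$), so $\pi\circ R_{n^{-1}}$ is constant on the fibres of the quotient $\pi\colon G\to G/H$ and hence factors through a morphism $\varphi_n\colon G/H\to G/H$. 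It is $G$-equivariant, since $\varphi_n(g'\cdot gH)=g'gn^{-1}H=g'\cdot\varphi_n(gH)$, and invertible with inverse $\varphi_{n^{-1}}$; thus $\varphi_n\in\Aut_{\k}^{G}(G/H)$.

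Next I would check that $\Psi$ is a group homomorphism with kernel $H(\k)$. A direct computation gives $\varphi_m\circ\varphi_n(gH)=gn^{-1}m^{-1}H=g(mn)^{-1}H=\varphi_{mn}(gH)$, so $\Psi(mn)=\Psi(m)\circ\Psi(n)$ (the inverse built into the formula is precisely what makes the orders agree). Moreover $\varphi_n=\mathrm{id}$ forces $n^{-1}H=\varphi_n(eH)=eH$, i.e. $n\in H$, and conversely every element of $H(\k)$ clearly lies in the kernel; hence $\ker\Psi=H(\k)$. Since $\k$ is algebraically closed and the groups are smooth, $N_G(H)(\k)\to(N_G(H)/H)(\k)$ is surjective with kernel $H(\k)$, so $\Psi$ descends to an injective homomorphism $(N_G(H)/H)(\k)\hookrightarrow\Aut_{\k}^{G}(G/H)$, which is the map of the statement.

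The main obstacle is surjectivity. Given $\varphi\in\Aut_{\k}^{G}(G/H)$, surjectivity of the orbit map lets me write $\varphi(eH)=n^{-1}H$ for some $n\in G(\k)$, and then $G$-equivariance yields $\varphi(gH)=g\cdot\varphi(eH)=gn^{-1}H=\varphi_n(gH)$ on all $\k$-points. It remains to check $n\in N_G(H)$. Since $\varphi$ is a well-defined map on cosets, for each $h\in H(\k)$ we get $hn^{-1}H=\varphi(hH)=\varphi(eH)=n^{-1}H$, whence $nhn^{-1}\in H(\k)$; as $H$ is reduced and $\k$ is algebraically closed, $H(\k)$ is Zariski dense in $H$, so conjugation by $n$ maps $H$ into $H$ and therefore $n\in N_G(H)(\k)$ (the reverse inclusion, hence the equality $nHn^{-1}=H$, following because conjugation by $n$ is an automorphism of $G$). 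Finally, the two $G$-equivariant morphisms $\varphi$ and $\varphi_n$ agree on the $\k$-points of the reduced $\k$-variety $G/H$, hence coincide, so $\varphi=\Psi(n)$ and the map is onto. This establishes the asserted group isomorphism.
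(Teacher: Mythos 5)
Your proof is correct. The paper does not prove this proposition at all: it is quoted as a classical fact with a citation to Timashev's book, and your argument is precisely the standard one (well-definedness of right translation on cosets, homomorphism with kernel $H(\k)$, and surjectivity via evaluation at the base point $eH$ together with density of $\k$-points). The only step worth tightening is the passage from $nHn^{-1}\subseteq H$ to $nHn^{-1}=H$: this does not follow merely from conjugation being an automorphism of $G$ (it fails for abstract groups), but it does hold here because $nHn^{-1}$ is a closed subgroup of $H$ of the same dimension and with the same finite number of connected components, hence contains $H^{\circ}$ and surjects onto $H/H^{\circ}$.
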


It is a difficult problem in general to determine whether a given $G$-variety admits a finite number  of isomorphism classes of $(k,F)$-forms. However, in the homogeneous case we have the following well-known result.

\begin{corollary} \label{cor:finiteness form homogeneous spaces}
Assume that for every $n \in \N$, there exist only a finite number of subextensions of $\k$ which are of degree $n$ over $k$. 
If $G$ is linear and $X \simeq G/H$ is a homogeneous space, then $X$ admits a finite number of isomorphism classes of $(k,F)$-forms.
\end{corollary}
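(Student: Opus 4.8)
The plan is to reduce the enumeration of isomorphism classes of $(k,F)$-forms to the finiteness of a single Galois cohomology set, and then to invoke the classical finiteness theorem of Serre; the hypothesis on $k$ is precisely Serre's condition (F).

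First I would dispose of the trivial case. If $X=G/H$ admits no $(k,F)$-form, then the number of isomorphism classes is $0$ and there is nothing to prove. So assume $X$ admits a $(k,F)$-form, equivalently that there exists a $(G,\rho)$-equivariant descent datum $\mu$ on $X$; recall that such a descent datum is automatically effective since $G/H$ is quasiprojective. Taking $\mu$ as a basepoint, Proposition~\ref{prop:H1 Galois k-forms} identifies the set of equivalence classes of $(G,\rho)$-equivariant descent data on $X$ -- and hence the set of isomorphism classes of $(k,F)$-forms of $X$ -- with the pointed set $\HH^1(\Gamma,\Aut_{\k}^{G}(X))$. It therefore suffices to show that this pointed set is finite.

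Next I would pin down the coefficient group. By Proposition~\ref{prop:G-eq automorphisms of G/H}, the group $\Aut_{\k}^{G}(X)$ is the group of $\k$-points of the algebraic group $A:=N_G(H)/H$, which is linear (being the quotient of the closed, hence linear, subgroup $N_G(H)\subseteq G$ by its normal closed subgroup $H$) and smooth (as $k$ is perfect). I would then check that the $\Gamma$-group structure $\inn_\mu$ on $\Aut_{\k}^{G}(X)$ transports, under this identification, to a descent datum $\nu$ on $A$. Writing $\mu_\gamma(gH)=\rho_\gamma(g)t_\gamma H$ as in Proposition~\ref{prop:A} and $\varphi_{nH}(gH)=gn^{-1}H$ as in Proposition~\ref{prop:G-eq automorphisms of G/H}, a direct computation gives $\inn_\mu(\gamma,\varphi_{nH})=\varphi_{\nu_\gamma(nH)}$ with $\nu_\gamma(nH)=t_\gamma^{-1}\rho_\gamma(n)t_\gamma H$; thus $\nu_\gamma$ is the semilinear automorphism of $N_G(H)/H$ induced by $\inn_{t_\gamma^{-1}}\circ\rho_\gamma$, and condition~\ref{item2-prop:A} of Proposition~\ref{prop:A} ensures that $\gamma\mapsto\nu_\gamma$ is a homomorphism, hence a descent datum. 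Consequently $A$ acquires a $k$-form $F_A$ with $\Aut_{\k}^{G}(X)\simeq F_A(\k)$ as $\Gamma$-groups, so that $\HH^1(\Gamma,\Aut_{\k}^{G}(X))\simeq \HH^1(k,F_A)$.

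Finally I would invoke the finiteness theorem. The assumption on $k$ -- that for every $n\in\N$ there are only finitely many subextensions of $\k/k$ of degree $n$ -- is exactly Serre's condition (F). By Serre's finiteness theorem (see \emph{Cohomologie Galoisienne}, Ch.~III, \S~4), for a perfect field $k$ of type (F) and any (possibly non-connected) linear algebraic group $A$ over $k$, the pointed set $\HH^1(k,A)$ is finite. Applying this to $F_A$ yields the finiteness of $\HH^1(\Gamma,\Aut_{\k}^{G}(X))$, and hence of the set of isomorphism classes of $(k,F)$-forms of $X$. The main obstacle is the middle step: one must verify that the $\Gamma$-group $\Aut_{\k}^{G}(X)$ is genuinely the Galois module attached to a $k$-form of the algebraic group $N_G(H)/H$ (the explicit descent datum $\nu_\gamma=\inn_{t_\gamma^{-1}}\circ\rho_\gamma$ above), so that Serre's theorem -- which concerns algebraic groups rather than arbitrary $\Gamma$-modules, and whose extension to disconnected groups must be applied here -- is indeed available; the remaining steps are a formal assembly of Propositions~\ref{prop:H1 Galois k-forms} and \ref{prop:G-eq automorphisms of G/H} with the finiteness theorem.
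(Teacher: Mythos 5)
Your proposal is correct and follows essentially the same route as the paper: reduce to the finiteness of $\HH^1(\Gamma,\Aut_{\k}^{G}(X))$ via Proposition~\ref{prop:H1 Galois k-forms}, identify $\Aut_{\k}^{G}(X)$ with the $\k$-points of the linear algebraic group $N_G(H)/H$, and invoke the Borel--Serre finiteness theorem for fields of type (F) (the paper cites \cite[\S~6.2. Th\'eor\`eme]{BS64}, which is the same result you quote from \emph{Cohomologie Galoisienne}). Your middle step, verifying that the twisted $\Gamma$-action $\inn_\mu$ is genuinely the Galois action attached to a $k$-form of the algebraic group $N_G(H)/H$, is a detail the paper leaves implicit, and your computation of it is consistent with the formula $\inn_\mu(\gamma,\varphi_{mH})=\varphi_{t_{\gamma}^{-1}\rho_\gamma(m)t_\gamma H}$ appearing later in the paper's proof of Proposition~\ref{prop:number of forms}.
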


\begin{proof}
It suffices to prove that the number of  equivalence classes of $(G,\rho)$-equiva\-riant descent data on $X$ is finite. 

Assume that there exists at least one $(G,\rho)$-equivariant descent datum $\mu$ on $X$, otherwise there is nothing to prove. 
Then, by Proposition~\ref{prop:H1 Galois k-forms}, the number of equivalence classes of $(G,\rho)$-equivariant descent data on $X$ is equal to the cardinal of the set $\HH^1(\Gamma,\Aut_{\k}^{G}(X))$,  where $\Gamma$ acts on $\Aut_{\k}^{G}(X)$ by $\mu$-conjugation.

Since the algebraic group $G$ is linear, the group $\Aut_{\k}^{G}(X)\simeq (N_G(H)/H)(\k) $ is also linear. 
Then \cite[\S~6.2. Th\'eor\`eme]{BS64} yields the finiteness of $\HH^1(\Gamma,\Aut_{\k}^{G}(X))$, which finishes the proof.
\end{proof}

\subsection{A cohomological invariant}\label{sec:cohomological criterion}
In this subsection, we present a cohomological invariant introduced by Borovoi and Gagliardi in \cite{BG21}, and also used in the real setting in \cite{MJT21}.

From now on we assume that the algebraic group $G$ is linear and connected. 
As before we fix a descent datum $\rho$ on $G$ and we consider the short exact sequence of $\Gamma$-groups
\[1 \to Z(G)(\k) \to G(\k) \to (G/Z(G))(\k) \to 1.\]
There is a connecting homomorphism
\small
\[\begin{array}{cccc}
\delta \colon & \HH^1(\Gamma,(G/Z(G))(\k)) & \to & \HH^2(\Gamma,Z(G)(\k)) \\
  & [\overline{c}\colon\Gamma \to (G/Z(G))(\k),\, \gamma \mapsto \overline{c_\gamma}] & \mapsto & [\Gamma^2 \to Z(G)(\k),\, (\gamma_1,\gamma_2) \mapsto c_{\gamma_1} \rho_{\gamma_1}(c_{\gamma_2}) c_{\gamma_1 \gamma_2}^{-1})] 
\end{array}\]
\normalsize
where $c\colon\Gamma \to G(\k)$ is a locally constant lift of $\overline{c}$ satisfying $c_e=Id_G$ and $\HH^2(\Gamma,Z(G)(\k))$ is the second cohomology group (and not just a pointed set since $Z(G)(\k)$ is an abelian $\Gamma$-group); see \cite[\S~I\!I]{Ber10} for the definition of this cohomology group and of the connecting homomorphism.

\smallskip

Let $H \subseteq G$ be an algebraic subgroup, and assume that the homogeneous space $X=G/H$ admits a $(G,\rho)$-equivariant descent datum $\mu$. Then $\Aut_{\k}^{G}(X)$ is a $\Gamma$-group (see \eqref{eq:def inn_mu} for the definition of the $\Gamma$-action induced by $\mu$), and the homomorphism $\kappa$ obtained by composing the following homomorphisms
\begin{equation}\label{eq:def of kappa}
\kappa\colon Z(G)(\k) \hookrightarrow N_G(H)(\k) \twoheadrightarrow (N_G(H)/H)(\k) \xrightarrow{\sim} \Aut_{\k}^{G}(X),\ z \mapsto (x \mapsto z^{-1} \cdot x) 
\end{equation}
is $\Gamma$-equivariant. It induces a map between second pointed sets of Galois cohomology  
\[ \lambda_H\colon \HH^2(\Gamma,Z(G)(\k)) \to \HH^2(\Gamma, \Aut_{\k}^{G}(X)),\  [(\rho,\alpha)] \mapsto [(\inn_{\mu},\kappa \circ \alpha)].\] 
We refer to \cite[\S~1.5]{Bor93} for the definition of the second nonabelian Galois cohomology set. It is also defined in  \cite[\S~1.14]{Spr66}. (Note, however, that in \textit{loc.~cit.~}the convention  differs  slightly from ours. More precisely, a  $2$-cocycle $(\tau,\beta)$ in the present article and in \cite{Bor93} corresponds to a $2$-cocycle $(\tau,\beta^{-1})$ in \cite{Spr66}.)
We denote by
\begin{equation*}
\Delta_H\colon \HH^1(\Gamma,(G/Z(G))(\k))  \to \HH^2(\Gamma, \Aut_{\k}^{G}(X))
\end{equation*}
the map obtained by composing $\delta$ and $\lambda_H$. 

The case where $\Aut_{\k}^{G}(X)$ is an abelian group is of special interest (it is the case, for example, when $X$ is a spherical variety). Indeed, in this case, $\HH^2(\Gamma, \Aut_{\k}^{G}(X))$ is the classical second Galois cohomology group, which is an abelian group, and $\Delta_H$ is a group homomorphism.
The neutral element is called \emph{neutral cohomology class} in $\HH^2(\Gamma, \Aut_{\k}^{G}(X))$. If $\Aut_{\k}^{G}(X)$ is nonabelian, the definition of a neutral cohomology class is the following:

\begin{definition}(\cite[\S~1.6]{Bor93})
The class of a $2$-cocycle $[(\tau,\beta)] \in \HH^2(\Gamma, \Aut_{\k}^{G}(X))$ is \emph{neutral} if there exists a locally constant map $d\colon \Gamma \to \Aut_{\k}^{G}(X)$ such that  
\[ \forall \gamma_1,\gamma_2 \in \Gamma,\ 
d_{\gamma_1 \gamma_2} \circ \beta_{\gamma_1,\gamma_2} \circ \tau_{\gamma_1} (d_{\gamma_2})^{-1} \circ d_{\gamma_1}^{-1}=Id_{X}.\]
\end{definition}

In particular, identifying $\HH^1(\Gamma,(G/Z(G))(\k))$ with the strong equivalence classes of inner twists of $\rho$ (see \S~\ref{sec:Forms of algebraic groups and inner twists}) and using $\Aut_{\k}^{G}(X)\simeq (N_G(H)/H)(\k)$ (see Proposition~\ref{prop:G-eq automorphisms of G/H}), we check that the cohomology class $\Delta_H([\rho_c])$ is neutral if and only if there exists a locally constant map $n\colon \Gamma \to N_G(H)(\k)$ such that  
\begin{equation}\label{eq:class is neutral} 
\forall \gamma_1,\gamma_2 \in \Gamma,\ \varphi_{n_{\gamma_1 \gamma_2}H} \circ \varphi_{c_{\gamma_1} \rho_{\gamma_1}(c_{\gamma_2}) c_{\gamma_1 \gamma_2}^{-1}H} \circ 
\mu_{\gamma_1} \circ \varphi_{n_{\gamma_2}H}^{-1} \circ \mu_{\gamma_1}^{-1} \circ \varphi_{n_{\gamma_1}H}^{-1}=Id_X.
\end{equation}

The cohomology class $\Delta_H([\rho_c])$ is the \emph{cohomological invariant} to which the title of the subsection refers. Let us note that, in the nonabelian case, the subset of neutral cohomology classes in $\HH^2(\Gamma, \Aut_{\k}^{G}(X))$ may be empty or have more than one element.

\subsection{Existence of forms for homogeneous spaces}\label{sec:Existence of forms}
In this subsection we prove Proposition~\ref{prop:A} and Theorem~\ref{th:B}. 

Recall that we denote by $G$ a connected linear algebraic group over $\k$, by $H \subseteq G$ an algebraic subgroup, by $\rho$ a descent datum on $G$, and by $F=G/\Gamma$ (where $\Gamma$ acts on $G$ through $\rho$) the $k$-form of $G$ corresponding to $\rho$.

\begin{proposition} \label{prop:existence of (k,F)-form}
The homogeneous space $X=G/H$ admits a $(k,F)$-form if and only if there exists a locally constant map $t\colon \Gamma \to G(\k)$ such that
\begin{enumerate}
\item\label{item:condition i} 
$\rho_\gamma(H)=t_\gamma H t_\gamma^{-1}$ for all $\gamma\in\Gamma$; and
\item\label{item:condition ii} 
$t_{\gamma_1 \gamma_2}\in \rho_{\gamma_1}(t_{\gamma_2}) t_{\gamma_1} H$ for all $\gamma_1,\gamma_2 \in\Gamma$. 
\end{enumerate}
If \ref{item1-prop:A}-\ref{item2-prop:A} are satisfied, then a $(G,\rho)$-equivariant descent datum on $X$ is given by 
\begin{equation}\label{eq:descent datum on X}
\mu\colon \Gamma  \to \Aut(X),\ \gamma \mapsto (gH \mapsto \rho_\gamma(g)t_\gamma H).
\end{equation}
Moreover, two locally constant maps $t_1,t_2 \colon \Gamma \to G(\k)$ as above correspond to the same $(k,F)$-form if and only if  $t_{2,\gamma} \in t_{1,\gamma}H(\k)$ for all $\gamma \in \Gamma$, and they correspond to isomorphic $(k,F)$-forms if and only if there exists $n \in N_G(H)(\k)$ such that $t_{2,\gamma} \in \rho_\gamma(n) t_{1,\gamma} n^{-1}H(\k)$ for all $\gamma \in \Gamma$.
\end{proposition}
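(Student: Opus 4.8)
The plan is to exploit the correspondence, recalled in \S~\ref{sec:descent data for algebraic varieties with group action}, between isomorphism classes of $(k,F)$-forms of $X$ and equivalence classes of \emph{effective} $(G,\rho)$-equivariant descent data on $X$, together with the fact that, since $X=G/H$ is quasiprojective, every $(G,\rho)$-equivariant descent datum on $X$ is automatically effective. Thus the existence of a $(k,F)$-form is equivalent to the existence of a $(G,\rho)$-equivariant descent datum $\mu$ on $X$, the first uniqueness statement becomes a statement about when two such descent data \emph{coincide}, and the second about when they are \emph{equivalent} in the sense of Definition~\ref{def:k-forms of G-varieties}.

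First I would show that every $(G,\rho)$-equivariant descent datum $\mu$ on $X$ has the announced shape. Writing $x_0=eH$ and setting $\mu_\gamma(x_0)=t_\gamma H$ for a locally constant lift $t_\gamma\in G(\k)$, the equivariance relation \eqref{eq:Gamma-eq} applied to $gH=g\cdot x_0$ forces
\[
\mu_\gamma(gH)=\rho_\gamma(g)\cdot\mu_\gamma(x_0)=\rho_\gamma(g)t_\gamma H,
\]
which is precisely \eqref{eq:descent datum on X}. Conversely, I would check that \eqref{eq:descent datum on X} defines a $(G,\rho)$-equivariant descent datum exactly when \ref{item:condition i} and \ref{item:condition ii} hold: well-definedness of $gH\mapsto\rho_\gamma(g)t_\gamma H$ as a bijection on $\k$-points amounts to $t_\gamma^{-1}\rho_\gamma(H)t_\gamma=H$, i.e.\ \ref{item:condition i}; and the homomorphism property $\mu_{\gamma_1\gamma_2}=\mu_{\gamma_1}\circ\mu_{\gamma_2}$, after substituting \eqref{eq:descent datum on X} and cancelling the common factor $\rho_{\gamma_1\gamma_2}(g)$, reduces to $\rho_{\gamma_1}(t_{\gamma_2})t_{\gamma_1}H=t_{\gamma_1\gamma_2}H$, i.e.\ \ref{item:condition ii}. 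The remaining requirements of Definition~\ref{def:k-forms of G-varieties}(i)--(ii) (descent over a finite Galois extension, semilinearity over $(\gamma^*)^{-1}$) follow from the local constancy of $t$ and $\rho$.

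For the two finer assertions I would compare the descent data $\mu^{(1)},\mu^{(2)}$ attached to $t_1,t_2$ via \eqref{eq:descent datum on X}. They are literally equal if and only if $\rho_\gamma(g)t_{1,\gamma}H=\rho_\gamma(g)t_{2,\gamma}H$ for all $g,\gamma$, i.e.\ if and only if $t_{2,\gamma}\in t_{1,\gamma}H(\k)$ for all $\gamma$, which gives the first statement. For isomorphic forms, the descent data must be equivalent, so there is $\varphi\in\Aut_{\k}^{G}(X)$ with $\mu^{(2)}_\gamma=\varphi\circ\mu^{(1)}_\gamma\circ\varphi^{-1}$; by Proposition~\ref{prop:G-eq automorphisms of G/H} we may write $\varphi=\varphi_{nH}$ for some $n\in N_G(H)(\k)$, with $\varphi_{nH}^{-1}(gH)=gnH$. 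A direct computation then yields
\[
(\varphi_{nH}\circ\mu^{(1)}_\gamma\circ\varphi_{nH}^{-1})(gH)=\rho_\gamma(g)\,\rho_\gamma(n)\,t_{1,\gamma}\,n^{-1}H,
\]
so equivalence holds if and only if $t_{2,\gamma}\in\rho_\gamma(n)t_{1,\gamma}n^{-1}H(\k)$ for all $\gamma$, which is the last assertion.

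The main obstacle I anticipate lies not in the algebra, which is a sequence of bookkeeping manipulations, but in the scheme-theoretic content hidden in passing between a $(G,\rho)$-equivariant descent datum $\mu$ and the family $(t_\gamma)$: one must verify that the point-level formula \eqref{eq:descent datum on X} genuinely defines a scheme automorphism $\mu_\gamma$ of $X$ lying over $(\gamma^*)^{-1}$, that a locally constant lift $t\colon\Gamma\to G(\k)$ of $\gamma\mapsto\mu_\gamma(x_0)$ exists, and that condition \ref{item:condition i} is to be read as an \emph{equality} $t_\gamma^{-1}\rho_\gamma(H)t_\gamma=H$ of subgroup schemes rather than a mere inclusion. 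These points use that $X$ is quasiprojective and that the datum factors through a finite Galois quotient, so that everything can be tested after base change to a finite Galois extension $k'/k$.
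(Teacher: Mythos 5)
Your proposal is correct and follows essentially the same route as the paper's proof: reduce to $(G,\rho)$-equivariant descent data via quasiprojectivity, read off $t_\gamma$ from $\mu_\gamma(eH)$, and identify condition \ref{item:condition i} with well-definedness and condition \ref{item:condition ii} with the homomorphism property. Your explicit computation of $\varphi_{nH}\circ\mu^{(1)}_\gamma\circ\varphi_{nH}^{-1}$ for the last assertion is exactly the verification the paper omits, and it checks out.
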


\begin{proof}
Since $X=G/H$ is a quasiprojective variety, it admits a $(k,F)$-form if and only if it admits a $(G,\rho)$-equivariant descent datum.

Assume that $X$ admits a $(G,\rho)$-equivariant descent datum $\mu$. Then $\mu$ is determined by the collection of $\mu_\gamma(eH) \in (G/H)(\k)$ for every $\gamma \in \Gamma$. Let $t\colon \Gamma \to G(\k)$ be a locally constant map such that $\mu_\gamma(eH)=t_\gamma H$ (the existence of such a map follows from the \emph{algebraicity} assumption in Definition~\ref{def:k-forms of G-varieties}). Then property \eqref{eq:Gamma-eq} in Definition~\ref{def:k-forms of G-varieties} implies condition~\ref{item:condition i} since 
\[ \forall \gamma \in \Gamma,\ t_\gamma H=\mu_\gamma(eH)=\mu_\gamma(H \cdot eH)=\rho_\gamma(H) \cdot \mu_\gamma(eH)=\rho_\gamma(H) t_\gamma H.  \]
Moreover, $\gamma \mapsto \mu_\gamma$ being a homomorphism yields condition~\ref{item:condition ii} since
\[\forall \gamma_1,\gamma_2 \in \Gamma, \ t_{\gamma_1 \gamma_2}H=\mu_{\gamma_1 \gamma_2}(eH)=\mu_{\gamma_1}(\mu_{\gamma_2}(eH))=\mu_{\gamma_1}(t_{\gamma_2}H)=\rho_{\gamma_1}(t_{\gamma_2}) t_{\gamma_1} H.\]
Then, for all $\gamma \in \Gamma$ and all $gH \in (G/H)(\k)$, we have 
\[ 
\mu_\gamma(gH)=\rho_\gamma(g) \mu_\gamma(eH)=\rho_\gamma(g) t_\gamma H.
\]

Conversely, if there exists a locally constant map $t\colon \Gamma \to G(\k)$ satisfying conditions~\ref{item:condition i} and~\ref{item:condition ii}, then the map defined by~\eqref{eq:descent datum on X} is a $(G,\rho)$-equivariant descent datum on $X$. More precisely, Condition~\ref{item:condition i} ensures that for all $g \in G(\k)$ and all $h \in H(\k)$, we have $\mu_\gamma(gH)=\mu_\gamma(ghH)$, i.e.~that the map $\mu$ is well-defined, and Condition~\ref{item:condition ii} ensures that $\gamma \mapsto \mu_\gamma$ is a homomorphism.

Finally, using~\eqref{eq:descent datum on X} and Proposition~\ref{prop:G-eq automorphisms of G/H}, the last sentence of the proposition is an easy consequence of the fact that isomorphic $(k,F)$-forms correspond to equivalent descent data. We omit here the details of this part of the proof.
\end{proof}

\begin{remark}\label{rk:choice of the base point}
The existence of a $(k,F)$-form of $X=G/H$ does not depend on the choice of a base point of $X$. 
Indeed, if the two conditions in Proposition~\ref{prop:existence of (k,F)-form} hold and $H'=sHs^{-1}$ for some $s \in G(\k)$, then  a $(G,\rho)$-equivariant descent datum on $G/H'$ is given by
$\mu'\colon \Gamma \to \Aut(G/H'),\ \gamma \mapsto (gH' \mapsto \rho_\gamma(gs)t_\gamma s^{-1}H')$.
\end{remark}

\begin{example}
Let $X=G/P$ be a generalized flag variety. Then, using Proposition~\ref{prop:existence of (k,F)-form} and the fact that $N_G(P)=P$, one can check that $X$ admits a $(k,F)$-form if and only if there exists a locally constant map $t\colon \Gamma \to G(\k)$ such that $\rho_\gamma(H)=t_\gamma H t_\gamma^{-1}$ for all $\gamma\in\Gamma$.
\end{example}

\begin{corollary}\label{cor:existence of forms for strongly eq rho}
Let $\rho_1$ and $\rho_2$ be two strongly equivalent descent data on $G$, with corresponding $k$-forms $F_1$ and $F_2$, and let $X=G/H$ be a homogeneous space.
Then there is a bijection between the (isomorphism classes of) $(k,F_1)$-forms and the (isomorphism classes of) $(k,F_2)$-forms of $X$.  
\end{corollary}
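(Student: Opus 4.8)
The plan is to reduce everything to the dictionary, recalled in \S~\ref{sec:descent data for algebraic varieties with group action}, between isomorphism classes of $(k,F_i)$-forms of $X$ and equivalence classes of (effective) $(G,\rho_i)$-equivariant descent data on $X$; recall that, $X=G/H$ being quasiprojective, every such descent datum is automatically effective. It then suffices to exhibit an explicit bijection between the two sets of equivalence classes. Since $\rho_1$ and $\rho_2$ are strongly equivalent, Definition~\ref{def:k-forms of algebraic groups} provides $g\in G(\k)$ such that, setting $\psi:=\inn_g$, we have $\rho_{2,\gamma}=\psi\circ\rho_{1,\gamma}\circ\psi^{-1}$ for all $\gamma\in\Gamma$. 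The geometric device I would use is the left translation $L_g\colon X\to X$, $g'H\mapsto gg'H$, a $\k$-automorphism of $X$ with inverse $L_{g^{-1}}$. A one-line computation shows that $L_g$ is not $G$-equivariant but is $\psi$-\emph{equivariant}, i.e.~$L_g(a\cdot x)=\psi(a)\cdot L_g(x)$ for all $a\in G(\k)$, $x\in X(\k)$ (and symmetrically $L_{g^{-1}}$ is $\psi^{-1}$-equivariant).

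Next I would define the candidate map on descent data by conjugation, $\mu\mapsto\mu'$ with $\mu'_\gamma:=L_g\circ\mu_\gamma\circ L_{g^{-1}}$, and check that it sends $(G,\rho_1)$-equivariant descent data to $(G,\rho_2)$-equivariant ones. The homomorphism property and the compatibility with $(\gamma^*)^{-1}$ over $\Spec(\k)$ are immediate because $L_g$ is a fixed $\k$-automorphism. For the equivariance \eqref{eq:Gamma-eq} one feeds $a\cdot x$ through $L_{g^{-1}}$ (producing $\psi^{-1}$), then through $\mu_\gamma$ (producing $\rho_{1,\gamma}$), then through $L_g$ (producing $\psi$); the composite $\psi\circ\rho_{1,\gamma}\circ\psi^{-1}=\rho_{2,\gamma}$ appears exactly, so $\mu'$ is $(G,\rho_2)$-equivariant. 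Because $\rho_{1,\gamma}=\psi^{-1}\circ\rho_{2,\gamma}\circ\psi$, conjugation by $L_{g^{-1}}$ is the inverse construction, whence $\mu\mapsto\mu'$ is a bijection between $(G,\rho_1)$- and $(G,\rho_2)$-equivariant descent data on $X$.

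Finally I would verify that this bijection descends to equivalence classes. If $\mu_1,\mu_2$ are equivalent via $\varphi\in\Aut_{\k}^{G}(X)$, then $\mu_1',\mu_2'$ are equivalent via $L_g\circ\varphi\circ L_{g^{-1}}$; this conjugate lies again in $\Aut_{\k}^{G}(X)$ since the factors $\psi^{\pm1}$ arising from $L_g$ and $L_{g^{-1}}$ cancel against the $G$-equivariance of $\varphi$, leaving a genuinely $G$-equivariant automorphism. The same argument with $L_{g^{-1}}$ gives the reverse implication. Thus conjugation by $L_g$ induces a well-defined bijection on equivalence classes, and composing with the dictionary of \S~\ref{sec:descent data for algebraic varieties with group action} yields the desired bijection between isomorphism classes of $(k,F_1)$- and $(k,F_2)$-forms of $X$.

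I expect the only delicate point to be bookkeeping rather than substance: ensuring the algebraicity axiom of Definition~\ref{def:k-forms of G-varieties} survives conjugation by $L_g$ (one simply enlarges the finite Galois extension $k'$ so that it contains the entries of $g$), and keeping the direction of the $\psi$-equivariance consistent throughout. As a concrete cross-check one can instead argue through Proposition~\ref{prop:existence of (k,F)-form}: writing the datum attached to a locally constant map $t\colon\Gamma\to G(\k)$ via \eqref{eq:descent datum on X}, the conjugated datum $\mu'$ corresponds to $s_\gamma:=g\,\rho_{1,\gamma}(g)^{-1}t_\gamma$, and one verifies directly that $s$ satisfies conditions~\ref{item:condition i}--\ref{item:condition ii} relative to $\rho_2$ and that the assignment $t\mapsto s$ is compatible with the two $N_G(H)(\k)$-equivalence relations describing, respectively, sameness and isomorphism of forms.
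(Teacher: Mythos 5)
Your proposal is correct, and the bijection it produces is exactly the one in the paper; the difference is in how it is verified. The paper works entirely at the level of the locally constant maps furnished by Proposition~\ref{prop:existence of (k,F)-form}: it defines $t_{2,\gamma}=g\rho_{1,\gamma}(g)^{-1}t_{1,\gamma}$ and checks conditions~\ref{item:condition i}--\ref{item:condition ii} for $\rho_2$ by direct computation, then asserts (without detail) that $t_1\mapsto t_2$ respects the two $N_G(H)(\k)$-relations. You instead conjugate descent data by the left translation $L_g$, observing that $L_g$ is $\inn_g$-equivariant; this makes the homomorphism property, the semilinearity, and the passage to equivalence classes essentially formal, since $L_g\circ\varphi\circ L_{g^{-1}}$ is again $G$-equivariant for $\varphi\in\Aut_{\k}^{G}(X)$. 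Your closing "cross-check" $s_\gamma=g\,\rho_{1,\gamma}(g)^{-1}t_\gamma$ is precisely the paper's formula, confirming the two arguments coincide. What your route buys is a cleaner treatment of the descent to equivalence classes (the step the paper leaves to the reader) and independence from the explicit parametrization of Proposition~\ref{prop:existence of (k,F)-form}; what the paper's route buys is that it stays inside the combinatorial bookkeeping used throughout \S~\ref{sec:Existence of forms} and is reused verbatim in the proof of Theorem~\ref{th:cohomological criterion}. Your remark about enlarging the finite Galois extension $k'$ so that $g$ is defined over it correctly disposes of the only genuine subtlety (the algebraicity axiom of Definition~\ref{def:k-forms of G-varieties}).
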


\begin{proof}
Let $g \in G(\k)$ such that $\rho_2=\inn_g \circ \rho_1 \circ \inn_{g}^{-1}$.
By Proposition~\ref{prop:existence of (k,F)-form}, a $(k,F_1)$-form on $X$ corresponds to a locally constant map $t_1\colon\Gamma \to G(\k)$ such that conditions~\ref{item:condition i} and ~\ref{item:condition ii} of  Proposition~\ref{prop:existence of (k,F)-form} are satisfied. 

We define a locally constant map $t_2\colon\Gamma \to G(\k),\ \gamma \mapsto g\rho_{1,\gamma}(g)^{-1}t_{1,\gamma}$. Then, for all $\gamma \in \Gamma$, we have 
\[
\rho_{2,\gamma}(H)=\inn_g \circ \rho_{1,\gamma} \circ \inn_{g}^{-1}(H)=g\rho_{1,\gamma}(g)^{-1}t_{1,\gamma}H t_{1,\gamma}^{-1}   \rho_{1,\gamma}(g) g^{-1}=t_{2,\gamma}Ht_{2,\gamma}^{-1}
\]
and, for all $\gamma_1,\gamma_2 \in \Gamma$, we have
\begin{align*}
t_{2,\gamma_1 \gamma_2}=g\rho_{1,\gamma_1 \gamma_2}(g)^{-1}t_{1,\gamma_1 \gamma_2} &\in g\rho_{1,\gamma_1 \gamma_2}(g)^{-1} \rho_{1,\gamma_1}(t_{1,\gamma_2}) t_{1,\gamma_1}H\\
&=g \rho_{1,\gamma_1}(\rho_{1,\gamma_2}(g)^{-1}t_{1,\gamma_2}) \rho_{1,\gamma_1}(g)g^{-1}t_{2,\gamma_1}H\\
&=\rho_{2,\gamma_1}(t_{2,\gamma_2}) t_{2,\gamma_1}H.
\end{align*} 
Hence, conditions~\ref{item:condition i} and ~\ref{item:condition ii} of  Proposition~\ref{prop:existence of (k,F)-form} hold for the locally constant map $t_2$, i.e.~it corresponds to a $(k,F_2)$-form on $X$. Then we check that the map $t_1 \mapsto t_2$ induces a bijection between the (isomorphism classes of) $(k,F_1)$-forms and the (isomorphism classes of) $(k,F_2)$-forms of $X$.
\end{proof}

\noindent \emph{Proof of Proposition~\ref{prop:A}:} 
The first part of Proposition~\ref{prop:A} is Proposition~\ref{prop:existence of (k,F)-form} and the second part is Corollary~\ref{cor:existence of forms for strongly eq rho}.
\qed

\smallskip

We now give an example of two equivalent descent data $\rho_1$ and $\rho_2$ on $G$, but not strongly equivalent, with corresponding $k$-forms $F_1$ and $F_2$, such that $X=G/H$ admits a $(k,F_1)$-form but does not admit a $(k,F_2)$-form.

\begin{example}\label{ex:counter-ex not strongly eq}
Let $k=\R$, let $G=\mathbb{G}_{m,\C}^{2}$, and let $H=\{1\} \times \mathbb{G}_{m,\C}$. Then $\Gamma= \{1,\gamma\} \simeq \Z/2\Z$. Let $\rho_{1,\gamma}\colon G \to G,\ (u,v) \mapsto (\overline{u},\overline{v}^{-1})$, let $\varphi \colon G \to G,\ (u,v) \mapsto (uv,v)$, and let $\rho_{2,\gamma}=\varphi \circ \rho_{1,\gamma} \circ \varphi^{-1}\colon G \to G,\ (u,v) \mapsto (\overline{u} \overline{v}^{-2},\overline{v}^{-1})$. Then $\rho_{1,\gamma}(H)=H$ but $\rho_{2,\gamma}(H)=\{(t^2,t)\ | \ t \in \mathbb{G}_{m,\C}\} \neq H$, and so $X=G/H$ admits a $(k,F_1)$-form but it does not admit a $(k,F_2)$-form by Proposition~\ref{prop:existence of (k,F)-form}.
\end{example}

\smallskip

The following theorem was proved by Borovoi-Gagliardi \cite[Theorem~1.6]{BG21} for arbitrary quasiprojective $G$-varieties. Since the proof in the homogeneous case is easier and relies only on Proposition~\ref{prop:existence of (k,F)-form}, we include it for the sake of completeness. 

\begin{theorem}\label{th:cohomological criterion} \emph{(\cite[Theorem~1.6]{BG21} applied to homogeneous spaces)}\\
Let $\rho_c$ be an inner twist of $\rho$ and  let $F_c$ be the corresponding $k$-form of $G$.
Let $X=G/H$ be a homogeneous space which admits a $(k,F)$-form. It admits a $(k,F_c)$-form if and only if the cohomology class $\Delta_H([\rho_c])$ is neutral, i.e.~\eqref{eq:class is neutral} holds.
\end{theorem}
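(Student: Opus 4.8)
The plan is to apply Proposition~\ref{prop:existence of (k,F)-form} to the twisted descent datum $\rho_c$ and then match the resulting criterion, term by term, with the neutrality condition~\eqref{eq:class is neutral}. Since $X=G/H$ admits a $(k,F)$-form, I fix a locally constant $t\colon\Gamma\to G(\k)$ provided by Proposition~\ref{prop:existence of (k,F)-form}, so that conditions~\ref{item:condition i} and~\ref{item:condition ii} hold and $\mu_\gamma(gH)=\rho_\gamma(g)t_\gamma H$. Applying Proposition~\ref{prop:existence of (k,F)-form} to $\rho_c$, the space $X$ admits a $(k,F_c)$-form if and only if there is a locally constant $s\colon\Gamma\to G(\k)$ with $\rho_{c,\gamma}(H)=s_\gamma H s_\gamma^{-1}$ and $s_{\gamma_1\gamma_2}\in\rho_{c,\gamma_1}(s_{\gamma_2})s_{\gamma_1}H$; the goal is to show such an $s$ exists exactly when $\Delta_H([\rho_c])$ is neutral.

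First I would analyze the first condition. Since $\rho_{c,\gamma}=\inn_{c_\gamma}\circ\rho_\gamma$ and $\rho_\gamma(H)=t_\gamma H t_\gamma^{-1}$ by~\ref{item:condition i}, one gets $\rho_{c,\gamma}(H)=(c_\gamma t_\gamma)H(c_\gamma t_\gamma)^{-1}$; hence the first condition holds precisely when $s_\gamma\in c_\gamma t_\gamma N_G(H)(\k)$, i.e.\ $s_\gamma=c_\gamma t_\gamma n_\gamma^{-1}$ for some locally constant $n\colon\Gamma\to N_G(H)(\k)$ (the inverse is merely a relabeling, chosen to fit the next step). Substituting this into the second condition and simplifying, using that $z_{\gamma_1,\gamma_2}:=c_{\gamma_1}\rho_{\gamma_1}(c_{\gamma_2})c_{\gamma_1\gamma_2}^{-1}$ is central (it is the representative of $\delta([\overline c])$ appearing in the definition of $\Delta_H$) and that $t_{\gamma_1\gamma_2}\in\rho_{\gamma_1}(t_{\gamma_2})t_{\gamma_1}H$ by~\ref{item:condition ii}, the factor $\rho_{\gamma_1}(t_{\gamma_2})$ and the $t$-factors cancel, and the second condition collapses to the single membership $n_{\gamma_1\gamma_2}\in H\,n_{\gamma_1}\,t_{\gamma_1}^{-1}\rho_{\gamma_1}(n_{\gamma_2})t_{\gamma_1}\,z_{\gamma_1,\gamma_2}^{-1}$ for all $\gamma_1,\gamma_2\in\Gamma$.

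In parallel I would unwind the neutrality of $\Delta_H([\rho_c])$ as spelled out in~\eqref{eq:class is neutral}. The composite occurring there is a product of maps in $\Aut_{\k}^{G}(X)$ together with $\inn_\mu(\gamma_1)$-conjugates, hence lies in $\Aut_{\k}^{G}(X)$, so it equals $\mathrm{Id}_X$ if and only if it fixes the base point $eH$. Using Proposition~\ref{prop:G-eq automorphisms of G/H}, the homomorphism law $\varphi_{aH}\circ\varphi_{bH}=\varphi_{abH}$, and the key identity $\mu_{\gamma_1}\circ\varphi_{mH}\circ\mu_{\gamma_1}^{-1}=\varphi_{(t_{\gamma_1}^{-1}\rho_{\gamma_1}(m)t_{\gamma_1})H}$ (the displayed element lies in $N_G(H)$ by~\ref{item:condition i}, and the identity is obtained by comparing $\mu_{\gamma_1}\circ\varphi_{mH}$ with $\varphi_{m'H}\circ\mu_{\gamma_1}$ on an arbitrary coset $gH$), the whole composite equals $\varphi_{wH}$ with $w=n_{\gamma_1\gamma_2}\,z_{\gamma_1,\gamma_2}\,t_{\gamma_1}^{-1}\rho_{\gamma_1}(n_{\gamma_2})^{-1}t_{\gamma_1}\,n_{\gamma_1}^{-1}$. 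Thus neutrality amounts to $w\in H$, which rearranges to exactly the membership obtained at the end of the previous paragraph. Comparing the two criteria yields the equivalence: a valid $s$ (equivalently $n$) for $\rho_c$ exists if and only if $\Delta_H([\rho_c])$ is neutral.

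The routine but delicate part — and the main obstacle — is forcing the two bookkeeping computations to land on the same group element: the cancellation in condition~\ref{item:condition ii} of Proposition~\ref{prop:existence of (k,F)-form} after the substitution $s_\gamma=c_\gamma t_\gamma n_\gamma^{-1}$, and the evaluation of the conjugated automorphism $\mu_{\gamma_1}\circ\varphi_{mH}\circ\mu_{\gamma_1}^{-1}$. One must track carefully which cosets are left and which are right, invoke the centrality of $z_{\gamma_1,\gamma_2}$ at the right moments, and pin down the inverse/relabeling conventions so that the condition coming from Proposition~\ref{prop:existence of (k,F)-form} and the one coming from~\eqref{eq:class is neutral} coincide rather than merely agree up to an unrecorded inversion.
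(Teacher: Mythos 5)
Your proposal is correct and follows essentially the same route as the paper: both reduce the existence of a $(k,F_c)$-form, via Proposition~\ref{prop:existence of (k,F)-form} applied to $\rho_c$ and the substitution $s_\gamma=c_\gamma t_\gamma n_\gamma^{-1}$ with $n_\gamma\in N_G(H)(\k)$, to the membership condition \eqref{eq:new condition neutral class}, and both identify that condition with the neutrality of $\Delta_H([\rho_c])$ using the identity $\mu_{\gamma}\circ\varphi_{mH}\circ\mu_{\gamma}^{-1}=\varphi_{t_{\gamma}^{-1}\rho_{\gamma}(m)t_{\gamma}H}$. The only cosmetic difference is that you run a single equivalence chain where the paper writes out the two implications separately.
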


\begin{proof}
The variety $X=G/H$ admits a $(k,F)$-form if and only if there exists a locally constant map $t\colon \Gamma \to G(\k)$ such that conditions \ref{item:condition i} and \ref{item:condition ii} of Proposition~\ref{prop:existence of (k,F)-form} are satisfied. When these two conditions are satisfied, a $(G,\rho)$-equivariant descent datum on $X$ is given by the map $\mu$ defined by \eqref{eq:descent datum on X}.  Since 
\[
\forall \gamma \in \Gamma,
\forall \varphi_{nH} \in \Aut_{\k}^{G}(X) \simeq (N_G(H)/H)(\k),\ 
\mu_{\gamma} \circ \varphi_{nH} \circ \mu_{\gamma}^{-1}=\varphi_{t_{\gamma}^{-1} \rho_{\gamma}(n) t_{\gamma}H},\] we can simplify  \eqref{eq:class is neutral}; this yields
\begin{align*}
 \eqref{eq:class is neutral} &\Leftrightarrow \varphi_{n_{\gamma_1 \gamma_2}H} \circ \varphi_{c_{\gamma_1} \rho_{\gamma_1}(c_{\gamma_2}) c_{\gamma_1 \gamma_2}^{-1}H}  \circ \varphi_{t_{\gamma_1}^{-1} \rho_{\gamma_1}(n_{\gamma_2})^{-1} t_{\gamma_1}H}  \circ \varphi_{n_{\gamma_1}^{-1}H}=Id_X \\
 &\Leftrightarrow n_{\gamma_1 \gamma_2} c_{\gamma_1} \rho_{\gamma_1}(c_{\gamma_2}) c_{\gamma_1 \gamma_2}^{-1} t_{\gamma_1}^{-1} \rho_{\gamma_1}(n_{\gamma_2})^{-1} t_{\gamma_1}  n_{\gamma_1}^{-1} \in H \\
  &\Leftrightarrow c_{\gamma_1 \gamma_2} \rho_{\gamma_1}(c_{\gamma_2})^{-1}c_{\gamma_1}^{-1} \in n_{\gamma_1 \gamma_2} t_{\gamma_1}^{-1} \rho_{\gamma_1}(n_{\gamma_2})^{-1} t_{\gamma_1}  n_{\gamma_1}^{-1}H.
\end{align*}

Therefore, the cohomology class $\Delta_H([\rho_c])$ is neutral if and only if there exists a locally constant map $n\colon \Gamma \to N_G(H)(\k)$ such that  the following holds:
\begin{equation}\label{eq:new condition neutral class}
\forall \gamma_1,\gamma_2 \in \Gamma,\ c_{\gamma_1 \gamma_2} \rho_{\gamma_1}(c_{\gamma_2})^{-1}c_{\gamma_1}^{-1} \in n_{\gamma_1 \gamma_2} t_{\gamma_1}^{-1} \rho_{\gamma_1}(n_{\gamma_2})^{-1} t_{\gamma_1}  n_{\gamma_1}^{-1}H.
\end{equation}
 
\smallskip

($\Rightarrow$): We suppose that $X$ admits a $(k,F_c)$-form. Hence there exists a locally constant map $s\colon \Gamma \to G(\k)$ such that conditions \ref{item:condition i} and \ref{item:condition ii} of Proposition~\ref{prop:existence of (k,F)-form} are satisfied.  This implies
\[ \forall \gamma \in \Gamma, s_\gamma H s_{\gamma}^{-1}=c_{\gamma}t_{\gamma}H t_{\gamma}^{-1} c_{\gamma}^{-1},\]
and so we can define a locally constant map 
\[m\colon \Gamma \to N_G(H)(\k),\ \gamma \mapsto s_{\gamma}^{-1} c_\gamma t_\gamma. \]
Then
\begin{align*}
 c_{\gamma_1 \gamma_2} \rho_{\gamma_1}(c_{\gamma_2})^{-1} c_{\gamma_1}^{-1} &= s_{\gamma_1 \gamma_2} m_{\gamma_1 \gamma_2} t_{\gamma_1 \gamma_2}^{-1} \rho_{\gamma_1}(s_{\gamma_2} m_{\gamma_2} t_{\gamma_2}^{-1})^{-1}  c_{\gamma_1}^{-1}\\
 & \hspace{-15mm}\in ( \rho_{c,\gamma_1}(s_{\gamma_2}) s_{\gamma_1} H) m_{\gamma_1 \gamma_2} (H t_{\gamma_1}^{-1} \rho_{\gamma_1}(t_{\gamma_2})^{-1}) \rho_{\gamma_1}(t_{\gamma_2}) \rho_{\gamma_1}(m_{\gamma_2})^{-1} \rho_{\gamma_1}(s_{\gamma_2}^{-1})  c_{\gamma_1}^{-1}\\
 &\hspace{-15mm}= c_{\gamma_1}  \rho_{\gamma_1}(s_{\gamma_2}) c_{\gamma_1}^{-1} s_{\gamma_1}  m_{\gamma_1 \gamma_2} H t_{\gamma_1}^{-1} \rho_{\gamma_1}(m_{\gamma_2})^{-1} \rho_{\gamma_1}(s_{\gamma_2})^{-1}  c_{\gamma_1}^{-1}.\\
\end{align*}
But $c_{\gamma_1 \gamma_2} \rho_{\gamma_1}(c_{\gamma_2})^{-1} c_{\gamma_1}^{-1} \in Z(G)(\k)$, so conjugating on both side by $\rho_{\gamma_1}(s_{\gamma_2})^{-1}c_{\gamma_1}^{-1}$ yields
\begin{align*}
c_{\gamma_1 \gamma_2} \rho_{\gamma_1}(c_{\gamma_2})^{-1} c_{\gamma_1}^{-1} &\in c_{\gamma_1}^{-1} s_{\gamma_1}  m_{\gamma_1 \gamma_2} H t_{\gamma_1}^{-1} \rho_{\gamma_1}(m_{\gamma_2})^{-1}\\
&=t_{\gamma_1} m_{\gamma_1}^{-1} m_{\gamma_1 \gamma_2} H t_{\gamma_1}^{-1} \rho_{\gamma_1}(m_{\gamma_2})^{-1}.
\end{align*}
Conjugating again on both sides by $m_{\gamma_1}t_{\gamma_1}^{-1}$ yields
\[
c_{\gamma_1 \gamma_2} \rho_{\gamma_1}(c_{\gamma_2})^{-1} c_{\gamma_1}^{-1} \in
m_{\gamma_1 \gamma_2} H t_{\gamma_1}^{-1} \rho_{\gamma_1}(m_{\gamma_2})^{-1} t_{\gamma_1} m_{\gamma_1}^{-1}.
\]
Also, we check that $t_{\gamma_1}^{-1} \rho_{\gamma_1}(m_{\gamma_2})^{-1} t_{\gamma_1} \in N_G(H)(\k)$, which yields
\[
c_{\gamma_1 \gamma_2} \rho_{\gamma_1}(c_{\gamma_2})^{-1} c_{\gamma_1}^{-1} \in
m_{\gamma_1 \gamma_2}  t_{\gamma_1}^{-1} \rho_{\gamma_1}(m_{\gamma_2})^{-1} t_{\gamma_1} m_{\gamma_1}^{-1} H.
\]
Hence, \eqref{eq:new condition neutral class} holds, i.e.~the cohomology class $\Delta_H([\rho_c])$ is neutral.

\smallskip

($\Leftarrow$): We assume that the cohomology class $\Delta_H([\rho_c])$ is neutral, i.e.~that there exists a locally constant map $n\colon \Gamma \to N_G(H)(\k)$ such that \eqref{eq:new condition neutral class} holds.

We define a locally constant map $s\colon\Gamma \to G(\k),\ \gamma \mapsto s_\gamma:=c_\gamma t_\gamma n_{\gamma}^{-1}$. Then 
\[
\forall \gamma \in \Gamma,\ 
\rho_{c,\gamma}(H)=c_\gamma \rho(H) c_{\gamma}^{-1}=s_\gamma H s_{\gamma}^{-1}.\] Moreover, unraveling the previous equalities yields that \eqref{eq:new condition neutral class} is equivalent to
\[
\forall \gamma_1,\gamma_2 \in \Gamma,\ c_{\gamma_1 \gamma_2} \rho_{\gamma_1}(c_{\gamma_2})^{-1}c_{\gamma_1}^{-1} \in \rho_{c,\gamma_1}(s_{\gamma_2}) s_{\gamma_1} H n_{\gamma_1 \gamma_2} t_{\gamma_1 \gamma_2}^{-1} \rho_{\gamma}(c_{\gamma_2})^{-1} c_{\gamma_1}^{-1}
\]
and so, simplifying on both sides by $\rho_{\gamma_1}(c_{\gamma_2})^{-1}c_{\gamma_1}^{-1}$ yields
\[
\forall \gamma_1,\gamma_2 \in \Gamma,\ c_{\gamma_1 \gamma_2} \in \rho_{c,\gamma_1}(s_{\gamma_2}) s_{\gamma_1} H n_{\gamma_1 \gamma_2} t_{\gamma_1 \gamma_2}^{-1}
\] 
which is equivalent to 
\[
\forall \gamma_1,\gamma_2 \in \Gamma,\ s_{\gamma_1 \gamma_2} \in \rho_{c,\gamma_1}(s_{\gamma_2}) s_{\gamma_1} H.
\] 
Hence the locally constant map $s\colon\Gamma \to G(\k)$ satisfies the two conditions of Proposition~\ref{prop:existence of (k,F)-form}, i.e.~$X$ admits a $(k,F_c)$-form. 
\end{proof}

\begin{corollary}\label{cor:easy cases} 
We keep the assumptions of Theorem~\ref{th:cohomological criterion}.
If $Z(G) \subseteq H$ (e.g.~if  $H=N_G(H)$), then $X=G/H$ admits a $(k,F_c)$-form.
\end{corollary}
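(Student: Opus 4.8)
The plan is to show that each of the two hypotheses guarantees that the cohomology class $\Delta_H([\rho_c])$ is neutral, so that the conclusion follows immediately from Theorem~\ref{th:cohomological criterion}. Recall from \eqref{eq:new condition neutral class} that neutrality amounts to finding a locally constant map $n\colon\Gamma\to N_G(H)(\k)$ satisfying
\[
\forall \gamma_1,\gamma_2\in\Gamma,\quad c_{\gamma_1\gamma_2}\,\rho_{\gamma_1}(c_{\gamma_2})^{-1}c_{\gamma_1}^{-1}\in n_{\gamma_1\gamma_2}\,t_{\gamma_1}^{-1}\rho_{\gamma_1}(n_{\gamma_2})^{-1}t_{\gamma_1}\,n_{\gamma_1}^{-1}H.
\]
So the whole task reduces to exhibiting, in each of the two cases, an explicit $n$ that works. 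This is a purely algebraic verification once the right candidate is chosen.

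First I would treat the case $Z(G)\subseteq H$. Here the key observation is that the left-hand side $c_{\gamma_1\gamma_2}\rho_{\gamma_1}(c_{\gamma_2})^{-1}c_{\gamma_1}^{-1}$ lies in $Z(G)(\k)$ (this is exactly the $2$-cocycle $\delta(\overline{c})$ appearing in the definition of the connecting map $\delta$ in \S~\ref{sec:cohomological criterion}), and by hypothesis $Z(G)(\k)\subseteq H(\k)$. I would simply take $n_\gamma=Id_G$ (the constant map equal to the identity) for all $\gamma$; this is certainly a locally constant map into $N_G(H)(\k)$ since $e\in N_G(H)$. With this choice the right-hand side becomes $t_{\gamma_1}^{-1}\rho_{\gamma_1}(e)^{-1}t_{\gamma_1}H=H$, and the required membership $c_{\gamma_1\gamma_2}\rho_{\gamma_1}(c_{\gamma_2})^{-1}c_{\gamma_1}^{-1}\in H$ holds because the left-hand side is central and $Z(G)\subseteq H$. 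Hence $\Delta_H([\rho_c])$ is neutral.

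Next I would treat the case $H=N_G(H)$. Now $\Aut_{\k}^{G}(X)\simeq(N_G(H)/H)(\k)$ is trivial by Proposition~\ref{prop:G-eq automorphisms of G/H}, so $\HH^2(\Gamma,\Aut_{\k}^{G}(X))$ is a one-element pointed set and its unique class is neutral by definition; thus $\Delta_H([\rho_c])$ is automatically neutral. Alternatively, and more concretely in the language of \eqref{eq:new condition neutral class}, the constraint $n_\gamma\in N_G(H)(\k)=H(\k)$ forces every factor $n_{\gamma_1\gamma_2}$, $\rho_{\gamma_1}(n_{\gamma_2})^{-1}$ (after conjugation by $t_{\gamma_1}$, which preserves $H$ by condition~\ref{item:condition i}), and $n_{\gamma_1}^{-1}$ to lie in $H$, so the right-hand side is just $H$ regardless of the choice of $n$; one then needs the left-hand side to lie in $H$, which again holds since it is central — but here centrality is not even needed, as the triviality of $N_G(H)/H$ makes the cohomological obstruction vanish outright. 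Either argument shows neutrality.

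The only mild subtlety I anticipate is bookkeeping in the first case: one must be careful that the element $t_{\gamma_1}^{-1}\rho_{\gamma_1}(n_{\gamma_2})^{-1}t_{\gamma_1}$ genuinely normalizes $H$, but with the choice $n\equiv Id_G$ this factor collapses to the identity and the issue disappears entirely. In both cases the argument is short and the main point is recognizing that the left-hand side of the neutrality condition is the central $2$-cocycle $\delta(\overline{c})$, so that the hypotheses $Z(G)\subseteq H$ and $H=N_G(H)$ each trivialize the obstruction for an essentially trivial reason. Consequently, by Theorem~\ref{th:cohomological criterion}, $X=G/H$ admits a $(k,F_c)$-form in either case.
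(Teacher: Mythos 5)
Your proof is correct and follows essentially the same route as the paper: both hypotheses force the image of $Z(G)(\k)$ in $(N_G(H)/H)(\k)\simeq\Aut_{\k}^{G}(X)$ to be trivial, so the class $\Delta_H([\rho_c])$ --- the image under this map of the central $2$-cocycle $\delta(\overline{c})$ --- is neutral. The paper expresses this as the triviality of the map $\kappa$ from \eqref{eq:def of kappa}, whereas you verify the unwound condition \eqref{eq:new condition neutral class} with $n\equiv Id$ (and, for $H=N_G(H)$, note that $\HH^2(\Gamma,\{1\})$ has only the neutral class); these are the same observation.
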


\begin{proof}
The condition $Z(G) \subseteq H$ implies that the map $\kappa$, defined by \eqref{eq:def of kappa}, is trivial. Therefore the cohomology class $\Delta_H([\rho_c])$ is neutral, and so
$X$ admits a $(k,F_c)$-form by Theorem~\ref{th:cohomological criterion}.
\end{proof}

\begin{proposition}\label{prop:number of forms}
We keep the assumptions of Theorem~\ref{th:cohomological criterion} and we assume that $X=G/H$ admits a $(k,F_c)$-form.
If $\Aut_{\k}^{G}(X)$ is abelian or $Z(G) \subseteq H$, then there is a bijection between the (isomorphism classes of) $(k,F)$-forms and the (isomorphism classes of) $(k,F_c)$-forms of $X$.
\end{proposition}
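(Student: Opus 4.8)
The plan is to parametrize both families of forms by Galois cohomology of the \emph{same} group and then to arrange that the two relevant $\Gamma$-actions coincide. Write $A:=\Aut_{\k}^{G}(X)$. Since $X=G/H$ is quasiprojective, isomorphism classes of $(k,F)$-forms (resp.\ $(k,F_c)$-forms) correspond to equivalence classes of $(G,\rho)$-equivariant (resp.\ $(G,\rho_c)$-equivariant) descent data on $X$. By hypothesis both families are nonempty, so I may fix a reference datum $\mu$ for $F$ and $\mu_c$ for $F_c$; Proposition~\ref{prop:H1 Galois k-forms} then gives bijections of these two families with $\HH^1(\Gamma,A)$, taken with the $\Gamma$-actions $\inn_{\mu}$ and $\inn_{\mu_c}$ respectively (see \eqref{eq:def inn_mu}, and write $\inn_{\mu,\gamma}$ for the action of $\gamma$). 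If I can choose $\mu$ and $\mu_c$ so that $\inn_{\mu}=\inn_{\mu_c}$ as $\Gamma$-actions on $A$, then the two pointed sets $\HH^1(\Gamma,A)$ literally coincide, and composing the two bijections of Proposition~\ref{prop:H1 Galois k-forms} produces the desired bijection between isomorphism classes of $(k,F)$- and of $(k,F_c)$-forms.

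To compare the two actions, I take $\mu$ and $\mu_c$ coming from locally constant maps $t$ and $s$ as in Proposition~\ref{prop:existence of (k,F)-form}, so that $\mu_\gamma(gH)=\rho_\gamma(g)t_\gamma H$ and $\mu_{c,\gamma}(gH)=\rho_{c,\gamma}(g)s_\gamma H$ with $\rho_{c,\gamma}=\inn_{c_\gamma}\circ\rho_\gamma$. The first step is the direct computation $(\mu_{c,\gamma}\circ\mu_\gamma^{-1})(gH)=c_\gamma\, g\, b_\gamma H$, where $b_\gamma:=t_\gamma^{-1}c_\gamma^{-1}s_\gamma$; since $\mu_{c,\gamma}\circ\mu_\gamma^{-1}$ is a well-defined regular automorphism of $G/H$, necessarily $b_\gamma\in N_G(H)(\k)$. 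Denoting by $\ell_{c_\gamma}\colon x\mapsto c_\gamma\cdot x$ the (regular, non-equivariant) translation, this reads $\mu_{c,\gamma}=\ell_{c_\gamma}\circ a_\gamma\circ\mu_\gamma$ with $a_\gamma:=\varphi_{b_\gamma^{-1}H}\in A$ (using Proposition~\ref{prop:G-eq automorphisms of G/H}). The key observation is that conjugation by $\ell_{c_\gamma}$ is trivial on $A$: for $G$-equivariant $\varphi$ one has $\ell_{c_\gamma}\circ\varphi\circ\ell_{c_\gamma}^{-1}=\varphi$. Combining this with $\inn_{\mu,\gamma}(\varphi)\in A$ yields
\[
\inn_{\mu_c,\gamma}=\inn_{a_\gamma}\circ\inn_{\mu,\gamma}\qquad(\gamma\in\Gamma),
\]
so the two $\Gamma$-actions differ precisely by inner conjugation in $A$ by the elements $a_\gamma$.

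It then remains to kill the correction $\inn_{a_\gamma}$ in each of the two cases. If $A$ is abelian, inner conjugation is trivial, so $\inn_{a_\gamma}=\mathrm{id}_A$ and $\inn_{\mu_c}=\inn_{\mu}$ for any choice of $t$ and $s$. If instead $Z(G)\subseteq H$, I would specialize $s_\gamma:=c_\gamma t_\gamma$: condition~\ref{item:condition i} of Proposition~\ref{prop:existence of (k,F)-form} for $s$ is immediate from $\rho_{c,\gamma}(H)=c_\gamma\rho_\gamma(H)c_\gamma^{-1}$, while condition~\ref{item:condition ii} reduces, using condition~\ref{item:condition ii} for $t$ and the centrality of $Z(G)$, to the requirement that the $2$-cocycle $z_{\gamma_1,\gamma_2}:=c_{\gamma_1}\rho_{\gamma_1}(c_{\gamma_2})c_{\gamma_1\gamma_2}^{-1}$ lie in $H$; but $z_{\gamma_1,\gamma_2}\in Z(G)(\k)$ since $\rho_c$ is a descent datum, i.e.\ $\overline{c}$ is a $1$-cocycle by Lemma~\ref{lem:Z1 parametrizes inner twists}, hence $z_{\gamma_1,\gamma_2}\in H$. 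With this choice $b_\gamma=t_\gamma^{-1}c_\gamma^{-1}s_\gamma=e$, so $a_\gamma=\mathrm{id}_A$ and again $\inn_{\mu_c}=\inn_{\mu}$. In both cases the two $\Gamma$-actions on $A$ agree, which completes the argument as explained in the first paragraph.

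The main obstacle is the second paragraph: identifying $\mu_{c,\gamma}\circ\mu_\gamma^{-1}$ explicitly, checking that $b_\gamma$ normalizes $H$ so that $a_\gamma$ genuinely lies in $A$, and verifying that conjugation by the translation $\ell_{c_\gamma}$ acts trivially on $G$-equivariant automorphisms; this is what reduces the comparison of $\inn_{\mu}$ and $\inn_{\mu_c}$ to a single inner correction. Note that I never need $a$ to be a $1$-cocycle, since in both cases I arrange $\inn_{a_\gamma}$ to be the identity outright; this is exactly why the hypothesis "$\Aut_{\k}^{G}(X)$ abelian or $Z(G)\subseteq H$" is what forces the two form-counts to agree, in contrast with the general situation mentioned in the remark following Theorem~\ref{th:B}.
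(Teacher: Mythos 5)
Your argument is correct and follows essentially the same route as the paper's proof: both reduce to comparing the two $\Gamma$-actions $\inn_{\mu}$ and $\inn_{\mu_c}$ on $\Aut_{\k}^{G}(X)$ via Proposition~\ref{prop:H1 Galois k-forms}, identify the discrepancy as conjugation by the element $\varphi_{b_\gamma^{-1}H}$ with $b_\gamma=t_\gamma^{-1}c_\gamma^{-1}s_\gamma\in N_G(H)(\k)$ (the inverse of the paper's $n_\gamma$), and kill it either by commutativity of $\Aut_{\k}^{G}(X)$ or by choosing $s_\gamma=c_\gamma t_\gamma$ when $Z(G)\subseteq H$. Your factorization $\mu_{c,\gamma}=\ell_{c_\gamma}\circ a_\gamma\circ\mu_\gamma$ together with the observation that conjugation by a translation is trivial on $G$-equivariant automorphisms is just a tidier packaging of the explicit computation of $\inn_{\mu'}(\gamma,\varphi_{mH})$ carried out in the paper.
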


\begin{proof} 
By Proposition~\ref{prop:existence of (k,F)-form}, there exist locally constant maps $t\colon \Gamma \to G(\k)$ and $s\colon \Gamma \to G(\k)$ such that 
\begin{equation*}
\mu\colon \Gamma \to \Aut(X),\ \gamma \mapsto (gH \mapsto \rho_{\gamma}(g) t_\gamma H).
\end{equation*}
is a  $(G,\rho)$-equivariant descent datum on $X$ and
\begin{equation*}
\mu'\colon \Gamma \to \Aut(X),\ \gamma \mapsto (gH \mapsto \rho_{c,\gamma}(g) s_\gamma H).
\end{equation*}
is a  $(G,\rho_c)$-equivariant descent datum on $X$.
Also note that the equalities $\rho_\gamma(H)=t_\gamma H t_{\gamma}^{-1}$ and $\rho_{c,\gamma}(H)=s_\gamma H s_{\gamma}^{-1}$ imply that $s_{\gamma}^{-1} c_\gamma t_\gamma \in N_G(H)(\k)$. 
This yields a locally constant map 
\[
n\colon \Gamma \to N_G(H)(\k), \ \gamma \mapsto n_\gamma:=s_{\gamma}^{-1} c_\gamma t_\gamma.
\]

By Proposition~\ref{prop:H1 Galois k-forms}, to prove the statement it suffices to construct a bijection between the sets $\HH^1((\Gamma,\mu),\Aut_{\k}^{G}(X))$ and $\HH^1((\Gamma,\mu'),\Aut_{\k}^{G}(X))$, where the notation ($\Gamma,\mu$) and ($\Gamma,\mu'$) means that $\Gamma$ acts on $\Aut_{\k}^{G}(X)$ through $\inn_\mu$ and $\inn_{\mu'}$ respectively.

Recall that $(N_G(H)/H)(\k) \xrightarrow{\sim} \Aut_{\k}^{G}(X),\ mH \mapsto \varphi_{mH}$ (Proposition~\ref{prop:G-eq automorphisms of G/H}). Then, for all $\gamma \in \Gamma$ and all $mH \in (N_G(H)/H)(\k)$, we have
\begin{align*}
\inn_\mu(\gamma,\varphi_{mH}):=\mu_\gamma \circ \varphi_{mH} \circ \mu_{\gamma}^{-1}&=\varphi_{t_{\gamma}^{-1}\rho_\gamma(m)t_\gamma H} \text{; and}\\
\inn_{\mu'}(\gamma,\varphi_{mH}):={\mu'}_\gamma \circ \varphi_{mH} \circ {\mu'}_{\gamma}^{-1}&=\varphi_{s_{\gamma}^{-1}\rho_{c,\gamma}(m)s_\gamma H}\\
&=\varphi_{(c_\gamma t_\gamma n_{\gamma}^{-1})^{-1}\rho_{c,\gamma}(m)c_\gamma t_\gamma n_{\gamma}^{-1} H}\\
&=\varphi_{n_\gamma t_{\gamma}^{-1}\rho_{\gamma}(m)t_\gamma n_{\gamma}^{-1} H}.
\end{align*}
Moreover $\rho_\gamma(H)=t_\gamma H t_{\gamma}^{-1}$ implies that $t_{\gamma}^{-1}\rho_{\gamma}(m)t_\gamma \in N_G(H)(\k)$.
Hence, if the group $N_G(H)/H$ is abelian, we have $n_\gamma t_{\gamma}^{-1}\rho_{\gamma}(m)t_\gamma n_{\gamma}^{-1} H=t_{\gamma}^{-1}\rho_{\gamma}(m)t_\gamma H$. 
On the other hand, if $Z(G) \subseteq H$, we can take $s_\gamma=c_\gamma t_\gamma$ (i.e.~$n_\gamma=1$)  and check that the two conditions of Proposition~\ref{prop:existence of (k,F)-form} holds using the fact that $c_{\gamma_1} \rho_{\gamma_1}(c_{\gamma_2})c_{\gamma_1 \gamma_2}^{-1} \in Z(G)(\k)$.

Either way, we see that $\inn_\mu(\gamma,\varphi_{nH})=\inn_{\mu'}(\gamma,\varphi_{nH})$, for all $\gamma \in \Gamma$ and all $nH \in (N_G(H)/H)(\k)$. Therefore, the identity map is an isomorphism between the $\Gamma$-groups ($\Aut_{\k}^{G}(X),\inn_\mu$) and ($ \Aut_{\k}^{G}(X),\inn_{\mu'}$), and so the corresponding pointed sets $\HH^1((\Gamma,\mu), \Aut_{\k}^{G}(X))$ and $\HH^1((\Gamma,\mu'), \Aut_{\k}^{G}(X))$ are in bijection. This finishes the proof.
\end{proof}

\noindent \emph{Proof of Theorem~\ref{th:B}:} 
Part~\ref{item part 1 th B} is Theorem~\ref{th:cohomological criterion} together with Corollary~\ref{cor:easy cases}, and part~\ref{item part 2 th B} is Proposition~\ref{prop:number of forms}.
\qed

\medskip

Let us mention that forms of spherical homogeneous spaces (see Definition~\ref{def:complexity}) over an arbitrary base field of characteristic zero were studied by Borovoi and Gagliardi in \cite{Bor20,BG21}. The reader is referred to \cite[\S~3]{MJT21} and \cite[\S~11]{BG21} for examples of spherical homogeneous spaces  for which versions of Proposition~\ref{prop:A} and Theorem~\ref{th:B} are applied to determine their $(k,F)$-forms. Other results, based in part on a weaker version of Proposition~\ref{prop:A}, concerning the real forms of complex symmetric spaces can be found in \cite{MJT19}.

\section{Luna-Vust theory over perfect fields}\label{sec:LV over perfect fields}
In the founding paper \cite{LV83} Luna and Vust studied equivariant embeddings of homogeneous spaces under the action of a reductive algebraic group and gave a combinatorial classification of these embeddings when the base field is algebraically closed of characteristic zero. Later in \cite{Tim97,Tim11} Timashev extended the results of Luna-Vust to classify the (not necessarily almost homogeneous) $G$-varieties in a given $G$-birational class when the base field is algebraically closed of arbitrary characteristic. 

\smallskip

Our principal objective in \S~\ref{sec:LV over perfect fields} is twofold: First we recall how this combinatorial classification works when the base field is algebraically closed (in \S~\ref{sec:LV over alg closed fields}), then we explain how this classification extends when the base field is perfect (in \S\S~\ref{sec:Gamma action on colored data}-\ref{sec:k-forms for models}). 
Moreover, in \S~\ref{sec:caseof complexity one} we briefly review how the Luna-Vust theory specializes for certain families of varieties of complexity $\leq 1$, and finally in \S~\ref{sec:strategy} we explain our strategy to determine the forms of arbitrary almost homogeneous varieties using the results obtained in \S\S~\ref{sec:forms of qh varieties2}-\ref{sec:LV over perfect fields}. This strategy will be then applied in \S~\ref{sec:real forms of SL2-threefolds} to determine the real forms of complex almost homogeneous $\SL_2(\C)$-threefolds.

\subsection{Recollections on Luna-Vust theory over algebraically closed fields}\label{sec:LV over alg closed fields}
Let $k$ be an algebraically closed field, let $G$ be a reductive algebraic group over $k$, let $B$ be a Borel subgroup of $G$, and let $X_0$ be a $G$-variety over $k$. 

In this subsection we give a brief overview of the Luna-Vust theory, which is detailed in \cite[Chapter~3]{Tim11}, concentrating only on the essential information   necessary to understand the content of Theorem~\ref{th:C}, which is the main result of \S~\ref{sec:LV over perfect fields}. The goal of the Luna-Vust theory is to classify the $G$-varieties in the $G$-birational class of $X_0$ (i.e.~$G$-equivariantly birational to $X_0$) in terms of certain combinatorial objects depending on $X_0$. In \S~\ref{sec:k-forms for models} we will see how to adapt this combinatorics to classify $G$-varieties over perfect fields.

\begin{definition}\label{def:colored equipment} (Colored equipment of the $G$-variety $X_0$.)
\begin{itemize}[leftmargin=*]
\item Let $K=k(X_0)$ be the function field of $X_0$. The group $G$ acts naturally on $K$.
\item By a \emph{valuation} $\nu$  of $K$ we mean a surjective homomorphism $\nu\colon (K^*,\times) \to (\Z,+)$ satisfying $\nu(a+b) \geq \min(\nu(a),\nu(b))$ when $a+b \neq 0$ and whose kernel contains $k^*$. A valuation $\nu$ is \emph{geometric} if there exists a variety $X$ in the birational class of $X_0$ such that $\nu=\nu_D$ with $D$ a prime divisor of $X$ (here $\nu_D(f)$ denotes the order of vanishing of $f \in K^*$ along $D$). 
Moreover, a valuation $\nu$ of $K$ is \emph{$G$-invariant} if $\nu(g \cdot f)=\nu(f)$ for all $g \in G$ and all $f \in K$.  We denote 
\[\V^G=\V^G(X_0)=\{\text{$G$-invariant geometric valuations of } K\}.\]
\item The set of \emph{colors} of $X_0$ is 
\[\DD^B=\DD^B(X_0)=\{ \text{$B$-stable prime divisors of $X_0$ that are not $G$-stable} \}.\]
\end{itemize}
The pair $(\V^G,\DD^B)$ is called \emph{colored equipment} of $X_0$.
\end{definition}

\begin{remark} \label{rk:geometric valuations and stable divisors}
By \cite[Proposition~19.8]{Tim11} any valuation $\nu \in \V^G$ is equal to $\nu_D$, where $D$ is a $G$-stable prime divisor of a $G$-variety in the $G$-birational class of $X_0$. Similarly, there is a one-to-one correspondence between the $B$-stable prime divisors of $X_0$ and the $B$-invariant geometric valuations of $K$.
\end{remark}

Let $\psi\colon X_0 \dashrightarrow X_1$ be a $G$-equivariant birational map. It induces a field isomorphism $\psi^*\colon k(X_1) \simeq k(X_0)$ and bijections $\V^G(X_0) \simeq \V^G(X_1)$ and $\DD^B(X_0) \simeq \DD^B(X_1)$. This is clear for $\V^G$ through the identification $k(X_1) \simeq k(X_0)$. For $\DD^B$ this follows from the fact that the complements of the $G$-stable dense open subsets of $X_0$ and $X_1$ over which $\psi$ is an isomorphism are a union of $G$-stable closed subvarieties, and therefore they contain no colors.

\begin{definition} \label{def:colored data of G-orbits}
Let $X$ be a $G$-variety in the $G$-birational class of $X_0$, and let $\psi\colon X_0 \dashrightarrow X$ be a $G$-equivariant birational map. 
\begin{enumerate}
\item The pair $(X,\psi)$ is called a \emph{$G$-model} of $X_0$. 
\item The \emph{colored data} of a $G$-orbit $Y\subseteq X$ (with respect to $\psi$) is the pair
\begin{itemize}[leftmargin=0mm]
\item $\V_Y^G=\{ \nu_D \in \V^G(X) \ | \ Y \subseteq D,\ \text{ where $D$ is a $G$-stable prime divisor of $X$} \}\\  \subseteq \V^G(X) \simeq \V^G$; \text{ and}
\item $\DD_Y^B=\{ D \in \DD^B(X) \ | \ Y \subseteq D \} \subseteq \DD^B(X) \simeq \DD^B$.
\end{itemize}
\end{enumerate}
\end{definition}

\begin{definition}\label{def:equivalence models}
Let $X_1$ and $X_2$ be $G$-varieties in the $G$-birational class of $X_0$. For $i=1,2$, let $\psi_i\colon X_0 \dashrightarrow X_i$ be  a $G$-equivariant birational map. We say that the $G$-models $(X_1,\psi_1)$ and $(X_2,\psi_2)$ are \emph{equivalent} if there exists a $G$-isomorphism $\varphi\colon X_1 \to X_2$ such that $\psi_2=\varphi \circ \psi_1$. 
\end{definition}

Let us note that if the $G$-models $(X_1,\psi_1)$ and $(X_2,\psi_2)$ are equivalent, then the colored data of the $G$-orbits of $X_1$ (with respect to $\psi_1$) coincides with the colored data of the $G$-orbits of $X_2$ (with respect to $\psi_2$). 

\begin{remark}
In the case where a $G$-variety $X$ has a dense open $G$-orbit $X_0=G/H$, then a canonical representative of the $G$-birational class of $X$ is given by $X_0$, and the category of $G$-models of $X_0$ identifies with the category of $G$-equivariant embeddings of $X_0$.
This is the original framework considered by Luna and Vust.
\end{remark}

\smallskip

The next statement is the central pillar of the Luna-Vust theory.

\begin{theorem}\label{th:Luna-Vust alg closed field}\emph{(see \cite[\S~14]{Tim11})}
The map 
\[(X,\psi) \mapsto \FF(X,\psi)=\{(\V_Y^G,\DD_Y^B), \text{ for every $G$-orbit $Y \subseteq X$} \}\]
induces a bijection between the equivalence classes of $G$-models of $X_0$ and the collections of pairs $(\W_i,\RR_i)_{i \in I}$, where $\W_i \subseteq \V^G$ and $\RR_i \subseteq \DD^B$, satisfying the conditions listed in \S~\ref{sec:A.2}. 
\end{theorem}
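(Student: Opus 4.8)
The plan is to follow the Luna--Vust/Timashev strategy of reducing the global classification to a local one by means of \emph{simple} $G$-models, i.e.\ $G$-models $(X,\psi)$ in which $X$ has a unique closed $G$-orbit. Any $G$-model is covered canonically by $G$-stable open subsets that are simple: each orbit $Y$ is the unique closed orbit of a smallest $G$-stable open subset, namely the union of the orbits $Y'$ with $Y\subseteq\overline{Y'}$, and conversely a $G$-model is obtained by gluing simple pieces along such open subsets. Thus it suffices to prove two things: that a simple $G$-model is determined up to equivalence by the colored data $(\V_Y^G,\DD_Y^B)$ of its closed orbit $Y$, and that the way simple pieces fit together is recorded faithfully by the combinatorial compatibility conditions of \S~\ref{sec:A.2}. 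I would therefore split the argument into (injectivity $+$ necessity) and (surjectivity $+$ sufficiency).

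For the reconstruction step, I would set up the dictionary between geometry and combinatorics along a single orbit. By Remark~\ref{rk:geometric valuations and stable divisors} every element of $\V^G$ is realized by a $G$-stable prime divisor and the colors correspond to the $B$-stable prime divisors, so both $\V^G$ and $\DD^B$ restrict to linear functionals on the lattice $\Lambda$ of weights of $B$-semiinvariant rational functions, giving maps into $\QQ=\Hom(\Lambda,\Q)$. The essential local statement is that the $G$-stable simple open subset attached to an orbit $Y$ is recovered as the locus where all $B$-semiinvariant functions $f$ with $\nu(f)\geq 0$ for every $\nu\in\V_Y^G$ and $\nu_D(f)\geq 0$ for every color $D\in\DD_Y^B$ are regular; reconstructing this algebra from the colored data recovers the chart. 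Since the covering of $X$ by simple opens is intrinsic and each chart is thus pinned down by its colored data together with the identification $\psi^*$ of function fields, two $G$-models with the same image under $\FF$ admit compatible $G$-isomorphisms on matching charts, which glue to a single $G$-isomorphism $\varphi$ with $\psi_2=\varphi\circ\psi_1$; this yields injectivity, matching Definition~\ref{def:equivalence models}.

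For the other direction I would first check that the colored data of a genuine $G$-model satisfies the conditions of \S~\ref{sec:A.2} (necessity): normality forces the reconstructed semigroups to be saturated, hence the associated cones convex; finite type forces finite generation; and separatedness of $X$ translates, via the valuative criterion applied to $G$-invariant (and $B$-invariant) valuations, into the compatibility requirement that the pairs $(\W_i,\RR_i)$ meet only along common faces with matching colors. The orbit--face correspondence, proved alongside the reconstruction, guarantees that distinct orbits yield distinct pairs and that the face relations among the pairs mirror the closure relations among orbits. Conversely (sufficiency), given an abstract collection satisfying the \S~\ref{sec:A.2} conditions, I would build one simple chart from each maximal pair by the recipe above, glue these along the charts attached to common faces, and verify that the compatibility conditions make the result a separated normal variety whose colored data recovers the prescribed collection.

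The main obstacle is precisely the equivalence \emph{combinatorial compatibility $\Leftrightarrow$ separatedness}, together with the faithfulness of the orbit--cone dictionary: one must show that the valuative criterion of separatedness, restricted to $G$-invariant valuations, is captured exactly by the face-matching condition, and that the reconstruction sees \emph{every} orbit (not only the closed ones) exactly once. This is delicate in higher complexity, where the relevant combinatorial space is not merely $\QQ$ but incorporates the function field of the rational $G$-quotient (Timashev's ``hyperspace''), so that convexity, finiteness, and the classification of $G$-invariant valuations demand the full structure theory of \cite[Chapter~3]{Tim11}. This is why the conditions in \S~\ref{sec:A.2} are more elaborate than in the spherical case, and why I would lean on \emph{loc.\ cit.} for those technical inputs rather than reprove them.
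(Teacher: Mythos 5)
The paper does not prove this statement: it is recalled verbatim from Timashev's book (the citation \cite[\S~14]{Tim11} in the theorem header is the proof), and \S~\ref{sec:LV over alg closed fields} together with Appendix~\ref{sec:appendix A} only records the ingredients (the colored equipment, Proposition~\ref{prop:colored data of B-charts} on $B$-charts, and the conditions of \S\S~\ref{sec:A.1}--\ref{sec:A.2}) needed to state and later use the result. Your outline is a faithful reconstruction of the strategy that the citation points to -- localize to pieces with a distinguished closed orbit, reconstruct each piece as $\Spec$ of an intersection of valuation rings cut out by the colored data, match (C)/(F)/(W) with function-field recovery, finite generation, and essentiality of valuations, and encode separatedness and the orbit--face dictionary in the global compatibility conditions -- so in that sense you are proposing the same proof the paper delegates to \emph{loc.\ cit.} Two caveats are worth flagging. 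First, your middle paragraph frames the reconstruction via linear functionals on the weight lattice $\Lambda$, i.e.\ the spherical picture; in the generality of Theorem~\ref{th:Luna-Vust alg closed field} the restriction of a valuation to $\mathbb{K}^{(B)}$ is not captured by $\Hom(\Lambda,\Q)$ alone, and the local pieces in Proposition~\ref{prop:colored data of B-charts} are $B$-charts (whose $G$-spans are only quasiprojective), not the $G$-stable simple opens themselves -- you acknowledge this at the end, but the reconstruction step as written would need to be rerun in the hyperspace formalism. Second, in \S~\ref{sec:A.2} separatedness is encoded by the pairwise disjointness of the supports $\mathscr{S}_i$ (a geometric valuation dominates at most one orbit's local ring), not by a face-matching condition as in the toric/spherical case; your phrasing of the ``main obstacle'' should be adjusted accordingly. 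Neither caveat invalidates the plan, but both confirm that the technical content you defer to \cite[Chapter~3]{Tim11} is genuinely where the proof lives.
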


The collection of pairs $\FF(X,\psi)$ is called \emph{colored data} of the $G$-model $(X,\psi)$.

\subsection{Galois actions on the colored equipment} \label{sec:Gamma action on colored data}
Let $k$ be a perfect field, let $\k$ be an algebraic closure of $k$, and let $\Gamma=\Gal(\k/k)$ be the absolute Galois group of $k$. Let $F$ be a reductive algebraic group over $k$, and let $X_0$  be an $F$-variety over $k$. We denote $G=\Fk=F \times_{\Spec(k)} \Spec(\k)$ and $X_{0,\k}=X_0 \times_{\Spec(k)} \Spec(\k)$. Let $B$ be a Borel subgroup of $G$.

In \S~\ref{sec:LV over alg closed fields}, we introduced the \emph{colored equipment} $(\V^G,\DD^B)$ of a $G$-variety over $\k$.
In this subsection, we define (continuous) $\Gamma$-actions on this colored equipment. These actions were originally given by Hurguen in his thesis (see \cite[\S~2.2]{Hur11}).

\smallskip

First, $\Gamma$ acts on $G=F_{\k}$ and on $X_{0,\k}$ through its natural action on $\k$. We denote by $\rho\colon \Gamma \to \Aut(G)$ the corresponding descent datum on $G$ and by $\mu\colon \Gamma \to \Aut(X_{0,\k})$ the corresponding $(G,\rho)$-equivariant descent datum on $X_{0,\k}$ (see Definitions~\ref{def:k-forms of algebraic groups} and~\ref{def:k-forms of G-varieties}). 
The $\Gamma$-action on $X_{0,\k}$ induces a $\Gamma$-action on $K=\k(X_{0,\k})$ defined by
\begin{equation}\label{eq:Gamma linearity of the action}
\forall \gamma \in \Gamma,\ \forall f \in K, \ \forall x \in \Def(f \circ \mu_{\gamma^{-1}}),\ (\gamma \cdot f)(x):=\gamma \left( f(\mu_{\gamma^{-1}}(x)) \right),
\end{equation}
where $\Def(f) \subseteq X_{0,\k}$ is the maximal dense open subset over which $f$ is defined.

\smallskip

Second, there is also a $\Gamma$-action on 
\[\V^B=\{\text{$B$-invariant geometric valuations of $K$}\}
\] 
as we now explain.
If $\gamma \in \Gamma$, then $\rho_\gamma(B)$ is a Borel subgroup of $G$, therefore there exists $e_\gamma \in G$ such that $\rho_\gamma(B)=e_\gamma B e_\gamma^{-1}$. Moreover, $e_\gamma$ is unique up to right multiplication by an element of $B$. The group $\Gamma$ acts on  $\V^B$ as follows
\begin{equation*}
 \forall \gamma \in \Gamma, \ \forall \nu \in \V^B, \ \forall f \in K, \ (\gamma \cdot \nu)(f):=\nu(\gamma^{-1} \cdot (e_\gamma \cdot f)).
\end{equation*}
By \cite[Proposition~2.15]{Hur11}, this $\Gamma$-action is well-defined and does not depend on the particular choice of the $(e_\gamma)_{\gamma \in \Gamma}$. 

Given $\gamma \in \Gamma$ and $D \in \DD^B$, let $\gamma \cdot D$ be the unique $B$-stable prime divisor of $X_{0,\k}$ such that $\nu_{\gamma \cdot D}=\gamma \cdot \nu_D$; this defines a $\Gamma$-action on $\DD^B$ such that $D \in \DD^B \mapsto \nu_D \in \V^B$ is $\Gamma$-equivariant.  
Moreover, $\V^G \subseteq \V^B$ is $\Gamma$-stable and, using \eqref{eq:Gamma-eq}, the restriction of the $\Gamma$-action to $\V^G$ can be rewritten as follows
\begin{equation*}
 \forall \gamma \in \Gamma, \ \forall \nu \in \V^G, \ \forall f \in K, \ (\gamma \cdot \nu)(f):=\nu(\gamma^{-1} \cdot f).
\end{equation*}

\subsection{Forms of models over perfect fields}\label{sec:k-forms for models}
We keep the same notation as in \S~\ref{sec:Gamma action on colored data}. 
Our main goal in this subsection is to prove Theorem~\ref{th:C} which is a generalization of Theorem~\ref{th:Luna-Vust alg closed field} over perfect fields.

\smallskip

To extend the Luna-Vust theory over non algebraically closed fields we first need to introduce the notion of forms of $G$-models of $X_{0,\k}$ (which is distinct from the notion of forms for $G$-varieties introduced in \S~\ref{sec:descent data for algebraic varieties with group action}).

\begin{definition}\label{def:k-form of a model}\item
\begin{itemize}[leftmargin=6.5mm]
\item An \emph{$F$-model} of an $F$-variety $X_0$ over $k$ is a pair $(X,\delta)$, where $X$ is an $F$-variety over $k$ and $\delta\colon X_0 \dashrightarrow X$ is an $F$-equivariant birational map. 
\item Two models $(X_1,\delta_1)$ and $(X_2,\delta_2)$ are said to be \emph{equivalent} if there exists an $F$-isomorphism $\varphi\colon X_1 \to X_2$ such that $\delta_2=\varphi \circ \delta_1$.
\item A \emph{$(k,F,X_0)$-form of a $G$-model $(Z,\psi)$ of $X_{0,\k}$} is an $F$-model $(X,\delta)$ of $X_0$ such that $(X_{\k},\delta_{\k})$ is equivalent to $(Z,\psi)$, where $\delta_{\k}\colon X_{0,\k} \dashrightarrow X_{\k}$ is the $G$-equivariant birational map obtained from $\delta$ by extending the scalars.
\end{itemize}
\end{definition}

Let us note that if two $F$-models $(X_1,\delta_1)$ and $(X_2,\delta_2)$ of $X_0$ are equivalent, then the two $G$-models $(X_{1,\k},\delta_{1,\k})$ and $(X_{2,\k},\delta_{2,\k})$ of $X_{0,\k}$ are also equivalent. We will see in Lemma~\ref{lem:uniqueness of the k-form} that the converse holds as well.

\smallskip

Recall that we denote by $\mu\colon \Gamma \to \Aut(X_{0,\k})$ the descent datum on $X_{0,\k}$ defined by $\mu_\gamma=Id \times (\gamma^*)^{-1}$ for each $\gamma \in \Gamma$.
Then $\Gamma$ acts $k$-birationally on any $G$-model $(Z,\psi)$ of $X_{0,\k}$ through the homomorphism 
\[\Theta_{(Z,\psi)}\colon \Gamma \to \Bir(Z),\ \gamma \mapsto \psi \circ \mu_\gamma \circ  \psi^{-1},\] 
where we denote by $\Bir(Z)$ the group of birational transformations of the $\k$-variety $Z$.

\begin{lemma}\label{lem:uniqueness of the k-form}
Let $(Z,\psi)$ be a $G$-model of $X_{0,\k}$. If $(Z,\psi)$ admits a $(k,F,X_0)$-form, then it is unique up to equivalence.
\end{lemma}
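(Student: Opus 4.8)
The plan is to reduce the statement to Galois descent for morphisms. Suppose $(X_1,\delta_1)$ and $(X_2,\delta_2)$ are two $(k,F,X_0)$-forms of the same $G$-model $(Z,\psi)$. By Definition~\ref{def:k-form of a model}, both $(X_{1,\k},\delta_{1,\k})$ and $(X_{2,\k},\delta_{2,\k})$ are equivalent to $(Z,\psi)$, hence equivalent to one another; so there is a $G$-isomorphism $\Phi\colon X_{1,\k}\isoto X_{2,\k}$ over $\k$ with $\delta_{2,\k}=\Phi\circ\delta_{1,\k}$. The goal is to descend $\Phi$ to an $F$-isomorphism $\varphi\colon X_1\to X_2$ over $k$ satisfying $\delta_2=\varphi\circ\delta_1$, which is exactly an equivalence of the $F$-models $(X_1,\delta_1)$ and $(X_2,\delta_2)$.

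First I would record the Galois actions. For $i=1,2$, the $k$-structure on $X_i$ yields the natural descent datum $\mu^{(i)}\colon\Gamma\to\Aut(X_{i,\k})$ with $\mu^{(i)}_\gamma=Id\times(\gamma^*)^{-1}$. Since $\delta_i$ is a morphism of $k$-varieties, its base change $\delta_{i,\k}$ is $\Gamma$-equivariant for $\mu$ and $\mu^{(i)}$, so that $\mu^{(i)}_\gamma=\delta_{i,\k}\circ\mu_\gamma\circ\delta_{i,\k}^{-1}$; equivalently, the transported action $\Theta_{(X_{i,\k},\delta_{i,\k})}$ coincides with $\mu^{(i)}$ and is therefore by \emph{regular} automorphisms. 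Combining this with $\delta_{2,\k}=\Phi\circ\delta_{1,\k}$ gives, for every $\gamma\in\Gamma$,
\[
\mu^{(2)}_\gamma=\delta_{2,\k}\circ\mu_\gamma\circ\delta_{2,\k}^{-1}=\Phi\circ\mu^{(1)}_\gamma\circ\Phi^{-1},
\]
that is, $\mu^{(2)}_\gamma\circ\Phi=\Phi\circ\mu^{(1)}_\gamma$. A priori this is an identity of birational maps, but both composites are genuine morphisms $X_{1,\k}\to X_{2,\k}$ lying over $(\gamma^*)^{-1}\colon\Spec(\k)\to\Spec(\k)$ (being composites of the regular maps $\Phi$ and $\mu^{(i)}_\gamma$); as they agree on a dense open subset and the varieties are separated and integral, they agree as morphisms. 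Thus $\Phi$ is $\Gamma$-equivariant.

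Next I would invoke Galois descent. As $\Phi$ and all the relevant data are defined over a finite Galois subextension $k'/k$ of $\k$, I reduce to the finite case, where a $\Gamma$-equivariant $\k$-isomorphism between the base changes of the $k$-varieties $X_1$ and $X_2$ descends to a unique $k$-isomorphism $\varphi\colon X_1\to X_2$ with $\varphi_{\k}=\Phi$. Since $\Phi$ is $G$-equivariant and $\Gamma$-equivariant compatibly with the descent datum $\rho$ defining $F$, the descended map $\varphi$ is $F$-equivariant. Finally, $\delta_2$ and $\varphi\circ\delta_1$ are $k$-rational maps $X_0\dashrightarrow X_2$ with equal base change to $\k$ (namely $\delta_{2,\k}=\Phi\circ\delta_{1,\k}=\varphi_{\k}\circ\delta_{1,\k}$); since base change along the faithfully flat extension $k\hookrightarrow\k$ is injective on $k$-morphisms, $\delta_2=\varphi\circ\delta_1$, establishing the equivalence.

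The main obstacle is the passage, in the displayed identity, from birational maps to honest morphisms: the transported Galois actions $\Theta$ are only a priori by birational transformations, and the argument crucially exploits that the \emph{existence} of a $k$-form forces $\mu^{(1)}$ and $\mu^{(2)}$ to be regular, so that the intertwining relation holds between actual morphisms and descent applies. Some care is also needed to descend over the profinite group $\Gamma$ rather than over a finite quotient, which I handle by passing to a finite Galois subextension over which $\Phi$, $X_1$, $X_2$, and the $\delta_i$ are all defined.
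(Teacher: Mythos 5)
Your proposal is correct and follows essentially the same route as the paper: take the unique $G$-isomorphism $\Phi$ with $\delta_{2,\k}=\Phi\circ\delta_{1,\k}$, check that $\gamma\cdot\Phi=(\delta_{2,\k}\circ\mu_\gamma\circ\delta_{2,\k}^{-1})\circ\Phi\circ(\delta_{1,\k}\circ\mu_{\gamma^{-1}}\circ\delta_{1,\k}^{-1})=\Phi$, and descend. The only cosmetic difference is that the paper delegates the final descent step to \cite[Proposition~8.1]{Wed18} rather than carrying it out by reduction to a finite Galois subextension as you do.
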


\begin{proof}
Let $(X_1,\delta_1)$ and $(X_2,\delta_2)$ be two $(k,F,X_0)$-forms of $(Z,\psi)$. There exists a unique $G$-isomorphism $\varphi\colon X_{1,\k} \to X_{2,\k}$ such that $\delta_{2,\k}=\varphi \circ (\delta_{1,\k})$. Indeed, if such a $\varphi$ exists, then it must coincide with $\delta_{2,\k} \circ \delta_{1,\k}^{-1}$ over a dense open subset of $X_{1,\k}$, and so it is unique.) Moreover,
\[\forall \gamma \in \Gamma,\  \gamma \cdot \varphi=(\delta_{2,\k} \circ  \mu_\gamma  \circ \delta_{2,\k}^{-1}) \circ \varphi \circ (\delta_{1,\k} \circ  \mu_{\gamma^{-1}}  \circ \delta_{1,\k}^{-1})=\varphi. \]
In other words, $\varphi$ is defined over $k$, and so by \cite[Proposition~8.1]{Wed18} it is induced by a $G$-isomorphism $\phi\colon X_1 \to X_2$ satisfying $\delta_2=\phi \circ \delta_1$, which proves the lemma.
\end{proof}

\begin{proposition} \label{prop:two conditions existence of k forms for models}
A $G$-model $(Z,\psi)$ of $X_{0,\k}$ admits a $(k,F,X_0)$-form if and only if it satisfies the two conditions~\ref{label:existence descent datum} and~\ref{label:effectiveness descent datum}, or equivalently~\ref{label:existence descent datum} and~\ref{label:effectiveness descent datum2}, below.
\begin{enumerate}
\item \label{label:existence descent datum} The image of $\Theta_{(Z,\psi)}$ is contained in $\Aut(Z)$.
\item \label{label:effectiveness descent datum} The variety $Z$ is covered by $\Gamma$-stable affine open subsets.
\item \label{label:effectiveness descent datum2} The variety $Z$ is covered by $\Gamma$-stable quasiprojective open $G$-subvarieties.
\end{enumerate}
\end{proposition}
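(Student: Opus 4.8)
The plan is to recognize the birational $\Gamma$-action $\Theta:=\Theta_{(Z,\psi)}$ as the candidate equivariant descent datum on $Z$, and then to read off the statement from the correspondence between $(k,F)$-forms and effective equivariant descent data recalled in \S\ref{sec:descent data for algebraic varieties with group action}. First I would record the features of $\Theta$ that hold for free: it is a homomorphism $\Gamma\to\Bir(Z)$, each $\Theta_\gamma=\psi\circ\mu_\gamma\circ\psi^{-1}$ lies over $(\gamma^*)^{-1}$ on $\Spec(\k)$ (as $\psi,\psi^{-1}$ are $\k$-birational while $\mu_\gamma$ lies over $(\gamma^*)^{-1}$), and, because $\psi$ is $G$-equivariant and $\mu$ is $(G,\rho)$-equivariant, $\Theta$ satisfies relation \eqref{eq:Gamma-eq} as an identity of birational maps. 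The key observation is that conditions~\ref{label:existence descent datum} and~\ref{label:effectiveness descent datum} together say precisely that $\Theta$ is an \emph{effective} $(G,\rho)$-equivariant descent datum in the sense of Definition~\ref{def:k-forms of G-varieties}: condition~\ref{label:existence descent datum} supplies property~(ii) (each $\Theta_\gamma\in\Aut(Z)$), after which \eqref{eq:Gamma-eq} upgrades from a birational identity to an identity of morphisms $G\times Z\to Z$ by separatedness and density; the algebraicity property~(i) holds because $Z$ and $\psi$ are defined over some finite Galois extension $k'/k$, and for $\gamma\in\Gal(\k/k')$ one checks that $\Theta_\gamma$ is the natural Galois action of the corresponding $k'$-form of $Z$; and condition~\ref{label:effectiveness descent datum} is effectiveness verbatim.

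Granting this, the equivalence ``a $(k,F,X_0)$-form exists $\Leftrightarrow$ \ref{label:existence descent datum} and~\ref{label:effectiveness descent datum}'' splits in two. For $(\Leftarrow)$, the correspondence of \S\ref{sec:descent data for algebraic varieties with group action} turns the effective descent datum $\Theta$ into a $(k,F)$-form $X=Z/\Gamma$ together with a $G$-isomorphism $\varphi\colon X_{\k}\isoto Z$ intertwining the natural descent datum $\nu$ on $X_{\k}$ with $\Theta$. Then $\alpha:=\varphi^{-1}\circ\psi\colon X_{0,\k}\dashrightarrow X_{\k}$ is a $G$-equivariant birational map satisfying $\nu_\gamma\circ\alpha=\alpha\circ\mu_\gamma$, i.e.\ it is $\Gamma$-invariant; by Galois descent of rational maps (the birational analogue of \cite[Proposition~8.1]{Wed18}, applied on maximal domains of definition) it descends to an $F$-equivariant birational map $\delta\colon X_0\dashrightarrow X$ over $k$ with $\delta_{\k}=\alpha$, so $(X,\delta)$ is the desired form. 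For $(\Rightarrow)$, a form $(X,\delta)$ with chosen $G$-isomorphism $\varphi\colon X_{\k}\isoto Z$ and $\psi=\varphi\circ\delta_{\k}$ carries the genuine (regular, effective) natural descent datum $\nu$ on $X_{\k}$; since $\delta_{\k}$ intertwines $\mu$ with $\nu$ and $\varphi$ intertwines $\nu$ with $\Theta$, we obtain $\Theta_\gamma=\varphi\circ\nu_\gamma\circ\varphi^{-1}\in\Aut(Z)$ (condition~\ref{label:existence descent datum}), while the base change to $X_{\k}$ of an affine open cover of $X$ over $k$ transports via $\varphi$ to a $\Gamma$-stable affine cover of $Z$ (condition~\ref{label:effectiveness descent datum}).

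It remains to exchange~\ref{label:effectiveness descent datum} for~\ref{label:effectiveness descent datum2}. The implication \ref{label:effectiveness descent datum2}$\Rightarrow$\ref{label:effectiveness descent datum} is \cite[Lemma~2.4]{BG21} (a $\Gamma$-variety covered by $\Gamma$-stable quasiprojective opens is covered by $\Gamma$-stable affine opens). For the converse I would not argue on $Z$ directly but use the form produced above: applying Sumihiro's theorem over $k$ to the normal $F$-variety $X$ (with $F$ connected linear; see \cite[\S~3]{Tim11}) covers $X$ by $F$-stable quasiprojective open subvarieties, whose base changes are $G$-stable and $\Gamma$-stable quasiprojective opens of $X_{\k}\simeq Z$, yielding~\ref{label:effectiveness descent datum2}. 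Combining, ``a $(k,F,X_0)$-form exists'', ``\ref{label:existence descent datum} and~\ref{label:effectiveness descent datum}'', and ``\ref{label:existence descent datum} and~\ref{label:effectiveness descent datum2}'' are all equivalent, which is the assertion.

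The main obstacle I anticipate is the $(\Leftarrow)$ direction, and within it two delicate points. First, the verification of the algebraicity condition~(i) of Definition~\ref{def:k-forms of G-varieties}: one must genuinely exhibit a finite Galois $k'$ over which both $Z$ and $\psi$ are defined and confirm that $\Theta$ restricts to the natural descent datum of a $k'$-form there, a spreading-out argument easy to state but requiring care. Second, and more serious, is the descent of the structural map $\delta$: since $\delta$ is only birational, \cite[Proposition~8.1]{Wed18} (stated for isomorphisms) does not apply verbatim, so one must descend at the level of function fields, or of maximal domains of definition, while keeping track of $F$-equivariance. Everything else reduces to bookkeeping with the already-established descent correspondence.
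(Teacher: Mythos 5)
Your proposal is correct and its core is the same as the paper's: interpret $\Theta_{(Z,\psi)}$ as the candidate $(G,\rho)$-equivariant descent datum, so that conditions~\ref{label:existence descent datum} and~\ref{label:effectiveness descent datum} say exactly that this datum is regular and effective, then pass back and forth via the quotient $X=Z/\Gamma$ and descend the birational map $\varphi^{-1}\circ\psi$ using \cite[Proposition~8.1]{Wed18}. The one place where you genuinely diverge is the implication \ref{label:effectiveness descent datum}$\Rightarrow$\ref{label:effectiveness descent datum2}: the paper argues directly on $Z$, taking a $\Gamma$-stable affine open $V$ and invoking \cite[Corollary~3]{Ben13} to conclude that $G\cdot V$ is a quasiprojective $G$-stable open, $\Gamma$-stability following from relation~\eqref{eq:Gamma-eq}; you instead route through the already-constructed $k$-form $X$ and apply Sumihiro's theorem over $k$, then base change. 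Both work; the paper's version is more self-contained in that it establishes the equivalence of~\ref{label:effectiveness descent datum} and~\ref{label:effectiveness descent datum2} without first building the form (though it too implicitly uses condition~\ref{label:existence descent datum} for the $\Gamma$-stability of $G\cdot V$), whereas yours only yields the equivalence of the two conjunctions, which is all the statement requires. Your caution about applying \cite[Proposition~8.1]{Wed18} to a birational rather than biregular map is reasonable; the paper applies it to the birational map $\psi'$ without comment, and your suggested fix (descend on maximal domains of definition or at the level of function fields) is the standard repair, so this is a point where your write-up is if anything more careful than the published one.
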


\begin{proof}
The first condition means that the map $\Theta_{(Z,\psi)}\colon \Gamma \to \Aut(Z)$
is an equivariant descent datum on $Z$ (see Definition~\ref{def:k-forms of G-varieties}), and the second condition that this descent datum is effective, i.e.~that the quotient $Z/\Gamma$ is a variety (and not just and algebraic space).

Therefore, if conditions \ref{label:existence descent datum} and \ref{label:effectiveness descent datum} are satisfied, then the quotient space $X:=[Z/\Gamma]$ is a $G$-variety such that $X_{\k} \simeq Z$ as $G$-varieties. Fix such a $G$-isomorphism $\varphi\colon X_{\k} \xrightarrow{\sim} Z$. Then the $G$-equivariant birational map $\psi'=\varphi^{-1} \circ \psi\colon X_{0,\k} \dashrightarrow X_{\k}$ is defined over $k$ (i.e.~$\gamma \cdot \psi'=\psi'$ for all $\gamma \in \Gamma$), and so by \cite[Proposition~8.1]{Wed18} there exists a $G$-equivariant birational map $\delta\colon X_0 \dashrightarrow X$ such that $\varphi \circ \delta_{\k}=\varphi \circ \psi'= \psi$. Hence, $(X,\delta)$ is a $k$-form of $(Z,\psi)$.

Conversely, assume that $(Z,\psi)$ admits a $k$-form, say $(X,\delta)$. Then there exists a $G$-isomorphism $\varphi\colon X_{\k} \ \xrightarrow{\sim} Z$ such that $\psi=\varphi \circ \delta_{\k}$. The group $\Gamma$ acts on $X_{\k}=X \times_{\Spec(k)} \Spec(\k)$ through its action on $\k$, therefore it corresponds to a homomorphism $\Theta_{(X_{\k},\delta_{\k})}\colon \Gamma \to \Aut(X_{\k})$, and 
\[\Theta_{(Z,\psi)}(\Gamma)=\varphi \circ \Theta_{(X_{\k},\delta_{\k})}(\Gamma) \circ \varphi^{-1} \subseteq \varphi \circ \Aut(X_{\k}) \circ \varphi^{-1}=\Aut(Z),\]
which is condition~\ref{label:existence descent datum}.
Moreover, $[Z/\Gamma] \simeq [X_{\k}/\Gamma] \simeq X$, which implies condition~\ref{label:effectiveness descent datum} by \cite[Proposition~V.1.8]{SGA1}. 

It remains to check that the conditions~\ref{label:effectiveness descent datum} and~\ref{label:effectiveness descent datum2} are equivalent.
The fact that~\ref{label:effectiveness descent datum2} implies~\ref{label:effectiveness descent datum} was already mentioned in \S~\ref{sec:descent data for algebraic varieties with group action}:
This follows from the fact that any $\Gamma$-orbit of $X(\k)$ is finite, hence contained in some affine open subset $U(\k)$ of $X(\k)$, and then the affine open subset $\bigcap_{\gamma \in \Gamma} \gamma \cdot U(\k)$ is non-empty and $\Gamma$-stable in $X(\k)$.
Conversely, assume that \ref{label:effectiveness descent datum} holds, and let $V \subset X$ be a $\Gamma$-stable affine dense open subset. Then $V':=G \cdot V \subset X$ is a $G$-stable quasi-projective dense open subset by \cite[Corollary~3]{Ben13}, and the fact that $V'$ is $\Gamma$-stable follows from the fact that the $\Gamma$-action on $X$ corresponds to an equivariant descent datum on $X$ (more precisely, it follows from Condition~\eqref{eq:Gamma-eq} in Definition~\ref{def:k-forms of G-varieties}).
\end{proof}

Let $(Z,\psi)$ be a $G$-model of $X_{0,\k}$. A \emph{$B$-chart} of $Z$ is a $B$-stable affine dense open subset of $Z$. It follows from a local structure theorem (see \cite[Theorem~4.7]{Tim11}) that the variety $Z$ is covered by $G$-translates of finitely many $B$-charts (but this cover is not unique in general). Let us mention that this key-fact, which generalizes Sumihiro's theorem for torus actions, was the starting point of the Luna-Vust theory.   

We denote 
\[\V_{Z}^{G}=\bigcup_{\text{$G$-orbits $Y \subseteq Z$}}  \V_{Y}^{G} \ \text{ and }\  \DD_{Z}^{B}=\bigcup_{\text{$G$-orbits $Y \subseteq Z$}}  \DD_{Y}^{B},\]
where $(\V_{Y}^{G},\DD_{Y}^{B})$ is the colored data of the $G$-orbit $Y \subseteq Z$.
Also, for a set $S$, we denote by $\mathfrak{P}(S)$ the powerset of $S$. 

The next result gives a combinatorial description of the $B$-charts of $Z$.

\begin{proposition}\label{prop:colored data of B-charts} \emph{(\cite[Corollary~13.9]{Tim11})}
The map 
\[ \{\text{$B$-charts of }Z\} \to \mathfrak{P}(\V_{Z}^{G}) \times \mathfrak{P}(\DD_{Z}^{B}),\ \mathring{Z} \mapsto  
 \bigcup_{\substack{\text{$G$-orbits $Y \subseteq Z$},\\ Y \cap \mathring{Z} \neq \varnothing}} \left(  \V_{Y}^{G}, \DD_{Y}^{B} \right) \]
 is a bijection between the $B$-charts of $Z$ and the pairs $(\W,\RR) \in   \mathfrak{P}(\V_{Z}^{G}) \times \mathfrak{P}(\DD_{Z}^{B})$ satisfying conditions $(C)$, $(F)$, and $(W)$ given in \S~\ref{sec:A.1}. 
 
 The converse map sends the pair $(\W,\RR)$ to $\Spec(A[\W,\RR])$, where
\begin{equation}\label{eq:B-chart as intersection of valuation rings}
 A[\W,\RR]= \left(\bigcap_{w \in \W} \O_w\right) \cap  \left(\bigcap_{D \in \RR} \O_{\nu_D}\right) \cap  \left(\bigcap_{D \in \DD \setminus \DD^B} \O_{\nu_D}\right) \subseteq  \k(Z)
 \end{equation} 
and $\DD$ is the set of prime divisors on $Z$ that are not $G$-stable. 
\end{proposition}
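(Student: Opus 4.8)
The plan is to treat this statement as a transcription into our notation of \cite[Corollary~13.9]{Tim11}: the substance is to exhibit the two displayed assignments and to check that they are mutually inverse, the conditions $(C)$, $(F)$, $(W)$ of \S~\ref{sec:A.1} being precisely what is needed to cut out the pairs that actually arise. I would begin with the reconstruction step, proving that for a $B$-chart $\mathring Z$ with colored data $(\W,\RR)$ one has $\O(\mathring Z)=A[\W,\RR]$. Since $Z$ is normal, $\O(\mathring Z)$ is the intersection of the valuation rings $\O_{\nu_D}$ over the prime divisors $D$ of $Z$ meeting $\mathring Z$, so everything reduces to identifying those divisors.

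The key point is a density lemma: every non-$B$-stable prime divisor $D'$ meets $\mathring Z$. Indeed, the closure $\overline{B\cdot D'}$ is $B$-stable and cannot be a proper divisor, for a $B$-stable divisor has finitely many prime components, which are permuted by the connected group $B$ and therefore each $B$-stable; this would contradict that $D'$ is non-$B$-stable. Hence $\overline{B\cdot D'}=Z$, and $D'$ meets every dense $B$-stable open, in particular $\mathring Z$. Consequently the prime divisors meeting $\mathring Z$ are exactly the $B$-stable ones that meet it — each either $G$-stable, contributing a valuation in $\W=\bigcup_{Y\cap\mathring Z\neq\varnothing}\V_Y^G$, or a color, contributing an element of $\RR=\bigcup_{Y\cap\mathring Z\neq\varnothing}\DD_Y^B$ — together with the whole of $\DD\setminus\DD^B$. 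This is precisely the intersection defining $A[\W,\RR]$ in \eqref{eq:B-chart as intersection of valuation rings}, so $\O(\mathring Z)=A[\W,\RR]$; in particular the first map is injective and the second is its one-sided inverse.

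It then remains to prove surjectivity: given a pair $(\W,\RR)$ satisfying $(C)$, $(F)$, $(W)$, the ring $A[\W,\RR]$ is a finitely generated $\k$-algebra whose spectrum is a $B$-stable affine dense open of a $G$-model of $X_{0,\k}$, with colored data equal to $(\W,\RR)$. Here I would invoke the local structure theorem \cite[Theorem~4.7]{Tim11} and the correspondence between $B$-stable prime divisors and $B$-invariant geometric valuations (Remark~\ref{rk:geometric valuations and stable divisors}): condition $(F)$ is designed to force finite generation, so that $\Spec(A[\W,\RR])$ is genuinely of finite type; condition $(C)$ ensures that the chosen valuations and colors are pairwise compatible; and condition $(W)$ guarantees that the elements of $\W$ are realised by $G$-stable divisors on the resulting chart. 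Recomputing the colored data of $\Spec(A[\W,\RR])$ and feeding it back into the reconstruction step shows the two maps are inverse.

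The main obstacle is exactly this surjectivity, and within it the finite generation of $A[\W,\RR]$ together with the verification that its spectrum carries the prescribed $G$-orbit structure; this is the technical heart of Luna--Vust theory and is where conditions $(F)$ and $(W)$ do their work. Since our statement is a verbatim transcription of \cite[Corollary~13.9]{Tim11}, in the write-up I would cite that result directly, reproducing only the short normality-plus-density argument of the reconstruction step, which is the part that is most transparent in our setting and that we use repeatedly in the sequel.
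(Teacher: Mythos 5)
The paper gives no proof of this proposition at all—it is stated as a direct citation of \cite[Corollary~13.9]{Tim11}—and your plan to cite that result for the hard (surjectivity/finite-generation) direction is exactly what the paper does. Your supplementary reconstruction argument (normality gives $\O(\mathring Z)=\bigcap\O_{\nu_D}$ over prime divisors meeting $\mathring Z$, and the density lemma shows every non-$B$-stable divisor meets any $B$-stable dense open, so the relevant divisors are precisely those indexed by $\W$, $\RR$, and $\DD\setminus\DD^B$) is correct and consistent with Timashev's treatment, so there is nothing to object to.
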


\begin{proposition}\label{prop:Gamma stability}
Let $(Z,\psi)$ be a $G$-model of $X_{0,\k}$.
The condition~\ref{label:existence descent datum} in Proposition~\ref{prop:two conditions existence of k forms for models} holds if and only if the colored data $\FF(Z,\psi)$ are preserved by the $\Gamma$-action on $\V^{G}$ and $\DD^B$ introduced in \S~\ref{sec:Gamma action on colored data}, i.e.~
\[\forall \gamma \in \Gamma,\ \forall (\V_{Y}^{G},\DD_Y^B) \in \FF(Z,\psi),\ \gamma \cdot (\V_{Y}^{G},\DD_Y^B) \in \FF(Z,\psi).\] 
\end{proposition}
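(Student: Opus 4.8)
The plan is to use the combinatorial dictionary of Luna--Vust theory (Theorem~\ref{th:Luna-Vust alg closed field} and Proposition~\ref{prop:colored data of B-charts}) to translate the \emph{regularity} of the birational transformations $\theta_\gamma:=\Theta_{(Z,\psi)}(\gamma)=\psi\circ\mu_\gamma\circ\psi^{-1}\in\Bir(Z)$ into the \emph{$\Gamma$-stability} of their combinatorial shadow $\FF(Z,\psi)$. First I would record two structural facts: that $\gamma\mapsto\theta_\gamma$ is a homomorphism (immediate from $\mu_{\gamma_1\gamma_2}=\mu_{\gamma_1}\circ\mu_{\gamma_2}$), and that each $\theta_\gamma$ is $\rho_\gamma$-semilinear, i.e.~$\theta_\gamma(g\cdot z)=\rho_\gamma(g)\cdot\theta_\gamma(z)$, which follows from the $G$-equivariance of $\psi$ together with the semilinearity \eqref{eq:Gamma-eq} of $\mu_\gamma$. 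In particular, whenever $\theta_\gamma$ is regular it is automatically an automorphism (its inverse being $\theta_{\gamma^{-1}}$), it carries $G$-orbits to $G$-orbits (as $\rho_\gamma(G)=G$), $G$-stable prime divisors to $G$-stable prime divisors, and colors, which are $B$-stable, to $\rho_\gamma(B)=e_\gamma Be_\gamma^{-1}$-stable divisors.

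The technical heart of the argument, which I would isolate as a lemma, is the compatibility of $\theta_\gamma$ with the $\Gamma$-action on $(\V^G,\DD^B)$ from \S~\ref{sec:Gamma action on colored data}: after transporting valuations and divisors through $\psi$, the pushforward by $\theta_\gamma$ coincides with $\gamma\cdot(-)$. This is essentially the reason the action was defined as it was in \cite{Hur11}: on $\V^G$ the formula reduces to $(\gamma\cdot\nu)(f)=\nu(\gamma^{-1}\cdot f)$ because $G$-invariant valuations are insensitive to the choice of $e_\gamma$, whereas on $\DD^B$ the conjugation by $e_\gamma$ is exactly the bookkeeping that turns the $\rho_\gamma(B)$-stable divisor $\theta_\gamma(D)$ back into a $B$-stable one. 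Concretely, the lemma asserts that $\theta_\gamma$ sends the $B$-chart $\Spec A[\W,\RR]$ of \eqref{eq:B-chart as intersection of valuation rings} to the locus with coordinate ring $A[\gamma\cdot\W,\gamma\cdot\RR]$, and that if $\theta_\gamma$ is an automorphism then $\gamma\cdot(\V_Y^G,\DD_Y^B)=(\V_{\theta_\gamma(Y)}^G,\DD_{\theta_\gamma(Y)}^B)$ for every $G$-orbit $Y\subseteq Z$. Getting this right --- in particular tracking the $e_\gamma$ and the uncolored divisors $\DD\setminus\DD^B$ appearing in \eqref{eq:B-chart as intersection of valuation rings} --- is the main obstacle, and I expect it to be the only genuinely delicate point; both implications are then formal.

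For the direct implication, assume condition~\ref{label:existence descent datum}, so that every $\theta_\gamma$ is an automorphism. Then $\theta_\gamma$ permutes the $G$-orbits of $Z$, and by the lemma it carries the colored data of an orbit $Y$ to that of the orbit $\theta_\gamma(Y)$, namely $\gamma\cdot(\V_Y^G,\DD_Y^B)=(\V_{\theta_\gamma(Y)}^G,\DD_{\theta_\gamma(Y)}^B)\in\FF(Z,\psi)$. As $Y$ and $\gamma$ are arbitrary, this is precisely the asserted $\Gamma$-stability of $\FF(Z,\psi)$.

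For the converse, assume $\FF(Z,\psi)$ is $\Gamma$-stable. By the local structure theorem $Z$ is covered by finitely many $G$-translates of $B$-charts, so it suffices to prove $\theta_\gamma$ regular on each $B$-chart $\mathring{Z}=\Spec A[\W,\RR]$; regularity on a $G$-translate $g\cdot\mathring{Z}$ then follows from $\theta_\gamma(g\cdot z)=\rho_\gamma(g)\cdot\theta_\gamma(z)$, and applying the result to $\gamma^{-1}$ produces the regular inverse, giving $\theta_\gamma\in\Aut(Z)$. By the lemma the pair $(\gamma\cdot\W,\gamma\cdot\RR)$ is the combinatorial datum of the birational image $\theta_\gamma(\mathring{Z})$; by $\Gamma$-stability it is again a union of colored data of $G$-orbits of $Z$ and still satisfies the chart conditions $(C)$, $(F)$, $(W)$, so Proposition~\ref{prop:colored data of B-charts} identifies it with an honest $B$-chart $\mathring{Z}'$ of $Z$. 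Hence $\theta_\gamma(\mathring{Z})$ is a $G$-translate of $\mathring{Z}'$, in particular an affine open subvariety of $Z$, and the induced map $\mathring{Z}\to\theta_\gamma(\mathring{Z})$ --- being the isomorphism of coordinate rings coming from $\theta_\gamma^{*}$ --- is regular, which completes the proof.
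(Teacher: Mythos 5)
Your proposal is correct and follows essentially the same route as the paper: the forward direction via the observation that a semilinear automorphism permutes $G$-orbits and transports their colored data according to the $\Gamma$-action, and the converse by identifying $\gamma\cdot(\W,\RR)$ (via $\Gamma$-stability and conditions (C), (F), (W)) with an actual $B$-chart of $Z$, deducing regularity of $\Theta_{(Z,\psi)}(\gamma)$ on that chart from the induced map of coordinate rings in \eqref{eq:B-chart as intersection of valuation rings}, and then spreading regularity to all of $Z$ using the $\rho_\gamma$-semilinearity and the covering by $G$-translates of $B$-charts. The compatibility statement you isolate as a lemma (including the $e_\gamma$-bookkeeping for colors) is exactly what the paper uses implicitly, so there is no substantive difference.
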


\begin{proof}
Let us assume that the image of $\Theta_{(Z,\psi)}$ is contained in $\Aut(Z)$. 
Relation~\eqref{eq:Gamma-eq} implies that if $\gamma \in \Gamma$ and $Y$ is a $G$-orbit of $Z$, then $Y':=\gamma \cdot Y$ is also a $G$-orbit. Moreover, the $\Gamma$-actions defined in \S~\ref{sec:Gamma action on colored data} satisfy
$\gamma \cdot (\V_{Y}^{G},\DD_Y^B)=(\V_{Y'}^{G}, \DD_{Y'}^{B})$, and so   
the colored data $\FF(Z,\psi)$ are preserved by the $\Gamma$-action on the colored equipment $(\V^{G},\DD^B)$ of $X_{0,\k}$.

Conversely, we now assume that the colored data $\FF(Z,\psi)$ are globally preserved by the $\Gamma$-action on $(\V^{G},\DD^B)$. 
Fix $\gamma \in \Gamma$. Let $Y$ be a $G$-orbit of $Z$, and let $\mathring{Z}$ be a $B$-chart of $Z$ such that $\mathring{Z} \cap Y \neq \varnothing$. 
According to Proposition~\ref{prop:colored data of B-charts} there exists a unique pair $(\W,\RR)$ such that $\Spec(A[\W,\RR])=\mathring{Z}$.
Then the pair $(\W',\RR'):=\gamma^{-1} \cdot (\W,\RR)$ also satisfies the conditions $(C)$, $(F)$, and $(W)$ (recalled in \S~\ref{sec:A.1}). Moreover, the $\Gamma$-stability of $\FF(Z,\psi)$ means that $\mathring{Z}'=\Spec(A[\W',\RR'])$ is a $B$-chart of $Z$ (and not just a $B$-chart of some $G$-variety in the $G$-birational class of $Z$). 

The element $\gamma \in \Gamma$ acts on $\k(Z)$ and, using \eqref{eq:B-chart as intersection of valuation rings}, we see that it maps $A[\W',\RR']$ to $A[\W,\RR]$. Hence, it corresponds to a regular morphism $\mathring{Z} \to \mathring{Z}'$. In other words, the open locus $U_\gamma \subseteq Z$ over which $\overline{\gamma}:=\Theta_{(Z,\psi)}(\gamma)=\psi \circ  \mu_\gamma \circ \psi^{-1}$ is defined contains $\mathring{Z}$. 
It follows from the relation~\eqref{eq:Gamma linearity of the action} that $U_\gamma$ is $G$-stable, so it also contains $G \cdot \mathring{Z}$. In particular, $U_\gamma$ contains the $G$-orbit $Y$. But $Y$ is an arbitrary $G$-orbit of $Z$, which means that $U_\gamma=Z$, i.e.~$\overline{\gamma}$ is an element of $\Aut(Z)$. Hence condition~\ref{label:existence descent datum} in Proposition~\ref{prop:two conditions existence of k forms for models} holds.
 \end{proof}

We can now state and prove Theorem~\ref{th:C}, which generalizes Theorem~\ref{th:Luna-Vust alg closed field} to the case where the base field is perfect but not necessarily algebraically closed.

\begin{theorem}\label{th:C}
Let $F$ be a reductive algebraic group over $k$, let $G=F_{\k}$, and let $X_0$ be an $F$-variety over $k$. The map 
\[(X,\delta) \mapsto \FF(X_{\k},\delta_{\k})=\left\{(\V_{Y}^{G},\DD_{Y}^{B}), \text{ for every $G$-orbit $Y \subseteq X_{\k}$} \right\},\]
where $(\V_{Y}^{G},\DD_{Y}^{B})$ is the colored data of the $G$-orbit $Y \subseteq X_{\k}$ (see Definition~\ref{def:colored data of G-orbits}), is a bijection between the equivalence classes of $F$-models of $X_0$ and the collections of pairs $(\W_i,\RR_i)_{i \in I}$, where $\W_i \subseteq \V^{G}$ and $\RR_i \subseteq \DD^B$ are subsets such that 
\begin{enumerate}[leftmargin=10mm]
\item \label{item:condition 1 LV perfect field} the conditions listed in \S~\ref{sec:A.2} are satisfied, i.e.~the collection $(\W_i,\RR_i)_{i \in I}$ corresponds to an equivalence class of a  $G$-model $(Z,\psi)$ of $X_{0,\k}$;
\item \label{item:condition 2 LV perfect field} the collection $(\W_i,\RR_i)_{i \in I}$ is globally preserved by the $\Gamma$-actions on $\V^{G}$ and $\DD^B$ introduced in \S~\ref{sec:Gamma action on colored data}; and
\item\label{item:condition 3 LV perfect field} the variety $Z$ is covered by $\Gamma$-stable affine open subsets.
\end{enumerate}
\end{theorem}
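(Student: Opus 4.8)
The plan is to assemble Theorem~\ref{th:C} from the pieces already established, treating it essentially as a bookkeeping combination of Theorem~\ref{th:Luna-Vust alg closed field}, Proposition~\ref{prop:two conditions existence of k forms for models}, Proposition~\ref{prop:Gamma stability}, and Lemma~\ref{lem:uniqueness of the k-form}. The key observation is that an equivalence class of $F$-models $(X,\delta)$ of $X_0$ is equivalent information to an equivalence class of a $G$-model $(Z,\psi)$ of $X_{0,\k}$ that admits a $(k,F,X_0)$-form; so I would first set up the composite map
\[(X,\delta) \mapsto (X_{\k},\delta_{\k}) \mapsto \FF(X_{\k},\delta_{\k})\]
and then argue that its image consists exactly of those colored data satisfying conditions \ref{item:condition 1 LV perfect field}--\ref{item:condition 3 LV perfect field}.

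First I would verify well-definedness and injectivity. If $(X_1,\delta_1)$ and $(X_2,\delta_2)$ are equivalent $F$-models, the remark following Definition~\ref{def:k-form of a model} shows $(X_{1,\k},\delta_{1,\k})$ and $(X_{2,\k},\delta_{2,\k})$ are equivalent $G$-models, hence have the same colored data by Theorem~\ref{th:Luna-Vust alg closed field}; so the map is well-defined on equivalence classes. For injectivity, suppose two $F$-models have the same colored data. Then their scalar extensions are equivalent $G$-models (again Theorem~\ref{th:Luna-Vust alg closed field}), and Lemma~\ref{lem:uniqueness of the k-form} gives that the $(k,F,X_0)$-form of a given $G$-model is unique up to equivalence; hence the two $F$-models are equivalent.

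Next I would characterize the image. A collection $(\W_i,\RR_i)_{i \in I}$ lies in the image precisely when it is the colored data of some $G$-model $(Z,\psi)$ of $X_{0,\k}$ \emph{that admits a $(k,F,X_0)$-form}. By Theorem~\ref{th:Luna-Vust alg closed field} the condition that the collection come from a $G$-model is exactly condition~\ref{item:condition 1 LV perfect field}. By Proposition~\ref{prop:two conditions existence of k forms for models}, such a $(Z,\psi)$ admits a $(k,F,X_0)$-form if and only if it satisfies condition~\ref{label:existence descent datum} (the image of $\Theta_{(Z,\psi)}$ lies in $\Aut(Z)$) together with the effectiveness condition~\ref{label:effectiveness descent datum}. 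Proposition~\ref{prop:Gamma stability} translates condition~\ref{label:existence descent datum} into the $\Gamma$-stability of $\FF(Z,\psi)$, which is precisely condition~\ref{item:condition 2 LV perfect field}; and condition~\ref{label:effectiveness descent datum} is verbatim condition~\ref{item:condition 3 LV perfect field}. Chaining these equivalences shows the image is exactly the set of collections satisfying \ref{item:condition 1 LV perfect field}--\ref{item:condition 3 LV perfect field}, and conversely any such collection arises from an $F$-model via the inverse assignment $(\W_i,\RR_i)_{i \in I} \mapsto (Z,\psi) \mapsto (Z/\Gamma, \text{descended } \psi)$.

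The main obstacle, and the step requiring the most care, is making sure the bijection is genuinely well-defined at the level of \emph{equivalence classes} and that the two notions of equivalence (of $F$-models over $k$ and of $G$-models over $\k$) match up cleanly through the $\Gamma$-descent; this is where Lemma~\ref{lem:uniqueness of the k-form} does the real work, since it is what prevents distinct $F$-models from collapsing to the same colored data. Everything else is a matter of citing the already-proved equivalences in the correct order, so the proof should reduce to a short paragraph of assembly once these matching-up points are made explicit.
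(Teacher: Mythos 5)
Your proposal is correct and follows essentially the same route as the paper: both assemble Theorem~\ref{th:C} from Theorem~\ref{th:Luna-Vust alg closed field} (for condition~\ref{item:condition 1 LV perfect field}), Proposition~\ref{prop:two conditions existence of k forms for models} combined with Proposition~\ref{prop:Gamma stability} (for conditions~\ref{item:condition 2 LV perfect field} and~\ref{item:condition 3 LV perfect field}), and Lemma~\ref{lem:uniqueness of the k-form} (to match equivalence classes of $F$-models with equivalence classes of $G$-models admitting a form). Your extra care about well-definedness and injectivity at the level of equivalence classes is the same bookkeeping the paper leaves implicit.
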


\begin{proof}
By Theorem~\ref{th:Luna-Vust alg closed field}, the map $(Z,\psi)\mapsto \F(Z,\psi)$ induces a bijection between equivalence classes of $G$-models $(Z,\psi)$ of $X_{0,\k}$ and the collections of pairs $(\W_i,\RR_i)_{i \in I}$, where $\W_i \subseteq \V^{G}$ and $\RR_i \subseteq \DD^B$ are subsets satisfying the condition~\ref{item:condition 1 LV perfect field}.

By Lemma~\ref{lem:uniqueness of the k-form}, if $(Z,\psi)$ admits a $(k,G,X_0)$-form then it is unique up to equivalence. Moreover, by Proposition~\ref{prop:two conditions existence of k forms for models}, it admits a $(k,G,X_0)$-form if and only if 
\begin{enumerate}[(a), leftmargin=*]
\item the image of the map $\Theta_{(Z,\psi)}$ is contained in $\Aut(Z)$, which is equivalent to the condition~\ref{item:condition 2 LV perfect field} by Proposition~\ref{prop:Gamma stability}; and 
\item the variety $Z$ is covered by $\Gamma$-stable affine open subsets, which is the condition~\ref{item:condition 3 LV perfect field}.
\end{enumerate}

Therefore, the equivalence classes of $F$-models of $X_0$ are in one-to-one correspondence with the collections of pairs $(\W_i,\RR_i)_{i \in I}$ satisfying the three conditions \ref{item:condition 1 LV perfect field}-\ref{item:condition 2 LV perfect field}-\ref{item:condition 3 LV perfect field} of the statement, and this correspondence is indeed given by the map $(X,\delta) \mapsto \F(X_{\k},\delta_{\k})$. This concludes the proof.
\end{proof}

\begin{remark}
In fact, condition~\ref{item:condition 3 LV perfect field} of Theorem~\ref{th:C} is equivalent to the fact that the $G$-variety $Z$ is covered by $\Gamma$-stable quasiprojective open $G$-subvarieties (see Proposition~\ref{prop:two conditions existence of k forms for models}), and the condition for a $G$-variety to be quasiprojective can be expressed in terms of the colored data.
However, if one prefers to work in the category of algebraic spaces instead of schemes (as Wedhorn in \cite{Wed18}), then one can eliminate condition~\ref{item:condition 3 LV perfect field}, which simplifies the statement.   
\end{remark}

\begin{remark}
If $k=\k$ and $X_0$ is a homogeneous space under some reductive algebraic group, then we recover the classical result of Luna-Vust \cite{LV83} which actually gives a combinatorial classification of the equivariant embeddings of $X_0$.
Other versions of Theorem~\ref{th:C} for varieties of complexity $\leq 1$ can be found in the literature; see \S~\ref{sec:caseof complexity one} for a detailed account.
\end{remark}

\subsection{The case of complexity \texorpdfstring{$\leq 1$}{<=1}}\label{sec:caseof complexity one}
We keep the same notation as in \S~\ref{sec:Gamma action on colored data}.

\begin{definition}\label{def:complexity}
The \emph{complexity} of an $F$-variety $X$ is the codimension of a general $B$-orbit of $X_{\k}$. A complexity-zero variety (i.e.~an $F$-variety $X$ such that $B$ acts on $X_{\k}$ with a dense open orbit) is usually called a \emph{spherical} variety.
\end{definition}

Let  $X_0$ be an $F$-variety of complexity $\leq 1$. Then the description of $F$-models of $X_0$ via the Luna-Vust theory is effective in the sense that the colored equipment of $X_0$ (see Definition~\ref{def:colored equipment}) can be expressed in terms of implementable combinatorial structures that allow us to study these varieties similarly as is done for toric varieties.

We now briefly recall what this combinatorial description looks like when the base field is algebraically closed and how Theorem~\ref{th:C} specializes in this situation. 

\smallskip

\begin{itemize}[leftmargin=*]

\item \textbf{Torus embeddings:} Let $T$ be a torus over $\k$, and let $Z_0 \simeq T$. 
Consider the $\Q$-vector space $N_{\Q}=N \otimes_\Z \Q$, with $N=\Hom_{gr}(\Gm,T)$.  It is a classical  result (see e.g. \cite{Ful93}) that equivalence classes of $T$-models of $Z_0$, also known as \emph{torus embeddings} or \emph{toric varieties}, are in bijection with combinatorial objects, known as  \emph{fans}. They are certain finite collections of convex cones  in the space  $N_{\Q}$.
In this situation $\V^T\simeq N$, $\DD^B=\varnothing$, and the cones of a fan of a  torus embedding $(Z,\iota)$ of $Z_0$ correspond to the colored data of $Z$.

Forms of torus embeddings (in the sense of Definition~\ref{def:k-forms of G-varieties}) were studied by Huruguen in \cite{Hur11} for arbitrary torus embeddings, by Elizondo--Lima-Filho--Sottile--Teitler in \cite{ELST14} for projective spaces and toric surfaces, and by Duncan in \cite{Dun16} who considered other notions of forms of torus embeddings.

In this setting, Theorem~\ref{th:C} corresponds to \cite[Theorem~1.22]{Hur11}.

\smallskip
\item \textbf{Complexity-one $T$-varieties:} Let again $T$ be a torus over $\k$, let $C$ be a curve over $\k$, and let $Z_0=T \times C$ on which $T$ acts by left-multiplication on $T$ and trivially on $C$. 
It follows from the work of Altmann-Hausen \cite{AH06} (affine case) and Altmann-Hausen-S\"u\ss \, \cite{AHS08} (general case) that equivalence classes of $T$-models of $Z_0$ are in bijection with the \emph{divisorial fans} on ($C,N_\Q$) with $N_\Q=\Hom_{gr}(\Gm,T) \otimes_\Z \Q$; roughly speaking these are $1$-dimensional families of pseudo-fans in $N_\Q$ parametrized by $C$ and constant over a dense open subset (see \cite[\S~5]{AHS08} for a precise definition).

Forms of affine complexity-one $T$-varieties were studied by Langlois in \cite{Lan15} (and by Gillard in \cite{Gil22,Gil} in arbitrary complexity). 
In this setting, Theorem~\ref{th:C} corresponds to \cite[Theorem~5.10]{Lan15}.

\smallskip
\item \textbf{Spherical embeddings:} Let $G$ be a reductive algebraic group over $\k$, let $B \subseteq G$ be a Borel subgroup, and let $Z_0 \simeq G/H$ be a $G$-homogeneous space with a dense open $B$-orbit. The notion of fan defined for torus embeddings generalizes to the notion of \emph{colored fan}, which is a finite collection $\mathcal{E}=\{(\mathcal{C}_i,\mathcal{F}_i), \ i \in I\}$ of \emph{colored cones} satisfying some properties (see \cite[\S~3]{Kno91} for details).

It follows from the work of Luna-Vust \cite{LV83} and Knop \cite{Kno91} that equivalence classes of $G$-models of $Z_0$, also known as \emph{spherical embeddings} or \emph{spherical varieties}, are in bijection with the \emph{colored fans} in some $\Q$-vector space $N_{\Q}$ (depending on $Z_0$). 
In this situation the colored cones of a colored fan of a spherical embedding $(Z,\iota)$ of $Z_0$ correspond to the colored data of $Z$. 

Forms of spherical embeddings were studied by Huruguen in \cite{Hur11} over perfect fields (see also \cite[\S~7]{BG21}), and by Wedhorn  (who works in the category of algebraic spaces instead of schemes) in \cite{Wed18} over arbitrary fields.

In this setting, Theorem~\ref{th:C} corresponds to \cite[Theorem~2.26]{Hur11}.

\smallskip
\item \textbf{Complexity-one $G$-varieties:} Let again $G$ be a reductive algebraic group over $\k$, and let $Z_0$ be a complexity-one $G$-variety. 
Timashev introduced in \cite{Tim97} (see also \cite[\S~16]{Tim11}) the notions of \emph{colored hypercones} and \emph{colored hyperfans} to represent colored data of models of complexity-one varieties in a way similar to the spherical case. He proves that there is a bijection between equivalence classes of $G$-models of $Z_0$ and colored hyperfans in some colored hyperspace (depending on $Z_0$). See also \cite{LT16} for an alternative combinatorial description of the $G$-models of $Z_0$ in terms of \emph{colored divisorial fans} in the case where $Z_0 \simeq G/H \times C$ with $C$ a curve and $G/H$ a horospherical homogeneous space (which means that $H$ contains a maximal unipotent subgroup of $G$).  

As far as we know, forms of complexity-one $G$-varieties have never been systematically studied, except for torus actions (i.e.~for complexity-one $T$-varieties).

\end{itemize}

\subsection{Strategy to determine the forms of almost homogeneous varieties}\label{sec:strategy}
We keep the same notation as in \S~\ref{sec:Gamma action on colored data}. 
Let $Z$ be a given almost homogeneous $G$-variety with dense open orbit $Z_0$. We are interested in classifying the $(k,F)$-forms of $Z$. Recall that these correspond to effective $(G,\rho)$-equivariant descent data on $Z$ (see \S~\ref{sec:descent data for algebraic varieties with group action}).
Let us note that if $\mu\colon \Gamma \to \Aut(Z)$ is an equivariant descent datum on $Z$, then it induces an equivariant descent datum on $Z_0$ by restriction. This suggests to start by studying the equivariant descent data on $Z_0$ (which correspond to $(k,F)$-forms of $Z_0$), and then 
to determine which ones extend to $Z$. 

(Also note that, in what follows, we will implicitly fix a $\k$-point in the dense open orbit $Z_0 \simeq G/H$ of $Z$, but this choice plays a role only in the calculations, not in the classification result of the isomorphism classes of $(k,F)$-forms of $Z$.)

\smallskip

The strategy to determine the isomorphism classes of $(k,F)$-forms of $Z$ is therefore the following:
\begin{enumerate}[leftmargin=7mm]
\item Determine whether the dense open orbit $Z_0 \simeq G/H$ admits a $(k,F)$-form with Proposition~\ref{prop:A} and Theorem~\ref{th:B}. 

\item If $Z_0$ has a $(k,F)$-form, then use Proposition~\ref{prop:H1 Galois k-forms} to parametrize the isomorphism classes of $(k,F)$-forms of $Z_0$.

\item Pick a $(k,F)$-form $X_0$ of $Z_0$ in each  isomorphism class of $(k,F)$-forms of $Z_0$ and apply Luna-Vust theory (Theorem~\ref{th:C}) to determine if the corresponding equivariant descent datum extends to an effective equivariant descent datum on $Z$, i.e.~if the $G$-model $(Z,\psi)$ of $X_{0,\k}$ admits a $(k,F,X_0)$-form $(X,\delta)$, see Definition~\ref{def:k-form of a model}, where $\psi\colon X_{0,\k} \dashrightarrow Z$ is the $G$-equivariant birational map obtained by composing the $G$-isomorphism $X_{0,\k} \xrightarrow{\sim} Z_0$ with the inclusion $Z_0 \hookrightarrow Z$.

\item Determine whether the natural homomorphism $\Aut_{\k}^{G}(Z) \hookrightarrow \Aut_{\k}^{G}(Z_0)$ is an isomorphism, in which case our strategy provides one representative for each isomorphism class of $(k,F)$-forms of $Z$. Otherwise one is left to determine which $(k,F)$-forms in each isomorphism class of $(k,F)$-forms of $Z_0$ correspond to equivariant descent data that 
extend to effective equivariant descent data on $Z$, and which ones are equivalent after extension.
\end{enumerate} 

\smallskip

This strategy, which was applied in \cite[\S~3.6]{MJT21} to determine the real forms of the complex horospherical varieties with Picard rank $1$, will be applied in the next section to determine the real forms of complex almost homogeneous $\SL_2(\C)$-threefolds.

\section{Real forms of complex almost homogeneous \texorpdfstring{$\SL_2$}{SL2}-threefolds}\label{sec:real forms of SL2-threefolds}
In this section, we study the real forms of the complex $\SL_2(\C)$-threefolds with a dense open orbit. 
Since the Galois group of $\C/\R$ is a group of order two, the notation introduced in \S\S~\ref{sec:Forms of algebraic groups and inner twists}-\ref{sec:descent data for algebraic varieties with group action} regarding the descent data for algebraic groups and algebraic varieties endowed with algebraic group action can be simplified; this is explained in \S~\ref{sec:real forms SL2}.
In \S~\ref{sec:SL2 homogeneous case}, we classify all the equivariant real structures on homogeneous spaces $\SL_2(\C)/H$ with $H$ a finite subgroup. Then, in \S~\ref{sec:almost homogeneous SL2 threefolds}, we explain how Theorem~\ref{th:C} specializes in this setting and illustrate it with some examples. Finally, in \S~\ref{sec:minimal models}, we apply the strategy described in \S~\ref{sec:strategy} to classify the equivariant real structures on minimal smooth completions of $\SL_2(\C)/H$ when $H$ is non-cyclic.

\subsection{Real forms, real structures, and real loci}\label{sec:real forms SL2}
In the case of  $k=\R$ and $\overline{k}=\C$, the Galois group is $\Gamma=\{1,\gamma\} \simeq \Z/2\Z$. Given a complex algebraic group $G$, a descent datum $\rho\colon \Gamma \to \Aut(G)$ satisfies : $\rho_1=Id_G$ and $\rho_\gamma=:\sigma$  is a scheme involution on $G$ such that the diagram 
\[
\xymatrix@R=4mm@C=2cm{
    G \ar[rr]^{\sigma} \ar[d]  && G \ar[d] \\
    \Spec(\C)  \ar[rr]^{\gamma^*\ =\ \Spec(z \mapsto \overline{z})} && \Spec(\C)  
  }
  \]
commutes, and the induced morphism $\gamma^*G \to G$ is an isomorphism of complex algebraic groups, where $\gamma^*G \to \Spec(\C)$ is the base change of $G \to \Spec(\C)$ along the morphism at the bottom of the square. 
We therefore call $\sigma$ a \emph{real group structure} on $G$. Two real group structures $\sigma_1$ and $\sigma_2$ on $G$ are called \emph{equivalent} if there exists a group automorphism $\psi \in \Aut_\C(G)$ such that $\sigma_2=\psi\circ\sigma_1\circ\psi^{-1}$.

Note that the map $\sigma \mapsto G/\left\langle \sigma \right\rangle$ gives a correspondence between equivalence classes of real group structures on $G$ and isomorphism classes of real forms of $G$, as a real algebraic group.

The \emph{real locus}  of a real group structure $\sigma$ on $G$ is the set of fixed points $G(\C)^\sigma$; it identifies with the group of $\R$-points of the corresponding real form $G/\left\langle \sigma \right \rangle$.

\smallskip

We now introduce the notion of \emph{equivariant real structure} for a complex $G$-variety. Given a complex algebraic group $G$ endowed with a real group structure $\sigma$, and a complex $G$-variety $X$, we say that $\mu$ is a \emph{$(G,\sigma)$-equivariant real structure} on $X$ if $\mu$ is a scheme involution on $X$  such that the diagram 
\[
\xymatrix@R=4mm@C=2cm{
    X \ar[rr]^{\mu} \ar[d]  && X \ar[d] \\
    \Spec(\C)  \ar[rr]^{\gamma^*\ =\ \Spec(z \mapsto \overline{z})} && \Spec(\C)  
  }
  \]
commutes, and such that
\[ \forall g \in G(\C), \forall x \in X(\C),\ \mu(g\cdot x)=\sigma(g)\cdot \mu(x).
\]  
As for real group structures on complex algebraic groups, an equivariant real structure $\mu$ on $X$ corresponds to an equivariant descent datum $\tilde{\mu}$ on $X$ given by $\tilde{\mu}_1=Id$ and $\tilde{\mu}_\gamma(x)=\mu(x)$, for all $x \in X$. 
Two $(G,\sigma)$-equivariant real structures $\mu_1$ and $\mu_2$ on $X$ are called \emph{equivalent} if there exists a $G$-equivariant automorphism $\varphi\in \Aut_{\C}^{G}(X)$ such that $\mu_2=\varphi\circ \mu_1\circ \varphi^{-1}$.

Given a $(G,\sigma)$-equivariant real structure $\mu$ on a $G$-variety $X$ such that $X$ is covered by $\left\langle \mu \right \rangle$-stable affine open subsets, the quotient $X/\left\langle \mu \right \rangle$ defines a real algebraic variety endowed with an algebraic action of the real algebraic group $F=G/\left\langle \sigma \right \rangle$; it is an $(\R,F)$-form of $X$ determined by $\mu$. 
Any real form of the $G$-variety $X$ is obtained from a unique equivariant real structure on $X$. But for a given equivariant real structure $\mu$ on $X$, the quotient $X/\left\langle \mu \right \rangle$ may not be an algebraic variety (only an algebraic space) in which case there is no real form associated with $\mu$. Also, two real forms $X/\left\langle \mu_1 \right \rangle$ and $X/\left\langle \mu_2 \right \rangle$ are isomorphic if and only if $\mu_1$ and $\mu_2$ are equivalent.   

The \emph{real locus} of a $(G,\sigma)$-equivariant real structure $\mu$ on $X$ is the (possibly empty) set  of fixed points $X(\C)^\mu$; it identifies with the set of $\R$-points of the corresponding real form $X/\left \langle \mu \right \rangle$. Moreover, it is endowed with an action of $G(\C)^\sigma$.

\smallskip

For the rest of this article we consider only the case $G=\SL_2(\C)$ and, to simplify the notation, we use the terminology of real structures instead of descent data.

Let $\sigma_s$ be the real group structure defined by 
$\sigma_s(g)=\text{\emph{the complex conjugate of $g$}}$. Its real locus is $\SL_2(\R)$. This structure is called \emph{split}, in the sense that the corresponding real form $G/\left\langle \sigma_s \right \rangle$ is split.  Let $\sigma_c$ be the real group structure defined by $\sigma_c(g)={}^{t}\sigma_s(g)^{-1}$. It is called \emph{compact} because its real locus is the compact real Lie group $\SU_2(\C)$. 
Any real group structure on $G$ is strongly equivalent to either $\sigma_s$ or $\sigma_c$ (see e.g. \cite[\S~V\!I.10]{Kna02}). 
Also, note that $\sigma_c= \inn_{e} \circ \sigma _s$ (i.e.~$\sigma_c(g)=e \sigma_s(g) e^{-1}$ for all $g \in G$), where $e=
\begin{bmatrix}
0 & 1 \\-1 & 0
\end{bmatrix}.$ In particular, $\sigma_c$ is an inner twist of $\sigma_s$ (see Definition~\ref{def:inner twist}). 

\smallskip

In the next subsections, we will study the equivariant real structures on almost homogeneous $G$-threefolds. For each such variety, we will often consider separately the $(G,\sigma_s)$-equivariant real structures and the $(G,\sigma_c)$-equivariant real structures on it.

\subsection{Real forms in the homogeneous case}\label{sec:SL2 homogeneous case} 
We start with the homogeneous case. The main goal of this subsection is to prove Theorem~\ref{th:D}.

\smallskip

A homogeneous $G$-threefold is $G$-isomorphic to $G/H$, where $H$ is a finite subgroup of $G=\SL_2(\C)$. Thus the first step to study the equivariant real structures on homogeneous $G$-threefolds  is to recall the set of conjugacy classes of all finite subgroups of $G$.

\begin{remark} \label{prop:finite subgroups of SL2}\emph{(Finite subgroups of $\SL_2(\C)$; see \cite{Klein93})}.
If $H$ is a finite subgroup of $\SL_2(\C)$, then it is conjugate to one of the following subgroups:
\begin{itemize}[leftmargin=4mm]
\item $(A_n$, $n \geq 1)$ The cyclic group, of cardinal $n$, generated by $\w_n=\begin{bmatrix}
\zeta_{n} & 0 \\ 0 & \zeta_{n}^{-1}
\end{bmatrix}$, where $\zeta_n$ is a primitive $n$-th root of unity.
\item $(D_n$, $n \geq 4)$ The binary dihedral group, of cardinal $4n-8$, generated by $A_{2n-4}$ and $\begin{bmatrix}
0 & i \\ i & 0
\end{bmatrix}$.
\item $(E_6)$ The binary tetrahedral group, of cardinal $24$, generated by $D_4$ and $\alpha=\frac{1}{2} \begin{bmatrix}
1-i & 1-i \\ -1-i & 1+i
\end{bmatrix}$.
\item $(E_7)$ The binary octahedral group, of cardinal $48$, generated by $E_6$ and $A_8$.
\item $(E_8)$ The binary icosahedral group, of cardinal $120$, generated by $A_{10}$, $\begin{bmatrix}
0 & 1 \\ -1 & 0
\end{bmatrix}$, and $\beta=\frac{1}{\zeta_5^2-\zeta_5^3} \begin{bmatrix}
\zeta_5+\zeta_5^{-1} & 1 \\ 1 & -\zeta_5-\zeta_5^{-1}
\end{bmatrix}$.
\end{itemize}
\end{remark}

We now prove the existence of an equivariant real structure on each $G/H$. Until the end of \S~\ref{sec:SL2 homogeneous case}, for a fixed subgroup $H$ of $G$, the notation $\modH{g}\in G/H$ will indicate the class of $g$ modulo $H$ (not to be confused with the complex conjugation).

\begin{lemma}\label{lem:existence of an equivariant structure}
Let $H$ be a finite subgroup of $G=\SL_2(\C)$, and let $\sigma$ be a real group structure on $G$. 
Then $G/H$ admits a $(G,\sigma)$-equivariant real structure. Moreover, for $\sigma \in \{ \sigma_s,\sigma_c\}$ and $H$ one of the subgroup of $G$ listed in Remark \ref{prop:finite subgroups of SL2}, a $(G,\sigma)$-equivariant real structure on $G/H$ is given by 
\[
G/H \to G/H,\ \modH{g} \mapsto \modH{\sigma(g)}.
\]
\end{lemma}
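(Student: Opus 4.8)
The plan is to check directly that, as soon as $\sigma(H)=H$, the displayed formula defines a $(G,\sigma)$-equivariant real structure, and then to reduce the whole lemma to verifying the stability $\sigma(H)=H$ for $\sigma\in\{\sigma_s,\sigma_c\}$.

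First, assume $\sigma(H)=H$ and set $\mu(\modH g):=\modH{\sigma(g)}$. The map is well defined: if $g_2=g_1h$ with $h\in H$, then $\sigma(g_2)=\sigma(g_1)\sigma(h)$ with $\sigma(h)\in\sigma(H)=H$, so $\modH{\sigma(g_1)}=\modH{\sigma(g_2)}$. It is an involution because $\sigma$ is, it is antiregular because it is induced on the quotient $G\to G/H$ by the antiregular map $\sigma$, and it satisfies the equivariance condition of \S\ref{sec:real forms SL2}, since for $g'\in G(\C)$ one has $\mu(g'\cdot\modH g)=\modH{\sigma(g'g)}=\modH{\sigma(g')\sigma(g)}=\sigma(g')\cdot\mu(\modH g)$. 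This yields the \emph{Moreover} part, granting $\sigma(H)=H$. The first assertion, for an arbitrary real group structure $\sigma$, then follows formally: every real group structure on $G=\SL_2(\C)$ is strongly equivalent to $\sigma_s$ or to $\sigma_c$, and the existence of a $(G,\sigma)$-equivariant real structure (equivalently, of an $(\R,F)$-form of $G/H$, as $G/H$ is quasiprojective) is preserved under strong equivalence by Corollary~\ref{cor:existence of forms for strongly eq rho}.

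It thus remains to prove $\sigma_s(H)=H$ and $\sigma_c(H)=H$ for each $H$ in Proposition~\ref{prop:finite subgroups of SL2}. The first step I would take is the observation that every generator $g$ listed there satisfies $\sigma_c(g)=g$ (i.e.\ is unitary), so that $H\subseteq G(\C)^{\sigma_c}=\SU_2(\C)$; for $\sigma_c$ this gives $\sigma_c(H)=H$ at once. For $\sigma_s$ I would exploit the relation $\sigma_c=\inn_e\circ\sigma_s$ recalled in \S\ref{sec:real forms SL2}: it gives $\sigma_s(g)=e^{-1}\sigma_c(g)\,e$, hence $\sigma_s(h)=e^{-1}h\,e$ for every $h\in H\subseteq\SU_2(\C)$, so that $\sigma_s(H)=e^{-1}He$. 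Thus $\sigma_s(H)=H$ is equivalent to $e\in N_G(H)$.

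Finally I would verify $e\in N_G(H)$ case by case. For $A_n$ one computes $e\,\w_n\,e^{-1}=\w_n^{-1}\in A_n$. For $D_n$ one adds the computation $e\,\bigl[\begin{smallmatrix}0&i\\ i&0\end{smallmatrix}\bigr]\,e^{-1}=-\bigl[\begin{smallmatrix}0&i\\ i&0\end{smallmatrix}\bigr]$, which lies in $D_n$ because $-I=\w_{2n-4}^{\,n-2}\in A_{2n-4}\subseteq D_n$. For $E_6,E_7,E_8$ the cleanest route is to note that $e$ in fact belongs to $H$: it is one of the listed generators of $E_8$, and it lies in $D_4\subseteq E_6\subseteq E_7$ (in the identification $\SU_2(\C)\simeq\{\text{unit quaternions}\}$ one has $D_4\simeq Q_8$ and $e\leftrightarrow\mathbf j$, whence $E_6\simeq 2T$, $E_7\simeq 2O$, $E_8\simeq 2I$ all contain $e$); since $e\in H$ implies $e^{-1}He=H$, stability follows. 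I expect this last point to be the main obstacle: for the binary polyhedral groups and $\sigma_s$, checking $\sigma_s(H)=H$ directly on the given generators is unpleasant, and the clean argument really relies on first placing $H$ inside $\SU_2(\C)$ and then recognizing $e$ as an element of $H$ via the quaternionic description.
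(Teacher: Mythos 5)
Your proof is correct, and it takes a genuinely different (and arguably cleaner) route through the key verification $\sigma(H)=H$. The paper checks this generator by generator: for $A_n$ and $D_n$ it calls the computation straightforward, and for the exceptional groups it computes $\sigma_s(\alpha)=\bigl[\begin{smallmatrix}i&0\\0&-i\end{smallmatrix}\bigr]\alpha$, $\sigma_c(\alpha)=\alpha$, $\sigma_s(\beta)=-\beta$, $\sigma_c(\beta)=-\beta^{-1}$ directly on the matrices of Proposition~\ref{prop:finite subgroups of SL2}. You instead isolate the uniform fact $H\subseteq\SU_2(\C)=G(\C)^{\sigma_c}$ (which the paper only records later, in the proof of Theorem~\ref{th:D}), so that $\sigma_c(H)=H$ is immediate, and then the identity $\sigma_c=\inn_e\circ\sigma_s$ converts the $\sigma_s$-case into the single normalizer condition $e\in N_G(H)$, settled for $E_6,E_7,E_8$ by the observation $e\in D_4\subseteq E_6\subseteq E_7$ and $e\in E_8$. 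This buys you a conceptual explanation of \emph{why} the split case works whenever the compact case does, at the cost of the quaternionic/normalizer detour; the paper's computation is more pedestrian but needs no auxiliary structure. Two small points: (i) your verification of the \emph{Moreover} part is fine, but the first assertion of the lemma concerns an \emph{arbitrary} finite subgroup $H$, so besides reducing $\sigma$ to $\{\sigma_s,\sigma_c\}$ via Corollary~\ref{cor:existence of forms for strongly eq rho} you must also reduce $H$ to a conjugate in the standard list — this is exactly Remark~\ref{rk:choice of the base point} and costs one sentence, but it should be said; (ii) the statement $\sigma_c(\beta)=\beta$ implicit in your unitarity claim is consistent with the paper's $\sigma_c(\beta)=-\beta^{-1}$ only because $\beta^2=-I_2$, which is worth noting so the two computations are visibly compatible.
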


\begin{proof}
If $H'$ is conjugate to $H$, then $G/H'$ admits a $(G,\sigma)$-equivariant real structure if and only if $G/H$ admits a $(G,\sigma)$-equivariant real structure (Remark~\ref{rk:choice of the base point}). Therefore we can assume that $H$ is one of the group listed in Remark~\ref{prop:finite subgroups of SL2}.
Also, since $\sigma$ is strongly equivalent (i.e.~conjugate by an inner automorphism) to $\sigma_s$ or $\sigma_c$, we can assume that $\sigma \in \{\sigma_s,\sigma_c\}$ by the second part of Proposition~\ref{prop:A}. 

We now check in each case that $\sigma(H)=H$, which will imply that the map $\modH{g} \mapsto \modH{\sigma(g)}$ is a $(G,\sigma)$-equivariant real structure on $G/H$ (this can be deduced from Proposition~\ref{prop:A} or even directly from the definition of an equivariant real structure).
 
The cases of $A_n$ and $D_n$ are straightforward. For $E_6$ we check that $\sigma_s(\alpha)=\begin{bmatrix}
i & 0 \\ 0 & -i
\end{bmatrix} \alpha$ and that $\sigma_c(\alpha)=\alpha$. This also implies the result for $E_7$. Finally for $E_8$ we check that $\sigma_s(\beta)=-\beta$ and that $\sigma_c(\beta)=-\beta^{-1}$, which concludes the proof.
\end{proof}

Now that we know the existence of a $(G,\sigma)$-equivariant real structure on each $G/H$, the next step is to use Propositions~\ref{prop:H1 Galois k-forms} and~\ref{prop:G-eq automorphisms of G/H} to determine the equivalence classes of all the $(G,\sigma)$-equivariant real structures on $G/H$. Since the set of such equivalence classes is in bijection with the first pointed set of Galois cohomology $\HH^1(\Gamma, N_G(H)/H)$, we need first to determine the group $N_G(H)/H$ in each case.

\begin{lemma}\label{lem:N(H)/H}
Let $G=\SL_2(\C)$, and let $H \subseteq G$ be one of the finite subgroups listed in Remark~\ref{prop:finite subgroups of SL2}. Then $N_G(H)$ and $N_G(H)/H$ are described below.  
\begin{itemize}[leftmargin=6mm]
\item If $n \in \{1,2\}$, then $N_G(A_n)=G$.  Also, $G/A_1 = G$ and $G/A_2=\PGL_2(\C)$.
\item If $n \geq 3$, then $N_G(A_n)= \Mon:= \left\{\begin{bmatrix}
* & 0 \\ 0 & *
\end{bmatrix}\right\} \cup \left\{ \begin{bmatrix}
0 & * \\ * & 0
\end{bmatrix}\right\}$ and $\Mon/A_n \simeq \Mon$.\\
$($Here $\Mon$ stands for the subgroup of monomial matrices in $G$; it is isomorphic to $\Gm\rtimes\Z/2\Z. )$
\item $N_G(D_4)=E_7$ and $E_7/D_4 \simeq \mathfrak{S}_3=\left\langle  \modH{\frac{1}{2}\begin{bmatrix}
1-i & 1-i \\ -1-i & 1+i
\end{bmatrix}},  \modH{\begin{bmatrix}
\zeta_8 & 0 \\ 0 & \zeta_8^{-1}
\end{bmatrix}}\right\rangle$.
\item If $n\geq 5$, then  $N_G(D_n)=D_{2n-2}$ and $D_{2n-2}/D_n \simeq \Z/2\Z=\left\langle \modH{\begin{bmatrix}
\zeta_{4n-8} & 0 \\ 0 & \zeta_{4n-8}^{-1}
\end{bmatrix}}\right\rangle$.
\item $N_G(E_6)=E_7$ and $E_7/E_6 \simeq \Z/2\Z=\left\langle \modH{\begin{bmatrix}
\zeta_{8} & 0 \\ 0 & \zeta_{8}^{-1}
\end{bmatrix}}\right\rangle$.
\item $N_G(E_7)=E_7$. 
\item $N_G(E_8)=E_8$. 
\end{itemize}
\end{lemma}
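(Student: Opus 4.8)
The plan is to split the seven cases according to whether $H$ contains $-I=Z(G)$. The cyclic groups $A_1$ and $A_n$ ($n$ odd) do not contain $-I$ and are handled by a direct eigenvalue computation in $\SL_2(\C)$; every other group on the list contains $-I$, and for these I would pass to the adjoint quotient. Writing $\pi\colon G=\SL_2(\C)\to\PGL_2(\C)\cong\SO_3(\C)$ for the projection with kernel $\{\pm I\}$, the key remark is that when $-I\in H$ one has $H=\pi^{-1}(\pi(H))$, so that $g$ normalises $H$ if and only if $\pi(g)$ normalises $\pi(H)$; consequently
\[ N_G(H)=\pi^{-1}\!\bigl(N_{\SO_3}(\pi(H))\bigr)\quad\text{and}\quad N_G(H)/H\cong N_{\SO_3}(\pi(H))/\pi(H). \]
This reduces the whole computation for the $-I$-groups to the geometry of the classical rotation groups, using that every finite subgroup of $\SO_3(\C)$ is conjugate to one of $C_m,D_m,\mathfrak{A}_4,\mathfrak{S}_4,\mathfrak{A}_5$.

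First I would dispatch the cyclic groups. If $n\in\{1,2\}$ then $A_n$ is trivial or equal to $Z(G)$, hence normal, so $N_G(A_n)=G$ and the homogeneous spaces are $G/A_1=G$ and $G/A_2=G/Z(G)=\PGL_2(\C)$. For $n\ge3$ the generator $\omega_n$ has two distinct eigenvalues, so its eigenlines are precisely the two coordinate axes; the diagonal torus $T$ centralises $A_n$ while the antidiagonal Weyl element conjugates $\omega_n$ to $\omega_n^{-1}$, so the monomial group $\Mon=N_G(T)$ normalises $A_n$. Conversely any $g\in N_G(A_n)$ carries $\omega_n$ to a generator of $A_n$ with the same eigenvalues, forcing $g\,\omega_n\,g^{-1}=\omega_n^{\pm1}$ (here $n\ge3$ is essential), so $g$ either preserves or swaps the coordinate axes and lies in $\Mon$. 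Finally $T\cong\Gm$ carries $A_n$ to $\mu_n$, whence $T/A_n\cong\Gm$, and since the Weyl element still acts by inversion, $\Mon/A_n\cong\Gm\rtimes\Z/2\Z\cong\Mon$.

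For the groups containing $-I$ I would first record the images under $\pi$: $\pi(D_n)$ is dihedral of order $2(n-2)$, and $\pi(E_6),\pi(E_7),\pi(E_8)$ are the tetrahedral, octahedral and icosahedral groups $\mathfrak{A}_4,\mathfrak{S}_4,\mathfrak{A}_5$. Then I would compute the normalizers in $\SO_3$: $N_{\SO_3}(V_4)=\mathfrak{S}_4$ for the Klein four-group $V_4=\pi(D_4)$; $N_{\SO_3}(D_m)=D_{2m}$ for the dihedral group of order $2m$ with $m\ge3$; and $N_{\SO_3}(\mathfrak{A}_4)=\mathfrak{S}_4$, $N_{\SO_3}(\mathfrak{S}_4)=\mathfrak{S}_4$, $N_{\SO_3}(\mathfrak{A}_5)=\mathfrak{A}_5$. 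Pulling back by $\pi$ and comparing orders identifies $N_G(H)$ with the named group, since the preimage of a dihedral (resp.\ polyhedral) subgroup under the spin cover $\pi$ is binary dihedral (resp.\ binary polyhedral): this yields $N_G(D_4)=E_7$, $N_G(D_n)=D_{2n-2}$ for $n\ge5$, $N_G(E_6)=E_7$, and $N_G(E_7)=E_7$, $N_G(E_8)=E_8$. The quotients then drop out of the displayed isomorphism: $E_7/D_4\cong\mathfrak{S}_4/V_4\cong\mathfrak{S}_3$, $D_{2n-2}/D_n\cong D_{2m}/D_m\cong\Z/2\Z$ (with $m=n-2$), and $E_7/E_6\cong\mathfrak{S}_4/\mathfrak{A}_4\cong\Z/2\Z$. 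To finish I would check the explicit representatives: $\overline{\alpha}$ has order $3$ modulo $D_4$ because $\langle D_4,\alpha\rangle=E_6$; the matrix $\diag(\zeta_8,\zeta_8^{-1})$ squares to $\omega_4\in D_4\subseteq E_6$ but is not itself in $E_6$ (which has no element of order $8$); and $\diag(\zeta_{4n-8},\zeta_{4n-8}^{-1})$ has order $4n-8$ yet squares into $A_{2n-4}\subseteq D_n$, so it represents the nontrivial class.

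The hard part will be the normalizer computations inside $\SO_3$. The dihedral rule $N_{\SO_3}(D_m)=D_{2m}$ for $m\ge3$ relies on $D_m$ having a unique cyclic subgroup of order $m$, so its rotation axis is canonical and must be preserved as an unoriented line by any normalising element; this confines the normalizer to the copy of $O(2)\cap\SO_3$ stabilising that axis, inside which the normalizer of $D_m$ is exactly $D_{2m}$. The case $m=2$, i.e.\ $V_4$, is genuinely exceptional precisely because $V_4$ has no distinguished axis: its normalizer permutes the three mutually orthogonal axes and so equals the full octahedral group $\mathfrak{S}_4$, which is what makes $D_4$ special in the statement. For the polyhedral groups the key point is finiteness of the normalizer: a positive-dimensional normalizer would contain a circle subgroup $SO(2)$, which being the identity component is normal, hence normalised by $H$, forcing $H\subseteq O(2)\cap\SO_3$ and contradicting that $H$ is polyhedral. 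Once finiteness is in hand, $N_{\SO_3}(H)$ is a finite subgroup containing $H$ as a normal subgroup, and the classification of finite subgroups of $\SO_3$ pins it down: $\mathfrak{A}_4$ is normal in $\mathfrak{S}_4$ and in no larger finite group, while $\mathfrak{S}_4$ and $\mathfrak{A}_5$ are maximal finite subgroups and therefore self-normalising.
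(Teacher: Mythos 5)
Your proposal is correct in substance and, for the subgroups containing $-I_2$, takes a genuinely different route from the paper. The paper treats $A_n$ and $D_n$ by direct matrix computation (imposing $ghg^{-1}\in H$ on the generators), and treats $E_6,E_7,E_8$ by first proving $N_G(H)$ is finite --- via the classification of infinite algebraic subgroups of $\SL_2(\C)$ (any such is conjugate into $\Mon$ or into a Borel subgroup $B$, citing van der Put--Singer) and ruling out both possibilities --- and then reading off the answer from containments among the finite subgroups. Your uniform reduction of every $-I_2$-containing case to $N_{\SO_3}(\pi(H))/\pi(H)$ via $H=\pi^{-1}(\pi(H))$ is a device the paper does not use: it handles $D_n$ and $E_n$ in one stroke through the classical normalizer computations for finite rotation groups, and it explains conceptually why $D_4$ is exceptional (the Klein four-group has no distinguished axis). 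For the cyclic groups your eigenvalue argument is essentially the paper's direct computation in more structural form. The identifications of the explicit coset representatives at the end are all correct.

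One step needs patching. Your finiteness argument for the polyhedral normalizers --- a positive-dimensional normalizer contains a circle $SO(2)$ which is its identity component --- is the compact real picture, but the normalizer must be computed in $\SO_3(\C)\cong\PGL_2(\C)$, where the identity component of a positive-dimensional closed subgroup may instead be a copy of $\Ga$, a Borel subgroup, or all of $\PGL_2(\C)$; the torus case is not the only one. The missing cases are easy to dispose of (if $N^\circ=\PGL_2(\C)$ then $\pi(H)$ would be normal in a simple group; if $N^\circ$ contains a nontrivial unipotent element then $\pi(H)$, normalizing $N^\circ$, fixes its unique fixed point on $\P^1$ and so lies in a Borel subgroup, whose finite subgroups are cyclic), but they must be addressed --- this is precisely the $\Mon$-versus-$B$ dichotomy that the paper's own proof carries out by invoking \cite[Theorem~4.29]{vdPS03}.
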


\begin{proof}
Remark~\ref{prop:finite subgroups of SL2} gives a list of generators for each finite subgroup $H \in\{A_n,D_n,E_n\}$ of $G$. 
Writing $g=\begin{bmatrix}
a & b \\ c & d
\end{bmatrix} \in G$, the condition $ghg^{-1} \in H$, for each generator $h$ of $H$, imposes conditions on the coefficients of $g$. In the case where $H \in \{A_n,D_n\}$, this is sufficient to determine $N_G(H)$ with very few calculations due to the simple form of the generators. (We omit details here.)

Let us now consider the case where $H=E_n$ with $n \in \{7,8,9\}$. Assume that $N_G(H)$ is infinite, then $N_G(H)$ is conjugate to a subgroup of $\Mon$ or of a Borel subgroup $B$ of $G$ (see e.g. \cite[Theorem~4.29]{vdPS03}). 
\begin{itemize}[leftmargin=*]
\item If $N_G(H)$ is conjugate to a subgroup of $\Mon$, then either $H$ is conjugate to a subgroup of $\Mon^0 \simeq \Gm$, which cannot be as $H$ is not abelian, or there is a surjective homomorphism $H \to \Z/2\Z$, induced by the exact sequence 
\[0 \to \Gm \to \Mon \to \Z/2\Z \to 0,\]
and so $H$ should contain an abelian normal subgroup of index $2$, which is not the case. 
\item If $N_G(H)$  is conjugate to a subgroup of $B$, then the generators of $H$ should have a common eigenvector which is not the case. 
\end{itemize}
Therefore $N_G(H)$ is not conjugate to a subgroup of $\Mon$ or $B$. Hence, $N_G(H)$ is a finite subgroup of $G$. But $E_6$ is a normal subgroup of $E_7$, and it is not a normal subgroup of any other finite subgroup of $G$, thus $N_G(E_6)=E_7$. Also, $E_7$ and $E_8$ are not contained in any larger finite subgroup of $G$, and so they must be self-normalized. 
\end{proof}

For each finite subgroup $H \subseteq G$, we want to determine the first Galois cohomology pointed set $\HH^1(\Gamma,N_G(H)/H)$. 
Any $1$-cocycle $c\colon \Gamma \to N_G(H)/H$ is determined by $c_\gamma \in N_G(H)/H$ since $c_1=\modH{1}$. We will denote by $[c_\gamma] \in \HH^1(\Gamma,N_G(H)/H)$ the cohomology class of the $1$-cocycle determined by $c_\gamma$. Actually, since $\Gamma \simeq \Z/2\Z $, the definition of this set simplifies a bit. 
Indeed, if $\Gamma \simeq \Z/2\Z $ and $A$ is a $\Gamma$-group, then
\[ 
\HH^1(\Gamma,A)=Z^1(\Gamma,A)/\sim, \text{\ where \ } Z^1(\Gamma,A)=\{ a \in A \ | \   a^{-1}= \gamma \cdot a \}
\] 
and $a_1$, $a _2 \in Z^1(\Gamma,A)$ satisfy $a_1 \sim a_2$ if $a_2=b^{-1} a_1 (\gamma \cdot  b)$ for some $b \in A$.

\smallskip

Before stating the next theorem, which is the main result of this subsection, we need some notation.  Recall that $e=\begin{bmatrix}
 0 & 1 \\-1 & 0 \end{bmatrix}$, let $f=\begin{bmatrix} 0 & i \\i & 0 \end{bmatrix}$, let $d=\frac{1}{\sqrt{2}}\begin{bmatrix} 1 & i \\i & 1 \end{bmatrix}$, and for $n\in\N$,  let $\w_n=\begin{bmatrix} \zeta_{n} & 0 \\0 & \zeta_{n}^{-1}\end{bmatrix}$ which are all elements of $G$. Note that, $d^{-1}=\sigma_s(d)$, $d^2=f$, and $\w_4e=-e\w_4=f$.

\begin{proposition} \label{prop:description of H1 for SL2/H}
Let $H$ be one of the finite subgroups of $G=\SL_2(\C)$ listed in Remark~\ref{prop:finite subgroups of SL2}, let $\sigma \in \{\sigma_s,\sigma_c\}$, and let 
$\Gamma=\{1,\gamma\} \simeq \Z/2\Z $ acting on $N_G(H)/H$ as follows:
\[
\forall \modH{g}=gH \in N_G(H)/H,\ \  \gamma \cdot \modH{g}:=\modH{\sigma(g)}. 
\]
Then an exhaustive list of class representatives of the finite set $\HH^1(\Gamma,N_G(H)/H)$ is given in the table below.
\end{proposition}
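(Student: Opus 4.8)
The plan is to treat each finite subgroup $H$ separately, computing $\HH^1(\Gamma, N_G(H)/H)$ from the explicit description $\HH^1(\Gamma, A) = Z^1(\Gamma, A)/\!\sim$ recalled just above the statement, in which a cocycle is an $a \in A := N_G(H)/H$ with $\overline{\sigma(a)} = a^{-1}$ and $a_1 \sim a_2$ means $a_2 = b^{-1} a_1\,\overline{\sigma(b)}$ for some $b \in A$. The structure of $A$ and a generating set are supplied by Lemma~\ref{lem:N(H)/H}, so the first step in each case is to compute the involution $\bar g \mapsto \overline{\sigma(g)}$ induced by $\sigma \in \{\sigma_s,\sigma_c\}$ on $A$; this only requires evaluating $\sigma_s$ and $\sigma_c$ on the explicit generators, for which the elementary identities $\sigma_s(\w_m)=\w_m^{-1}$, $\sigma_c(\w_m)=\w_m$ together with $\sigma_s(\alpha)=\w_4\alpha$ and $\sigma_c(\alpha)=\alpha$ (the latter two from Lemma~\ref{lem:existence of an equivariant structure}) do the bulk of the work.

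I would first dispatch the cases in which $A$ is finite, namely $H \in \{D_4, D_n\ (n\geq5), E_6, E_7, E_8\}$. The key observation is that in every one of these cases the induced involution of $A$ is \emph{trivial}. For $E_7, E_8$ this is vacuous ($A = 1$); for $E_6$ and $D_n$ ($n\geq5$) it is automatic since $A \simeq \Z/2\Z$ has no nontrivial automorphism; and for $D_4$, where $A \simeq \mathfrak{S}_3$, it follows from the above identities, which show that $\sigma$ fixes the generators $\overline\alpha$ and $\overline{\w_8}$ (note $\w_4 \in D_4$, so $\overline{\w_4\alpha} = \overline\alpha$, and $\overline{\w_8^{-1}} = \overline{\w_8}$). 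With a trivial action, $Z^1(\Gamma, A)$ is exactly the set of elements of order dividing $2$ and $\sim$ is ordinary conjugacy, so $\HH^1(\Gamma, A)$ is the set of conjugacy classes of involutions (together with the neutral class). This yields one class for $E_7, E_8$; the two classes of $\Z/2\Z$ for $E_6$ and $D_n$ ($n\geq5$); and the two classes $\{$neutral, transposition$\}$ for $D_4$, for either $\sigma$.

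For the two cases where $A$ is a positive-dimensional linear algebraic group I would reduce to classical Galois cohomology over $\R$. For $H = A_1$, $A = \SL_2(\C)$ and $\HH^1(\Gamma, \SL_2(\C))$ is $\HH^1(\R, \cdot)$ of the corresponding real form of $\SL_2$: it is trivial for the split structure $\sigma_s$ (Hilbert~90 via $\det\colon \GL_2 \twoheadrightarrow \Gm$) and has two elements for the compact structure $\sigma_c$, where $\SU_2 \simeq \SL_1(\mathbb H)$ and $\HH^1(\R, \SL_1(\mathbb H)) \simeq \R^*/\mathrm{Nrd}(\mathbb H^*) \simeq \Z/2\Z$. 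For $H = A_2$, $A = \PGL_2(\C)$ is adjoint and $\HH^1$ has two elements for either structure, corresponding to the two real forms $\PGL_2(\R)$ and $\mathrm{PU}_2$ of $\PGL_2$ (identified with $\Br(\R)[2] \simeq \Z/2\Z$).

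The main obstacle is the monomial case $H = A_n$ with $n \geq 3$, where $A = \Mon/A_n \simeq \Gm \rtimes \Z/2\Z$ is infinite and nonabelian. Here I would first pin down the induced action after the identification of Lemma~\ref{lem:N(H)/H}: on the diagonal part $T/A_n \simeq \Gm$ the structure $\sigma_s$ acts as the split conjugation $s \mapsto \bar s$, whereas $\sigma_c$ acts as $s \mapsto \bar s^{-1}$ (the norm-one torus), so that $\HH^1(\Gamma, T/A_n)$ is trivial for $\sigma_s$ and is $\Z/2\Z$ for $\sigma_c$. I would then analyse $Z^1(\Gamma, \Mon/A_n)$ by splitting it into its diagonal cocycles $\overline{\tau(t)}$ and its antidiagonal cocycles $\overline{\tau(t)w}$ (equivalently, run the nonabelian cohomology sequence attached to $1 \to T/A_n \to \Mon/A_n \to \Z/2\Z \to 1$). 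The diagonal cocycles are governed by the condition that $t\bar t^{\pm 1} \in \mu_n$ and collapse under the torus identifications, while the antidiagonal cocycles satisfy a congruence of the type $-\bar t/t \in \mu_n$; tracking the remaining identifications, and in particular the dependence of the number of surviving classes on the parity of $n$, is the delicate point. Once this is settled the table is complete, the other entries following from the routine evaluations above.
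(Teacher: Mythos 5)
Your handling of the finite cases is correct and matches the paper's argument: for $H\in\{D_n,E_n\}$ the induced involution on $N_G(H)/H$ is trivial (your verification via $\sigma_s(\alpha)=\w_4\alpha$ with $\w_4\in D_4$, and $\modH{\w_8^{-1}}=\modH{\w_8}$, is exactly the point), so $\HH^1$ is the set of conjugacy classes of elements of order dividing $2$, giving the stated entries. For $H=A_1$ and $A_2$ you take a genuinely different route: the paper identifies $\HH^1(\Gamma,G/Z(G))$ with the set of strong equivalence classes of inner twists of $\sigma$ (Lemma~\ref{lem:Z1 parametrizes inner twists}), of which there are exactly two, and then settles $A_1$ by lifting cocycles from $G/Z(G)$ to $G$ and observing that $\pm e$ do not lift; you instead invoke Hilbert~90, $\SU_2(\C)\simeq \SL_1(\mathbb{H})$ with $\HH^1\simeq \R^*/\mathrm{Nrd}(\mathbb{H}^*)$, and the Brauer-group description of $\HH^1(\R,\PGL_2)$. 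Both are valid; the paper's version is self-contained, yours is shorter if the classical facts are granted.

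The genuine gap is the case $H=A_n$ with $n\ge 3$, where $N_G(H)/H\simeq \Mon$ is infinite and nonabelian. This is the case carrying the only delicate table entries, including the parity-dependent answer for $\sigma_c$ that underlies Remark~\ref{rk:rk sec 3.2}, and you explicitly defer the computation (``tracking the remaining identifications\dots is the delicate point''). What must actually be proved, and is not in your outline, is: for $\sigma_s$ the diagonal cocycles are the $\modH{\diag(t,t^{-1})}$ with $|t|=1$ and all collapse to $[\modH{I_2}]$, while the antidiagonal cocycles split into exactly two inequivalent families represented by $\modH{f}$ and $\modH{f\w_{2n}}$ (to be rewritten as $\modH{e}$, $\modH{e\w_{2n}}$ when $n$ is even); for $\sigma_c$ with $n$ odd there are \emph{no} antidiagonal cocycles at all, since the cocycle condition on an antidiagonal element forces $-1\in\mu_n$, and the diagonal ones give precisely the two inequivalent classes $[\modH{I_2}]$ and $[\modH{-I_2}]$; for $\sigma_c$ with $n$ even one must exhibit the three classes and check they are pairwise inequivalent. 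Since the proposition asks for an exhaustive list of explicit representatives, deferring this computation leaves roughly half the table, including all of its representatives, unestablished.
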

\setlength\extrarowheight{2pt}
\[
\begin{array}{|c|l|l|}
\hline 
\text{Subgroup } H \subseteq G & \HH^1(\Gamma,N_G(H)/H)& \HH^1(\Gamma,N_G(H)/H)\\
&\text{for }\sigma=\sigma_s&\text{for }\sigma=\sigma_c\\
\hline 
\hline 
  A_1    &  [I_2]&  [I_2],\ \  [-I_2]  \\
  \hline
  A_2    & [\modH{I_2}],\ \ [\modH{e}] & [\modH{I_2}],\ \ [\modH{e}]   \\
      \hline
A_n, n\ge 3, n \text{ odd} &[\modH{I_2}],\ \ [\modH{f}] &[\modH{I_2}],\ \ [\modH{-I_2}]\\
  \hline
 A_n, n\ge 4, n \text{ even} &[\modH{I_2}],\ \ [\modH{e}],\ \  [\modH{e\w_{2n}}]&[\modH{I_2}],\ \ [\modH{e}],\ \  [\modH{\w_{2n}}]\\
   \hline
           D_{n}, n\ge 4    &  [\modH{I_2}], \ \ [\modH{\w_{4n-8}}] &  [\modH{I_2}], \  \  [\modH{\w_{4n-8}}]  \\
       \hline
        E_6    &   [\modH{I_2}], \ \ [\modH{\w_8}]  &  [\modH{I_2}], \ \ [\modH{\w_8}]  \\
          \hline
              E_7   &  [\modH{I_2}]  &  [\modH{I_2}]  \\
                \hline
                    E_8   &  [\modH{I_2}]  &  [\modH{I_2}]  \\
                    \hline
\end{array} 
\]

\smallskip

\begin{proof}
As a preliminary remark, let us note that the finiteness of $\HH^1(\Gamma, N_G(H)/H)$ is given by Corollary~\ref{cor:finiteness form homogeneous spaces}.

First of all,  when $N_G(H)/H$ is trivial, then $\HH^1(\Gamma, N_G(H)/H))$ is reduced to one element, the class of the identity matrix, for either of the choices of $\sigma$. This finishes the cases of $E_7$ and $E_8$.

Secondly, in the cases where $Z(G)=\{ \pm I_2\} \subseteq H$ (i.e.~the cases $A_n$, with n even, $D_n$ and $E_6$), the first cohomology set for $\sigma_s$ and for $\sigma_c$ have the same number of elements (Theorem~\ref{th:B}~\ref{item part 2 th B}). 
In fact, the  map sending $[\modH{g}]$ to $[\modH{eg}]$ induces a bijection between the two cohomology sets. This is easily checked by direct computation. (One uses the facts that $e\in N_G(H)$ for each $H$,  $e^2=-I_2$, and that $\sigma(e)=e$.) 
In particular, when $e\in H$ (i.e.~in the cases $D_n$, with $n$ even, and $E_6$), the two cohomology sets coincide. Let us note that there is one case where $e\not\in H$, but $-I_2$ is in $H$, and the two sets are different (it is the case $A_n, n \geq 4,n$ even).

Thus we are left to study all the finite subgroups for $\sigma_s$, and also the subgroups $A_n$ with $n$ odd  for $\sigma_c$.

We start by treating the cases where $N_G(H)/H$ is finite. 
\begin{itemize}[leftmargin=*]
\item \textbf{Case $H=D_4$.} Let $\alpha=\frac{1}{2}\begin{bmatrix}
1-i & 1-i \\ -1-i & 1+i
\end{bmatrix}$ and $\w_8= \begin{bmatrix}
\zeta_8 & 0 \\ 0 & \zeta_8^{-1}
\end{bmatrix}$. Then $\modH{\alpha^3}=\modH{\w_8^2}=\modH{I_2}$ and $N_G(D_4)/D_4 \simeq \mathfrak{S}_3=\{\modH{I_2}, \modH{\alpha}, \modH{\alpha^2}, \modH{\w_8},\modH{\w_8 \alpha},\modH{\w_8 \alpha^2}\}$.
We check that $\sigma(\modH{\alpha})=\modH{\alpha}$ and $\sigma(\modH{\w_8})=\modH{\w_8}$, and so $\Gamma$ acts trivially on $\mathfrak{S}_3$.
Thus $Z^1(\Gamma,\mathfrak{S}_3) =\{\modH{I_2}, \modH{\w_8}, \modH{\w_8 \alpha}, \modH{\w_8 \alpha^2}\}$, and two elements of $Z^1(\Gamma,\mathfrak{S}_3)$ are equivalent if and only if they are conjugate in $\mathfrak{S}_3$. This means that $\HH^1(\Gamma,\mathfrak{S}_3) $ has two elements which are $[\modH{I_2}]$ and $[\modH{\w_8}]$. 
\smallskip

\item \textbf{Case $H=E_6$ or $H=D_n$ with $n \geq 5$.}  In both cases, the quotient group $N_G(H)/H$ is isomorphic to $\Z/2\Z =\{\modH{I_2},\modH{\delta}\}$ with $\delta=\begin{bmatrix}
\zeta & 0 \\ 0 & \zeta^{-1}
\end{bmatrix}$ for some non-trivial primitive root of unity $\zeta$. 
We check that 
$\sigma(\modH{\delta})=\modH{\delta}$, and so $\Gamma$ acts trivially on $\Z/2\Z $.
Thus, in both cases for $\sigma$, we have 
\[
\HH^1(\Gamma,N_G(H)/H) \simeq Z^1(\Gamma,N_G(H)/H) \simeq N_G(H)/H=\{ \modH{I_2}, \modH{\delta}\}.
\]
This yields 
\[
\HH^1(\Gamma,N_G(H)/H)=\left\{
    \begin{array}{ll}
        \{[\modH{I_2}], [\modH{\w_8}]\} &\text{ when } H=E_6; \text{ and} \\
        \{[\modH{I_2}], [\modH{\w_{4n-8}}]\} &\text{ when } H=D_n \text{ with } n\geq 5.
    \end{array}
\right.
\]
\end{itemize}
\smallskip

It remains to treat the cases where $N_G(H)/H$ is infinite, i.e.~\textbf{the case $H=A_n$.}

We check that for $\sigma=\sigma_s$ and $n \geq 3$, the set of $1$-cocycles $Z^1(\Gamma,N_G(A_n)/A_n)$ is 
\[
\left\{\modH{\begin{bmatrix}
t & 0 \\ 0 & t^{-1}
\end{bmatrix}}; \ |t|=1 \right\} 
\sqcup \left\{ \modH{\begin{bmatrix}
0 & is \\ is^{-1} & 0
\end{bmatrix}}; \  s\in \R^* \right\}
\sqcup \left\{ \modH{\begin{bmatrix}
0 & is\zeta_{2n} \\ is^{-1}\zeta_{2n}^{-1} & 0
\end{bmatrix}}; \ s \in \R^* \right\}.
\]
Note that, when $n$ is odd, then the two last subsets are equal.  This is not the case for $n$ even. 
 To determine the cohomology set, we note that any element of $\left\{\modH{\begin{bmatrix}
t & 0 \\ 0 & t^{-1}
\end{bmatrix}};
 \ |t|=1 \right\}$ is equivalent  to $\modH{I_2}$, and any element of $\left\{ \modH{\begin{bmatrix}
0 & is \\ is^{-1} & 0
\end{bmatrix} }; \ s \in \R^* \right\}$  is equivalent  to $\modH{f}$. 
One checks also that these two  elements are  inequivalent. 
This means in particular that when $n$ is odd,  $\HH^1(\Gamma,N_G(A_n)/A_n)$ has  exactly two  elements when $\sigma=\sigma_s$ and $n \geq 3$. 
 When $n$ is even, there is a third equivalence class, coming the cycles of the form $\left\{ \modH{\begin{bmatrix}
0 & is\zeta_{2n} \\ is^{-1}\zeta_{2n}^{-1} & 0
\end{bmatrix}}; \ s \in \R^* \right\}$.
 
 Note that, when $n$ is even, the table indicates other representatives of the cohomology set.  Here we explain the choice in the table, from which the case of $\sigma=\sigma_c$ can be easily deduced.

\begin{itemize}[leftmargin=6mm]
\item \textbf{Case $H=A_n$ with $n\equiv 0 \,[4]$.} 
In this case, $\modH{e}=\modH{f}$, since $\w_4\in H$. Thus, for $\sigma=\sigma_s$, we have $\HH^1(\Gamma,N_G(A_n)/A_n)=\{[\modH{I_2}],[\modH{e}], [\modH{ew_{2n}}]\}$. 
Then the set $\HH^1(\Gamma,N_G(A_n)/A_n)$  for $\sigma=\sigma_c$ is obtained by multiplication by $\modH{e}$, and we get the result for $\sigma=\sigma_c$.
\smallskip
\item \textbf{Case $H=A_n$ with $n\equiv 2 \, [4] $ and $n\ge 6$.} 
In this case $\modH{e}=\modH{f\w_{2n}}$ and $\modH{e\w_{2n}}=\modH{f}$, and so the result is the same as in the case $n\equiv 0 \,[4]$. 
\smallskip
\item \textbf{Case $H=A_n$, with $n$ is odd and $n\ge 3$.}
We have already treated the case where $\sigma=\sigma_s$. Now we consider the case where $\sigma=\sigma_c$. Since $-I_2\not\in H$, we cannot deduce the result directly from that of $\sigma_s$. Here we find
\[
Z^1(\Gamma,N_G(A_n)/A_n)=
\left\{\modH{\begin{bmatrix}
t & 0 \\ 0 & t^{-1}
\end{bmatrix};} \ t\in\R^* \right\} .
\]
If $t> 0$, then $\modH{\begin{bmatrix}
t & 0 \\ 0 & t^{-1}
\end{bmatrix}}$ is equivalent to $\modH{I_2}$, and if $t<0$, then it is equivalent to $\modH{-I_2}$.  Moreover, $\modH{I_2}$ and $\modH{-I_2}$ are inequivalent. Thus $\HH^1(\Gamma,N_G(A_n)/A_n)=\{[\modH{I_2}],[\modH{-I_2}]\}$.
\end{itemize}

\smallskip

We are left with the final two cases, where $H=A_1$ or $A_2$. Direct calculations in these cases are more difficult, since $N_G(H)=G$. We will therefore use instead the well-known result on the number of distinct real group structures on $G$. 

\begin{itemize}[leftmargin=6mm]
\item \textbf{Case $H=A_2$.} In this case, for $\sigma \in \{\sigma_s,\sigma_c\}$, we have 
$\HH^1(\Gamma,N_G(A_2)/A_2)=\HH^1(\Gamma,G/Z(G))$  
which parametrizes the strong equivalence classes of inner twists of $\sigma$ by Lemma~\ref{lem:Z1 parametrizes inner twists}. But we already know that there are two such classes, namely those of $\sigma_s$ and $\sigma_c$, corresponding to $[\modH{I_2}]$ and $[\modH{e}]$ since $\sigma_c=\inn_e \circ \sigma_s$. 
\smallskip
\item \textbf{Case $H=A_1$.} We will use the previous result. Suppose the $g\in Z^1(\Gamma, G) \subseteq G$. Then the class modulo $A_2=Z(G)$ will be a $1$-cocycle in $G/Z(G)$. 
Thus, $[\modH{g}]=[\modH{I_2}]$ or $[\modH{g}]=[\modH{e}]$ in $\HH^1(\Gamma,G/Z(G))$ by the previous case.
But $e$ and $-e$ are not contained in $Z^1(\Gamma, G)$, hence $g$ must be equivalent to $I_2$ or $-I_2$ in $\HH^1(\Gamma,G)$.

If $\sigma=\sigma_s$, then $I_2$ and $-I_2$ are equivalent. Indeed, if $ b=\begin{bmatrix} i&0\\ 0 &-i\end{bmatrix}$, then $b^{-1}\sigma_s(b)=-I_2$. Hence $\HH^1(\Gamma,G)=\{[I_2]\}$.

If  $\sigma=\sigma_c$, then  $I_2$ and $-I_2$ are inequivalent. This can be seen for instance by checking the fixed locus of the corresponding involutions. We find that the involution $G \to G,\ g\mapsto-\sigma_c(g)$, corresponding to the class $[-I_2]$, has no fixed points. Hence $\HH^1(\Gamma,G)=\{[I_2],[-I_2]\}$.
\end{itemize}
\end{proof}

\begin{remark}\label{rk:rk sec 3.2}
The case $H=A_1$ provides an example where a homogeneous space $G/H$ admits a $(k,F)$-form and a $(k,F_c)$-form (with $F=G/\left\langle \sigma_s \right \rangle$ and $F_c=G/\left\langle \sigma_c \right \rangle$) but for which the number of isomorphism classes of $(k,F)$-forms is not equal to the number of isomorphism classes of $(k,F_c)$-forms, even though $\sigma_c$ is an inner twist of $\sigma_s$.  
\end{remark}

\smallskip

\noindent \emph{Proof of Theorem~\ref{th:D}:} 
Since $\sigma$ is strongly equivalent to either $\sigma_s$ or $\sigma_c$, we can assume that $\sigma \in \{\sigma_s,\sigma_c\}$ by the second part of Proposition~\ref{prop:A}.
Moreover, by Remark~\ref{prop:finite subgroups of SL2}, it suffices to consider the cases where $H \in \{A_n,D_n,E_n\}$.
The list of $(G,\sigma)$-equivariant real structures on $X=G/H$, which can be found in Appendix~\ref{ap:table2}, follows then from Proposition~\ref{prop:description of H1 for SL2/H} together with Proposition~\ref{prop:H1 Galois k-forms}. (We recall that for homogeneous spaces any descent datum is effective.)
It remains to determine the corresponding real locus in each case.

\smallskip

$\bullet$ Let us start with the case $\sigma=\sigma_c$.
First of all, notice that in all cases $H\subseteq \SU_2(\C)$, and the elements $-I_2$, $e$ and $\w_m$, for all $m \geq 1$, are also in $\SU_2(\C)$.  Let $\mu$ be one of the $(G,\sigma_c)$-equivariant real structures on $X$ listed in the table of Appendix~\ref{ap:table2}; it is of the form $\mu(\modH{g})=\modH{\sigma_c(g)t}$, where $t \in \{\pm I_2, e, \w_m\}$. Then $\modH{g}$ belongs to the real locus of $\mu$ if and only if $g^{-1} \sigma_c(g) \in H t^{-1}$. In particular, we must have $g^{-1} \sigma_c(g)  \in \SU_2(\C)$.

But for any $g\in G$, the element  $g^{-1}\sigma_c(g)$ is a matrix of the form 
$\begin{bmatrix}
r_1 & z \\ \overline{z} & r_2
\end{bmatrix}$, where $r_1, r_2\in\R_+$, and $\overline{z}$ denotes the conjugate of the complex number $z$ (not to be confused with the notation $\modH{g}=gH$ for an element $g \in G$). 
But the only such element which is in   $\SU_2(\C)$  is the identity matrix. 
Thus $g^{-1}\sigma_c(g)\in SU_2(\C)$ if and only if $g^{-1}\sigma_c(g)=I_2$.
This implies that the real locus of $\mu$ is 
\[
\left\{
    \begin{array}{cl}
        \SU_2(\C)/H & \text{ if } t \in H; \text{ and} \\
        \varnothing & \text{ if } t \not\in H.
    \end{array}
\right.
\]
We can therefore complete the column of the table in Appendix~\ref{ap:table2} corresponding to $\sigma=\sigma_c$.

\smallskip

$\bullet$ Let us now consider the case $\sigma=\sigma_s$.
The cases $H=A_1$ and $H=A_2$ are quite easy to deal with. Let us give details when $H=A_n$ with $n \geq 3$ and $n$ odd.
Write $g=\begin{bmatrix}
a & b \\ c & d
\end{bmatrix}$. We have $\modH{g}=\modH{\sigma_s(g)}$ if and only if, for some $n$-th root of unity $\xi=e^{i2\pi\frac{k}{n}}$ ($0 \leq k \leq n-1$), we have
\[
\begin{bmatrix}
a & b \\ c & d
\end{bmatrix}=
\begin{bmatrix}
\overline{a} & \overline{b} \\ \overline{c} & \overline{d}
\end{bmatrix} \begin{bmatrix}
\xi & 0 \\ 0 & \xi^{-1}
\end{bmatrix}
=
\begin{bmatrix}
\xi \overline{a} & \xi^{-1}\overline{b} \\ \xi\overline{c} & \xi^{-1}\overline{d}
\end{bmatrix} \Leftrightarrow 
\begin{bmatrix}
a & b \\ c & d
\end{bmatrix} \in \SL_2(\R) \begin{bmatrix}
\xi_0 & 0 \\ 0 & \xi_{0}^{-1}
\end{bmatrix},
\] 
where $\xi_0=e^{i\pi\frac{k}{n}}$ is a $2n$-th root of unity. 
Therefore $\modH{g}=\modH{\sigma_s(g)}$ if and only if 
$\modH{g} \in \SL_2(\R)\modH{I_2} \sqcup  \SL_2(\R)\modH{\w_{2n}}$.

We now consider the equality $\modH{g}=\modH{-\sigma_s(g)f}$ i.e.~$-g^{-1}\sigma_s(g)f \in A_n$. A direct computation yields that $-g^{-1}\sigma_s(g)f \in A_n$ if and only if $g^{-1}\sigma_s(g)=f$.
Denote $g'=gd$, where $d$ is the matrix defined before Proposition~\ref{prop:description of H1 for SL2/H}. Then
\small
\[
gf=\sigma_s(g) \Leftrightarrow g'd^{-1}f=\sigma_s(g')\sigma_s(d^{-1})  
\Leftrightarrow g'd=\sigma_s(g')d \Leftrightarrow g'=\sigma_s(g') \Leftrightarrow g' \in \SL_2(\R).
\]
\normalsize
Therefore $g \in \SL_2(\R) d^{-1} $, and so the real locus in $G/H$ is $\SL_2(\R) \modH{d^{-1}}$.

It remains to consider the equality $\modH{g}=\modH{-\sigma_s(g)f\w_{2n}}$. 
But a d direct computation yields that the equation $-g^{-1}\sigma_s(g)f\w_{2n} \in A_n$ 
if and only if $g^{-1}\sigma_s(g)=-f$. 
Then
\[
gf=-\sigma_s(g) 
\Leftrightarrow gd=-\sigma_s(gd)
\Leftrightarrow gd \in \SL_2(\R) \w_4 \Leftrightarrow g \in \SL_2(\R) \w_4 d^{-1}.
\]
But $\w_4 d^{-2}=-\w_4 f=e \in \SL_2(\R)$, hence $\SL_2(\R) \w_4 d^{-1}=\SL_2(\R)d$, and so the real locus in $G/H$ is $\SL_2(\R) \modH{d}$.

\smallskip

$\bullet$   To determine the real locus in the other cases when $H \in \{A_n,D_n\}$, the approach is similar using the special form of the elements of $H$ (namely diagonal and antidiagonal elements).

\smallskip

$\bullet$  In the case $H=E_6$ and $\mu(\modH{g})=\modH{\sigma_s(g)}$, we check that $g^{-1} \sigma_s(g) \in E_6$ if and only if $g^{-1} \sigma_s(g) \in D_4$. Hence, the corresponding real locus is the image of the real locus obtained for $H=D_4$ through the quotient map $G/D_4 \to G/E_6$. But the three components of the real locus of $\mu$ when $H=D_4$ are permuted by the action of $E_6/D_4 \simeq \Z/3\Z$, which gives the single component $\PSL_2(\R)\modH{I_2}$ for the real locus of $\mu$ when $H=E_6$.

For the last three cases with $\sigma=\sigma_s$ and $H \in \{E_6,E_7,E_8\}$, the real locus is determined by computer calculations using the software system \emph{SageMath} \cite{sage}. We omit here the details of these calculations.
\qed

\subsection{Real forms in the almost homogeneous case}\label{sec:almost homogeneous SL2 threefolds}
Fix a finite subgroup $H \subseteq G=\SL_2(\C)$, and let $B=\begin{bmatrix}
* & 0 \\ * &* 
\end{bmatrix}$ which is a Borel subgroup of $G$. The description of the combinatorial data to classify the $G$-equivariant embeddings of $G/H$ is detailed in Appendix~\ref{sec:appendix B}. 
In this subsection we make explicit the $\Gamma$-actions introduced in \S~\ref{sec:Gamma action on colored data}, on the colored equipment $(\V^G(G/H),\DD^B(G/H))$ of $G/H$,  and then specialize the Luna-Vust theory in this setting (Theorem~\ref{th:E}).

\smallskip

Let $\sigma \in \{\sigma_s,\sigma_c\}$, and let $\mu$ be a $(G,\sigma)$-equivariant real structure on $G/H$. 
To avoid a possible confusion with the complex conjugation, we will use the notation $\mu(gH)=\sigma(g)tH$, where  $t$ is the appropriate element of $G$, as indicated in the table of Appendix~\ref{ap:table2}. 
Then the Galois group $\Gamma=\{1,\gamma\}$ acts on $\C(G/H)$ as follows (here $\overline{\alpha}$ indicates the complex conjugate of $\alpha$):
\[
\forall f \in \C(G/H),\ \forall gH \in \mathrm{Def}(f),\ (\gamma \cdot f)(gH)=\overline{f (\sigma(g)tH)}.
\]

Recall that the set of colors $\DD^B(G/H)$ is identified with $\P^1/H$ (see \S~\ref{sec:B1}). The $\Gamma$-action on $\DD^B(G/H) \simeq \P^1/H$ is given by  
\[
 \left\{
    \begin{array}{ll}
\,        [\alpha:\beta]\cdot H \mapsto [\overline\alpha:\overline\beta]\cdot tH & \text{ if } \sigma=\sigma_s; \text{ and } \\
\,         [\alpha:\beta]\cdot H \mapsto [-\overline\beta:\overline\alpha]  \cdot tH &  \text{ if } \sigma=\sigma_c.
    \end{array}
\right.
\]
Finally, the $\Gamma$-action on $\V^G(G/H)$ is given by
\[
\forall j \in \DD^B(G/H) \simeq \P^1/H,\ \forall \nu(j,r)\in \V^G(G/H),\ \gamma \cdot \nu(j,r)=\nu(\gamma \cdot j,r).
\]
In particular, it should be noted that the $\Gamma$-action preserves the type of the $G$-orbits.

The $\Gamma$-action can be visualized on the "skeleton diagrams" introduced in Appendix~\ref{sec:appendix B} (see Figure~\ref{Vg diagram}). Indeed, for any $j\in\DD^B(G/H)$, consider the set of valuations of $\Vg$ of the form $\nu(j,r)$. Call this set the \emph{spoke} of $j$ in the diagram. Then the $\Gamma$-action  corresponds to a permutation of the spokes according to the $\Gamma$-action on $\DD^B(G/H)$. 
In particular, two spokes exchanged by $\Gamma$ must be of the same length.

\begin{remark}\label{rk:spherical behavior}
Let us note that, contrary to the case of spherical homogeneous spaces (see \cite[\S~2]{Hur11}), the $\Gamma$-action on the colored equipment of $X_0=G/H$ depends not only on $\sigma$, but also on $\mu$. In fact it is even possible, for a given $\sigma$, to have two equivalent $(G,\sigma)$-equivariant real structures on $X_0$ such that only one of them extends to a given $G$-equivariant embedding of $X_0$.
\end{remark}

The next result is mostly a consequence of Theorem~\ref{th:C} specialized in the case of almost homogeneous $\SL_2$-threefolds.

\begin{theorem}\label{th:E}
Let $H$ be a finite subgroup of $G=\SL_2(\C)$, let $\sigma$ be a real group structure on $G$ corresponding to the real form $F$ of $G$, and
let $\mu$ be a $(G,\sigma)$-equivariant real structure on $X_0=G/H$.
Let $X_0 \hookrightarrow X$ be a $G$-equivariant embedding of $X_0$. 
Then $\mu$ extends to an effective $(G,\sigma)$-equivariant real structure $\tilde{\mu}$ on $X$, which corresponds to an $(\R,F)$-form of $X$ through the map $X \mapsto X/\left\langle \tilde{\mu} \right \rangle$, if and only if
\begin{enumerate}[leftmargin=10mm]
\item\label{lab:item i for SL2 th} the  $\Gamma$-actions on $\V^G(G/H)$ and $\DD^B(G/H)$ induced by $\mu$ preserves the collection of colored data of the $G$-orbits of $X$; and
\item\label{lab:item ii for SL2 th} every $G$-orbit of $X$ of type $\BB_0$ or $\BB_-$ is stabilized by the $\Gamma$-action.
\end{enumerate}
\end{theorem}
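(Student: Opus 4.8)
The plan is to derive Theorem~\ref{th:E} from Theorem~\ref{th:C}, following the strategy of \S~\ref{sec:strategy}. Extending $\mu$ to an effective $(G,\sigma)$-equivariant real structure on $X$ is the same as producing an $(\R,F)$-form of $X$ (see \S~\ref{sec:real forms SL2}), which in turn amounts to showing that the $G$-model $(X,\psi)$ of $X_0=G/H$ --- where $\psi\colon G/H \dashrightarrow X$ is the open immersion --- admits an $(\R,F,X_0/\langle\mu\rangle)$-form in the sense of Definition~\ref{def:k-form of a model}. By Proposition~\ref{prop:two conditions existence of k forms for models}, such a form exists if and only if (a) the image of $\Theta_{(X,\psi)}$ lies in $\Aut(X)$, and (b) $X$ is covered by $\Gamma$-stable affine open subsets, or equivalently by $\Gamma$-stable quasiprojective open $G$-subvarieties.

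The first step is to identify (a) with condition~\ref{lab:item i for SL2 th}. This is exactly the content of Proposition~\ref{prop:Gamma stability}: the homomorphism $\Theta_{(X,\psi)}$ takes values in $\Aut(X)$ precisely when the collection of colored data $\FF(X,\psi)$ is globally preserved by the $\Gamma$-actions on $\V^G(G/H)$ and $\DD^B(G/H)$, which here are the ones made explicit at the beginning of this subsection. Since these actions preserve the type of the $G$-orbits, (a) is exactly condition~\ref{lab:item i for SL2 th}.

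The heart of the proof is to show that, under condition~\ref{lab:item i for SL2 th}, condition (b) is equivalent to condition~\ref{lab:item ii for SL2 th}. For this I would use the explicit description of the $G$-orbits and of their $B$-charts recalled in Appendix~\ref{sec:appendix B}, together with Proposition~\ref{prop:colored data of B-charts}. The key geometric input is that a $G$-orbit whose type is neither $\BB_0$ nor $\BB_-$ always possesses a $G$-stable quasiprojective open neighbourhood with enough combinatorial freedom that, together with its Galois translate, it can be enlarged to a $\Gamma$-stable quasiprojective open $G$-subvariety; whereas an orbit $Y$ of type $\BB_0$ or $\BB_-$ admits, up to the $G$-action, essentially a single such neighbourhood, and the resulting $G$-stable open is quasiprojective only when $Y$ is not interchanged with a distinct orbit by $\Gamma$. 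For the implication (b) $\Rightarrow$ \ref{lab:item ii for SL2 th}, if some $\BB_0$- or $\BB_-$-orbit $Y$ satisfied $\gamma\cdot Y\neq Y$, then no $\Gamma$-stable quasiprojective open $G$-subvariety could contain $Y$, contradicting (b). Conversely, assuming \ref{lab:item ii for SL2 th}, I would build the required cover orbit by orbit: a $\Gamma$-stable orbit $Y$ with $G$-stable quasiprojective neighbourhood $U_Y$ is covered by the $\Gamma$-stable open $U_Y\cap \gamma\cdot U_Y\ni Y$, while a Galois-swapped pair of orbits of the flexible types can be separated into disjoint neighbourhoods whose (still quasiprojective) union is $\Gamma$-stable.

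I expect the main obstacle to be exactly this last equivalence, namely extracting from the local classification of Appendix~\ref{sec:appendix B} the precise reason why the orbit types $\BB_0$ and $\BB_-$ obstruct the existence of $\Gamma$-stable quasiprojective $G$-covers when they are not Galois-stable, while the remaining orbit types never do. This requires a case-by-case analysis of the quasiprojectivity of the $G$-stable open subsets attached to each orbit type, and a check that both admissible real group structures $\sigma_s$ and $\sigma_c$ interact with the permutation of the spokes of the skeleton diagram (Figure~\ref{Vg diagram}) in a compatible way.
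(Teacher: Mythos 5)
Your overall route is the paper's: reduce to Theorem~\ref{th:C} (equivalently Proposition~\ref{prop:two conditions existence of k forms for models}), identify condition (a) with condition~\ref{lab:item i for SL2 th} via Proposition~\ref{prop:Gamma stability}, and then show that, given~\ref{lab:item i for SL2 th}, the covering condition is equivalent to~\ref{lab:item ii for SL2 th}. However, you leave the decisive step as an open ``case-by-case analysis,'' and that is a genuine gap: the single fact that makes both directions immediate is the quasiprojectivity criterion already recorded in \S~\ref{sec:B4} (from \cite[\S~6.7.1, Proposition~13]{Bou00}), namely that an equivariant embedding of $G/H$ is quasiprojective if and only if it contains \emph{at most one} $G$-orbit of type $\BB_0$ or $\BB_-$. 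You never isolate this statement, and your description of the obstruction (``essentially a single such neighbourhood \dots quasiprojective only when $Y$ is not interchanged'') does not yet amount to a proof.

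With that criterion in hand, the argument closes as follows. Using the equivalence of conditions~\ref{label:effectiveness descent datum} and~\ref{label:effectiveness descent datum2} in Proposition~\ref{prop:two conditions existence of k forms for models}, one may work with $\Gamma$-stable quasiprojective open $G$-subvarieties throughout. If every $\BB_0$- or $\BB_-$-orbit is $\Gamma$-stable, the combinatorial description of \S~\ref{sec:B4} produces a cover of $X$ by $\Gamma$-stable quasiprojective open $G$-subvarieties (each containing at most one such orbit), giving the ``if'' direction. For the ``only if'' direction your claim that no $\Gamma$-stable quasiprojective open $G$-subvariety can contain a non-$\Gamma$-stable orbit $Y$ of type $\BB_0$ or $\BB_-$ needs to be made precise: the point is that the \emph{minimal} $G$-stable open subset containing both $Y$ and $\gamma\cdot Y$ consists of exactly five orbits (the open orbit, the two type-$\CC$ orbits attached to $\nu$ and $\gamma\cdot\nu$ where $\V^G_Y=\{\nu\}$, and $Y$, $\gamma\cdot Y$); it contains two orbits of type $\BB_0$ or $\BB_-$ and is therefore not quasiprojective, and every $\Gamma$-stable open $G$-subvariety containing $Y$ must contain it. No interaction with the specific choice of $\sigma_s$ versus $\sigma_c$, and no further case analysis of the orbit types, is required.
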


\begin{proof}
According to Theorem~\ref{th:C}, the $(G,\sigma)$-equivariant real structure $\mu$ on $X_0$ extends to an effective $(G,\sigma)$-equivariant real structure $\tilde{\mu}$ on $X$ if and only if the collection of colored data of the $G$-equivariant embedding $X_0 \hookrightarrow X$ is $\Gamma$-stable, which is the condition~\ref{lab:item i for SL2 th} of the theorem, and $X$ is covered by $\Gamma$-stable affine open subsets.

Hence, it suffices to show that $X$ is covered by $\Gamma$-stable affine open subsets if and only if every $G$-orbit of $X$ of type $\BB_0$ or $\BB_-$ is stabilized by the $\Gamma$-action. 
For this we will use the following result: an equivariant embedding of $X_0$ is quasiprojective if and only if it has at most one $G$-orbit of type $\BB_0$ or $\BB_-$; see \cite[\S~6.7.1, Proposition~13]{Bou00}.

Suppose that $X$ satisfies the condition~\ref{lab:item ii for SL2 th}. 
Then, according to \S~\ref{sec:B4},  the variety $X$ is covered by $\Gamma$-stable quasiprojective open $G$-subvarieties, and thus by $\Gamma$-stable affine open subsets.

We now suppose that $X$ has a $G$-orbit $Y$ of type $\BB_0$ or $\BB_-$ which is not $\Gamma$-stable.
Let $\nu \in \V^G(G/H)$ be the valuation such that $\V_Y^G=\{ \nu \}$. It follows from \S~\ref{sec:B4} that $X$ contains a $G$-stable dense open subset $X'$ formed by the union of five $G$-orbits: the dense open orbit $X_0$, two orbits of type $\CC$ with colored data $(\{\nu\},\varnothing)$ and $(\{\gamma \cdot \nu\}, \varnothing)$, and the two orbits $Y$ and $\gamma \cdot Y$. The open subset $X'$ is not quasiprojective as it contains two orbits of type $\BB_0$ or $\BB_-$. 
But any $\Gamma$-stable  open $G$-subvariety of $X$ which contains $Y$ must contain $X'$. Thus the condition~\ref{label:effectiveness descent datum2} of Proposition~\ref{prop:two conditions existence of k forms for models} does not hold, and therefore $X$ is not covered by $\Gamma$-stable affine open subsets.
\end{proof}

\begin{example} \label{ex:baby example}
Let $X=\P^2\times\P^1=\P(R_0\oplus R_1)\times\P(R_1)$, where $R_i$  is the irreducible representation of dimension $i+1$ for $G=SL_2(\C)$. We consider the $G$-equivariant embedding of $G$ given by
\[
G \to \P^2 \times \P^1,\ 
\begin{bmatrix}
a & b \\ c &d
\end{bmatrix} \mapsto ([1:a:c],[b:d]).
\]
The orbit decomposition of $X$ is $\ell_1 \sqcup \ell_2 \sqcup S_1 \sqcup S_2 \sqcup X_0$, where the five orbits are the following (see \S~\ref{sec:B3}  for the types of the orbits and see Figure~\ref{Diagram for first example} for the diagram associated with this $G$-equivariant embedding):
\begin{itemize}[leftmargin=7mm]
\item $\ell_1=\{([0:u_1:u_2],[u_1:u_2])\}\simeq \P^1$ of type $\AA_2([0:1],[1:0],1,0)$.
\item $\ell_2=\{([1:0:0],[w_1:w_2])\}\simeq \P^1$ of type $\BB_+([1:0],0)$.
\item $S_1=\{([0:u_1:u_2],[w_1:w_2])\}$, with $[u_1:u_2]\not=[w_1:w_2]$, of type $\CC([0:1],1)$. This orbit is isomorphic to $\P^1\times\P^1\setminus\Delta$, where $\Delta$ is the diagonal. 
\item $S_2=\{([1:\alpha:\beta],[u_1:u_2])\}$ with $[\alpha:\beta]=[w_1:w_2]$, of type $\CC([1:0],0)$. This orbit is isomorphic to $\A^2\setminus\{0\}$. 
\item The dense open orbit $X_0 \simeq G$, which is the complement of all the other orbits. 
\end{itemize}

The $\Gamma$-action induced by $\pm\sigma_c$ on the colored equipment of $G$ does not preserve the collection of colored data of $X$, and thus the $(G,\sigma_c)$-equivariant real structures $\pm \sigma_c$ on $G$  do not extend to $X$. (Actually, one can check that $X$ admits no $(G,\sigma_c)$-equivariant real structure.)

On the other hand, the $\Gamma$-action induced by $\sigma_s$ fixes the colored data of each orbit of $X$, and so the $(G,\sigma_s)$-equivariant real structure $\sigma_s$ on $G$ extends to $X$. Moreover, since there is no orbit of type $\BB_0$ or $\BB_-$, this real structure on $X$ is effective and corresponds therefore to a real form of $X$. 
In fact, using \cite[Proposition~1.18]{MJT21} together with the fact that
\small 
\[ 
\mathbb{G}_{m,\C} \xrightarrow{\sim}  \Aut_{\C}^{G}(X) \subseteq \Aut_\C(X) \simeq \PGL_3(\C) \times \PGL_2(\C),\ \alpha \mapsto 
\left\{ \left( \begin{bmatrix}
\alpha^{-2} & 0 & 0 \\ 0 &\alpha & 0 \\ 0 & 0 & \alpha
\end{bmatrix},I_2 \right) \right \},  
\]
\normalsize
we check that the extension of $\sigma_s$ to $X$ is the only $(G,\sigma_s)$-equivariant real structure on $X$ up to equivalence. Its real locus is $\P_\R^2 \times \P_\R^1$.
\end{example}

\begin{figure}[ht]
\begin{tikzpicture}[scale=1.2]
\path (0,0) coordinate (origin);
\draw (0,0) circle (2pt) ;
\path (90:1.5cm) coordinate (P0);
\path (135:1.5cm) coordinate (P1);
\path (200:1.5cm) coordinate (P2);
\path (300:1.5cm) coordinate (P3);
\path (30:1.5cm) coordinate (P4);
\path (90:.8cm) coordinate (Q0);
\draw[line width=0.3mm] (Q0) -- +(180:2pt)  (Q0)-- +(0:2pt); ;
\draw[line width=0.5mm] (origin) -- (P0) (origin) -- (P1) (origin) -- (P2)(origin) -- (P3) (origin) -- (P4);
\fill[white] (0,0) circle (1.5pt) ;
\node at (-1,.4){$\vdots$};
\node at (-.3,.8){$0$};
\node at (-.2,1.3){$+$};
\draw[line width=0.3mm] (P4) -- +(120:3pt)  (P4)-- +(330:3pt); 
\node at (1.5,1){$1$};

\end{tikzpicture}
\caption{Diagram for Example~\ref{ex:baby example}.}
\label{Diagram for first example}
\end{figure}
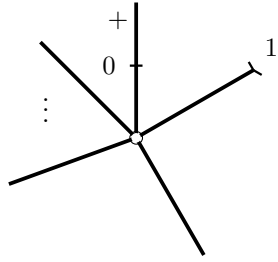

\begin{example} \label{ex:structure extends but the quotient is not a scheme}
Let $X$ be the $G$-equivariant embedding of $G$ with the following five $G$-orbits (see Figure~\ref{Diagram for second example} for the corresponding diagram): 
\begin{itemize}[leftmargin=7mm]
\item the dense open orbit $X_0 \simeq G$;
\item two orbits of type $\CC$, one associated with the valuation $\nu_1=\nu([1:0],\frac{1}{2})$ and the other with the valuation $\nu_2=\nu([0:1],\frac{1}{2})$; and
\item two orbits of type $B_-$, one with $\V^G_Y=\{\nu_1\}$ and the other with $\V^G_Y=\{\nu_2\}$. 
\end{itemize}
This variety is smooth (as follows from \cite[Theorem~3.1]{MJ90}) but not quasiprojective as it contains two orbits of type $B_-$.

The collection of colored data of $X$ is stable for the $\Gamma$-action induced by $\sigma_s$ or $\pm\sigma_c$. For $\mu=\sigma_s$, each orbit of type $B_-$ is fixed, and so the $(G,\sigma_s)$-equivariant real structure $\mu$ is effective and the corresponding quotient $X/\Gamma$
is a real variety. On the other hand, for $\mu=\pm\sigma_c$, the two orbits of type $B_-$ are permuted. Thus the corresponding quotient $X/\Gamma$, which exists as a real algebraic space, is not a real variety. Therefore $X$ admits a real form if and only if $\mu=\sigma_s$. 
\end{example}

\begin{figure}[ht]
\begin{tikzpicture}[scale=1.2]
\path (0,0) coordinate (origin);
\draw (0,0) circle (2pt) ;
\path (90:1.5cm) coordinate (P0);
\path (135:1.5cm) coordinate (P1);
\path (200:1.5cm) coordinate (P2);
\path (300:1.5cm) coordinate (P3);
\path (30:1.5cm) coordinate (P4);
\path (30:1.1cm) coordinate (P5);
\path (30:.8cm) coordinate (P6);

\path (90:.8cm) coordinate (Q0);
\path (90: 1.1cm) coordinate (Q1);
\draw[line width=0.3mm] (Q0) -- +(180:2pt)  (Q0)-- +(0:2pt); 
\draw[line width=0.3mm] (Q1) -- +(180:2pt)  (Q1)-- +(0:2pt); 
\draw(origin) -- (P0) (origin) -- (P1) (origin) -- (P2)(origin) -- (P3) (origin) -- (P4);
\draw[line width=0.5mm] (Q1) -- (P0)  (P5) -- (P4);
\fill[white] (0,0) circle (1.5pt) ;
\node at (-1,.4){$\vdots$};
\node at (-.3,.8){\tiny{$0$}};
\node at (-.2,1.3){\tiny{$-$}};
\node at (.3,1.1){\tiny{$\frac{1}{2}$}};
\draw[line width=0.3mm] (P6) -- +(120:3pt)  (P6)-- +(330:3pt); 
\draw[line width=0.3mm] (P5) -- +(120:3pt)  (P5)-- +(330:3pt); 
\node at (.6,.7){\tiny{$0$}};
\node at (1,.9){\tiny{$-$}};
\node at (1.1,.3){\tiny{$\frac{1}{2}$}};


\end{tikzpicture}
\caption{Diagram for Example~\ref{ex:structure extends but the quotient is not a scheme}.}
\label{Diagram for second example}
\end{figure}
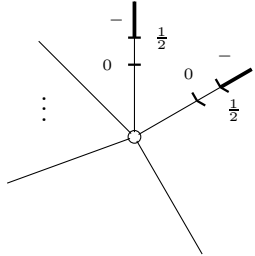

\begin{remark}
In the same vein as Example~\ref{ex:structure extends but the quotient is not a scheme}, we refer to \cite{LMV89} for the construction of examples of three-dimensional Moishezon manifolds which are not schemes and on which $G$ acts with a dense open orbit.
\end{remark}

\begin{example}\label{ex:first bis example}
Let $X=\P^1\times\P^1 \times \P^1=\P(R_1)\times\P(R_1) \times \P(R_1)$. Then the stabilizer of the point $x=([1:1],[1:0],[0:1])$ is $H=\{ \pm I_2 \}=A_2$, and so $(X,x)$ is a $G$-equivariant embedding of $G/H=\PGL_2(\C)$. 
The orbit decomposition of $X$ is $\ell \sqcup S_1 \sqcup S_2 \sqcup S_3 \sqcup X_0$, where $X_0 \simeq \PGL_2(\C)$ is the dense open orbit, the $S_i \simeq \P^1 \times \P^1 \setminus \Delta$ are of type $\CC([1:1],1),\CC([1:0],1), \CC([0:1],1)$ respectively, and $\ell \simeq \P^1$ is of type $\AA_3([1:1],[1:0],[0:1],1,1,1)$; see Figure~\ref{Diagram for first-bis example} for the corresponding diagram.

Let us note that
\[
\mathfrak{S}_3 \simeq \Aut_\C^G(X) \hookrightarrow \Aut_\C^G(X_0) \simeq \PGL_2(\C), \ (12) \mapsto \begin{bmatrix}
 i & 0 \\i & -i
\end{bmatrix} \ \text{and}  \ (23) \mapsto \begin{bmatrix}
0 & i \\ i & 0
\end{bmatrix},
\]
where the symmetric group $\mathfrak{S}_3$ acts on $X$ by permuting the three factors. Then, using Proposition~\ref{prop:H1 Galois k-forms}, a direct computation of $\HH^1(\Gamma,\Aut_\C^G(X))$ yields that $X$ admits exactly two equivalence classes of $(G,\sigma)$-equivariant real structures for each $\sigma \in \{\sigma_s,\sigma_c\}$. 

Let us first consider the case $\sigma=\sigma_s$. The $\Gamma$-action induced by the equivariant real structure $\mu_1\colon gH \mapsto \sigma_s(g)eH$ does not preserve the collection of colored data of $X$, and so $\mu_1$ does not extend to $X$. On the other hand, the $\Gamma$-action induced by the equivariant real structure $\mu_2\colon gH \mapsto \sigma_s(g)H$ stabilizes the colored data of each orbit of $X$, and so it extends to an equivariant real structure $\widetilde{\mu_2}$ on $X$. Moreover, the equivariant real structure $\mu_3\colon gH \mapsto \sigma_s(g)fH$, which is equivalent to $\mu_2$, also extends to an equivariant real structure $\widetilde{\mu_3}$ on $X$, but $\widetilde{\mu_2}$ and $\widetilde{\mu_3}$ are inequivalent as real structures on the  $G$-variety $X$. Hence  $\widetilde{\mu_2}$ and $\widetilde{\mu_3}$ are the two inequivalent equivariant real structures on $X$. Also, the real locus of $\widetilde{\mu_2}$ is $\P_\R^1 \times \P_\R^1 \times \P_\R^1$ and the real locus of $\widetilde{\mu_3}$ is $\P_\R^1 \times \mathbb{S} \simeq \P_\R^1 \times \P_\C^1$, where $\mathbb{S}=\{ ([u_0:u_1],[\overline{u_0}:\overline{u_1
}]) \} \subseteq \P_\C^1 \times \P_\C^1$.

The situation is very similar when $\sigma=\sigma_c$. Indeed, we check that among the two inequivalent equivariant real structures $\mu_4\colon gH \mapsto \sigma_c(g)H$ and $\mu_5\colon gH \mapsto \sigma_c(g)eH$ on $G/H$, only $\mu_5$ extends to an equivariant real structure $\widetilde{\mu_5}$ on $X$, that $\mu_6\colon gH \mapsto \sigma_c(g)efH$, which  is equivalent to $\mu_5$, also extends to an equivariant real structure $\widetilde{\mu_6}$ on $X$, but that $\widetilde{\mu_5}$ and $\widetilde{\mu_6}$ are inequivalent. 
Hence  $\widetilde{\mu_5}$ and $\widetilde{\mu_6}$ are the two inequivalent equivariant real structures on $X$. Also, the corresponding real loci are both empty. Indeed, the real locus of a complex smooth variety is either empty or Zariski dense (see e.g.~\cite[Corollary~2.2.10]{Man20}).
\end{example}

\begin{figure}[ht]
\begin{tikzpicture}[scale=1.2]
\path (0,0) coordinate (origin);
\draw (0,0) circle (2pt) ;
\path (90:1.5cm) coordinate (P0);
\path (135:1.5cm) coordinate (P1);
\path (200:1.5cm) coordinate (P2);
\path (300:1.5cm) coordinate (P3);
\path (30:1.5cm) coordinate (P4);
\path (90:.8cm) coordinate (Q0);
\draw [line width=0.5mm] (origin) -- (P0) (origin) -- (P1) (origin) -- (P2)(origin) -- (P3) (origin) -- (P4);
\fill[white] (0,0) circle (1.5pt) ;
\node at (-1,.4){$\vdots$};
\draw[line width=0.3mm] (P4) -- +(120:3pt)  (P4)-- +(330:3pt);
\draw[line width=0.3mm] (P0) -- +(0:3pt)  (P0)-- +(180:3pt);
\draw[line width=0.3mm] (P3) -- +(30:3pt)  (P3)-- +(210:3pt);
\end{tikzpicture}
\caption{Diagram for Example~\ref{ex:first bis example}.}
\label{Diagram for first-bis example}
\end{figure}
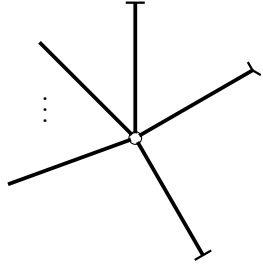

\begin{remark}\label{rk:Gamma map}
Let $X$ be a $G$-equivariant embedding of some $X_0=G/H$. 
Then any $(G,\sigma)$-equivariant real structure on $X$ restricts to a $(G,\sigma)$-equivariant real structure on the dense open orbit $X_0$. This restriction map induces a map
\begin{align*}
\Upsilon \colon &\{\text{equivalence classes of $(G,\sigma)$-equivariant real structure on $X$}\} \\
&\to  \{\text{equivalence classes of $(G,\sigma)$-equivariant real structure on $X_0$}\}.
\end{align*}
The previous examples show that the map $\Upsilon$ is generally neither injective nor surjective.
However, it is injective for instance when $\Aut_{\C}^{G}(X) \simeq \Aut_{\C}^{G}(X_0)$.
\end{remark}

\subsection{Real forms of minimal smooth completions of \texorpdfstring{$G/H$}{G/H}} \label{sec:minimal models}
In this subsection we follow the strategy given in \S~\ref{sec:strategy} to determine the real forms of the minimal smooth completions of $X_0=G/H$ when $H\subseteq G=\SL_2(\C)$ is non-cyclic, i.e.~conjugate to $D_n$ or $E_n$ (Corollary~\ref{cor:F}).

\smallskip

Here we call \emph{minimal smooth completion} of $X_0$ a $G$-equivariant embedding $X_0 \hookrightarrow X$ such that $X$ is a smooth complete variety and any $G$-equivariant birational morphism $X \to X'$, with $X'$ smooth, is an isomorphism. 
(Also, to simplify the situation, we identify the minimal smooth completions $X_0 \hookrightarrow X$ of $X_0$ that coincide up to an element of $\Aut_\C^G(X_0)$.
In other words, we consider two embeddings of $X_0$ equivalent if the underlying varieties are isomorphic as $G$-varieties.)

The minimal smooth completions of $X_0$ when $H$ is a finite subgroup of $G$ are well-known.
They were classified and studied by Mukai-Umemura in \cite{MU83} (for $H$ conjugate to $E_7$ or $E_8$), by Umemura in \cite[\S~4]{Ume88} (for $H$ conjugate to $D_n$), and by Nakano in \cite{Nak89} (for all $H$, but under projectivity assumption) as an application of Mori theory. Finally, the full classification (with the non-projective cases for $H$ conjugate to $A_n$) and the description of the corresponding colored data were obtained by Moser-Jauslin and Bousquet in \cite{MJ90,Bou00} as an application of the Luna-Vust theory. 

(We refer to \cite{IP99} for a general survey on Fano varieties.)

\begin{proposition}\label{prop:embeddings in type E}
Let $H \subseteq \SL_2(\C)$  be a finite subgroup, and let $X_0=G/H$.
\begin{itemize}[leftmargin=7mm]
\item If $H$ is conjugate to $E_6$, then the smooth quadric $Q_3 \subseteq \P^4=\P(R_4)$ is the unique minimal smooth completion of $X_0$, and $\Aut_{\C}^{G}(Q_3)=\{Id\}$.
\item If $H$ is conjugate to $E_7$, then the Fano threefold $V_5$ is the unique minimal smooth completion of $X_0$, and $\Aut_{\C}^{G}(V_5)=\{Id\}$.
\item If $H$ is conjugate to $E_8$, then the Fano threefold $V_{22}^{MU}$ is the unique minimal smooth completion of $X_0$, and $\Aut_{\C}^{G}(V_{22}^{MU})=\{Id\}$.
\end{itemize}
Moreover, the diagrams associated with these three minimal smooth completions of $X_0$ are pictured in Figure~\ref{fig:third example}.
\end{proposition}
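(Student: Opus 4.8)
The plan is to split the statement into two logically independent parts: first, the existence, uniqueness, and identification of the minimal smooth completion together with its diagram; and second, the computation of $\Aut_{\C}^{G}(X)$. For the first part I would not reprove the classification from scratch but invoke the cited literature. These threefolds have complexity one, so their $G$-equivariant embeddings are governed by the combinatorial data of Appendix~\ref{sec:appendix B} (the skeleton diagrams), and the complete smooth ones correspond to complete colored data satisfying the smoothness criterion of \cite[Theorem~3.1]{MJ90}. Running through the enumeration of such data carried out by Mukai--Umemura \cite{MU83} (for $E_7,E_8$) and by Moser-Jauslin--Bousquet \cite{MJ90,Bou00}, together with Nakano's classification \cite{Nak89} ensuring minimality among the projective completions, one extracts in each case exactly one minimal smooth completion, which is identified there with $Q_3\subseteq\P(R_4)$, $V_5$, and $V_{22}^{MU}$ respectively. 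The diagrams of Figure~\ref{fig:third example} are then read off directly from the corresponding colored data.

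It then remains to compute $\Aut_{\C}^{G}(X)$, and the key structural input is that restriction to the dense open orbit yields an injection $\Aut_{\C}^{G}(X)\hookrightarrow\Aut_{\C}^{G}(X_0)\simeq(N_G(H)/H)(\C)$, where the isomorphism is Proposition~\ref{prop:G-eq automorphisms of G/H} and injectivity holds because a $G$-automorphism preserving the dense orbit is determined by its restriction there (density and separatedness). For $H$ conjugate to $E_7$ or $E_8$, Lemma~\ref{lem:N(H)/H} gives $N_G(E_7)=E_7$ and $N_G(E_8)=E_8$, so $\Aut_{\C}^{G}(X_0)$ is already trivial and hence $\Aut_{\C}^{G}(V_5)=\Aut_{\C}^{G}(V_{22}^{MU})=\{Id\}$ with no further work.

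The genuine obstacle is the case $H\sim E_6$, where Lemma~\ref{lem:N(H)/H} only gives $\Aut_{\C}^{G}(X_0)\simeq E_7/E_6\simeq\Z/2\Z$, so I must show that the nontrivial class $\modH{\w_8}$ does not extend to $Q_3$. Rather than checking this combinatorially, I would argue representation-theoretically: writing $Q_3=\{q=0\}\subseteq\P(R_4)$ for the $G$-invariant quadratic form $q$ (the unique copy of $R_0$ inside $\mathrm{Sym}^2(R_4)$), the full automorphism group is $\Aut_{\C}(Q_3)\simeq\PSO_5(\C)$, and the $G$-action factors through the principal embedding $\SL_2(\C)\to\SO_5(\C)$ afforded by the irreducible module $R_4$. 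A $G$-equivariant automorphism is then an element of $\PSO_5(\C)$ centralizing this image; since $R_4$ is irreducible, Schur's lemma forces any centralizing element of $\GL(R_4)$ to be scalar, hence trivial in $\PSO_5(\C)$, so $\Aut_{\C}^{G}(Q_3)=\{Id\}$. As a consistency check this is precisely the expected failure of surjectivity of the restriction map $\Upsilon$ of Remark~\ref{rk:Gamma map}: the $\Z/2\Z$ visible on the open orbit permutes two colors of the skeleton diagram and so cannot preserve the colored data of $Q_3$. The main difficulty overall is thus not any single computation but the faithful translation of the cited classification into the precise uniqueness and diagram statements; the Schur argument then disposes cleanly of the only case where the open-orbit automorphism group is nontrivial.
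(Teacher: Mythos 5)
Your proposal is correct and, for most of the statement, follows the same route as the paper: the existence, uniqueness and identification of the minimal smooth completions (together with their diagrams) are imported from Mukai--Umemura, Nakano and Bousquet, and for $H$ conjugate to $E_7$ or $E_8$ the triviality of $\Aut_{\C}^{G}(X)$ is read off from the injection $\Aut_{\C}^{G}(X)\hookrightarrow\Aut_{\C}^{G}(X_0)\simeq (N_G(H)/H)(\C)$ together with Lemma~\ref{lem:N(H)/H}, exactly as in the paper. Where you genuinely diverge is the case $H$ conjugate to $E_6$: the paper keeps the same injection, notes that $\Aut_{\C}^{G}(X_0)\simeq E_7/E_6\simeq\Z/2\Z$, and then performs an explicit calculation to check that the nontrivial automorphism $\varphi_{\w_8 H}$ of the open orbit does not extend to $Q_3$ (in effect, that it exchanges the two spokes with $b(j)=-\tfrac{1}{2}$ in Figure~\ref{fig:third example}, a computation of the same flavor as the one for the $\Gamma$-action in the proof of Corollary~\ref{cor:structures on minimal models E cases}). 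You instead compute $\Aut_{\C}^{G}(Q_3)$ directly: every automorphism of the quadric threefold is linear, so $\Aut_{\C}(Q_3)\simeq\PSO_5(\C)$, and a $G$-equivariant automorphism is a projective centralizer of the principal $\SL_2(\C)$ acting irreducibly on $R_4$; Schur's lemma then forces triviality. This is a clean and valid alternative: it buys independence from the combinatorial bookkeeping (and yields the non-extension of $\varphi_{\w_8 H}$ as a corollary rather than needing it as input), at the cost of invoking the linearity of $\Aut(Q_3)$ and of one small step you elide, namely that a projective centralizer in $\PGL(R_4)$ lifts to an honest centralizer in $\GL(R_4)$ -- this holds because $\SL_2(\C)$ is perfect and so admits no nontrivial characters. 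Make that lifting step explicit; otherwise there is no gap of substance.
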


\begin{figure}[ht]
\begin{tikzpicture}[scale=1.3]
\path (0,0) coordinate (origin);
\draw (0,0) circle (2pt) ;
\path (0:1.7cm) coordinate (P0);
\path (0:1.1cm) coordinate (P0););
\path (75:1.3cm) coordinate (P1););
\path (19:.75cm) coordinate (P2);
\path (310:1.3cm) coordinate (P3);
\path (55:.75cm) coordinate (P4);
\path (-30:.75cm) coordinate (P5);
\path (90:.8cm) coordinate (Q0);
\draw[line width=0.3mm] (origin) -- (P0) (origin) -- (P3) (origin) -- (P4) (origin) -- (P5) (origin) -- (P1) (origin) -- (P2) ;
\fill[white] (0,0) circle (1.5pt) ;
\node at (.5,.5){$\vdots$};
\node at (.6,.8){\tiny{$-\frac{5}{6}$}};
\node at (0,1.3){\tiny{$-\frac{1}{2}$}};
\node at (.4,-1.1){\tiny{$-\frac{1}{2}$}};
\node at (1.5,0){\tiny{$-\frac{2}{3}$}};
\node at (-.2,-.2){\tiny{$-1$}};

\node at (.3,-2){\tiny{$H=E_6$}};
\draw[line width=0.3mm] (P3) -- +(40:3pt)  (P3)-- +(220:3pt);

\end{tikzpicture}
\quad
\begin{tikzpicture}[scale=1.3]
\path (0,0) coordinate (origin);
\draw (0,0) circle (2pt) ;
\path (0:1.7cm) coordinate (P0);
\path (0:1.1cm) coordinate (P0););
\path (75:1.1cm) coordinate (P1););
\path (19:.75cm) coordinate (P2);
\path (310:1.5cm) coordinate (P3);
\path (55:.75cm) coordinate (P4);
\path (-30:.75cm) coordinate (P5);
\path (90:.8cm) coordinate (Q0);
\draw[line width=0.3mm] (origin) -- (P0) (origin) -- (P3) (origin) -- (P4) (origin) -- (P5) (origin) -- (P1) (origin) -- (P2) ;
\fill[white] (0,0) circle (1.5pt) ;
\node at (.5,.5){$\vdots$};
\node at (.6,.8){\tiny{$-\frac{11}{12}$}};
\node at (0,1.3){\tiny{$-\frac{5}{6}$}};
\node at (.4,-1.1){\tiny{$-\frac{2}{3}$}};
\node at (1.5,0){\tiny{$-\frac{3}{4}$}};
\node at (-.2,-.2){\tiny{$-1$}};

\node at (.3,-2){\tiny{$H=E_7$}};
\draw[line width=0.3mm] (P3) -- +(40:3pt)  (P3)-- +(220:3pt); 

\end{tikzpicture}
\quad
\begin{tikzpicture}[scale=1.3]
\path (0,0) coordinate (origin);
\draw (0,0) circle (2pt) ;
\path (0:1.7cm) coordinate (P0);
\path (0:1.1cm) coordinate (P0););
\path (75:1.1cm) coordinate (P1););
\path (19:.75cm) coordinate (P2);
\path (310:1.5cm) coordinate (P3);
\path (55:.75cm) coordinate (P4);
\path (-30:.75cm) coordinate (P5);
\path (90:.8cm) coordinate (Q0);
\draw[line width=0.3mm] (origin) -- (P0) (origin) -- (P3) (origin) -- (P4) (origin) -- (P5) (origin) -- (P1) (origin) -- (P2) ;
\fill[white] (0,0) circle (1.5pt) ;
\node at (.5,.5){$\vdots$};
\node at (.6,.8){\tiny{$-\frac{29}{30}$}};
\node at (0,1.3){\tiny{$-\frac{14}{15}$}};
\node at (.4,-1.1){\tiny{$-\frac{5}{6}$}};
\node at (1.5,0){\tiny{$-\frac{9}{10}$}};
\node at (-.2,-.2){\tiny{$-1$}};

\node at (.3,-2){\tiny{$H=E_8$}};
\draw[line width=0.3mm] (P3) -- +(40:3pt)  (P3)-- +(220:3pt);

\end{tikzpicture}
\caption{Diagrams for the minimal smooth completions\newline  of $G/H$ when $H=E_6$, $E_7$, and $E_8$.}
\label{fig:third example}
\end{figure}
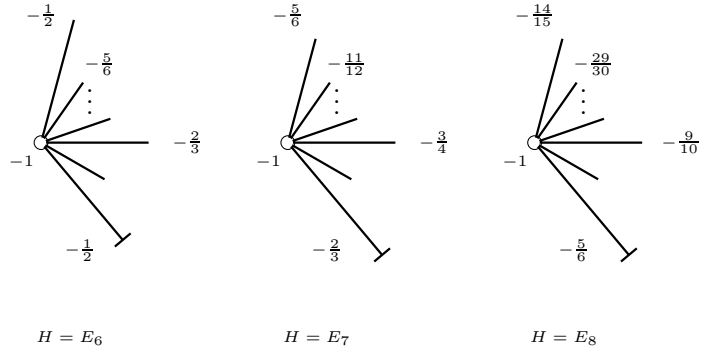

\begin{proof}
Assume first that $H$ is conjugate to $E_7$ or $E_8$. The fact that $X_0=G/H$ admits a unique minimal smooth completion is \cite[Theorem~2.1]{MU83} (see also \cite[\S~6.2.2, Proposition~7]{Bou00}), and the underlying $G$-variety can then be identified from \cite[Lemma~3.3]{MU83}. The fact that the groups $\Aut_{\C}^{G}(V_5)$ and $\Aut_{\C}^{G}(V_{22}^{MU})$ are trivial follows from Lemma~\ref{lem:N(H)/H} (together with Proposition~\ref{prop:G-eq automorphisms of G/H}).

Assume now that $H$ is conjugate to $E_6$.
The fact that $X_0$ admits a unique minimal smooth completion is \cite[Proposition~2.5]{Nak89} (see also \cite[\S~6.2.1, Proposition~7]{Bou00}), and the underlying $G$-variety is identified in \cite[Proposition~2.4]{Nak89}. 
The fact that the group $\Aut_{\C}^{G}(Q_3)$ is trivial follows from Lemma~\ref{lem:N(H)/H}, combined with an explicit calculation to check that the unique non-trivial $G$-equivariant automorphism of the open orbit $X_0$ does not extend to the whole variety $Q_3$.

The diagrams associated with these three minimal smooth completions can be found in \cite[\S\S~6.2.1-6.2.2]{Bou00}.
\end{proof}

\begin{corollary}\label{cor:structures on minimal models E cases}
Let $\sigma \in \{\sigma_s, \sigma_c\}$. Then, for $H \subseteq G=\SL_2(\C)$ conjugate to $E_6$, $E_7$ or $E_8$, the unique minimal smooth completion of $X_0=G/H$ admits exactly one $(G,\sigma)$-equivariant real structure. 
Hence, given a real form $F$ of $G$, there is exactly one $(\R,F)$-form for each of the three $G$-varieties $Q_3$, $V_5$ and $V_{22}^{MU}$.
\end{corollary}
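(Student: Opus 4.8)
The plan is to follow the strategy of \S~\ref{sec:strategy}, exploiting that for $H$ conjugate to $E_6$, $E_7$ or $E_8$ the relevant embedding $X$ (respectively $Q_3$, $V_5$, $V_{22}^{MU}$) is a Fano threefold with $\Aut_\C^G(X)=\{Id\}$ by Proposition~\ref{prop:embeddings in type E}. As usual we may assume $\sigma\in\{\sigma_s,\sigma_c\}$ by the second part of Proposition~\ref{prop:A}. The crucial reduction is this: by Proposition~\ref{prop:H1 Galois k-forms}, once we know that $X$ admits \emph{at least one} $(G,\sigma)$-equivariant real structure, the equivalence classes of such structures are in bijection with $\HH^1(\Gamma,\Aut_\C^G(X))=\HH^1(\Gamma,\{Id\})$, which is a singleton; hence there is exactly one. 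Since each $X$ is projective, that structure is automatically effective, so by \S~\ref{sec:real forms SL2} it corresponds to a unique $(\R,F)$-form. Thus the entire statement reduces to proving \emph{existence} of a $(G,\sigma)$-equivariant real structure on $X$.

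For existence I would pick a $(G,\sigma)$-equivariant real structure $\mu$ on the open orbit $X_0=G/H$ (which exists by Lemma~\ref{lem:existence of an equivariant structure}) and show that some such $\mu$ extends to $X$ via the criterion of Theorem~\ref{th:E}. Condition~\ref{lab:item ii for SL2 th} is immediate: $X$ is projective, hence quasiprojective, so by the criterion recalled in the proof of Theorem~\ref{th:E} (from \cite[\S~6.7.1]{Bou00}) it carries at most one $G$-orbit of type $\BB_0$ or $\BB_-$; since the $\Gamma$-action preserves orbit types, such an orbit is necessarily $\Gamma$-stable. The real content is therefore condition~\ref{lab:item i for SL2 th}, the $\Gamma$-stability of the colored data $\FF=\FF(X_\C,\delta_\C)$.

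To establish condition~\ref{lab:item i for SL2 th} I would argue as follows. By Theorem~\ref{th:Luna-Vust alg closed field}, the collection $\gamma\cdot\FF$ produced by the $\mu$-induced $\Gamma$-action corresponds to a $G$-model whose underlying $G$-variety is obtained from $X$ by the Galois twist; as smoothness, completeness and minimality are preserved under this twist, that variety is again a minimal smooth completion of $X_0$, hence $G$-isomorphic to $X$ by the uniqueness in Proposition~\ref{prop:embeddings in type E}. When $H$ is conjugate to $E_7$ or $E_8$ we have $N_G(H)=H$ by Lemma~\ref{lem:N(H)/H}, so $\Aut_\C^G(X_0)=\{Id\}$ and there is a \emph{single} equivalence class of $G$-model with underlying variety $X$; therefore $\gamma\cdot\FF=\FF$, so (the unique) $\mu$ extends. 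When $H$ is conjugate to $E_6$ we have $\Aut_\C^G(X_0)=N_G(E_6)/E_6\simeq\Z/2\Z$, generated by $w=\modH{\w_8}$, so there are \emph{exactly two} equivalence classes $\FF,\FF'$ of $G$-models with underlying variety $Q_3$, interchanged by $w$; the argument above only yields $\gamma\cdot\FF\in\{\FF,\FF'\}$. Here I would use that the two inequivalent real structures on $X_0$ (the classes $[\modH{I_2}]$ and $[\modH{\w_8}]$ of Proposition~\ref{prop:description of H1 for SL2/H}) differ by $w$, so their induced $\Gamma$-actions on the colored equipment differ by the permutation of colors induced by $w$; since $w$ swaps $\FF$ and $\FF'$, exactly one of the two structures satisfies $\gamma\cdot\FF=\FF$ and hence extends, giving existence for $Q_3$.

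The main obstacle is precisely this last point for $E_6$: one must check that the two real structures on $G/E_6$ induce $\Gamma$-actions on $(\V^G,\DD^B)$ differing by the nontrivial element of $N_G(E_6)/E_6$, i.e.\ track the effect of $\w_8$ on the colors $\P^1/E_6$ through the explicit formulas of \S~\ref{sec:almost homogeneous SL2 threefolds}. A cleaner alternative for existence on $Q_3$ is to exhibit a real structure directly: $Q_3\subseteq\P(R_4)$ is the $G$-invariant quadric attached to the invariant symmetric form on the self-dual representation $R_4$, and a $\sigma$-compatible real structure on $R_4$ induces a $(G,\sigma)$-equivariant real structure on $\P(R_4)$ preserving $Q_3$. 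Either way, combined with the singleton count of the first paragraph, this shows that each of $Q_3$, $V_5$, $V_{22}^{MU}$ admits exactly one $(G,\sigma)$-equivariant real structure, and hence a unique $(\R,F)$-form.
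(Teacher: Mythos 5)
Your proof is correct, and for the heart of the matter --- the $E_6$ case --- it takes a genuinely different route from the paper's. Both arguments share the same skeleton: uniqueness is immediate from $\Aut_{\C}^{G}(X)=\{Id\}$ (Proposition~\ref{prop:embeddings in type E}), effectiveness follows from projectivity, and everything reduces to showing that some equivariant real structure on $G/H$ induces a $\Gamma$-action preserving the colored data. The paper settles this combinatorially: the $\Gamma$-action can only permute spokes of equal length $b(j)$, so for $E_7$ and $E_8$ (all distinguished spokes of distinct lengths) every structure extends, while for $E_6$ (two spokes of length $-1/2$) it computes the two quartics $w^4+x^4\mp 2\sqrt{3}\,i\,w^2x^2$ cutting out the two ambiguous colors and checks by hand that $\modH{g}\mapsto\modH{\sigma(g)\w_8}$ fixes them whereas $\modH{g}\mapsto\modH{\sigma(g)}$ swaps them. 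You replace this computation by an abstract twisting argument: $\gamma\cdot\FF$ is the colored data of the conjugate variety, which is again a minimal smooth completion and hence $G$-isomorphic to $X$, so $\gamma\cdot\FF$ lies in the $\Aut_{\C}^{G}(X_0)$-orbit of $\FF$; for $E_6$ that orbit is $\{\FF,\,w\cdot\FF\}$ with $w\cdot\FF\neq\FF$ (because $w=\modH{\w_8}$ does not extend to $Q_3$), and since the two real structures on $G/E_6$ differ exactly by $w$, exactly one of them fixes $\FF$. This pigeonhole cleanly avoids all explicit computation (your direct construction of a real structure on $\P(R_4)$ preserving the invariant quadric is an equally valid shortcut for existence on $Q_3$); what it does not deliver, and what the paper's computation does, is the identification of \emph{which} of the two structures on $G/E_6$ extends --- information the paper records and reuses in the table at the end of \S~\ref{sec:minimal models}. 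The one step you should flesh out is the assertion that minimality is preserved under the Galois twist: it is true (a $G$-equivariant birational morphism from the conjugate variety conjugates back to one from $X$), but it is the load-bearing step of your reduction and deserves an explicit line.
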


\begin{proof}
Let $X$ be one of the  three $G$-varieties $Q_3$, $V_5$, or $V_{22}^{MU}$.
First note that, if a $(G,\sigma)$-equivariant real structure on $X$ exists, then it is unique. Indeed, otherwise $\mu_1 \circ \mu_2$ would give a non-trivial element of $\Aut_{\C}^{G}(X)=\{Id\}$. 
Also, we can assume without loss of generality that $H \in \{E_6,E_7,E_8\}$.

The $\Gamma$-actions on the collection of colored data of a minimal smooth completion of $X_0$ permute spokes of the diagram (see Figure~\ref{fig:third example}) that have the same length. This implies that, when $H=E_7$ or $H=E_8$, each $(G,\sigma)$-equivariant real structure on $X_0$ extends uniquely to the minimal smooth completion. This leaves the case of $H=E_6$, where the $\Gamma$-action can either fix the two spokes of the same length or exchange them. We will check for each $\Gamma$-action, associated with one of the $(G,\sigma)$-equivariant real structure on $X_0$ listed in Appendix~\ref{ap:table2}, which situation holds. 

We follow the notation of Appendix~\ref{sec:appendix B} and fix $H=E_6$. 
We now determine $j \in \DD^B(G/H) \simeq \P^1/H$ such that $b(j)=-1/2$. To do this, we find $\widehat{j}=[\alpha:\beta] \in\P^1$ such that the $H$-orbit of $\widehat{j}$ is of order $4$. 
Let $f_1=\alpha w+\beta x$, $f_2$, $f_3$, and $f_4$ be the four corresponding elements in $\C[G]$ (each of them defined up to a nonzero scalar).
Let $\nu=\nu([\alpha:\beta],1)\in\Vgt$;  its restriction to $\C[G]^H$, after normalization, satisfies the condition that $\nu(F^6)=\frac{6}{24}(1-1-1-1)=-1/2$, where $F=f_1 f_2 f_3 f_4 \in \C[G]^{(H)}$ is an $H$-semi-invariant such that $F^6 \in \C[G]^H$. Thus the element $j\in \DD^B(G/H)$ has a spoke with $b(j)=-1/2$.  There are, up to a constant multiple, exactly two such homogeneous polynomials that we denote by $F_1$ and $F_2$. 
Indeed, by using the generators of $H$, we determine 
\[
\begin{array}{lcl}
F_1=(w+\beta_1 x)(w-\beta_1 x)(w+\beta_{1}^{-1}x)(w-\beta_{1}^{-1}x)=w^4+x^4-2\sqrt{3}iw^2x^2, \ \text{and}\\
F_2=(w+\beta_2 x)(w-\beta_2 x)(w+\beta_{2}^{-1}x)(w-\beta_{2}^{-1}x)=w^4+x^4+2\sqrt{3}iw^2x^2,
\end{array}
\]
where $\beta_1=(1+i)\frac{1+\sqrt{3}}{2}$ and $\beta_2=(1+i)\frac{1-\sqrt{3}}{2}$.

Let $j_1,j_2\in\DD^B(G/H)$ be defined by $F_1$ and $F_2$ respectively. 
For each $(G,\sigma)$-equivariant real structure $\mu$ on $X_0$, either $\gamma \cdot j_1=j_1$ (and $\gamma \cdot j_2=j_2$), in which case $\mu$ extends to $Q_3$, or $\gamma \cdot j_1=j_2$, in which case $\mu$ does not extend to $Q_3$.
A straightforward computation (see \S~\ref{sec:almost homogeneous SL2 threefolds}) yields that
\[
 \left\{
    \begin{array}{lclcl}
      \gamma \cdot j_1=j_1 & \text{ for } &\mu\colon \modH{g} \to \modH{\sigma_s(g)\w_8} &\text{ and } &\mu\colon \modH{g} \to \modH{\sigma_c(g)\w_8}; \\
         \gamma \cdot j_1=j_2 & \text{ for } &\mu\colon \modH{g} \to \modH{\sigma_s(g)} &\text{ and } &\mu\colon \modH{g} \to \modH{\sigma_c(g)}.
    \end{array}
\right.
\]
This means that, for $\mu$ of the form $\modH{g} \to \modH{\sigma_s(g)\w_8}$ or $\modH{g} \to \modH{\sigma_c(g)\w_8}$, the equivariant real structure extends to $Q_3$ while for the other two choices of $\mu$, the equivariant real structure does not extend to $Q_3$.

The last sentence of the statement follows from the fact that the three $G$-varieties $Q_3$, $V_5$ and $V_{22}^{MU}$ are projective, and so any equivariant descent datum is effective. 
\end{proof}

To describe the minimal smooth completions of $X_0=G/H$, when $H$ is conjugate to $D_n$, we  introduce the following $\P^1$-bundle over $\P^2$.

\begin{definition}
Let $m\ge 0$ and let $\kappa\colon~\p^1\times\p^1 \to \p^2$ be the $(2:1)$-cover defined by
\[\begin{array}{rccc}
\kappa\colon~& \P^1\times\P^1 & \to &\P^2\\
& ([y_0:y_1],[z_0:z_1]) &\mapsto &[y_0 z_0:y_0 z_1+y_1 z_0:y_1z_1],\end{array}\]
whose branch locus is the diagonal $\Delta\subseteq\P^1\times\P^1$ and whose ramification locus is the smooth conic $\mathscr{C}=\{ [x:y:z] \mid y^2=4xz\}\subseteq\P^2.$
 The \emph{$m$-th Schwarzenberger $\P^1$-bundle} $\SS_m\to \P^2$ is the $\P^1$-bundle defined by \[\SS_m=\P(\kappa_* \O_{\P^1\times\P^1}(-m,0))\to \P^2.\] 
\end{definition}

Note that $\SS_m$ is the projectivization of the classical Schwarzenberger vector bundle $\kappa_* \O_{\P^1\times\P^1}(-m,0)$ introduced in \cite{Sch61}. It was first studied by Umemura in \cite[\S~4]{Ume88} and then by Blanc-Fanelli-Terpereau in \cite[\S~4.2]{BFT23} in order to determine the maximal connected algebraic subgroups of the Cremona group $\Bir(\P^3)$.

\begin{proposition}\label{prop:embeddings in type D}
Let $H \subseteq \SL_2(\C)$ be a finite subgroup, and let $X_0=G/H$.
\begin{itemize}[leftmargin=6mm]
\item If $H$ is conjugate to $D_4$, then $\SS_{2} \simeq \P(T_{\P^2})$ is the unique minimal smooth completion of $X_0$, and $\Aut_{\C}^{G}(\SS_2) \simeq \Z/2\Z$.
\item If $H$ is conjugate to $D_5$, then $\P^3=\P(R_3)$ is the unique minimal smooth completion of $X_0$, and $\Aut_{\C}^{G}(\P^3)=\{Id\}$.
\item If $H$ is conjugate to $D_n$ with $n \geq 6$, then $\SS_{n-2}$ is the unique minimal smooth completion of $X_0$, and $\Aut_{\C}^{G}(\SS_{n-2})=\{Id\}$.
\end{itemize}
Moreover, the diagrams associated with these $G$-equivariant embeddings of $X_0$ are pictured in Figure~\ref{fig:fourth example}. 
\end{proposition}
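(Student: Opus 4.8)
The plan is to follow the same two-step scheme used in the proof of Proposition~\ref{prop:embeddings in type E}: first quote the classification of the minimal smooth completions from the literature, and then compute the $G$-equivariant automorphism group of each completion by deciding which automorphisms of the open orbit extend. By Remark~\ref{rk:choice of the base point} we may replace $H$ by any conjugate, so it suffices to treat the standard representatives $D_4$, $D_5$, and $D_n$ ($n\geq 6$) of Proposition~\ref{prop:finite subgroups of SL2}. The existence and uniqueness of a minimal smooth completion $X$ of $X_0=G/H$, the identification of the underlying $G$-variety ($\SS_2\simeq\P(T_{\P^2})$ for $D_4$, $\P^3=\P(R_3)$ for $D_5$, and $\SS_{n-2}$ for $D_n$ with $n\geq 6$), and the associated diagrams of Figure~\ref{fig:fourth example}, are due to Umemura~\cite[\S~4]{Ume88}, Nakano~\cite{Nak89}, and Moser-Jauslin--Bousquet~\cite{MJ90,Bou00}; the isomorphism $\SS_2\simeq\P(T_{\P^2})$ is recorded in \cite[\S~4.2]{BFT}. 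I would simply cite these for the first step.

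For the automorphism groups, the key tool is a combinatorial extension criterion. By Proposition~\ref{prop:G-eq automorphisms of G/H} and Lemma~\ref{lem:N(H)/H}, $\Aut_{\C}^{G}(X_0)\simeq(N_G(H)/H)(\C)$ equals $\mathfrak{S}_3$ for $H=D_4$ and $\Z/2\Z$ for $H=D_n$ with $n\geq 5$. Since any $G$-automorphism of $X$ restricts to the dense open orbit, the restriction map $\Aut_{\C}^{G}(X)\hookrightarrow\Aut_{\C}^{G}(X_0)$ is injective; and for $\varphi\in\Aut_{\C}^{G}(X_0)$ the two $G$-models $(X,\iota)$ and $(X,\iota\circ\varphi)$ of $X_0$ (where $\iota\colon X_0\hookrightarrow X$ is the inclusion) are equivalent precisely when $\varphi$ extends to a $G$-automorphism of $X$. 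By Theorem~\ref{th:Luna-Vust alg closed field} this happens exactly when the permutation of $(\V^G,\DD^B)$ induced by $\varphi$ fixes the colored data $\FF(X)$. Hence $\Aut_{\C}^{G}(X)$ is the stabilizer of $\FF(X)$ inside $\Aut_{\C}^{G}(X_0)$, and the whole computation reduces to making the action of the generators of $N_G(H)/H$ on the colors $\DD^B(G/H)\simeq\P^1/H$ explicit and reading it off against the diagram.

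For $H=D_n$ with $n\geq 5$ the nontrivial generator is $\modH{\w_{4n-8}}$, acting on $\P^1$ by $[\alpha:\beta]\mapsto[\zeta_{4n-8}\alpha:\zeta_{4n-8}^{-1}\beta]$; I would check that it moves a color carrying a spoke of Figure~\ref{fig:fourth example}, so it does not fix $\FF(X)$ and $\Aut_{\C}^{G}(X)=\{Id\}$. For $D_5$ an even quicker route is available: $\P^3=\P(R_3)$ admits no nontrivial $G$-equivariant automorphism because $R_3$ is an irreducible $G$-representation, so Schur's lemma forces any such map to be a scalar, hence the identity on $\P^3$.

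The delicate case, which I expect to be the main obstacle, is $H=D_4$, where one must pin down exactly which $\Z/2\Z\subseteq\mathfrak{S}_3$ stabilizes $\FF(\SS_2)$. Carrying this out requires computing the $\mathfrak{S}_3$-action on $\DD^B(G/D_4)\simeq\P^1/D_4$ via the generators $\modH{\alpha}$ and $\modH{\w_8}$ of $N_G(D_4)/D_4$, and verifying on Figure~\ref{fig:fourth example} that the $\Z/3\Z$ subgroup $E_6/D_4$ (the same subgroup governing the three components seen in the real-locus computation of Theorem~\ref{th:D} for $D_4$) does \emph{not} fix the colored data of $\SS_2$, while a transposition does. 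Geometrically the surviving involution is the self-duality of the complete flag variety $\P(T_{\P^2})$ of $\SL_3$, which is $G$-equivariant because the representation $R_2$ is self-dual; this yields $\Aut_{\C}^{G}(\SS_2)\simeq\Z/2\Z$. The claims about the diagrams themselves are quoted from \cite[\S~6]{Bou00}.
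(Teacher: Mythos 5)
Your proposal is correct, and the first half (existence, uniqueness, and identification of the minimal smooth completions, plus the diagrams) coincides with the paper's proof, which likewise just cites Nakano, Umemura, Bousquet, and \cite{BFT}. Where you diverge is in the computation of $\Aut_{\C}^{G}(X)$. The paper handles $n\geq 6$ by quoting $\Aut_{\C}(\SS_{n-2})\simeq\PGL_2(\C)$ from \cite[Lemma~4.2.5~(2)]{BFT}, so that the $G$-equivariant automorphisms form the centralizer of $\PGL_2(\C)$ in itself, which is trivial; for $D_4$ and $D_5$ it only says the result "comes from an explicit calculation" without giving it. You instead identify $\Aut_{\C}^{G}(X)$ uniformly with the stabilizer of the colored data $\FF(X)$ inside $\Aut_{\C}^{G}(X_0)\simeq N_G(H)/H$ (via Theorem~\ref{th:Luna-Vust alg closed field} applied to the models $(X,\iota)$ and $(X,\iota\circ\varphi)$), and then read off the answer from the action of $\modH{\w_{4n-8}}$, respectively of $\mathfrak{S}_3$, on the distinguished spokes of Figure~\ref{fig:fourth example}; this is sound (for $n\geq5$ the involution $\modH{\w_{4n-8}}$ indeed swaps the two colors defined by $w^{n-2}\pm x^{n-2}$, only one of which carries an orbit of the completion, and for $D_4$ the stabilizer in $\mathfrak{S}_3$ of the marked pair among the three $b=0$ spokes is a transposition). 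Your method is more self-contained within the Luna--Vust framework of the paper and actually supplies the detail the paper omits for $D_4$ and $D_5$; what it costs is the explicit computation of the $N_G(H)/H$-action on $\P^1/H$, which the paper's citation of the full automorphism group avoids for $n\geq6$. Your Schur's-lemma shortcut for $\P^3=\P(R_3)$ is also valid (any $G$-equivariant element of $\PGL_4(\C)$ lifts to a genuine intertwiner since $\SL_2$ has no nontrivial characters) and is likely close to the "explicit calculation" the authors had in mind.
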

\vspace{-2mm}
\begin{figure}[ht]
\begin{tikzpicture}[scale=1.3]
\path (0,0) coordinate (origin);
\draw (0,0) circle (2pt) ;
\path (0:1.7cm) coordinate (P0);
\path (0:1.7cm) coordinate (P0););
\path (85:1.7cm) coordinate (P1););
\path (19:.75cm) coordinate (P2);
\path (290:1.7cm) coordinate (P3);
\path (55:.75cm) coordinate (P4);
\path (-30:.75cm) coordinate (P5);
\path (90:.8cm) coordinate (Q0);
\draw[line width=0.3mm] (origin) -- (P0) (origin) -- (P3) (origin) -- (P4) (origin) -- (P1) (origin) -- (P2) ;
\fill[white] (0,0) circle (1.5pt) ;
\node at (.5,.5){$\vdots$};
\node at (.6,.8){\tiny{$-\frac{1}{2}$}};
\node at (-.1,1.7){\tiny{$0$}};
\node at (.1,-1.5){\tiny{$0$}};
\node at (1.9,0){\tiny{$0$}};
\node at (-.2,-.2){\tiny{$-1$}};
\draw[line width=0.3mm] (P0) -- +(90:3pt)  (P0)-- +(270:3pt); 
\draw[line width=0.3mm] (P1) -- +(175:3pt)  (P1)-- +(355:3pt); 

\node at (.3,2){\tiny{$H=D_4$ }};

\end{tikzpicture}
\quad\quad
\begin{tikzpicture}[scale=1.3]
\path (0,0) coordinate (origin);
\draw (0,0) circle (2pt) ;
\path (0:1.7cm) coordinate (P0);
\path (0:1.7cm) coordinate (P0););
\path (85:1.3cm) coordinate (P1););
\path (19:.75cm) coordinate (P2);
\path (290:1.3cm) coordinate (P3);
\path (55:.75cm) coordinate (P4);
\path (-30:.75cm) coordinate (P5);
\path (90:.8cm) coordinate (Q0);
\draw [line width=0.3mm](origin) -- (P0) (origin) -- (P3) (origin) -- (P4) (origin) -- (P1) (origin) -- (P2) ;
\fill[white] (0,0) circle (1.5pt) ;
\node at (.5,.5){$\vdots$};
\node at (.6,.8){\tiny{$-\frac{2}{3}$}};
\node at (-.1,1.5){\tiny{$-\frac{1}{3}$}};
\node at (.2,-1.1){\tiny{$-\frac{1}{3}$}};
\node at (1.9,0){\tiny{$0$}};
\node at (-.2,-.2){\tiny{$-1$}};
\draw[line width=0.3mm] (P1) -- +(175:3pt)  (P1)-- +(355:3pt);

\node at (.3,2){\tiny{$H=D_5$ }};

\end{tikzpicture}
\quad\quad
\begin{tikzpicture}[scale=1.3]
\path (0,0) coordinate (origin);
\draw (0,0) circle (2pt) ;
\path (0:1.7cm) coordinate (P0);
\path (0:1.7cm) coordinate (P0););
\path (85:1.3cm) coordinate (P1););
\path (19:.75cm) coordinate (P2);
\path (290:1.3cm) coordinate (P3);
\path (55:.75cm) coordinate (P4);
\path (-30:.75cm) coordinate (P5);
\path (90:.8cm) coordinate (Q0);
\draw[line width=0.3mm] (origin) -- (P0) (origin) -- (P3) (origin) -- (P4) (origin) -- (P1) (origin) -- (P2) ;
\fill[white] (0,0) circle (1.5pt) ;
\node at (.5,.5){$\vdots$};
\node at (.6,.8){\tiny{$b$}};
\node at (-.1,1.5){\tiny{$b_1$}};
\node at (.2,-1.1){\tiny{$b_1$}};
\node at (1.9,0){\tiny{$0$}};
\node at (-.2,-.2){\tiny{$-1$}};

\node at (.4,-1.5){\tiny{$b_1=\frac{4-n}{n-2}$, $b=\frac{3-n}{n-2}$}};
\node at (.3,2){\tiny{$H=D_n$ , $n\ge 6$}};
\draw[line width=0.3mm] (P0) -- +(90:3pt)  (P0)-- +(270:3pt); 
\draw[line width=0.3mm] (P1) -- +(175:3pt)  (P1)-- +(355:3pt); 

\end{tikzpicture}
\caption{Diagrams for the minimal smooth completions\newline of $G/H$ when $H=D_n$ ($n \geq 4$).}
\label{fig:fourth example}
\end{figure}
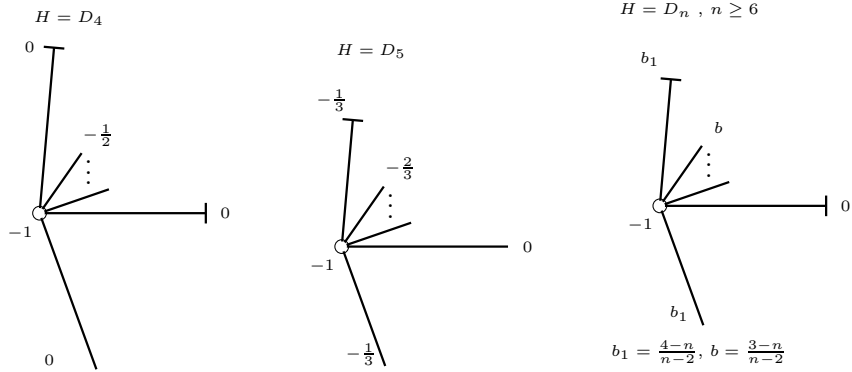

\begin{proof}
The fact that $X_0$ admits a unique minimal smooth completion is \cite[Theorem~2.8]{Nak89} (see also \cite[\S~6.2.5, Proposition~12]{Bou00}), and the underlying $G$-variety is described in \cite[Lemmas~2.6 and~2.7]{Nak89} (see also \cite[\S~4]{Ume88}). The fact that $\Aut_{\C}^{G}(\SS_{n-2})=\{Id\}$, for $n \geq 6$, follows for instance from the fact that $\Aut_{\C}(\SS_{n-2}) \simeq \PGL_2(\C)$ (see \cite[Lemma~4.2.5~(2)]{BFT23}). The fact that $\Aut_{\C}^{G}(\SS_2) \simeq \Z/2\Z$ and $\Aut_{\C}^{G}(\P^3)=\{Id\}$ comes from an explicit calculation.

The diagrams associated with these minimal smooth completions can be found in \cite[\S~6.2.5]{Bou00}.
\end{proof}


\begin{corollary} \label{cor:structures on minimal models D cases}
Let $\sigma \in \{\sigma_s, \sigma_c\}$, and let $H$ be a subgroup of $G=\SL_2(\C)$ conjugate to $D_n$ with $n \geq 4$.
\begin{enumerate}[leftmargin=7mm]
\item If $n=4$, then the unique minimal smooth completion of $G/H$ admits exactly two inequivalent $(G,\sigma)$-equivariant real structures. Hence, given a real form $F$ of $G$, there are exactly two non-isomorphic $(\R,F)$-forms for the $G$-variety $\SS_2$.
\item If $n\ge 5$, then the unique minimal smooth completion of $G/H$ admits exactly one $(G,\sigma)$-equivariant real structure. Hence, given a real form $F$ of $G$, there is exactly one $(\R,F)$-form for the $G$-varieties $\P^3$ and $\SS_{n-2}$ with $n \geq 6$.
\end{enumerate}
\end{corollary}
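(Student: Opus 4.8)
The plan is to follow the strategy of \S~\ref{sec:strategy} and to mirror the proof of Corollary~\ref{cor:structures on minimal models E cases}, reducing everything to a single existence statement. First observe that all the minimal smooth completions $X$ occurring here ($\SS_2$, $\P^3$, and $\SS_{n-2}$ for $n\geq 6$) are projective; hence they are quasiprojective, so by the quasiprojectivity criterion recalled in the proof of Theorem~\ref{th:E} each contains at most one $G$-orbit of type $\BB_0$ or $\BB_-$, and such an orbit, being the unique one of its type, is automatically stabilized by any $\Gamma$-action (which preserves orbit types). Thus condition~\ref{lab:item ii for SL2 th} of Theorem~\ref{th:E} holds automatically, and any $\Gamma$-stable collection of colored data yields an \emph{effective} real structure, i.e.\ a genuine $(\R,F)$-form. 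Consequently, once we know that $X$ admits at least one $(G,\sigma)$-equivariant real structure, Proposition~\ref{prop:H1 Galois k-forms} identifies the set of equivalence classes of such structures with $\HH^1(\Gamma,\Aut_{\C}^{G}(X))$; using the automorphism groups of Proposition~\ref{prop:embeddings in type D} this gives $\lvert\HH^1(\Gamma,\Z/2\Z)\rvert=2$ when $n=4$ (the $\Gamma$-action on $\Z/2\Z$ is forcibly trivial, and the cocycle count is immediate) and $\lvert\HH^1(\Gamma,\{Id\})\rvert=1$ when $n\geq 5$. The whole statement therefore reduces to proving that, for each $\sigma\in\{\sigma_s,\sigma_c\}$, at least one $(G,\sigma)$-equivariant real structure on $X_0=G/H$ extends to $X$.

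For existence I would extend the standard structure $\mu_0\colon \modH{g}\mapsto\modH{\sigma(g)}$, which exists by Lemma~\ref{lem:existence of an equivariant structure} and corresponds to the class $[\modH{I_2}]$ of Proposition~\ref{prop:description of H1 for SL2/H}. By Theorem~\ref{th:E} it suffices to check condition~\ref{lab:item i for SL2 th}, namely that the $\Gamma$-action on $(\V^G(G/H),\DD^B(G/H))$ induced by $\mu_0$ preserves the collection of colored data of $X$. Using $\DD^B(G/H)\simeq\P^1/H$ and the formulas of \S~\ref{sec:almost homogeneous SL2 threefolds} (with $t=I_2$), this $\Gamma$-action is induced on $\P^1$, in the affine coordinate $z=\alpha/\beta$, by $z\mapsto\overline{z}$ for $\sigma_s$ and by the antipodal map $z\mapsto -1/\overline{z}$ for $\sigma_c$; in both cases it preserves orbit types and the valuation parameter, hence permutes the spokes of the diagram preserving their lengths. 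It then remains to see that the special colors occurring in $X$ are mapped to special colors occurring in $X$. For this I would use that the image $\modH{H}$ of $H=D_n$ in $\PGL_2(\C)$ is the dihedral group of order $2(n-2)$ generated by $z\mapsto\zeta_{n-2}z$ and $z\mapsto 1/z$, whose exceptional $\P^1$-orbits are the pole orbit $\{0,\infty\}$ (ramification $n-2$) together with the two ramification-$2$ orbits $\{z^{n-2}=1\}$ and $\{z^{n-2}=-1\}$. Both $z\mapsto\overline{z}$ and $z\mapsto -1/\overline{z}$ fix the pole orbit, and they either fix each ramification-$2$ orbit or (for $\sigma_c$ with $n$ odd) interchange them; since the two corresponding colors carry the \emph{same} $b$-value in every diagram of Figure~\ref{fig:fourth example}, the collection of colored data of $X$ is preserved in all cases. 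This proves that $\mu_0$ extends, establishing existence for both $\sigma_s$ and $\sigma_c$, and assembling this with the $\HH^1$ computation above yields the two announced counts and the corresponding $(\R,F)$-forms.

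The main obstacle is this last bookkeeping step, i.e.\ verifying that the induced involution of $\P^1/H$ sends the colors actually appearing in the completion to appearing colors with matching $b$-value; it is the exact analogue of the explicit $F_1,F_2$ computation in the proof of Corollary~\ref{cor:structures on minimal models E cases}. The most delicate case is $H=D_4$, where $\modH{H}$ is the Klein four-group of signature $(2,2,2)$, so that all three exceptional orbits $\{0,\infty\}$, $\{\pm 1\}$, $\{\pm i\}$ have ramification $2$ and a priori could be permuted into colors absent from the diagram; here one checks directly that $z\mapsto\overline{z}$ and $z\mapsto -1/\overline{z}$ each stabilize the three orbits, so that $\mu_0$ again extends and $\HH^1(\Gamma,\Z/2\Z)$ produces exactly the two inequivalent real structures on $\SS_2$.
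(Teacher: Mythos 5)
Your overall architecture is legitimate and organized a bit differently from the paper's: you reduce everything to one existence statement plus the count $\lvert\HH^1(\Gamma,\Aut_{\C}^{G}(X))\rvert$, whereas the paper tests each of the four structures of Appendix~\ref{ap:table2} against the three longer spokes, whose defining polynomials it records as $F_1=wx$, $F_2=w^{n-2}+x^{n-2}$, $F_3=w^{n-2}-x^{n-2}$. The reduction is fine (for non-cyclic $H$ one has $b(j)\le 0$, so there are in fact no orbits of type $\BB_0$ or $\BB_-$ at all and effectiveness is automatic), and the cohomological count is correct.

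However, your existence step has a genuine gap when $\sigma=\sigma_c$ and $n$ is odd with $n\ge 5$. You assert that when the induced involution of $\P^1/H$ interchanges the two ramification-$2$ orbits, the colored data of $X$ is still preserved ``since the two corresponding colors carry the same $b$-value''. This conflates the symmetry of the colored equipment $\V^G(G/H)$ with the symmetry of the particular completion $X$: equal $b$-values only mean the two spokes have the same length in the valuation diagram, not that $X$ contains the same orbits along both. In the diagrams of Figure~\ref{fig:fourth example}, for every $n\ge 5$ exactly \emph{one} of the two $b_1$-spokes carries a tick mark, i.e.\ a $\CC$-type orbit ($G$-stable prime divisor) of $X$, and correspondingly the $\AA$-type orbit of $X$ has exactly one of the two colors $F_2,F_3$ in its set $\DD_Y^B$. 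A $\Gamma$-action swapping $F_2$ and $F_3$ therefore does not preserve $\FF(X,\psi)$, and indeed the table at the end of \S~\ref{sec:minimal models} records that $\modH{g}\mapsto\modH{\sigma_c(g)}$ does \emph{not} extend when $n$ is odd. Your chosen representative $\mu_0$ thus fails to extend in that case; existence there must instead be established with the twisted structure $\modH{g}\mapsto\modH{\sigma_c(g)\w_{4n-8}}$, whose induced action on $\P^1/H$ re-fixes each of the two ramification-$2$ orbits. With that substitution (your direct verification for $D_4$ is correct as written), the rest of your argument goes through.
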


\begin{proof}
The proof is done similarly to that of Corollary~\ref{cor:structures on minimal models E cases}. 
We can assume without loss of generality that $H=D_n$ with $n \geq 4$.
The polynomials which correspond to the three longer spokes are 
\[
F_1=wx,\  \ F_2=w^{n-2}+x^{n-2},  \text{ and } \  F_3=w^{n-2}-x^{n-2}.
\] 
It suffices then to check, for each equivariant real structure on $G/H$ listed in the table of Appendix~\ref{ap:table2}, whether the associated $\Gamma$-action fixes each of the three longer spokes when $n \geq 5$, and only permutes the two longer spokes corresponding to orbits of type $\CC$ when $n=4$.
The technique of finding which real structures extend to the almost homogeneous varieties is similar to the case of $H=E_6$. We omit the details here.
\end{proof}

\noindent \emph{Proof of Corollary~\ref{cor:F}:} 
Since $H$ is assumed to be non-cyclic, it is conjugate either to $E_n$ with $n \in \{6,7,8\}$ or to $D_n$ with $n \geq 4$. We can assume without loss of generality that $H=E_n$ or $H=D_n$. Also, since the real group structure $\sigma$ is strongly equivalent to $\sigma_s$ or $\sigma_c$, we can assume that $\sigma \in \{\sigma_s,\sigma_c\}$ by the second part of Proposition~\ref{prop:A}.

Now if $H=E_n$ then the result follows from Corollary~\ref{cor:structures on minimal models E cases}, and if $H=D_n$ then the result follows from Corollary~\ref{cor:structures on minimal models D cases}.
\qed

\begin{remark}\label{rk:cyclic case}
In \cite{Mou23}, using similar techniques, Moulin determined the real forms of the minimal smooth completions when $H$ is conjugate to $A_n$.  In this case the group $\Aut_{\C}^{G}(G/H)$ is infinite (see Lemma~\ref{lem:N(H)/H}) and, for each $n \geq 1$, there are between seven and eleven minimal smooth completions of $G/H$ to consider (depending on $n$), up to a $G$-equivariant automorphism of the dense open orbit $G/H$. Moreover, the underlying $G$-varieties are not all projective, which makes the question of the effectiveness of the Galois descent non-trivial in this case.  
\end{remark}

More precise information about which equivariant real structures on $X_0=G/H$ extend to the minimal smooth completion $X_0 \hookrightarrow X$, possibly after conjugating by an element of $\Aut_{\C}^{G}(X_0)$ in the case $H=D_4$, is given in the table below.
\smallskip
\small
\[
\renewcommand{\arraystretch}{1.1}
    \begin{array}{|c|c|c|}
\hline 
\text{Subgroup } H \subseteq \SL_2(\C) &  \mu \text{ on } \SL_2(\C)/H& \text{Does $\mu$ extends to $X$ ?}\\
\hline 
\hline 
             D_{n},\ n\ge 4    & \modH{g} \mapsto \modH{\sigma_s(g)}&  {YES}\\
          & \modH{g} \mapsto \modH{\sigma_s(g)\w_{4n-8}}&   {YES\ if\ n=4,\, NO\ if\ not}\\
          & \modH{g} \mapsto \modH{\sigma_c(g)}&    {YES\ if\ n\ is\ even,\, NO\ if\ not}\\
                & \modH{g} \mapsto \modH{\sigma_c(g)\w_{4n-8}}&   {YES\ if\ n\ is\ odd\ or\ n=4,\, NO\  if\ not}\\
       \hline
        E_6    &  \modH{g} \mapsto \modH{\sigma_s(g)}&     {NO}\\
                &  \modH{g} \mapsto \modH{\sigma_s(g)\w_8}&    {YES}\\
                 &  \modH{g} \mapsto \modH{\sigma_c(g)}&     {NO}\\
                &  \modH{g} \mapsto \modH{\sigma_c(g)\w_8}&    {YES}\\
          \hline
              E_7   &  \modH{g} \mapsto \modH{\sigma_s(g)} &    {YES}\\   
                        &  \modH{g} \mapsto \modH{\sigma_c(g)} &   {YES}\\
                    \hline
                    E_8   &  \modH{g} \mapsto \modH{\sigma_s(g)} &   {YES}\\
                       &  \modH{g} \mapsto \modH{\sigma_c(g)} &    {YES}\\
                    \hline
\end{array} 
\renewcommand{\arraystretch}{1}
\]
\normalsize

\bigskip

\appendix
\addappheadtotoc 

\addtocontents{toc}{\protect\setcounter{tocdepth}{0}}
\section{Some technical conditions}\label{sec:appendix A}

\subsection{Conditions (C), (F), and (W)}\label{sec:A.1}
In this subsection we explain what are the conditions (C), (F), and (W)  mentioned in the statement of Proposition~\ref{prop:colored data of B-charts} and in the proof of Proposition~\ref{prop:Gamma stability}. 

We follow the notation of Proposition~\ref{prop:colored data of B-charts}, and we denote $\mathbb{K}=\overline{k}(Z)$ and 
\[
\mathbb{K}^{(B)}=\{ f \in \mathbb{K}^*\ |\  \exists \chi \in \mathbb{X}(B),  \forall z \in Z, \forall b \in B,\ (b \cdot f)(z)=\chi(b)f(z) \}.
\]
We refer to \cite[\S~13]{Tim11} for the proofs of the following statements.
\begin{itemize}
\item \textbf{Condition (C)} is
\[
\forall \V_0 \subseteq \W \sqcup \RR,\ \V_0 \text{ finite},\ \exists f \in \mathbb{K}^{(B)} \text{ such that }  \left\{
    \begin{array}{ll}
       \nu(f) \geq 0, & \forall \nu \in \W \sqcup \RR; \text{ and } \\
         \nu(f)>0, & \forall \nu \in \V_0.
    \end{array}
\right.
\]
This condition is equivalent to the equality $\Quot(A[\W,\RR])=\mathbb{K}$.
\bigskip

\item \textbf{Condition (F)} is
\begin{align*}
&A[\W,\RR]^U=\overline{k}[f \in \mathbb{K}^{(B)}\ |\ \forall \nu \in \W \sqcup \RR, \nu(f) \geq 0] \text{ is finitely generated}\\ 
&\text{(where $U$ is the unipotent radical of $B$).}
\end{align*}
This condition is equivalent to the fact that $A[\W,\RR]$ is finitely generated.
\bigskip

\item \textbf{Condition (W)} is
\[
\forall \omega \in \W, \ \exists f \in \mathbb{K}^{(B)} \text{ such that }  \left\{
    \begin{array}{ll}
       \nu(f) \geq 0, & \forall \nu \in \W \sqcup \RR \setminus \{ \omega \}; \text{ and } \\
         \omega(f)<0. & 
    \end{array}
\right.
\]
This condition is equivalent to the fact that all valuations of $\W$ are essential for $A[\W,\RR]$.
\smallskip
Here the word \emph{essential} refers to ring theory: A valuation $\nu\in \W \sqcup \RR \sqcup (\DD \setminus \DD^B)$ is essential for $A[\W,\RR]$ if  it cannot be removed from the right-hand side of the equality  \eqref{eq:B-chart as intersection of valuation rings}. Let us note that by \cite[Proposition~13.7~(1)]{Tim11} all valuations from $\RR$ and $\DD \setminus \DD^B$ are essential for $A[\W,\RR]$.
\end{itemize}

\subsection{Conditions to meet to have a model over an algebraically closed field}\label{sec:A.2}
We now assume that $k=\k$ and we use the notation of \S~\ref{sec:LV over alg closed fields}.
Let $(\W_i,\RR_i)_{i \in I}$ be a collection of pairs with $\W_i \subseteq \V^G$ and $\RR_i \subseteq  \DD^B$. 

In this subsection we give the list of the conditions that the collection $(\W_i,\RR_i)_{i \in I}$ must meet to define an equivalence class of $G$-models of $X_0$. (These conditions are mentioned in the statements of Theorem~\ref{th:Luna-Vust alg closed field} and Theorem~\ref{th:C}.)

\begin{definition*}
For a given pair ($\W_i,\RR_i$) as above, we define its \emph{support} $\mathscr{S}_i$ as the subset of all elements $\nu_0 \in \V^G$ satisfying
\[
\begin{array}{l}
\forall f \in \mathbb{K}^{(B)}, (\nu(f) \geq 0,\ \forall \nu \in \W_i \sqcup \RR_i) \Rightarrow \nu_0(f) \geq 0,\\
\text{and if $>$ occurs in the left-hand side, then }\nu_0(f)>0. 
\end{array}
\]
\end{definition*} 

By \cite[\S~14.2]{Tim11}, the collection $(\W_i,\RR_i)_{i \in I}$ defines an equivalence class of $G$-models of $X_0$ if and only if 
\begin{itemize}[leftmargin=*]
\item the supports $\mathscr{S}_i$, $i \in I$, are non-empty and pairwise disjoints in $\V^G$; and
\item there exist $r \geq 1$ and $I_1,\ldots,I_r$ subsets of $I$ such that
\smallskip

\begin{enumerate}[leftmargin=*]
\item $\bigcup_{j=1}^{r} I_j=I$;
\item for each $j \in \{1,\ldots,r\}$, the pair $\bigcup_{i \in I_j} \left( \W_i, \RR_i\right)$ satisfies the conditions (C), (F), and (W) listed in \S~\ref{sec:A.1} (with $Z=X_0$); and
\item for each $i_0 \in I$, for each $j \in \{1,\ldots,r\}$ such that $i_0 \in I_j$, the element
$\omega \in \bigcup_{i \in I_j}  (\W_i \sqcup \RR_i)$ belongs to $\W_{i_0} \sqcup \RR_{i_0}$ if and only if  we have  $\omega(f)=0$ for all $f \in \mathbb{K}^{(B)}$ such that
\[
\nu_0(f)=0 \text{ and } \nu(f) \geq 0,\ \forall \nu \in \bigcup_{i \in I_j} (\W_i \sqcup \RR_i),
\]
where $\nu_0$ is some fixed element of $\mathscr{S}_{i_0}$.
\end{enumerate}
\end{itemize}

\begin{remark*}
The number $r$ and the subsets $I_1,\ldots,I_r$ of $I$ are not unique in general (see \cite[Remark~14.3]{Tim11}).
\end{remark*}

\section{Luna-Vust theory for equivariant\\ embeddings of \texorpdfstring{$\SL_2(\C)/H$}{SL2(C)/H}}\label{sec:appendix B}
In this appendix we recall how the Luna-Vust theory specializes to classify the equivariant embeddings of homogeneous $\SL_2(\C)$-threefolds; this is used in \S\S\ref{sec:almost homogeneous SL2 threefolds}-\ref{sec:minimal models}.

This description, in terms of "skeleton diagrams" (see Figure~\ref{Vg diagram} below), was first given by Luna-Vust \cite[\S~9]{LV83} for equivariant embeddings of $\SL_2(\C)$, and then later generalized to equivariant embeddings of homogeneous $\SL_2(\C)$-threefolds by Moser-Jauslin \cite{MJ87,MJ90} and Bousquet \cite{Bou00}.

\begin{remark*}
There is another description for equivariant embeddings of $\SL_2(\C)/H$, in terms of colored hypercones, due to Timashev \cite[\S~5]{Tim97} (see also \cite{BMJ04} for a smoothness criterion in terms of these data), but in this article we chose to use the former description.
\end{remark*}

\subsection{The colors}\label{sec:B1}
Let $H \subseteq G=\SL_2(\C)$ be a finite subgroup, and let $B=\begin{bmatrix}
* & 0 \\ * & *
\end{bmatrix}$ which is a Borel subgroup of $G$.
We recall that the set of colors of $G/H$ is
\[\DD^B=\DD^B(G/H)=\{ \text{$B$-stable prime divisors of $G/H$} \}.\]
This set identifies with 
\[B\backslash G/H\simeq \P^1/H,\ BgH \mapsto [1:0] \cdot gH.
\] 
Let us make the identification between $\DD^B$ and $\P^1/H$ more explicit. We denote the coordinate ring of $G$ by $\C[G]=\C[w,x,y,z]/(xy-wz-1)$ with $g=\begin{bmatrix}
 x & w \\ z &y
\end{bmatrix} \in G$. Then the colors of $G$ are parametrized as follows:
\[
\forall [\alpha:\beta] \in \P^1, \ D_{[\alpha:\beta]}=Z(\alpha w+\beta x) \subseteq G.
\]
(Here $Z(\alpha w+\beta x)$ denotes the zero locus of the regular function $\alpha w+\beta x$ in $G$.)

Now consider the natural right action of $H$  on $\P^1\simeq B\backslash G$. The colors of $G/H$ identifies with $\P^1/H$ as follows: 
\[
\forall \modH{[\alpha_0:\beta_0]} \in \P^1/H, \ D_{\modH{[\alpha_0:\beta_0]}}=\overline{Z\left( \alpha w+\beta x\right)} \subseteq G/H,
\]
where $[\alpha:\beta]\in \P^1$ is any representative of $\modH{[\alpha_0:\beta_0]}$.

\subsection{The \texorpdfstring{$G$}{G}-invariant geometric valuations}
In this subsection, for each finite subgroup $H$ of $G=SL_2(\C)$, we describe the set 
\[
\V^G=\V^G(G/H)=\{\text{$G$-invariant geometric valuations of $\C(G/H)$}\}.
\]

We first consider the case where $H$ is trivial. 
Any $B$-invariant geometric valuation on $\C(G)$ is determined by its restriction on the subset 
\[
\P^1\simeq \{\alpha w+\beta x\ |\  [\alpha:\beta]\in\P^1\} \subseteq \C[G].
\] 
Moreover, if $\nu \in \V^G(G)$, then there exist $[\alpha_0:\beta_0]\in\P^1$  and relatively prime integers $p$ and $q$, with $q\geq 1$ with $-1\leq r=\frac{p}{q}\leq 1$ such that $\nu (\alpha_0 w+\beta_0 x)=p$ and $\nu(\alpha w+\beta x)=-q$ for all  $[\alpha:\beta] \neq [\alpha_0:\beta_0]$.  Therefore
\[
\V^G(G)\simeq \{ \P^1\times ([-1,1]\cap \Q) \}/ \sim, 
\]
where $([\alpha:\beta],-1)\sim ([0:1],-1)$ for all $[\alpha:\beta]\in\P^1$.  

After an appropriate normalization, we can  choose the element $\nu$ of $\Vgt$ corresponding to $([\alpha_0:\beta_0],r)$ to satisfy $\nu(\alpha_0 w+\beta_0 x)=r$ and $\nu(\alpha w+\beta x)=-1$ for all $[\alpha:\beta]\not=[\alpha_0:\beta_0]$. 
For $j \in\P^1$ and $r\in[-1,1]\cap\Q$, we denote by $\nu(j,r)$ the $G$-invariant geometric valuation corresponding to $(j,r)$. When $r=-1$, the valuation $\nu(j,-1)$ does not depend on $j$; we denote this valuation by $\nu_0$. 

\smallskip

Now for any finite subgroup $H \subseteq G$, the set $\V^G(G/H)$ is obtained from $\V^G(G)$. 
More specifically, the restriction map $\V^G(G) \to \V^G(G/H)$ is onto, where $\C(G/H)$ is identified with a subfield of $\C(G)$ through $\C(G/H) \simeq \C(G)^H \subseteq \C(G)$. Also, any valuation of $\V^G(G/H)$ is determined by its restriction on the subset 
\[
\left\{  \left(\prod_{[\alpha,\beta] \in [\alpha_0,\beta_0] \cdot H} \alpha w +\beta x \right)^{\frac{\Card(H)}{\Card([\alpha_0,\beta_0] \cdot H)}} \ \middle| \  \modH{[\alpha_0:\beta_0]} \in \P^1/H \right\} \subseteq \C[G]^H.
\]

For each $j\in\P^1/H$, let $\widehat{j} \in \P^1$ be a representative of $j$. Let $\widehat{r}\in[-1,1]\cap\Q$.
The restriction of the valuation $\nu(\widehat{j},\widehat{r}) \in \V^G(G)$ to $\C(G/H)$ yields a valuation of $\V^G(G/H)$ such that, after a suitable normalization,  its value on any $j'\in \P^1/H \setminus \{j\}$ is $-1$ and its value on $j$ is some rational number $r \geq -1$.
We denote this valuation by   $\nu(j,r)\in \V^G(G/H)$. 

Let $b(j)\in\Q$ be the maximum rational number $r$ such that $\Vg$ has a valuation of the form $\nu(j,r)$. In particular, for $H \subseteq \{ \pm I_2 \}$, we have $b(j)=1$ for all $j$. In general, $b(j)$ is indicated in the diagrams of Figure~\ref{Vg diagram}.  (Since $\V^G(G/H)$ depends on $H$ only up to conjugacy, we can always assume that $H$ is one of the finite subgroups listed in Remark \ref{prop:finite subgroups of SL2}.)

Let us note that the set of valuations corresponding to the closure of a two-dimensional $G$-orbit in some $G$-equivariant embedding of $G/H$ is given by
\[
\Vone= \Vg\setminus\{\nu_0\}.
\]
On the other hand, the valuation $\nu_0$ corresponds to an infinite union of $G$-orbits.

\begin{figure}[ht]
\begin{tikzpicture}[scale=1.3]
\path (0,0) coordinate (origin);
\draw (0,0) circle (2pt) ;
\path (90:1.5cm) coordinate (P0);
\path (135:1.5cm) coordinate (P1);
\path (200:1.5cm) coordinate (P2);
\path (300:1.5cm) coordinate (P3);
\path (30:1.5cm) coordinate (P4);
\path (90:.8cm) coordinate (Q0);
\draw (origin) -- (P0) (origin) -- (P1) (origin) -- (P2)(origin) -- (P3) (origin) -- (P4);
\fill[white] (0,0) circle (1.5pt) ;
\node at (-1,.4){$\vdots$};
\node at (-.1,1.5){$_1$};
\node at (-.1,-.2){\tiny{$-1$}};
\node at (0,2){\tiny{$H=A_1$ or $A_2$}};

\end{tikzpicture}
\begin{tikzpicture}[scale=1.3]
\path (0,0) coordinate (origin);
\draw (0,0) circle (2pt) ;
\path (75:1.5cm) coordinate (P0);
\path (0:1.5cm) coordinate (P3);
\path (55:1cm) coordinate (P4);
\path (15:1cm) coordinate (P5);
\draw (origin) -- (P0) (origin) -- (P3) (origin) -- (P4) (origin) -- (P5) ;
\fill[white] (0,0) circle (1.5pt) ;
\node at (.6,.6){{{$\vdots$}}};
\node at (.65,1){\tiny{$b$}};
\node at (.3,1.5){\tiny{$1$}};
\node at (1.6,0){\tiny{$1$}};
\node at (-.1,-.2){\tiny{$-1$}};
\node at (.3,-1){\tiny{$b=\frac{4}{m}-1$}};
\node at (.3,2){\tiny{$H=A_n$ , $n\ge 3$}};

\end{tikzpicture}
\begin{tikzpicture}[scale=1.3]
\path (0,0) coordinate (origin);
\draw (0,0) circle (2pt) ;
\path (0:1.7cm) coordinate (P0);
\path (0:1.7cm) coordinate (P0););
\path (85:1.3cm) coordinate (P1););
\path (19:.75cm) coordinate (P2);
\path (290:1.3cm) coordinate (P3);
\path (55:.75cm) coordinate (P4);
\path (-30:.75cm) coordinate (P5);
\path (90:.8cm) coordinate (Q0);
\draw (origin) -- (P0) (origin) -- (P3) (origin) -- (P4) (origin) -- (P1) (origin) -- (P2) ;
\fill[white] (0,0) circle (1.5pt) ;
\node at (.5,.5){$\vdots$};
\node at (.6,.8){\tiny{$b$}};
\node at (-.1,1.5){\tiny{$b_1$}};
\node at (.2,-1.1){\tiny{$b_1$}};
\node at (1.9,0){\tiny{$0$}};
\node at (-.2,-.2){\tiny{$-1$}};

\node at (.4,-1.5){\tiny{$b_1=\frac{4-n}{n-2}$, $b=\frac{3-n}{n-2}$}};
\node at (.3,2){\tiny{$H=D_n$ , $n\ge 4$}};

\end{tikzpicture}

\bigskip
\bigskip

\bigskip

\begin{tikzpicture}[scale=1.3]
\path (0,0) coordinate (origin);
\draw (0,0) circle (2pt) ;
\path (0:1.7cm) coordinate (P0);
\path (0:1.1cm) coordinate (P0););
\path (75:1.3cm) coordinate (P1););
\path (19:.75cm) coordinate (P2);
\path (310:1.3cm) coordinate (P3);
\path (55:.75cm) coordinate (P4);
\path (-30:.75cm) coordinate (P5);
\path (90:.8cm) coordinate (Q0);
\draw (origin) -- (P0) (origin) -- (P3) (origin) -- (P4) (origin) -- (P1) (origin) -- (P2) ;
\fill[white] (0,0) circle (1.5pt) ;
\node at (.5,.5){$\vdots$};
\node at (.6,.8){\tiny{$-\frac{5}{6}$}};
\node at (0,1.3){\tiny{$-\frac{1}{2}$}};
\node at (.4,-1.1){\tiny{$-\frac{1}{2}$}};
\node at (1.5,0){\tiny{$-\frac{2}{3}$}};
\node at (-.2,-.2){\tiny{$-1$}};

\node at (.3,-2){\tiny{$H=E_6$}};

\end{tikzpicture}
\quad\quad
\begin{tikzpicture}[scale=1.3]
\path (0,0) coordinate (origin);
\draw (0,0) circle (2pt) ;
\path (0:1.7cm) coordinate (P0);
\path (0:1.1cm) coordinate (P0););
\path (75:1.1cm) coordinate (P1););
\path (19:.75cm) coordinate (P2);
\path (310:1.5cm) coordinate (P3);
\path (55:.75cm) coordinate (P4);
\path (-30:.75cm) coordinate (P5);
\path (90:.8cm) coordinate (Q0);
\draw (origin) -- (P0) (origin) -- (P3) (origin) -- (P4)  (origin) -- (P1) (origin) -- (P2) ;
\fill[white] (0,0) circle (1.5pt) ;
\node at (.5,.5){$\vdots$};
\node at (.6,.8){\tiny{$-\frac{11}{12}$}};
\node at (0,1.3){\tiny{$-\frac{5}{6}$}};
\node at (.4,-1.1){\tiny{$-\frac{2}{3}$}};
\node at (1.5,0){\tiny{$-\frac{3}{4}$}};
\node at (-.2,-.2){\tiny{$-1$}};

\node at (.3,-2){\tiny{$H=E_7$}};

\end{tikzpicture}
\quad\quad
\begin{tikzpicture}[scale=1.3]
\path (0,0) coordinate (origin);
\draw (0,0) circle (2pt) ;
\path (0:1.7cm) coordinate (P0);
\path (0:1.1cm) coordinate (P0););
\path (75:1.1cm) coordinate (P1););
\path (19:.75cm) coordinate (P2);
\path (310:1.5cm) coordinate (P3);
\path (55:.75cm) coordinate (P4);
\path (-30:.75cm) coordinate (P5);
\path (90:.8cm) coordinate (Q0);
\draw (origin) -- (P0) (origin) -- (P3) (origin) -- (P4) (origin) -- (P1) (origin) -- (P2) ;
\fill[white] (0,0) circle (1.5pt) ;
\node at (.5,.5){$\vdots$};
\node at (.6,.8){\tiny{$-\frac{29}{30}$}};
\node at (0,1.3){\tiny{$-\frac{14}{15}$}};
\node at (.4,-1.1){\tiny{$-\frac{5}{6}$}};
\node at (1.5,0){\tiny{$-\frac{9}{10}$}};
\node at (-.2,-.2){\tiny{$-1$}};

\node at (.3,-2){\tiny{$H=E_8$}};

\end{tikzpicture}
\caption{The set $\Vg$. Each spoke of the diagram corresponds to a point of $\P^1/H$. The center corresponds to the valuation $\nu_0(\alpha w+\beta x)=-1$ for all $[\alpha,\beta]\in\P^1$. In the caption of $H=A_n$, we denote by $m$ the least common multiple of $2$ and $n$; in other words, $m=n$ if $n$ is even, and $m=2n$ if $n$ is odd.}
\label{Vg diagram}
\end{figure}
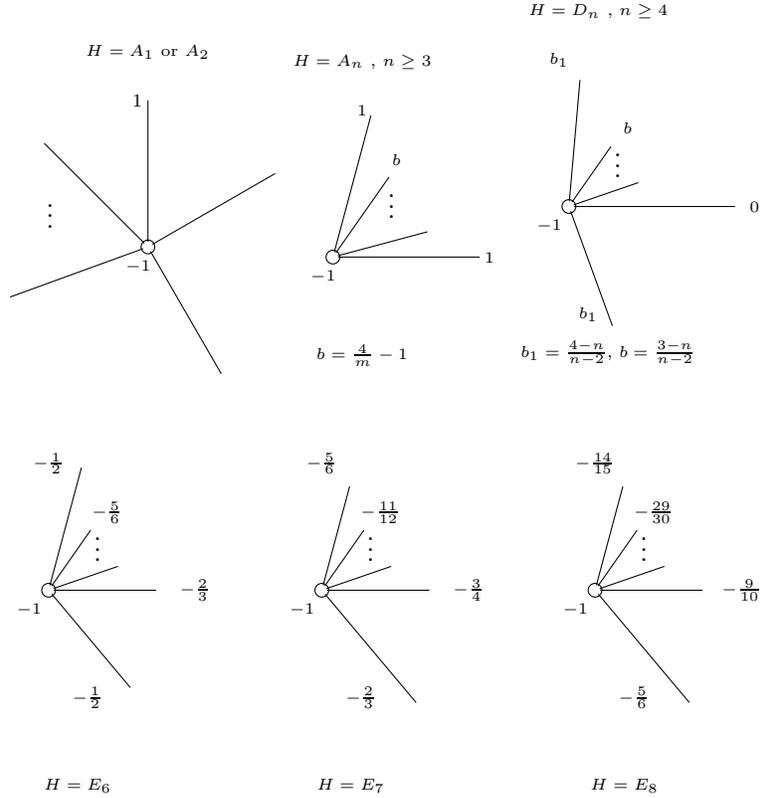

\subsection{Colored data and facets of orbits}\label{sec:B3} 
We now give a description of all possible colored data (see Definition~\ref{def:colored data of G-orbits}) of the $G$-orbits of the $G$-equivariant embeddings of $G/H$. We will also describe the {\it facet} of each orbit, i.e.~the set of valuations in $\Vone$  which dominate the local ring of the closure of the given orbit.

As in \cite{LV83}, we divide the (non-open) $G$-orbits into six types, and we describe the facet and give the colored data  $(\V_Y^G,\DD_Y^B)$ with $\V_Y^G\subseteq \V^G$ and $\DD_Y^B\subseteq \DD^B \simeq \P^1/H$ of a $G$-orbit $Y$ for each type.

\begin{itemize}[leftmargin=7mm]
\item[$\bullet$] {\bf Type $\CC(j,r)$} ($2$-dimensional orbit): $j\in\P^1/H$ and $r\in\Q\cap]-1,b(j)]$.\\ For this case, $\V_Y^G=\{\nu(j,r)\}$ is a single element of $\Vone$, 
 $\DD_Y^B=\varnothing$, and the facet of $Y$ is $\{\nu(j,r)\}$.

\item[$\bullet$]{\bf Type $\AA_N(j_1,\ldots,j_N,r_1,\ldots,r_N)$} (orbit isomorphic to $\P^1$):  $j_1,\ldots,j_N$ are  $N$ distinct elements of $\P^1/H$, and $r_i\in\Q\cap]-1,b(j_i)]$ for each $1 \leq i \leq N$.\\
Suppose also that $\sum_{i=1}^N\frac{1}{1+r_i}\ge1$.  Then $\V_Y^G=\{\nu(j_1,r_1),\ldots,\nu(j_N,r_N)\}$ and $\DD_Y^B=\P^1/H\setminus\{j_1,\ldots,j_N\}$. The facet of $Y$ is 
\[
\bigcup_{i=1}^N\{\nu(j_i,s_i)\ |\ -1<s_i<r_i\}\cup \{\nu(j,s)\ |\ j\not\in\{j_1,\ldots,j_N\},-1<s\le b(j)\}.
\]

\item[$\bullet$]{\bf Type $\AB(j,r_1,r_2)$} (orbit isomorphic to $\P^1$): $j\in\P^1/H$, and $r_1,r_2$ are two rational numbers satisfying $-1\le r_1<r_2\le b(j)$. Then $\V_Y^G=\{\nu(j,r_1), \nu(j,r_2)\}$ and $\DD_Y^B=\varnothing$. The facet of $Y$ is $\{\nu(j,s)\ |\ r_1<s<r_2\}$.

\item[$\bullet$]{\bf Type $\BB_+(j,r)$} (orbit isomorphic to $\P^1$): $j\in \P^1/H$, and $r\in\Q\cap[-1,b(j)[$. Then $\V_Y^G=\{\nu(j,r)\}$ and $\DD_Y^B=\{j\}$. The facet of $Y$ is $\{\nu(j,s)\ |\ r<s\leq b(j)\}$.
\item[$\bullet$]{\bf Type $\BB_-(j,r)$} (orbit isomorphic to $\P^1$): $j\in \P^1/H$, and $r\in\Q\cap]0,b(j)[$. Then $\V_Y^G=\{\nu(j,r))\}$ and $\DD_Y^B=\P^1/H\setminus\{j\}$. The facet of $Y$ is $\{\nu(j,s)\ |\ r<s\leq b(j)\}$.
\item[$\bullet$] {\bf Type $\BB_0(j,r)$} (fixed point): $j\in \P^1/H$, and $r\in\Q\cap]0,b(j)[$. Then $\V_Y^G=\{\nu(j,r))\}$ and $\DD_Y^B=\P^1/H$. The facet of $Y$ is $\{\nu(j,s)\ |\ r<s\leq b(j)\}$.

\end{itemize}

An orbit can be represented on the set $\Vg$ by marking its facet. Since the facets for orbits of types $\BB_-$, $\BB_0$ and $\BB_+$ are the same, one indicates the different types of orbits with a sign $0$, $+$ or $-$; see \S\S~\ref{sec:almost homogeneous SL2 threefolds}-\ref{sec:minimal models} for some examples.

\subsection{Conditions to meet to have an equivariant embedding}\label{sec:B4}
Using the facets, the conditions given in Appendix~\ref{sec:A.2} for determining which  set of orbits form a $G$-equivariant embedding $G/H \hookrightarrow X$ of $G/H$ simplifies as follows. 
A union of orbits form such a variety $X$ if and only if the following conditions are satisfied:
\begin{enumerate}[$(i)$]
\item The facets of the different orbits are disjoint.
\item The union of all the facets is closed in $\Vone$, where the topology is the one induced by the Euclidean topology on $\P^1 \times ([-1,1] \cap \Q)$.
\item If $X$ has an infinite number of orbits, then $X$ contains a $G$-stable hypersurface whose associated valuation is given by $\nu_0\in\Vg$. This means that $X$ contains all but finitely many orbits of type $\BB_+(j,-1)$. Also, in this case, the number of orbits of other types is finite.
\item If $X$ has a finite number of $G$-orbits, then it has no orbit of type $\BB_+(j,-1)$. 
\end{enumerate}

Moreover, the $G$-variety $X$ is complete if and only if the union of the facets is $\Vone$. It is quasiprojective if and only if it has at most one orbit of type $\BB_-$ or $\BB_0$. Note that type $\BB_-$ and $\BB_0$ orbits only occur if the subgroup $H$ is cyclic, since in all other cases, we have $b(j)\le 0$ for all $j \in \P^1/H$. In particular, if $H$ is non-cyclic, then any $G$-equivariant embedding $X$ of $G/H$ is quasiprojective, and so any descent datum on $X$ is effective (see \S~\ref{sec:descent data for algebraic varieties with group action}).

\begin{landscape}
\section{Equivariant real structures on \texorpdfstring{$\SL_2(\C)/H$}{SL2(C)/H}}\label{ap:table2}
\bigskip
\hspace{-15mm}
\scalebox{0.91}{
$
\renewcommand{\arraystretch}{1.2}
    \begin{array}{|c||c|c||c|c|}
\hline 
\text{Subgroup } H \subseteq \SL_2(\C) & \mu \text{ for } \sigma=\sigma_s &\text{real locus of }\mu& \mu \text{ for } \sigma=\sigma_c&\text{real locus of }\mu\\
\hline 
\hline 
  A_1    &  g \mapsto\sigma_s(g) &\SL_2(\R)&  g \mapsto \sigma_c(g)&\SU_2(\C)  \\
       &&     &   g \mapsto -\sigma_c(g)& \varnothing \\
  \hline
  A_2    &  \modH{g} \mapsto \modH{\sigma_s(g)} & \PSL_2(\R)\modH{I_2} \sqcup \PSL_2(\R) \modH{\w_4} \ (=\PGL_2(\R))  &\modH{g} \mapsto \modH{\sigma_c(g)} & \PSU_2(\C) \ (\simeq \SO_3(\R))   \\
      &\modH{g} \mapsto \modH{\sigma_s(g)e} & \varnothing &  \modH{g} \mapsto \modH{\sigma_c(g)e} & \varnothing \\
      \hline
A_n, n\ge 3, n \text{ odd} & \modH{g} \mapsto \modH{\sigma_s(g)}& \SL_2(\R) \modH{I_2} & \modH{g} \mapsto \modH{\sigma_c(g)}&\SU_2(\C)/A_n \\
 & \modH{g} \mapsto \modH{\sigma_s(g)f}& \SL_2(\R)\modH{d} 
  & \modH{g} \mapsto \modH{-\sigma_c(g)}& \varnothing \\
  \hline
 A_n, n\ge 4, n \text{ even} & \modH{g} \mapsto \modH{\sigma_s(g)}& \PSL_2(\R)\modH{I_2} \sqcup \PSL_2(\R) \modH{\w_{2n}} &\modH{g} \mapsto \modH{\sigma_c(g)}& \SU_2(\C)/A_n \\
 &\modH{g} \mapsto \modH{\sigma_s(g)e}&  \left\{
    \begin{array}{cl}
        \varnothing & \text{if } n \equiv 2\ [4] \\
        \PSL_2(\R)\modH{d} \sqcup  \PSL_2(\R) \modH{d^{-1}} & \text{if } n \equiv 0\ [4]
    \end{array}
\right. &\modH{g} \mapsto \modH{\sigma_c(g)e}&\varnothing \\
  &\modH{g} \mapsto \modH{\sigma_s(g)e\w_{2n}}&  \left\{
    \begin{array}{cl}
       \PSL_2(\R)\modH{d} \sqcup  \PSL_2(\R) \modH{d^{-1}}  & \text{if } n \equiv 2\ [4] \\
        \varnothing & \text{if } n \equiv 0\ [4]
    \end{array}
\right.  & \modH{g} \mapsto \modH{\sigma_c(g)\w_{2n}} &\varnothing \\
   \hline
           D_{n}, n\ge 4    & \modH{g} \mapsto \modH{\sigma_s(g)}& 
       \left\{
    \begin{array}{cl}
       \PSL_2(\R)\modH{I_2} \sqcup  \PSL_2(\R)\modH{\w_{4n-8}} \sqcup   \PSL_2(\R) \modH{d}  & \text{if } n \equiv 0\ [2] \\
      \PSL_2(\R)\modH{I_2}  \sqcup   \PSL_2(\R) \modH{d}  & \text{if } n \equiv 1\ [2]
    \end{array}
\right.     & \modH{g} \mapsto \modH{\sigma_c(g)} & \SU_2(\C)/D_n \\
                & \modH{g} \mapsto \modH{\sigma_s(g)\w_{4n-8}}&
                 \left\{
    \begin{array}{cr}
       \PSL_2(\R)\modH{\w_{8n-16}}  & \hspace{10mm}\text{if } n \equiv 0\ [2]\\
         \PSL_2(\R)\modH{\w_{8n-16}} \sqcup \PSL_2(\R) \modH{\w_{8n-16}^{-1}} & \text{if } n \equiv 1\ [2]
    \end{array}
\right.                    & \modH{g} \mapsto \modH{\sigma_c(g)\w_{4n-8}}& \varnothing \\
       \hline
        E_6    &  \modH{g} \mapsto \modH{\sigma_s(g)}&  \PSL_2(\R) \modH{I_2} & \modH{g} \mapsto \modH{\sigma_c(g)}& \SU_2(\C)/E_6  \\
                &  \modH{g} \mapsto \modH{\sigma_s(g)\w_8}&  \PSL_2(\R) \modH{\w_{16}} &  \modH{g} \mapsto \modH{\sigma_c(g)\w_8} & \varnothing  \\
          \hline
              E_7   &  \modH{g} \mapsto \modH{\sigma_s(g)} &  \PSL_2(\R) \modH{I_2} \sqcup \PSL_2(\R) \modH{\w_{16}} &  \modH{g} \mapsto \modH{\sigma_c(g)} & \SU_2(\C)/E_7 \\
                \hline
                    E_8   &  \modH{g} \mapsto \modH{\sigma_s(g)} &   \PSL_2(\R) \modH{I_2}  & \modH{g} \mapsto \modH{\sigma_c(g)} & \SU_2(\C)/E_8 \\
                    \hline
\end{array} 
$}
\end{landscape}

 \addtocontents{toc}{\protect\setcounter{tocdepth}{1}}
\bibliographystyle{alpha}

\begin{thebibliography}{ELFST14}

\bibitem[AH06]{AH06}
Klaus Altmann and J\"{u}rgen Hausen.
\newblock Polyhedral divisors and algebraic torus actions.
\newblock {\em Math. Ann.}, 334(3):557--607, 2006.

\bibitem[AHS08]{AHS08}
Klaus Altmann, J\"{u}rgen Hausen, and Hendrik S\"{u}ss.
\newblock Gluing affine torus actions via divisorial fans.
\newblock {\em Transform. Groups}, 13(2):215--242, 2008.

\bibitem[Ben13]{Ben13}
Olivier Benoist.
\newblock Quasi-projectivity of normal varieties.
\newblock {\em Int. Math. Res. Not. IMRN}, (17):3878--3885, 2013.

\bibitem[Ber10]{Ber10}
Gr\'{e}gory Berhuy.
\newblock {\em An introduction to {G}alois cohomology and its applications},
  volume 377 of {\em London Mathematical Society Lecture Note Series}.
\newblock Cambridge University Press, Cambridge, 2010.
\newblock With a foreword by Jean-Pierre Tignol.

\bibitem[BFT23]{BFT23}
J\'er\'emy Blanc, Andrea Fanelli, and Ronan Terpereau.
\newblock {Automorphisms of $\mathbb{P}^1$-bundles over rational surfaces}.
\newblock {\em {{\'E}pijournal de G{\'e}om{\'e}trie Alg{\'e}brique}}, {Volume 6}, 2023.

\bibitem[Bor93]{Bor93}
Mikhail~V. Borovoi.
\newblock Abelianization of the second nonabelian {G}alois cohomology.
\newblock {\em Duke Math. J.}, 72(1):217--239, 1993.

\bibitem[Bor20]{Bor20}
Mikhail~V. Borovoi.
\newblock Equivariant models of spherical varieties.
\newblock {\em Transform. Groups}, 25(2):391--439, 2020.

\bibitem[BG21]{BG21}
Mikhail~V. Borovoi and Giuliano Gagliardi.
\newblock Existence of equivariant models of spherical varieties and other {G}-varieties.
\newblock {\em International Mathematics Research Notices}, 07 2021.

\bibitem[Bou00]{Bou00}
Gilles Bousquet.
\newblock {\em Plongements homog\`enes de $SL(2)$ modulo un sous-groupe fini
  \emph{(available on \url{https://tel.archives-ouvertes.fr/tel-02899185})}}.
\newblock PhD thesis, 2000.
\newblock University of Burgundy.

\bibitem[BMJ04]{BMJ04}
Gilles Bousquet and Lucy Moser-Jauslin.
\newblock A local study of embeddings of complexity one.
\newblock In {\em Invariant theory in all characteristics}, volume~35 of {\em
  CRM Proc. Lecture Notes}, pages 1--10. Amer. Math. Soc., Providence, RI,
  2004.
  
\bibitem[Bri17]{Bri17}
Michel~Brion.
\newblock Algebraic group actions on normal varieties.
\newblock {\em Trans. Moscow Math. Soc.}, 78:85--107, 2017.

\bibitem[BS64]{BS64}
Armand Borel and Jean-Pierre Serre.
\newblock Th\'{e}or\`emes de finitude en cohomologie galoisienne.
\newblock {\em Comment. Math. Helv.}, 39:111--164, 1964.

\bibitem[Dun16]{Dun16}
Alexander Duncan.
\newblock Twisted forms of toric varieties.
\newblock {\em Transform. Groups}, 21(3):763--802, 2016.

\bibitem[ELFST14]{ELST14}
E.~Javier Elizondo, Paulo Lima-Filho, Frank Sottile, and Zach Teitler.
\newblock Arithmetic toric varieties.
\newblock {\em Math. Nachr.}, 287(2-3):216--241, 2014.

\bibitem[FSS98]{FSS98}
Yuval Z.~Flicker, Claus Scheiderer, and R.~Sujatha.
\newblock Grothendieck's theorem on non-abelian $\mathrm{H}^2$ and local-global principles.
\newblock {\em J. Am. Math. Soc.}, 11(3):731--750, 1998.


\bibitem[Ful93]{Ful93}
William Fulton.
\newblock {\em Introduction to toric varieties}, volume 131 of {\em Annals of
  Mathematics Studies}.
\newblock Princeton University Press, Princeton, NJ, 1993.
\newblock The William H. Roever Lectures in Geometry.


\bibitem[Gil22]{Gil22}
Pierre-Alexandre Gillard.
\newblock Real torus actions on real affine algebraic varieties.
\newblock {\em Math. Z.}, 301(2):1507--1536, 2022.

\bibitem[Gil]{Gil}
Pierre-Alexandre Gillard.
\newblock Torus actions on affine varieties over characteristic zero fields.
\newblock preprint, arXiv:2208.01495v1.

\bibitem[Hur11]{Hur11}
Mathieu Huruguen.
\newblock Toric varieties and spherical embeddings over an arbitrary field.
\newblock {\em J. Algebra}, 342:212--234, 2011.

\bibitem[IP99]{IP99}
Vasili\u{i}~A. Iskovskikh and Yuri~G. Prokhorov.
\newblock Fano varieties.
\newblock In {\em Algebraic geometry, {V}}, volume~47 of {\em Encyclopaedia
  Math. Sci.}, pages 1--247. Springer, Berlin, 1999.

\bibitem[Kle93]{Klein93}
Felix Klein.
\newblock {\em Vorlesungen \"{u}ber das {I}kosaeder und die {A}ufl\"{o}sung der
  {G}leichungen vom f\"{u}nften {G}rade}.
\newblock Birkh\"{a}user Verlag, Basel; B. G. Teubner, Stuttgart, 1993.
\newblock Reprint of the 1884 original, Edited, with an introduction and
  commentary by Peter Slodowy.

\bibitem[Kna02]{Kna02}
Anthony~W. Knapp.
\newblock {\em Lie groups beyond an introduction}, volume 140 of {\em Progress
  in Mathematics}.
\newblock Birkh\"auser Boston, Inc., Boston, MA, second edition, 2002.

\bibitem[Kno91]{Kno91}
Friedrich Knop.
\newblock The {L}una-{V}ust theory of spherical embeddings.
\newblock In {\em Proceedings of the {H}yderabad {C}onference on {A}lgebraic
  {G}roups ({H}yderabad, 1989)}, pages 225--249. Manoj Prakashan, Madras, 1991.

\bibitem[Lan15]{Lan15}
Kevin Langlois.
\newblock Polyhedral divisors and torus actions of complexity one over
  arbitrary fields.
\newblock {\em J. Pure Appl. Algebra}, 219(6):2015--2045, 2015.

\bibitem[LT16]{LT16}
Kevin Langlois and Ronan Terpereau.
\newblock On the geometry of normal horospherical {$G$}-varieties of complexity
  one.
\newblock {\em J. Lie Theory}, 26(1):49--78, 2016.

\bibitem[LMJV89]{LMV89}
Domingo Luna, Lucy Moser-Jauslin, and Thierry Vust.
\newblock Almost homogeneous {A}rtin-{M}oi\v{s}ezon varieties under the action
  of {${\rm PSL}_2({\bf C})$}.
\newblock In {\em Topological methods in algebraic transformation groups ({N}ew
  {B}runswick, {NJ}, 1988)}, volume~80 of {\em Progr. Math.}, pages 107--115.
  Birkh\"{a}user Boston, Boston, MA, 1989.



\bibitem[LV83]{LV83}
Domingo Luna and Thierry Vust.
\newblock Plongements d'espaces homog\`enes.
\newblock {\em Comment. Math. Helv.}, 58(2):186--245, 1983.

\bibitem[Man20]{Man20}
Fr\'ed\'eric Mangolte.
\newblock {\em Real Algebraic Varieties}, Springer Monographs in Mathematics. Cham:~Springer. xiv+450 p.~(2020).


\bibitem[Mil17]{Mil17}
James~S. Milne.
\newblock {\em Algebraic groups}, volume 170 of {\em Cambridge Studies in
  Advanced Mathematics}.
\newblock Cambridge University Press, Cambridge, 2017.
\newblock The theory of group schemes of finite type over a field.

\bibitem[MJ87]{MJ87}
Lucy Moser-Jauslin.
\newblock {\em Normal embeddings of $\SL(2)/\Gamma$}.
\newblock PhD thesis, 1987.
\newblock University of Geneva.

\bibitem[MJ90]{MJ90}
Lucy Moser-Jauslin.
\newblock Smooth embeddings of {${\rm SL}(2)$} and {${\rm PGL}(2)$}.
\newblock {\em J. Algebra}, 132(2):384--405, 1990.

\bibitem[MJT21a]{MJT21}
Lucy Moser-Jauslin and Ronan Terpereau.
\newblock {Real Structures on Horospherical Varieties}.
\newblock {\em Michigan Mathematical Journal}, 71(2): 283--320, May 2022. 

\bibitem[MJT21b]{MJT19}
Lucy Moser-Jauslin and Ronan Terpereau.
\newblock Real structures on symmetric spaces.
\newblock {\em Proc. Amer. Math. Soc.}, 149(8):3159--3172, 2021.

\bibitem[Mou23] {Mou23}
Lucas Moulin.
\newblock Real forms of minimal $\SL_2$-threefolds.
\newblock preprint, arXiv:2312.13738v2 


\bibitem[MU83]{MU83}
Shigeru Mukai and Hiroshi Umemura.
\newblock Minimal rational threefolds.
\newblock In {\em Algebraic geometry ({T}okyo/{K}yoto, 1982)}, volume 1016 of
  {\em Lecture Notes in Math.}, pages 490--518. Springer, Berlin, 1983.

\bibitem[Nak89]{Nak89}
Tetsuo Nakano.
\newblock On equivariant completions of {$3$}-dimensional homogeneous spaces of
  {${\rm SL}(2,{\bf C})$}.
\newblock {\em Japan. J. Math. (N.S.)}, 15(2):221--273, 1989.

\newcommand{\etalchar}[1]{$^{#1}$}
\bibitem[S{\etalchar{+}}09]{sage}
\emph{{S}ageMath, the {S}age {M}athematics {S}oftware {S}ystem ({V}ersion
  8.1)}, The Sage Developers, 2017, {\tt https://www.sagemath.org}.

\bibitem[Sch61]{Sch61}
Rolph~L.~E. Schwarzenberger.
\newblock Vector bundles on the projective plane.
\newblock {\em Proc. London Math. Soc. (3)}, 11:623--640, 1961.


\bibitem[SGA1]{SGA1}
{\em Rev\^{e}tements \'{e}tales et groupe fondamental ({SGA}~1)}, volume~3 of
  {\em Documents Math\'{e}matiques (Paris) [Mathematical Documents (Paris)]}.
\newblock Soci\'{e}t\'{e} Math\'{e}matique de France, Paris, 2003.
\newblock S\'{e}minaire de g\'{e}om\'{e}trie alg\'{e}brique du Bois Marie
  1960--61. [Algebraic Geometry Seminar of Bois Marie 1960-61], Directed by A.
  Grothendieck.
  
\bibitem[Spr66]{Spr66}
Tonny~A. Springer.
\newblock Nonabelian {$H\sp{2}$} in {G}alois cohomology.
\newblock In {\em Algebraic {G}roups and {D}iscontinuous {S}ubgroups ({P}roc.
  {S}ympos. {P}ure {M}ath., {B}oulder, {C}olo., 1965)}, pages 164--182. Amer.
  Math. Soc., Providence, R.I., 1966.

\bibitem[Tim97]{Tim97}
Dmitry~A. Timashev.
\newblock Classification of {$G$}-manifolds of complexity {$1$}.
\newblock {\em Izv. Ross. Akad. Nauk Ser. Mat.}, 61(2):127--162, 1997.

\bibitem[Tim11]{Tim11}
Dmitry~A. Timashev.
\newblock {\em Homogeneous spaces and equivariant embeddings}, volume 138 of
  {\em Encyclopaedia of Mathematical Sciences}.
\newblock Springer, Heidelberg, 2011.
\newblock Invariant Theory and Algebraic Transformation Groups, 8.

\bibitem[Ume88]{Ume88}
Hiroshi Umemura.
\newblock Minimal rational threefolds. {II}.
\newblock {\em Nagoya Math. J.}, 110:15--80, 1988.

\bibitem[vdPS03]{vdPS03}
Marius van~der Put and Michael~F. Singer.
\newblock {\em Galois theory of linear differential equations}, volume 328 of
  {\em Grundlehren der Mathematischen Wissenschaften [Fundamental Principles of
  Mathematical Sciences]}.
\newblock Springer-Verlag, Berlin, 2003.

\bibitem[Wed18]{Wed18}
Torsten Wedhorn.
\newblock Spherical spaces.
\newblock {\em Ann. Inst. Fourier (Grenoble)}, 68(1):229--256, 2018.

\end{thebibliography}

\def\cprime{$'$}

\end{document}